\documentclass[12pt]{amsart}

\usepackage{amsmath,amsthm,amsfonts,amssymb,mathrsfs}%\usepackage{paralist,nicefrac}
\usepackage[all]{xy}
\usepackage[total={6.5in,8.75in}, top=1.2in, left=1.0in, includefoot]{geometry}
\usepackage{amssymb}
\usepackage{latexsym}
\usepackage{amsmath,amsthm}
\usepackage{aeguill}
\usepackage{amssymb}
\usepackage{mathrsfs}
\usepackage{hyperref}
\usepackage{etoolbox}
\usepackage{enumerate}
\usepackage[dvipsnames]{xcolor}
\usepackage{tikz}[2013/12/13]

\makeatletter
\pretocmd{\chapter}{\addtocontents{toc}{\protect\addvspace{15\p@}}}{}{}
\pretocmd{\section}{\addtocontents{toc}{\protect\addvspace{3\p@}}}{}{}
\makeatother

\makeatletter
\def\@tocline#1#2#3#4#5#6#7{\relax
  \ifnum #1>\c@tocdepth % then omit
  \else
    \par \addpenalty\@secpenalty\addvspace{#2}%
    \begingroup \hyphenpenalty\@M
    \@ifempty{#4}{%
      \@tempdima\csname r@tocindent\number#1\endcsname\relax
    }{%
      \@tempdima#4\relax
    }%
    \parindent\z@ \leftskip#3\relax \advance\leftskip\@tempdima\relax
    \rightskip\@pnumwidth plus4em \parfillskip-\@pnumwidth
    #5\leavevmode\hskip-\@tempdima
      \ifcase #1
       \or\or \hskip .5em \or \hskip 1em \else \hskip 1.5em \fi%
      #6\nobreak\relax
    \dotfill\hbox to\@pnumwidth{\@tocpagenum{#7}}\par
    \nobreak
    \endgroup
  \fi}
\makeatother
\setcounter{section}{0}
\setcounter{tocdepth}{2}
\setcounter{secnumdepth}{6}

\newcommand{\C}{\mathbb{C}}
\newcommand{\N}{\mathbb{N}}
\newcommand{\Z}{\mathbb{Z}}

\newcommand{\Q}{\mathbb{Q}}
\newcommand{\F}{\mathbb{F}}
\newcommand{\A}{\mathbb{A}}

\newcommand{\cG}{\mathcal{G}}
\newcommand{\cH}{\mathcal{H}}

\newcommand{\cO}{\mathcal{O}}
\newcommand{\cS}{\mathcal{S}}
\newcommand{\cT}{\mathcal{T}}

\newcommand{\m}{\mathfrak{m}}

\newcommand{\Gal}{\operatorname{Gal}}

\newcommand{\rk}{\operatorname{rk}}

\renewcommand{\sc}{\operatorname{sc}}
\renewcommand{\ss}{\operatorname{ss}}
\newcommand{\der}{\operatorname{der}}
\newcommand{\red}{\operatorname{red}}
\newcommand{\rank}{\operatorname{rank}}

\newcommand{\Hom}{\operatorname{Hom}}
\newcommand{\Aut}{\operatorname{Aut}}
\newcommand{\End}{\operatorname{End}}

\newcommand{\conn}{\operatorname{conn}}

\newcommand{\SO}{\mathrm{SO}}

\newcommand{\GL}{\mathrm{GL}}
\newcommand{\PGL}{\mathrm{PGL}}
\newcommand{\SL}{\mathrm{SL}}
\newcommand{\PSL}{\mathrm{PSL}}
\newcommand{\Sp}{\mathrm{Sp}}
\newcommand{\GSp}{\mathrm{GSp}}
\newcommand{\GO}{\mathrm{GO}}

\def\bM{\mathbf{M}}
\def\bG{\mathbf{G}}
\def\bH{\mathbf{H}}

\def\bT{\mathbf{T}}

\def\bZ{\mathbf{Z}}
\def\bU{\mathbf{U}}

\def\bpx{\begin{pmatrix}}
\def\epx{\end{pmatrix}}

\newcommand\s{\mathsection}

\newcommand\uU{\underline{U}}

\newcommand\uM{\underline{M}}
\newcommand\uP{\underline{P}}
\newcommand\uG{\underline{G}}

\newcommand\uH{\underline{H}}

\newcommand\uJ{\underline{J}}
\newcommand\uS{\underline{S}}
\newcommand\uT{\underline{T}}

\newcommand\uZ{\underline{Z}}

\newcommand{\finiteG}{G}

\newcommand{\hto}{\hookrightarrow}

\newtheorem{thm}{Theorem}[section]

\newtheorem{cor}[thm]{Corollary}

\newtheorem{prop}[thm]{Proposition}
\newtheorem{lemma}[thm]{Lemma}

\newtheorem{remark}[thm]{Remark}

\begin{document}

\title[]{Monodromy of subrepresentations and irreducibility of low degree automorphic Galois representations}

\author{Chun Yin Hui}
\email{chhui@maths.hku.hk}
\email{pslnfq@gmail.com}
\address{
Department of Mathematics\\
The University of Hong Kong\\
Pokfulam, Hong Kong
}

\thanks{Mathematics Subject Classification (2010): 11F80, 11F70, 11F22, 20G05.}

\begin{abstract}
Let $X$ be a smooth, separated, geometrically connected scheme defined over a number field $K$ and
 $\{\rho_\lambda:\pi_1(X)\to\GL_n(E_\lambda)\}_\lambda$  
a system of semisimple $\lambda$-adic 
representations of the \'etale fundamental group of $X$ such that for each closed point $x$ of $X$, the specialization 
$\{\rho_{\lambda,x}\}_\lambda$ is a compatible system of Galois representations under mild local conditions. 
For almost all $\lambda$, we prove that
any type A irreducible subrepresentation   
of $\rho_\lambda\otimes_{E_\lambda} \overline \Q_\ell$ is residually irreducible.
When $K$ is totally real or CM, $n\leq 6$, and $\{\rho_\lambda\}_\lambda$ is the compatible system of Galois representations of $K$
 attached to a regular algebraic, polarized,
cuspidal automorphic representation of $\GL_n(\A_K)$, for almost all $\lambda$ we prove that $\rho_\lambda\otimes_{E_\lambda}\overline\Q_\ell$ is
(i) irreducible and (ii) residually irreducible if in addition $K=\Q$. 
\end{abstract}

\maketitle
\tableofcontents

\section{Introduction}\label{s1}
In this article, we establish big monodromy results for subrepresentations of some semisimple compatible systems 
of $\lambda$-adic representations of \'etale fundamental groups (e.g., Galois groups of number fields) 
and study with these results the conjectural irreducibility of some automorphic Galois representations 
of totally real or CM number fields.

Let $\Pi$ be a profinite group and   
$\sigma:\Pi\to \GL_m(\overline\Q_\ell)=\GL(W)$ an $\ell$-adic representation. 
The image $\sigma(\Pi)$ is called the \emph{monodromy group} of $\sigma$.
The Zariski closure of the monodromy group in $\GL_W$ is called the \emph{algebraic monodromy group} $\bG_W$ of $\sigma$.
The representation $\sigma$ is said to be \emph{of type A} if  the group
$\bG_W$ is of type A (e.g., isomorphic to $\GL_2$, see $\mathsection$\ref{NT} for definition) and $\sigma$ is
\emph{residually irreducible} if its semisimplified residual representation $\bar\sigma^{\ss}:\Pi\to\GL_m(\overline\F_\ell)$ is irreducible.
Our main big monodromy results, Theorems \ref{thmA}, \ref{ae}, and \ref{general}, compare
the images of $\sigma$ and $\bar\sigma^{\ss}$ under the setting of compatible system of representations.
A prototypical result on the monodromy of compatible system of Galois representations 
is the following well-known theorem of J.-P. Serre.
Let $K$ be a number field, $\overline K$ an algebraic closure of $K$, and $\Gal_K:=\Gal(\overline K/K)$ the absolute Galois group of $K$.

\begin{thm}\label{Serre0} \cite[Ch. VI]{Se98},\cite[Theorem 2]{Se72}
Let $\{\rho_\ell:\Gal_K\to\GL_2(\Q_\ell)\}_\ell$ be the compatible system of $\ell$-adic 
representations attached to an elliptic curve $E/K$ with no complex multiplication over $K$ (i.e., $\mathrm{End}_K(E)=\Z$).
The following assertions hold.
\begin{enumerate}[(i)]
\item The Galois representation $\rho_\ell:\Gal_K\to\GL_2(\Q_\ell)$ is irreducible for all $\ell$
and residually irreducible for all sufficiently large $\ell$. 
\item  Moreover, if $E$ has no complex multiplication over $\overline K$ (i.e., $\mathrm{End}_{\overline K}(E)=\Z$), then 
the algebraic monodromy group of $\rho_\ell$ is $\GL_2$ for all $\ell$ and 
the monodromy (resp. residual image) of $\rho_\ell$ is isomorphic to $\GL_2(\Z_\ell)$ (resp. $\GL_2(\F_\ell)$) for all sufficiently large $\ell$.
\end{enumerate}
\end{thm}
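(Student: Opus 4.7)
The overall plan is to combine standard facts about the Tate module of an elliptic curve (Weil bounds on Frobenius traces, Hodge-Tate weights, N\'eron-Ogg-Shafarevich) with Dickson's classification of subgroups of $\GL_2(\F_\ell)$, exploiting throughout the rigidity of Galois characters that come from Hecke characters.

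For part (i), assume $\rho_\ell$ is reducible for some $\ell$. Then $\rho_\ell^{\ss}=\chi_1\oplus\chi_2$ for continuous characters $\chi_1,\chi_2:\Gal_K\to\overline\Q_\ell^\times$ unramified outside the bad primes of $E$ and $\ell$, with $\chi_1\chi_2$ equal to the $\ell$-adic cyclotomic character and $\chi_1(\mathrm{Frob}_v)+\chi_2(\mathrm{Frob}_v)=a_v(E)\in\Z$ for almost all $v$. By the Weil bound $|a_v|\leq 2\sqrt{N_v}$ and the Hodge-Tate decomposition of $V_\ell(E)$ at primes above $\ell$ (weights $(1,0)$), local-global class field theory promotes $\chi_1,\chi_2$ to algebraic Hecke characters of $K$, forcing $E$ to have CM over $K$ and contradicting the hypothesis. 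For residual irreducibility, if $\bar\rho_\ell^{\ss}$ were reducible for an infinite set of $\ell$, then compatibility across the system forces the resulting residual characters to lift to characteristic-zero Hecke characters, again producing CM.

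For the first assertion of part (ii), let $\bG_\ell\subseteq\GL_{2,\overline\Q_\ell}$ be the algebraic monodromy group of $\rho_\ell$. Part (i) together with semisimplicity makes $\bG_\ell^\circ$ reductive; if $\bG_\ell^\circ$ were a torus, then after a finite extension of $K$ the representation would split into two characters and the argument of (i) would yield CM over a finite extension of $K$, contradicting the absence of CM over $\overline K$. Hence $\bG_\ell^\circ$ contains $\SL_2$, and combined with surjectivity of $\det\rho_\ell:\Gal_K\to\Z_\ell^\times$ this gives $\bG_\ell^\circ=\bG_\ell=\GL_2$.

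To conclude, show that $\bar\rho_\ell$ is surjective for all sufficiently large $\ell$ by eliminating each maximal proper subgroup of $\GL_2(\F_\ell)$ in Dickson's list: the Borel case is excluded by residual irreducibility from (i); the normaliser-of-Cartan case is excluded because it would force $\rho_\ell$ to become reducible over a quadratic extension of $K$, producing CM of $E$ over $\overline K$ via (i); and the exceptional cases (preimages of $A_4, S_4, A_5$ in $\PGL_2(\F_\ell)$) are excluded because such subgroups have uniformly bounded order while $\det\bar\rho_\ell$ has image of order $\ell-1$. Serre's lifting lemma then yields $\rho_\ell(\Gal_K)=\GL_2(\Z_\ell)$ for $\ell\geq 5$. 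The main obstacle is the Cartan-normaliser step, which must be handled uniformly in $\ell$ (the auxiliary quadratic extension varies with $\ell$); the standard resolution uses the tame inertia weights at $\ell$, coming from Raynaud's theorem or Fontaine-Laffaille theory at primes of good reduction, to show that the restriction of $\bar\rho_\ell$ to an inertia subgroup at $\ell$ cannot lie in the normaliser of a Cartan for infinitely many $\ell$.
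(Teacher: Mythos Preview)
The paper does not prove this theorem; it is stated with citations to Serre's original work as a motivating prototype for the paper's big monodromy results, with no argument supplied.

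Your sketch is Serre's strategy and is sound for the irreducibility of $\rho_\ell$ in (i) and for the outline of (ii). The real gap is the residual irreducibility clause of (i). You assert that if $\bar\rho_\ell^{\ss}$ is reducible for infinitely many $\ell$ then ``compatibility across the system forces the resulting residual characters to lift to characteristic-zero Hecke characters.'' But the residual characters $\bar\chi_{i,\ell}:\Gal_K\to\F_\ell^\times$ for different $\ell$ take values in unrelated groups and are not tied together by any compatibility: the compatibility of $\{\rho_\ell\}$ constrains only the Frobenius characteristic polynomials, not a choice of eigencharacter varying with $\ell$, so there is no mechanism to assemble them into a single Hecke character of $K$. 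Even granting such a lift, it would only match $\rho_\ell$ modulo $\ell$, not force $\rho_\ell$ itself to split. Serre's actual argument is geometric: a $\Gal_K$-stable line in $E[\ell]$ gives a $K$-rational cyclic $\ell$-isogeny $E\to E_\ell'$; by Shafarevich's finiteness theorem the targets $E_\ell'$ lie in finitely many $K$-isomorphism classes, so for two distinct primes $\ell_1\ne\ell_2$ one obtains isogenies $\phi_{\ell_i}:E\to E'$ to a common curve, and then $\hat\phi_{\ell_2}\circ\phi_{\ell_1}\in\mathrm{End}_K(E)$ has degree $\ell_1\ell_2$, which is not a perfect square, contradicting $\mathrm{End}_K(E)=\Z$. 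This argument also handles the potential-CM case cleanly, which is permitted by the hypothesis of (i).
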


Let $X$ be a smooth, separated, geometrically connected scheme over the number field $K$.
The \'etale fundamental groups of $X$ and $X_{\overline K}:=X\times_K\overline K$ (with respect to some geometric point)
and the absolute Galois group $\Gal_K:=\Gal(\overline K/K)$ of $K$
form a short exact sequence of profinite groups
\begin{equation}\label{diag}
\xymatrix{
1 \ar[r] &\pi_1(X_{\overline K}) \ar[r] & \pi_1(X) \ar[r] & \mathrm{Gal}_K \ar[r]& 1\\
&&& \Gal_{K(x)} \ar@{_{(}->}[u]^{i} \ar@{_{(}->}[lu]^{sp_x}}
\end{equation}
such that any closed point $x$ of $X$ induces (up to inner automorphism of $\pi_1(X)$) 
a splitting $sp_x$ of the open subgroup $\Gal_{K(x)}:=\Gal(\overline K/K(x))$ of $\Gal_K$,
where $K(x)$ is the residue field of $x$.
Fix a number field $E$ and denote by $\Sigma_E$ the set of finite places of $E$. 
We call a system  
\begin{equation}\label{introsys}
\{\rho_\lambda:\pi_1(X)\to\GL_n(E_\lambda)\}_{\lambda\in\Sigma_E}
\end{equation}
of $\lambda$-adic representations of $\pi_1(X)$
an \emph{$E$-rational compatible system} 
if for every closed point $x\in X$, the  system of $\lambda$-adic representations of the number field $K(x)$
\begin{equation}\label{spsys}
\{\rho_{\lambda,x}:=\rho_\lambda\circ sp_x:\Gal_{K(x)}\to\GL_n(E_\lambda)\}_{\lambda\in\Sigma_E}
\end{equation}
 is an \emph{$E$-rational Serre compatible system} ($\mathsection\ref{scs}$). 
Set $\ell$ to be the residue characteristic of $\lambda$ and
write $\bar\epsilon_\ell:\Gal_K\to\F_\ell^*$ as the mod $\ell$ cyclotomic character.
Suppose \eqref{introsys} is semisimple (i.e., $\rho_\lambda$ is semisimple for all $\lambda$),
Theorem \ref{thmA} (comparing with Theorem \ref{Serre0}(i)) gives conditions for an irreducible
subrepresentation $\sigma_\lambda$ of $\rho_\lambda\otimes_{E_\lambda}\overline\Q_\ell$\footnote{From now on, if $\phi_\lambda:\Pi\to\GL_m(E_\lambda)$ is a group representation
then the notation
$\phi_\lambda\otimes\overline\Q_\ell$ is the tensor product $\phi_{\lambda}\otimes_{E_\lambda}\overline\Q_\ell$ with respect to some field embedding $E_\lambda\to\overline\Q_\ell$ over $\Q_\ell$.} 
to be residually irreducible, where $E_\lambda\to\overline\Q_\ell$ is a field embedding 
over $\Q_\ell$.

\begin{thm}\label{thmA}
Let $\{\rho_\lambda\}_\lambda$ be a semisimple $E$-rational compatible system \eqref{introsys} of $\pi_1(X)$.
Suppose for every closed point $x\in X$, there exist integers $N_1,N_2\geq 0$ and a finite extension $K'/K(x)$ such that the following conditions hold for the Serre compatible system $\{\rho_{\lambda,x}\}_\lambda$.
\begin{enumerate}[(a)]
\item (Bounded tame inertia weights): for almost all $\lambda$ \footnote{This means ``for all but finitely many $\lambda\in\Sigma_E$''.}
and each finite place $v$ of $K(x)$ above $\ell$, 
the tame inertia weights of the local representation 
$(\bar\rho_{\lambda,x}^{\ss}\otimes\bar\epsilon_\ell^{N_1})|_{\Gal_{K(x)_v}}$ belong to $[0,N_2]$.
\item (Potential semistability): for almost all $\lambda$ and each finite place $w$ of $K'$ not above $\ell$,
the semisimplification of the local representation $\bar\rho_{\lambda,x}^{\ss}|_{\Gal_{K_{w}'}}$ is unramified.
\end{enumerate}
For almost all $\lambda$, if $\sigma_\lambda$ is a type A irreducible subrepresentation of $\rho_\lambda\otimes\overline\Q_\ell$, 
then  $\sigma_\lambda$ is residually irreducible.
\end{thm}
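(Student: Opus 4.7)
The plan is to show, for almost all $\lambda$, that the semisimplified residual representation $\bar\sigma_\lambda^{\ss}$ of a type A irreducible subrepresentation $\sigma_\lambda\subseteq\rho_\lambda\otimes\overline\Q_\ell$ is irreducible. Write $\bH_\lambda\subseteq\GL(W)$ for the algebraic monodromy group of $\sigma_\lambda$; by hypothesis $\bH_\lambda$ is of type A and acts irreducibly on the underlying $\overline\Q_\ell$-vector space $W$ of dimension $m$.

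First I would transfer the hypotheses (a) and (b) from $\rho_\lambda$ to the subrepresentation. Since $\rho_\lambda\otimes\overline\Q_\ell$ is semisimple, $\sigma_\lambda$ is a direct summand up to isomorphism; by choosing a $\pi_1(X)$-stable $\overline\Z_\ell$-lattice inside one for $\rho_\lambda$, the representation $\bar\sigma_\lambda^{\ss}$ appears as a summand of $\bar\rho_\lambda^{\ss}\otimes\overline\F_\ell$. Consequently the bounded tame inertia weight condition and the potential semistability condition descend to $\{\bar\sigma_{\lambda,x}^{\ss}\}_\lambda$ at every closed point $x$. Combining Fontaine--Laffaille theory applied to (a) at primes above $\ell$ with a boundedness argument controlling tame ramification away from $\ell$ from (b), one obtains uniform (in $\lambda$) bounds on the local images of $\bar\sigma_\lambda^{\ss}$: Frobenius traces lie among algebraic integers of bounded height, and local decomposition-group images are tame off an $\ell$-Sylow of bounded index.

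Next, the central technical step is Nori's theorem combined with a Larsen--Pink style comparison. For $\ell\gg 0$, Nori attaches to $\bar\sigma_\lambda^{\ss}(\pi_1(X))$ a connected semisimple $\F_\ell$-algebraic subgroup $\finiteAlgebraicS_\ell\subseteq\GL(W\otimes\overline\F_\ell)$ of bounded complexity whose $\F_\ell$-points are comparable to the residual image up to bounded index. Using compatibility of Frobenius characteristic polynomials across $\{\rho_\lambda\}_\lambda$, Chebotarev density, and the Brauer--Nesbitt theorem, I would compare formal characters and identify the root datum of $\finiteAlgebraicS_\ell$ with that of $(\bH_\lambda^\circ)^{\der}$ up to isogeny, for almost all $\lambda$.

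Finally, I would invoke the type A hypothesis: for a connected reductive group of type A, the fundamental irreducible representations are the exterior powers of the standard representation, which reduce irreducibly in every characteristic; by Steinberg's tensor product theorem, every irreducible algebraic representation of bounded highest weight remains irreducible upon restriction to the $\F_\ell$-points for all sufficiently large $\ell$. Applied to the irreducible $\bH_\lambda^\circ$-action on $W$ (whose highest weight is bounded in terms of $m$), this yields that $\finiteAlgebraicS_\ell(\F_\ell)$, and a fortiori the image of $\bar\sigma_\lambda^{\ss}$, acts irreducibly on $W\otimes\overline\F_\ell$. The hardest part is the Nori / Larsen--Pink identification of the mod-$\ell$ envelope with a mod-$\ell$ model of $\bH_\lambda^\circ$ uniformly in $\lambda$: only the ambient system $\{\rho_\lambda\}_\lambda$ is known to be compatible, so one must extract enough Frobenius-trace data from the projection onto the irreducible isotypic summand containing $\sigma_\lambda$ to pin down the root datum of $\finiteAlgebraicS_\ell$. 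The type A assumption is indispensable in the last step, since for other Dynkin types Weyl modules can fail to reduce irreducibly mod $\ell$ for arbitrarily large $\ell$.
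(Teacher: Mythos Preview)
Your outline captures the core of the Lie-irreducible case, but it has a genuine gap: you assume the identity component $\bH_\lambda^\circ$ acts irreducibly on $W$. The hypothesis only says $\bH_\lambda$ (possibly disconnected) is irreducible and of type A; nothing forces $\bH_\lambda^\circ$ to be irreducible on $W$. Your final step ``Applied to the irreducible $\bH_\lambda^\circ$-action on $W$'' therefore begs the question in the non-Lie-irreducible situation, and this is precisely the hard part of the theorem. The paper handles it by an induction on the degree $k=|\bG_\lambda/\bG_\lambda^\circ|$ of the ambient system: when $k=1$ the argument is close to what you sketch (via Theorem~\ref{general}(v), using that a type A subgroup of the same semisimple rank must coincide with the ambient type A group), but for $k>1$ one restricts $W_\lambda$ to the normal subgroup cutting out $\bH_{W_\lambda}^\circ$, applies Clifford theory, and splits into two cases. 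In case (a) (several non-isomorphic isotypic blocks) one writes $W_\lambda\cong\mathrm{Ind}\,W_1$ and proves residual irreducibility via Mackey's criterion, reducing conditions (I) and (II') to the inductive hypothesis applied to $\rho_\lambda$ and $\rho_\lambda\otimes\rho_\lambda^\vee$ restricted to intermediate covers. In case (b) (a single isotypic block with multiplicity) one writes $W_\lambda\cong U\otimes D$ as a tensor product of projective representations and checks irreducibility of $\overline U$ and $\overline D$ separately. None of this Clifford/Mackey/induction machinery appears in your proposal.

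There is also a structural issue you flag but do not resolve: the subrepresentations $\sigma_\lambda$ do not form a compatible system, so one cannot run Nori/Larsen--Pink directly on $\bar\sigma_\lambda^{\ss}(\pi_1(X))$ and hope to identify its envelope with a mod-$\ell$ form of $\bH_\lambda^\circ$ uniformly in $\lambda$. The paper avoids this by first reducing to the number field case via Serre's specialization theorem (choosing a closed point $x_0$ so that the full monodromy is realized by $\Gal_{K(x_0)}$), then constructing the algebraic envelope $\uG_\ell$ for the \emph{ambient} rational SCS $\{\rho_\ell\}_\ell$ (Theorem~\ref{ae}), and only then projecting to obtain the envelope $\uG_{W_\lambda}$ of each subrepresentation (Proposition~\ref{mt}, Corollary~\ref{mc}). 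The formal bi-character comparison and the type A conclusion are established at that level, not for the naked $\sigma_\lambda$.
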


\begin{cor}\label{corA}
Let $\{\rho_\lambda\}_\lambda$ be the semisimple $E$-rational compatible system of $\pi_1(X)$ in Theorem \ref{thmA}.
For almost all $\lambda$, if $\sigma_\lambda$ is an irreducible subrepresentation of $\rho_\lambda\otimes\overline\Q_\ell$ 
of dimension less than or equal to $3$,
then $\sigma_\lambda$ is residually irreducible.
\end{cor}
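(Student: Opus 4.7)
The plan is to reduce the corollary to Theorem~\ref{thmA} by proving that in dimension at most $3$, irreducibility of $\sigma_\lambda$ forces its algebraic monodromy group to be of type A. Once this structural statement is in hand, the conclusion is immediate.

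First I would observe that if $\sigma_\lambda \colon \pi_1(X) \to \GL(W)$ is irreducible with $W$ of dimension $m \le 3$, then the algebraic monodromy group $\mathbf{G} \subseteq \GL(W)$ acts irreducibly on $W$, so its unipotent radical is trivial and $\mathbf{G}$ is reductive. The connected center $Z(\mathbf{G})^0$ commutes with the derived subgroup $\mathbf{G}^{\mathrm{der}}$, hence (by Schur) acts by scalars on every $\mathbf{G}^{\mathrm{der}}$-isotypic component; irreducibility of $W$ under $\mathbf{G}$ then forces $W$ to be $\mathbf{G}^{\mathrm{der}}$-isotypic, i.e., isomorphic to $V^{\oplus k}$ as a $\mathbf{G}^{\mathrm{der}}$-module for some irreducible $V$. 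Since no nontrivial torus acts irreducibly in dimension $\ge 2$, the case $\mathbf{G}^{\mathrm{der}} = 1$ only occurs when $m = 1$; for $m \in \{2,3\}$ we must have $k = 1$ and $W = V$ an irreducible $\mathbf{G}^{\mathrm{der}}$-module.

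Next I would go through the low-dimensional cases. For $m = 1$, $\mathbf{G}^{\mathrm{der}}$ is trivial, which is vacuously of type A. For $m = 2$, $\mathbf{G}^{\mathrm{der}}$ is semisimple admitting a faithful $2$-dimensional irreducible representation, so by the classification it is isogenous to $\SL_2$ and of type $A_1$. For $m = 3$, $\mathbf{G}^{\mathrm{der}}$ is semisimple with an irreducible $3$-dimensional representation; since $3$ is prime, a tensor product decomposition of the representation rules out nontrivial direct product factorizations of $\mathbf{G}^{\mathrm{der}}$, so $\mathbf{G}^{\mathrm{der}}$ is simple, and the classification of simple groups with a $3$-dimensional irreducible representation yields either $\SL_2$ (acting via $\mathrm{Sym}^2$ of the standard representation, type $A_1$) or a quotient of $\SL_3$ (standard or dual, type $A_2$).

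In every case $\mathbf{G}$ is of type A in the sense of $\mathsection$\ref{NT}, so $\sigma_\lambda$ is a type A irreducible subrepresentation of $\rho_\lambda \otimes \overline{\Q}_\ell$ and Theorem~\ref{thmA} yields residual irreducibility for almost all $\lambda$. I do not anticipate a genuine obstacle: the entire content is a structural observation about low-dimensional irreducible representations of connected reductive groups, combined with a direct appeal to the already-established Theorem~\ref{thmA}. The one point requiring care is the case $m = 3$ with $k > 1$ ruled out above, where one must be slightly attentive to the possibility of isotypic rather than isotypic-and-irreducible decompositions under $\mathbf{G}^{\mathrm{der}}$.
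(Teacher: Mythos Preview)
Your approach is exactly the paper's: show that any irreducible $\sigma_\lambda$ of dimension $\le 3$ is automatically of type~A, then invoke Theorem~\ref{thmA}. The paper's own proof is a single line: ``Since $\dim W_\lambda\leq 3$ is of type A (including the possibility that $\bG_{W_\lambda}^\circ$ is a torus), we are done by Theorem~\ref{thmA}.''

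There is, however, a genuine gap in your detailed justification. Your isotypic argument and the claim ``$\bG^{\der}=1$ only occurs when $m=1$'' both tacitly assume $\bG$ is connected. If $\bG$ is disconnected, the component group can permute distinct $\bG^{\der}$-isotypic pieces, and $\bG^\circ$ can be a torus while $\bG$ still acts irreducibly on a $2$- or $3$-dimensional $W$ (e.g., the normalizer of a maximal torus in $\GL_2$). The paper's parenthetical ``including the possibility that $\bG_{W_\lambda}^\circ$ is a torus'' is precisely a nod to this case, which your argument would erroneously exclude.

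The fix is easy and in fact simpler than what you wrote: drop the irreducibility analysis of $\bG^{\der}$ on $W$ entirely. Since $\bG$ is reductive, $\bG^{\der}$ is connected semisimple and embeds faithfully in $\SL_m$ (no nontrivial characters), so $\rank\bG^{\der}\le m-1\le 2$. The non-type-A semisimple root systems of rank $\le 2$ are $B_2$, $G_2$, and $A_1\times A_1$; the corresponding groups have no faithful representation of dimension $\le 3$ (minimal dimensions $4$, $7$, and $4$ respectively). Hence $\bG^{\der}$ is trivial, $A_1$, or $A_2$, and $\bG$ is of type~A regardless of connectedness.
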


It is conjectured that for $E$-rational compatible systems 
$\{\rho_\lambda\}_\lambda$ of geometric origin 
(e.g., arising from higher direct image sheaves $R^w f_*\Q_\ell$ on $X$, where $f:Y\to X$ is a smooth projective morphism), 
the conclusion of Theorem \ref{thmA}
holds regardless of the type A condition on the subrepresentation 
$\sigma_\lambda:\pi_1(X)\to\GL_m(\overline\Q_\ell)=\GL(W_\lambda)$,
see \cite[$\mathsection11$]{Se94} for conjectures on maximal motives; \cite{La95}, \cite[Proposition 5.3.2]{BLGGT14}, and \cite{PSW18} for some density one results; \cite[Theorem 1.2]{HL20} for a criterion of Galois maximality;
\cite{HL13} and \cite{Ca15,Ca19,CM20} for adelic openness results.
The key feature of the type A condition is that if $\bH$ is a subgroup of a connected semisimple type A group $\bG$
such that $\bH$ and $\bG$ have the same semisimple rank,
then $\bH$ is equal to $\bG$. By a specialization result of Serre, the proof of Theorem \ref{thmA} 
can be reduced to the number field case, i.e., $X=\mathrm{Spec}(K)$.
As a result, $\pi_1(X)$ will only show up again in the proof of Theorem \ref{thmA}
and the rest of the article concerns Galois representations of $K$.
A key step of Theorem \ref{thmA} is to prove that for all sufficiently large $\ell$,
the finite Galois image of the semisimplified reduction $\bar\sigma_\lambda^{\ss}$ 
is approximated by a connected reductive subgroup $\uG_{W_\lambda}$ 
(called the \emph{algebraic envelope}\footnote{The method of algebraic envelopes $\uG_\ell$ was pioneered by J.-P. Serre in \cite{Se86} 
to study Galois representations for abelian varieties. The semisimple part of $\uG_\ell$ was constructed by Nori's theory \cite{No87}.} 
of $\sigma_\lambda$) 
of $\GL_{m,\overline\F_\ell}$ 
with many nice properties (see Theorems \ref{ae} and \ref{general}, comparing with Theorem \ref{Serre0}(ii)).
The big monodromy results in this article 
are motivated by some group theoretic results in \cite{HL20} and generalize the works \cite{Hu15} and \cite{HL16} in three aspects:
\begin{itemize}
\item big monodromy results are established on 
subrepresentations $\sigma_\lambda$ of $\rho_\lambda\otimes\overline{\Q}_\ell$; 
\item the algebraic monodromy group of $\rho_\lambda$ need not be connected\footnote{This condition is independent of $\lambda$.};
\item the system \eqref{introsys} needs only mild conditions on the local Galois representations of $\rho_\lambda$. 
\end{itemize}
The last aspect enables us to construct algebraic envelopes for some automorphic compatible systems (see $\mathsection$\ref{ic}).

It is conjectured that Galois representations (conjecturally \cite{Cl90}) associated to an algebraic cuspidal automorphic
representation $\pi$ of $\GL_n(\A_K)$ over a number field $K$ are 
absolutely irreducible (see \cite{Ra08}).
For classical modular forms and Hilbert modular forms, the results are proved by Ribet \cite{Ri77} and Taylor \cite{Ta95} (see also \cite{Di05}).
When $n=3$, $K$ is CM, and $\pi$ is polarized, the result is proved in \cite[Theorem 2.2.1]{BR92}. 
For some particular cases when $n=4$, see \cite{Ra13},\cite{DZ20},\cite{We22}.
Let $F$ be a totally real or CM number field. Due to the work of 
many mathematicians\footnote{See for example, the paragraphs before and after \cite[Theorem 2.1.1]{BLGGT14}.}, 
attached to a \textit{regular algebraic, polarized, cuspidal} 
automorphic representation $\pi$ of $\GL_n(\A_F)$ is a \emph{strictly compatible system} ($\mathsection\ref{wcs}$) of $F$ 
defined over a CM field $E$ containing all embeddings of $F$ in $\overline\Q$.
Enlarging $E$ to a bigger CM field if necessary, the strictly compatible system can be written as a 
semisimple $E$-rational Serre compatible system 
\begin{align}\label{introsys2}
\begin{split}
\{\rho_{\pi,\lambda}: \Gal_F\to \GL_n(E_\lambda)\}_\lambda. 
\end{split}
\end{align}

When $n\leq 5$ and $F$ is totally real (resp. $n\leq 6$ and $F$ is CM), it is proved that 
$\rho_{\pi,\lambda}$ is absolutely irreducible for $\lambda$ lying above a set of rational primes $\ell$ of
Dirichlet density one  \cite[Theorem 3.2]{CG13} (resp. \cite[Theorem 2]{Xi19}). 
For general $n$, the absolute irreducibility of $\rho_{\pi,\lambda}$
is proved for $\lambda$ lying above a set of rational primes $\ell$ of 
positive Dirichlet density  \cite[Theorem D]{PT15} and for 
$\lambda$ lying above a set of rational primes $\ell$ of Dirichlet density one 
if $\pi$ is extremely regular \cite[Theorem D]{BLGGT14}. 
Inspired by some recent works (\cite{BLGGT14}, \cite{CG13}, \cite{PT15}, \cite{Xi19} etc.) on the conjecture,
we obtain new cases of the irreducibility conjecture for \eqref{introsys2} when $n\leq 6$,
by combining various potential automorphy and Galois theoretic results, including our big monodromy results for subrepresentations.

\begin{thm}\label{thmB}
Let $F$ be a totally real or CM field, $n\leq 6$, and $\{\rho_{\pi,\lambda}\}_\lambda$ 
the semisimple Serre compatible system \eqref{introsys2} 
attached to a regular algebraic, polarized, cuspidal automorphic representation $\pi$ of $\GL_n(\A_F)$. 
The following assertions hold.
\begin{enumerate}[(i)]
\item The representation $\rho_{\pi,\lambda}\otimes\overline\Q_\ell$ is irreducible  for almost all $\lambda$.
\item If in addition $F=\Q$, the representation $\rho_{\pi,\lambda}\otimes\overline\Q_\ell$ is residually irreducible for almost all $\lambda$.
\end{enumerate}
\end{thm}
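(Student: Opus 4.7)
The plan is to handle (i) and (ii) separately, combining potential automorphy \cite{BLGGT14, PT15} with the density-one irreducibility results of Calegari--Geraghty \cite{CG13} (for $n \leq 5$, totally real) and Xia \cite{Xi19} (for $n \leq 6$, CM), together with the big monodromy machinery developed earlier in the paper (Theorem \ref{thmA}, Corollary \ref{corA}).

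For part (i), suppose toward contradiction that $\rho_{\pi,\lambda} \otimes \overline\Q_\ell$ is reducible for infinitely many $\lambda$. By the density-one results the exceptional set has Dirichlet density zero. Fix an irreducible decomposition $\rho_{\pi,\lambda} \otimes \overline\Q_\ell = \bigoplus_i \sigma_{\lambda,i}$. Regularity of $\pi$ forces the Hodge--Tate weights of $\rho_{\pi,\lambda}$ to be distinct, so each $\sigma_{\lambda,i}$ has Hodge--Tate weights drawn from a fixed finite multiset; after passing to a subsequence one may fix the partition $(m_1,\dots,m_k)$ of $n$ and the Hodge--Tate profile of each constituent. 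Using the polarization on $\pi$ to group mutually dual constituents, I would apply potential automorphy to realize each $\sigma_{\lambda,i}$ (after a finite CM base change) as coming from a regular algebraic polarized cuspidal automorphic representation on $\GL_{m_i}$. Iterating the argument on these lower-dimensional compatible systems, with base cases $m_i \leq 3$ handled by the classical results of Ribet, Taylor, and Blasius--Rogawski, yields the desired contradiction.

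For part (ii), assume (i), so $\rho_{\pi,\lambda} \otimes \overline\Q_\ell$ is its own irreducible subrepresentation for almost all $\lambda$; the natural tool is Theorem \ref{thmA}. The bounded tame inertia weights and potential semistability hypotheses follow from standard $p$-adic Hodge theory for automorphic Galois representations (via work of Caraiani, Barnet-Lamb--Geraghty--Harris--Taylor, Chenevier--Harris, etc.), with the constants $N_1, N_2$ determined by the Hodge--Tate weights of $\pi$. The remaining hypothesis is that the algebraic monodromy group be of type A. For $n \leq 3$ this is automatic (Corollary \ref{corA}). For $n \in \{4,5,6\}$ and $F = \Q$, one must rule out the non-type-A candidates for an irreducible polarized representation: $\Sp_4 \subset \GL_4$, $\SO_5 \hookrightarrow \GL_5$, $\Sp_6 \subset \GL_6$, and the analogous orthogonal subgroups. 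The constraint $F = \Q$ should be used here to pin down the Frobenius eigenvalue structure and exclude certain twists; alternatively, I would invoke the generalized big monodromy results (Theorems \ref{ae}, \ref{general}) mentioned in the introduction, which extend residual irreducibility beyond the type A setting.

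The hardest step, I expect, is controlling the algebraic monodromy group in dimensions $4, 5, 6$ for part (ii) --- either by a careful classical-group analysis specific to $F = \Q$, or by extending the residual irreducibility machinery to non-type-A cases. A subtler point in part (i) is to make the potential-automorphy recursion rigorous so that the exceptional set is genuinely finite rather than merely of density zero, since potential automorphy only produces compatible systems after a possibly $\lambda$-dependent CM base change, and one must keep uniform control as $\lambda$ varies.
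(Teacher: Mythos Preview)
Your outline for (i) is in the right neighborhood but misses the actual mechanism. To apply potential automorphy (\cite[Theorem C]{BLGGT14}) to a constituent $\sigma_{\lambda,i}$, one needs the residual restriction $\bar\sigma_{\lambda,i}^{\ss}|_{\Gal_{F(\zeta_\ell)}}$ to be irreducible---this is precisely where the big monodromy result (Theorem \ref{general}(v)) enters, and it only delivers this when the constituent is of type A. The paper therefore does not run a blind recursion on constituents; instead it first uses Xia's reduction (Propositions \ref{X1}--\ref{X3}) to arrange that some $\rho_{\pi,\lambda_0}$ is Lie-irreducible, fixes the formal bi-character of $\bG_\lambda^{\der}$ via Theorem \ref{Hui1}, and then performs a case-by-case analysis on the seventeen possible irreducible $(\bG^{\der},V)$ with $\dim V\leq 6$ (Table A). In most cases the formal character alone rules out reducibility; in the remaining ones (e.g.\ $\Sp_4$, $\SO_5$, $\Sp_6$, $\SL_2\times\SL_2$) one checks the constituents would be type A, applies Theorem \ref{general}(v) to get the residual hypothesis, and then invokes potential automorphy plus Proposition \ref{link}. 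Your recursion-on-constituents sketch does not explain how the residual-irreducibility hypothesis is met, nor how to keep the base change uniform---the paper's case analysis sidesteps both issues.

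For (ii) there is a genuine gap. Theorems \ref{ae} and \ref{general} do \emph{not} extend residual irreducibility beyond type A; the type A condition is essential (it is exactly what forces $\uG'_{W_\lambda}=\uH'_{W_\lambda}$ in Corollary \ref{mc}). So for the non-type-A cases $(4B_2)$, $(5B_2)$, $(6C_3)$ over $F=\Q$, a different input is required. The paper uses Serre's modularity conjecture \cite{KW09a,KW09b}: if $\overline V_\lambda^{\ss}$ were reducible one extracts a two-dimensional odd irreducible piece $\bar\phi_\lambda$ (oddness coming from the polarization or, in the $\SO_5$ case, from Taylor's sign result \cite{Ta12} after lifting through $\GL_2\times\GL_2\to\GO_4^\circ$), shows via Fontaine--Laffaille theory and Serre's weight/level recipe that infinitely many $\bar\phi_\lambda$ come from a single eigenform $f$, and then derives a rank contradiction by comparing the formal bi-characters of $\bG_\lambda^{\der}$ and the algebraic envelope of the augmented system $\rho_{\pi,\lambda}\oplus\psi_{f,\lambda}$. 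None of this apparatus appears in your proposal, and without it the non-type-A cases of (ii) remain open.
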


Denote by $\bG_\lambda$ the algebraic monodromy group of $\rho_{\pi,\lambda}$ 
and by $\bG_\lambda^{\der}$ the derived group of $\bG_\lambda^\circ$.
We illustrate the strategy of proving Theorem \ref{thmB}(i) by a simple case.
Suppose $F$ is totally real, $n=4$, and $\bG_{\lambda_0}^{\der}=\Sp_4\subset\GL_4$ for some $\lambda_0$.
Since the formal character of $\bG_\lambda^{\der}$ is independent of $\lambda$ (Theorem \ref{Hui1}),
$\bG_\lambda^{\der}\cong\SL_2\times\SL_2\subset\GL_4$ when $\rho_{\pi,\lambda}\otimes\overline\Q_\ell=W_\lambda\oplus W_\lambda'$ 
is reducible. In this case and if $\ell$ is sufficiently large, $\Gal_{F(\zeta_\ell)}$ 
is residually irreducible (Theorem \ref{general}(v)) on the two dimensional factors $W_\lambda$ and $W_\lambda'$,
which are also polarized and odd (Proposition \ref{esd2}).
By a potential automorphy result \cite[Theorem C]{BLGGT14}, the factors $W_\lambda$ and $W_\lambda'$ 
are automorphic after a totally real extension $F'/F$. This implies that $\{\rho_{\pi,\lambda}\}_\lambda$
after field extension $F'/F$, is a sum of two two-dimensional compatible systems,
contradicting the Lie-irreducibility of $\rho_{\pi,\lambda_0}\otimes\overline\Q_{\ell_0}$. 
Therefore,  $\rho_{\pi,\lambda}\otimes\overline\Q_\ell$ is irreducible for $\ell\gg0$.
The proof of Theorem \ref{thmB}(ii) needs an extra input from
the strong form of Serre's modularity conjecture \cite{Se87},\cite{Ed92},\cite{KW09a,KW09b}.
For some four-dimensional geometric compatible systems of $\Q$, big residual image results
have been obtained via this technique \cite{DV08,DV11}. 
We expect that the residual irreducibility of $\rho_{\pi,\lambda}$ for almost all $\lambda$
can be proven in a similar way, if some strong form of Serre's conjecture holds for the field $F$
(see e.g., \cite{Fi99} and \cite{BDJ10}).

%Fontaine-Mazur?

The structure of this article is as follows.
In section $2$, we present the terminologies (e.g., algebraic envelopes, formal bi-character, etc.) 
and previous results needed for proving the main theorems.
The big monodromy results (e.g., Theorem \ref{thmA} and Theorem \ref{general})
are then established in section $3$ by adapting the group theoretic methods employed in \cite{Hu15},\cite{HL16,HL20}
to subrepresentations of the compatible system \eqref{introsys}. 
We prove Theorem \ref{thmB} in section $4$ via a reduction result of \cite{Xi19} and
a case-by-case analysis on the possibilities of the formal character of $\bG_{\lambda}^{\der}$ in the Lie-irreducible situation. 
We also deduce some $\lambda$-independence results on the faithful representations of $\bG_\lambda^{\circ}$ 
(Propositions \ref{lambdaindep} and \ref{lambdaindepCM}).

\section{Preliminaries}\label{s2}
\subsection{Notation and terminologies}\label{NT} 
We adopt the following notation and terminologies throughout the article.
\begin{itemize}
\item $K$ (resp. $E$) is a number field and $\Sigma_K$ (resp. $\Sigma_E$) denotes its set of finite places.
\item Fix an algebraic closure $\overline K$ of $K$ and let $\Gal_K:=\Gal(\overline K/K)$ be the absolute Galois group.
\item For $\lambda\in\Sigma_E$, set $\ell$ to be the residue characteristic of $\lambda$.
\item If $F$ is a field, denote by $\overline F$ an algebraic closure of $F$.
\item Let $R\to S$ be a morphism of commutative unital rings
and $X$ (or $X_R$) an $R$-scheme. Denote by $X_S$ the 
base change $X\times_R S$.
\item Let $\bG$ be a linear algebraic group defined over a field $F$. 
Denote by $\bG^\circ$ the identity component of $\bG$, by $\bG^{\der}$ the 
derived group $[\bG^\circ,\bG^\circ]$, and by $\bU$ the unipotent radical of $\bG$.
The group $\bG$ is \emph{reductive} if $\bU$ is trivial \cite[$\mathsection11$]{Bo91}.
\item The \emph{rank} of $\bG$, denoted by $\rank\bG$, means 
the rank of $\bG_{\overline F}$ in usual sense. The \emph{semisimple rank} of $\bG$ means the rank of $(\bG/\bU)^{\der}$.
\item The group $\bG/F$ is said to be of \emph{type A}
if the root system of $(\bG/\bU)^{\der}_{\overline F}$ is a (possibly empty) product of type A irreducible root subsystems. 
\item Let $\Gamma$ be a compact topological group, $L$ an algebraic field extension of $\Q_\ell$, $V$ a finite-dimensional $L$-vector
space equipped with the $\ell$-adic topology, and
 $\rho:\Gamma\to \GL(V)$ a continuous representation.
Denote by $\mathcal{O}$ the ring of integers of $L$ 
and $\m$ its maximal ideal.
Since there is an $\mathcal{O}$-lattice of $V$ stabilized by the compact $\Gamma$, 
we obtain by (mod $\m$) reduction and semisimplification
an $\mathcal{O}/\m$-representation denoted by $\bar\rho^{\ss}$ or $\overline V^{\ss}$,
which is independent of the choice of the $\mathcal{O}$-lattice by the Brauer-Nesbitt Theorem 
\cite[Theorem 30.16]{CR88}. We call $\bar\rho^{\ss}$ (or $\overline V^{\ss}$)
the \emph{semisimplified residual representation} or \emph{semisimplified reduction} of $\rho$ (or $V$).
\end{itemize}

\subsection{Some group theoretic results}\label{algp}
Let $\bG/F$ be a linear algebraic group. 

\begin{prop}\label{prepare}
Suppose $F$ is algebraically closed and $\bT$ is a maximal torus of $\bG$.
Then the intersection $\bT^{\ss}:=\bT\cap \bG^{\der}$ is also a maximal torus of $\bG^{\der}$.
\end{prop}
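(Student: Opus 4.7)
The plan is to reduce first to connected $\bG$, then to the reductive case via the quotient by the unipotent radical. Since every maximal torus of $\bG$ is already contained in $\bG^\circ$ and $\bG^{\der} = [\bG^\circ,\bG^\circ]$ depends only on $\bG^\circ$, I may assume $\bG$ is connected. Let $\bar\bG := \bG/\bU$ and let $\pi \colon \bG \twoheadrightarrow \bar\bG$ be the quotient map. Because $\bT \cap \bU$ is trivial, $\pi|_\bT$ is an isomorphism onto a maximal torus $\bar\bT := \pi(\bT)$ of the reductive group $\bar\bG$, and derived groups are preserved by surjections so $\pi(\bG^{\der}) = \bar\bG^{\der}$. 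The reductive case of the proposition, namely that $\bar\bH := \bar\bT \cap \bar\bG^{\der}$ is a maximal torus of $\bar\bG^{\der}$, is standard: one exploits the almost-direct product $\bar\bG = Z(\bar\bG)^\circ \cdot \bar\bG^{\der}$ together with the fact that $Z(\bar\bG)^\circ \cap \bar\bG^{\der}$ is a finite central subgroup of $\bar\bG^{\der}$ that lies inside every maximal torus of the semisimple group $\bar\bG^{\der}$.

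Set $\bH := \bT \cap \bG^{\der}$. My first step is to show $\bH^\circ$ is a maximal torus of $\bG^{\der}$. Any maximal torus $\bS$ of $\bG^{\der}$ sits inside some maximal torus of $\bG$, which by the conjugacy theorem equals $g \bT g^{-1}$ for some $g \in \bG$, so $g^{-1} \bS g \subseteq \bT$; normality of $\bG^{\der}$ in $\bG$ also places $g^{-1} \bS g$ inside $\bG^{\der}$, hence inside $\bH$. Since $\bH \subseteq \bT$ is closed and hence diagonalizable, its identity component $\bH^\circ$ is a torus containing the maximal torus $g^{-1} \bS g$ of $\bG^{\der}$, which forces $\bH^\circ = g^{-1} \bS g$.

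The remaining, and most delicate, point---which I expect to be the main obstacle---is the equality $\bH = \bH^\circ$, i.e.\ the connectedness of $\bH$. My plan is a dimension-and-injectivity argument via $\pi$. First, one has the nested inclusions $\pi(\bH^\circ) \subseteq \pi(\bH) \subseteq \bar\bT \cap \bar\bG^{\der} = \bar\bH$. Next, because $\pi|_\bT$ (and hence $\pi|_\bH$) is injective,
\[
\dim \pi(\bH^\circ) \;=\; \dim \bH^\circ \;=\; \rank \bG^{\der} \;=\; \rank \bar\bG^{\der} \;=\; \dim \bar\bH,
\]
so the connected torus $\pi(\bH^\circ)$ exhausts the connected torus $\bar\bH$. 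The nested inclusions then collapse to $\pi(\bH) = \pi(\bH^\circ)$, and injectivity of $\pi|_\bH$ finally gives $\bH = \bH^\circ$, which is a maximal torus of $\bG^{\der}$ as desired.
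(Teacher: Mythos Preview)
Your proof is correct. Both your argument and the paper's share the same opening move: use conjugacy of maximal tori in $\bG$ to show that $\bT^{\ss}=\bT\cap\bG^{\der}$ contains a maximal torus $\bT'$ of $\bG^{\der}$ (equivalently, that $(\bT^{\ss})^\circ$ is such a torus). The difference lies in how connectedness is established. You prove the reductive case separately via the almost-direct product decomposition, and then for general $\bG$ push everything through $\pi\colon\bG\to\bG/\bU$, using injectivity of $\pi|_\bT$ and the rank equality $\rank\bG^{\der}=\rank\bar\bG^{\der}$ to force $\bH=\bH^\circ$. The paper instead, after reducing to reductive $\bG$ (asserted in one sentence, less carefully than you spell it out), observes that $\bT^{\ss}\subset\bT$ centralizes $\bT'$ and lies in the semisimple group $\bG^{\der}$, hence $\bT^{\ss}\subset C_{\bG^{\der}}(\bT')=\bT'$. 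The paper's centralizer argument is a quicker one-liner; your route has the virtue of making the passage to the reductive quotient fully explicit, which the paper leaves to the reader.
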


\begin{proof}
We may assume $\bG$ is connected. Since 
the morphism $\bG\to\bG/\bU$ 
when restricted to $\bG^{\der}$ is surjective onto $(\bG/\bU)^{\der}$
and when restricted to $\bT$ is injective, 
we may also assume $\bG$ is reductive.
Every maximal torus of $\bG^{\der}$ is contained in a maximal torus of $\bG$ which is conjugate to $\bT$,
so $\bT^{\ss}$ contains a maximal torus $\bT'$ of the semisimple $\bG^{\der}$.
The centralizer of $\bT'$ in $\bG^{\der}$ is $\bT'$ itself, so $\bT^{\ss}$ is contained in $\bT'$, and
we are done.
\end{proof}

\begin{prop}\label{lem1}
Suppose $\bG\subset\GL_{n,F}$ is a subgroup.
Let $\Gamma$ be a subgroup of $\bG(F)\subset\GL_n(F)$ 
such that $\Gamma$ and $\bG$ are semisimple on the ambient space $F^n$ 
and have the same commutants: $\End_\Gamma(F^n)=\End_{\bG}(F^n)$.
The following assertions hold.
\begin{enumerate}[(i)]
\item If $F^n=\oplus_i W_i$ is an irreducible decomposition of $\bG$ on $F^n$, then 
this is also an irreducible decomposition of $\Gamma$. 
Moreover, $W_i\cong W_j$ as representations of $\bG$ iff $W_i\cong W_j$ as representations of $\Gamma$.
\item There is a bijective correspondence between subrepresentations of 
$\bG$ on $F^n$ and subrepresentations of $\Gamma$ on $F^n$,
given by restriction.
\end{enumerate}
\end{prop}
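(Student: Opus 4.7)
The plan is to reduce both parts to a single observation: the $F$-subalgebras of $\End_F(V)$ (with $V = F^n$) generated by $\Gamma$ and by $\bG$ coincide. Once this is established, (i) and (ii) follow formally.

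First I would introduce the common commutant $A := \End_\Gamma(V) = \End_\bG(V) \subset \End_F(V)$ together with the two $F$-subalgebras $A_\Gamma$ and $A_\bG$ of $\End_F(V)$ generated by $\Gamma$ and by (the image of) $\bG$ respectively. Since $V$ is semisimple as a $\Gamma$-module (resp.\ as a $\bG$-module), it is semisimple as a module over $A_\Gamma$ (resp.\ $A_\bG$), so both algebras are semisimple subalgebras of $\End_F(V)$. By construction the commutant of $A_\Gamma$, and that of $A_\bG$, in $\End_F(V)$ equals $A$. The double centralizer theorem for semisimple subalgebras of $\End_F(V)$ then yields
\[
A_\Gamma \;=\; \End_A(V) \;=\; A_\bG.
\]

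With $A_\Gamma = A_\bG$ in hand, (ii) is immediate: an $F$-subspace of $V$ is $\Gamma$-stable iff it is an $A_\Gamma$-submodule iff it is an $A_\bG$-submodule iff it is $\bG$-stable. For (i), each $W_i$ is $\Gamma$-stable by (ii), and is $\Gamma$-irreducible because any nonzero proper $\Gamma$-stable subspace of $W_i$ would, again by (ii), be $\bG$-stable, contradicting the $\bG$-irreducibility of $W_i$. Finally, a $\Gamma$-equivariant $F$-linear isomorphism $W_i \to W_j$ is the same as an $A_\Gamma$-linear isomorphism, hence the same as an $A_\bG$-linear one, hence the same as a $\bG$-equivariant isomorphism; so the two equivalence relations on the index set agree.

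The main point requiring care is the correct interpretation of ``the $F$-subalgebra $A_\bG$ generated by $\bG$'' when $F$ is not algebraically closed and $\bG(F)$ need not be Zariski dense in $\bG$. I would define $A_\bG$ as the $F$-linear span of the image of the scheme morphism $\bG \to \End(V)$, equivalently as the Galois-invariant descent of the $\overline F$-subalgebra of $\End_{\overline F}(V \otimes_F \overline F)$ spanned by $\bG(\overline F)$. With this convention $\End_{A_\bG}(V)$ agrees with the algebraic-representation commutant $\End_\bG(V) = A$, and the semisimplicity of $V$ as a $\bG$-module carries over to $A_\bG$, so the double centralizer step goes through as stated. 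The rest is a textbook application of Jacobson density, so I expect this bookkeeping to be the only real obstacle.
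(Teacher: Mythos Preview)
Your proof is correct and takes a genuinely different route from the paper. The paper argues directly from the commutant hypothesis without invoking the double centralizer theorem: for (i) it observes that the equality of commutants restricts to each summand to give $\End_\Gamma(W_i)=\End_{\bG}(W_i)$, which is a division algebra by Schur, and combined with the semisimplicity of $\Gamma$ on $W_i$ this forces $\Gamma$-irreducibility; the isomorphism statement is handled by extending a $\Gamma$-isomorphism $W_i\to W_j$ (via the projections and inclusions) to an element of $\End_\Gamma(F^n)=\End_{\bG}(F^n)$. For (ii) it notes that a $\Gamma$-equivariant projector onto any $\Gamma$-subrepresentation lies in the common commutant and is therefore $\bG$-equivariant, so its image is $\bG$-stable.

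Your approach proves the stronger fact $A_\Gamma=A_{\bG}$ in one stroke via Jacobson density, after which (i) and (ii) are formal. This is more conceptual and packages the argument neatly; the paper's argument is more elementary in that it only uses Schur's lemma and the existence of equivariant projectors, avoiding any appeal to density or the double centralizer theorem. Your care about the meaning of $A_{\bG}$ over a non-algebraically-closed $F$ is well placed and your resolution (take the $F$-span of the image of $\bG\to\End(V)$, equivalently Galois-descend from $\overline F$) is the correct one; the paper does not discuss this point.
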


\begin{proof}
(i) The condition on commutants implies $\End_\Gamma(W_i)=\End_{\bG}(W_i)$.
Since the right-hand side is a division algebra, so is the left-hand-side.
Together with the fact that $\Gamma$ is semisimple on $W_i$, it follows that $\Gamma$ is irreducible on $W_i$.
If $\phi:W_i\to W_j$ is a $\Gamma$-isomorphism, then one finds a $\Gamma$-homomorphism 
$$\Phi: F^n \twoheadrightarrow W_i\stackrel{\phi}{\rightarrow}W_j\hookrightarrow F^n$$
which is an element of $\End_\Gamma(F^n)$. Since $\End_\Gamma(F^n)=\End_{\bG}(F^n)$,
the map $\Phi$ can be extended to a $\bG$-homomorphism which implies that 
$W_i\cong W_j$ as $\bG$-representations. 

(ii) Injectivity follows from assertion (i). For surjectivity, suppose $V$ is a $\Gamma$-subrepresentation of $F^n$. 
There exists projection map to $V$ in $\End_\Gamma(F^n) = \End_{\bG}(F^n)$,
so the image $V$ of this projector is a $\bG$-subrepresentation.
\end{proof}

\subsection{Formal character and bi-character}
Following \cite[$\mathsection2.6.1$]{HL20}, we define formal characters 
and formal bi-characters 
of subgroups of $\GL_n$ which are not necessarily reductive. 
Let $F$ be a field and $\bG\subset\GL_{n,F}$ a connected subgroup.
Suppose first $F$ is algebraically closed.
Let $\bT$ be a maximal torus of $\bG$. 
The subtorus
$$\bT\subset\GL_{n,F}$$ 
taken up to $\GL_{n,F}$-conjugation, is called the \emph{formal character} of $\bG$.
Let $\bT'$ be a maximal torus of $\bG^{\der}$ contained in $\bT$.
Then $\bT'=\bT^{\ss}$ by Proposition \ref{prepare}.
The chain of subtori 
$$\bT'=\bT^{\ss}\subset\bT\subset\GL_{n,F}$$
taken up to $\GL_{n,F}$-conjugation, 
is called the \emph{formal bi-character} of $\bG$. 
Both definitions are independent of the 
choice of the maximal torus $\bT$.

In general, the formal character (resp. bi-character)  of a
 subgroup $\bG$ of $\GL_{n,F}$ is defined to be the formal character (resp.\ bi-character) of $\bG^\circ$.
For a general field $F$, the formal character (resp. bi-character) of a subgroup $\bG\subset\GL_{n,F}$ 
is defined to be the formal character (resp. bi-character) of 
$\bG_{\overline F}$\footnote{The notion of formal character (resp. formal bi-character) of a reductive subgroup $\bG\subset\GL_{n,F}$ 
we adopt in this article is 
defined in an absolute sense (i.e., by passing to $\overline F$), 
which is different from the one defined in \cite[Definitions 2.2, 2.3]{Hu18}
as a $F$-subtorus $\bT\subset\GL_{n,F}$ (resp. a chain of $F$-subtori $\bT^{\ss}\subset\bT\subset\GL_{n,F}$) 
such that $\bT$ is a maximal torus of $\bG$ and $\bT^{\ss}$ is 
a maximal torus of $\bG^{\der}$.}.
Note that the formal bi-characters of $\bG\subset\GL_{n,F}$ and its semisimplification
are equal.

Let $I$ be a non-empty set and $\{\bG_i\subset\GL_{n_i,F_i}\}_i$ be a family of subgroups 
 indexed by $I$ with no restriction on the dimensions $n_i$ and fields $F_i$. 
The family is said to have \emph{the same formal character} (resp. \emph{bi-character})
if $n_i$ are all equal to some integer $n$ and there is a diagonal $\Z$-subtorus $\bT_{\Z}\subset\mathbb{G}_{m,\Z}^n$ 
(resp. chain of $\Z$-subtori $\bT'_\Z\subset\bT_\Z\subset\mathbb{G}_{m,\Z}^n$)
such that the base change 
$$\bT_{\overline F_i}\subset\GL_{n,\overline F_i} \hspace{.05in}
(\mathrm{resp.}\hspace{.05in} \bT'_{\overline F_i}\subset\bT_{\overline F_i}\subset\GL_{n,\overline F_i})$$ 
is the formal character (resp. bi-character) of $\bG_i\subset\GL_{n,F_i}$ for all $i$.
This defines an equivalence relation. For example,
the equivalence classes of formal characters in $\GL_n$ correspond to $\Z$-subtori of $\mathbb{G}_{m,\Z}^n$. 
The family $\{\bG_i\subset\GL_{n_i,F_i}\}_i$ is said to have \emph{bounded formal \hbox{(bi-)characters}}
if their formal \hbox{(bi-)characters} fall into finitely many equivalence classes. 
These definitions allow us to compare formal \hbox{(bi-)characters} defined over different fields;
we refer to \cite[$\mathsection2$]{Hu15},\cite[$\mathsection2.6$]{HL20}
for equivalent definitions. 

\subsection{Compatible system of Galois representations}\label{csgr}
Let $K$ and $E$ be number fields.
Denote by $S_\ell\subset\Sigma_K$ 
the subset consisting of finite places $v$ above 
a rational prime $\ell$ and by $\F_v$ the residue field corresponding to $v\in\Sigma_K$. 
For $\lambda\in\Sigma_E$, 
denote by $E_\lambda$ the $\lambda$-adic completion of $E$ and by $\ell$  
the residue characteristic of $\lambda$. 

\subsubsection{$E$-rationality and weight}\label{rat}
 A continuous $\ell$-adic representation 
\begin{equation}\label{Mrat}
\rho_\lambda:\Gal_K\to \GL_n(\overline E_\lambda)
\end{equation}
 of $K$ is said to be \emph{$E$-rational}
if there exists a finite subset $S\subset\Sigma_K$ such that the following properties hold.
\begin{enumerate}[(i)]
\item The representation $\rho_\lambda$ is unramified outside $S$.
\item If $v\in\Sigma_K\backslash S$, the coefficients of the polynomial 
$P_{v,\rho_\lambda}(T):=\mathrm{det}(\mathrm{Id}_n-\rho_\lambda(Frob_v)T)$ of 
the Frobenius class of $v$ belong to $E$.
\end{enumerate}

\begin{remark}
Condition (i) and Chebotarev's density theorem imply
that $\{\rho_\lambda(Frob_v):~v\in\Sigma_K\backslash S\}$ is a dense 
subset of $\rho_\lambda(\Gal_K)$.
\end{remark}

If $\rho_\lambda$ is $E$-rational, we say that 
$\rho_\lambda$ is \emph{rational} if $E=\Q$ and $\rho_\lambda$ is 
\emph{pure of weight $w\in\Z$} if for each $v\in\Sigma_K\backslash S$, 
all the roots of  $P_{v,\rho_\lambda}(T)\in E[T]$
in $\overline E$  have absolute value $|\F_v|^{w/2}$ under every embedding of $\overline E$ in $\C$.

If $\{\rho_{\lambda_i}:\Gal_K\to \GL_{n_i}(\overline E_{\lambda_i})\}_{i\in I}$
is a family of $n_i$-dimensional $E$-rational $\ell$-adic representations
of $K$ index by $I$, we say that the family is
semisimple (resp. pure of weight $w$) if this is true for every member $\rho_{\lambda_i}$.

\subsubsection{Compatible system in the sense of Serre}\label{scs}
 A family of $n$-dimensional $E$-rational $\ell$-adic representations  
\begin{equation}\label{sys}
\{\rho_\lambda:\Gal_K\to \GL_n(E_\lambda)\}_\lambda
\end{equation}
of $K$ indexed by $\Sigma_E$ is called a 
\emph{Serre compatible system\footnote{A Serre compatible system \eqref{sys} is the same as 
a \emph{strictly compatible system} defined in \cite[Chapter 1]{Se98}. The term is used to avoid 
confusion with a weakly/strictly compatible system in $\mathsection\ref{wcs}$.}} (SCS in short)
of $K$ defined over $E$ unramified outside a finite subset $S\subset\Sigma_K$
if the following properties hold.
\begin{enumerate}[(i)]
\item For each $\lambda$, the representation $\rho_\lambda$ is unramified outside $S\cup S_\ell$
and if $v\in\Sigma_K\backslash (S\cup S_\ell)$ then 
$P_{v,\rho_\lambda}(T)\in E[T]$.
\item For each pair of finite places $\lambda,\lambda'$ of $E$ with residue characteristics $\ell,\ell'$ 
and each finite place $v\in \Sigma_K\backslash (S\cup S_\ell\cup S_{\ell'})$, 
the equality $P_{v,\rho_\lambda}(T)=P_{v,\rho_{\lambda'}}(T)$ holds.
\end{enumerate}

\subsubsection{Weakly/strictly compatible system}\label{wcs}
A \emph{weakly compatible system} (WCS in short) of $n$-dimensional
$\ell$-adic representations of $K$ defined over $E$ and unramified outside 
a finite subset $S\subset\Sigma_K$ (see \cite[$\mathsection5.1$]{BLGGT14}) is a family of continuous semisimple representations
\begin{equation}\label{wsys}
\{\rho_\lambda:\Gal_K\to\GL_n(\overline E_\lambda)\}_\lambda
\end{equation}
indexed by $\Sigma_E$ with the following properties.
 
\begin{enumerate}[(i)]
\item The conditions (i) and (ii) in $\mathsection$\ref{scs} are satisfied.
\item Each $\rho_\lambda$ is de Rham at all places $v$ above $\ell$ and
moreover crystalline if $v\notin S$.
\item For each embedding $\tau: K\hookrightarrow \overline E$, the set 
of $\tau$-Hodge-Tate weights (see \cite[$\mathsection1$]{PT15}) of $\rho_{\lambda}$ is independent of $\lambda$ and any $\overline E\hookrightarrow \overline E_\lambda$ over $E$.
\end{enumerate}

\noindent The weakly compatible system \eqref{wsys} is called a \emph{strictly compatible system} 
(StCS in short) if in addition, the condition below holds.

\begin{enumerate}[(iv)]
\item For $v$ not above $\ell$, 
the semisimplified Weil-Deligne representation 
$\iota \text{WD}(\rho_{\lambda}|_{\Gal_{K_v}})^{F-ss}$ 
is independent of $\lambda$ and $\iota:\overline{E}_\lambda\stackrel{\cong}{\rightarrow} \C$.
\end{enumerate}

\subsubsection{Operations on compatible systems}\label{op}
Let $K'\subset K\subset K''$ and $E'\subset E\subset E''$ be number fields and denote the finite places of $E'$ 
by $\lambda'$ (resp. of $E''$ by $\lambda''$). Suppose $\{\rho^1_\lambda\}_\lambda$ and $\{\rho^2_\lambda\}_\lambda$
are Serre compatible systems (resp. weakly compatible systems) of $K$ defined over $E$ unramified outside a finite $S\subset\Sigma_K$
of dimension respectively $n_1$ and $n_2$.  The same is then true of the semisimplification of the $\{\rho^i_\lambda\}_\lambda$.
Moreover, one can construct a new SCS (resp. WCS) by any of the operations below.

\begin{enumerate}[(A)]
\item \textit{(Direct sum)}. The direct sum system $\{\rho^1_\lambda\oplus\rho^2_\lambda\}_\lambda$ 
is an $n_1+n_2$-dimensional SCS (resp. WCS) of $K$ defined over $E$.\smallskip

\item \textit{(Tensor product)}. The tensor product system $\{\rho^1_\lambda\otimes\rho^2_\lambda\}_\lambda$ 
is an $n_1n_2$-dimensional SCS (resp. WCS) of $K$ defined over $E$.\smallskip

\item \textit{(Dual)}. The dual system $\{\rho_\lambda^{i\vee}\}_\lambda$ is an $n_i$-dimensional SCS (resp. WCS)
of $K$ defined over $E$.\smallskip

\item \textit{(Restriction to subgroup)}. The restricted system $\{\mathrm{Res}^K_{K''}\rho_\lambda\}_\lambda$
is an $n$-dimensional SCS (resp. WCS) of $K''$ defined over $E$.\smallskip

\item \textit{(Induction from subgroup)}. The induced system $\{\mathrm{Ind}^{K'}_K\rho_\lambda\}_\lambda$
is an $n[K:K']$-dimensional SCS (resp. WCS)\footnote{This is due to the fact that 
the L-factors satisfy $L_{\mathfrak p'}(\mathrm{Ind}^{K'}_K\rho_\lambda,s)=
\prod_{\mathfrak{p}|\mathfrak{p'}}L_{\mathfrak{p}}(\rho_\lambda,s)$ 
for almost all prime ideals $\mathfrak p'$ of $K'$.} of $K'$ defined over $E$.\smallskip

\item \textit{(Coefficient extension)}. The coefficient extension $E''/E$ system:
$$\{\rho_\lambda\}_\lambda\otimes_E E'':=\{\rho_\lambda\otimes_{E_\lambda} E''_{\lambda''}:\hspace{.05in}\lambda''|\lambda\}_{\lambda''}$$
is an $n$-dimensional SCS of $K$ defined over $E''$.\smallskip

\item \textit{(Coefficient descent)}. If $\{\rho_{\lambda}\}_\lambda$ is a WCS and is \emph{regular}, i.e., 
the $\tau$-Hodge-Tate weights in $\mathsection$\ref{wcs}(iii) are distinct for all $\tau:K\to\overline{E}$,
then there exists a SCS 
$$\{\psi_{\lambda''}:\Gal_K\to \GL_n(E''_{\lambda''})\}_{\lambda''}$$
of $K$ defined over a finite extension $E''$ of $E$
such that 
$$\{\psi_{\lambda''}\circ(\GL_n(E''_{\lambda''})\to \GL_n(\overline{E''_{\lambda''}}))\}_{\lambda''}$$
is the coefficient extension system $\{\rho_\lambda\}_\lambda\otimes_E E''$
\cite[Lemma 5.3.1(3)]{BLGGT14}. Therefore, a regular WCS
can be turned into a semisimple SCS. \smallskip

\item \textit{(Restriction of scalars)}. By restriction of scalars $\mathrm{Res}_{E/E'}\GL_{n,E}$, 
the system  
$$\{\mathrm{Res}_{E/E'}\rho_{\lambda}:=\bigoplus_{\lambda|\lambda'}\rho_{\lambda}
:\Gal_K\to\prod_{\lambda|\lambda'}\GL_n(E_\lambda)\subset\GL_{n[E:E']}(E'_{\lambda'})\}_{\lambda'}$$
 is an $n[E:E']$-dimensional SCS of $K$ defined over $E'$.
\end{enumerate}

\subsection{Algebraic monodromy groups and $\lambda$-independence}\label{amg}
In this section, denote by $\{\rho_\lambda\}_\lambda$ a SCS (resp. WCS) defined above.
The Galois image $\rho_\lambda(\Gal_K)$, denoted by $\Gamma_\lambda$, is called the \emph{monodromy group} of $\rho_\lambda$.
It is a compact $\ell$-adic Lie group.
The \emph{algebraic monodromy group} of $\rho_\lambda$,
denoted by $\bG_\lambda$, is the algebraic subgroup of 
$\GL_{n,E_\lambda}$ (resp. $\GL_{n,\overline E_\lambda}$) 
defined as the Zariski closure of $\Gamma_\lambda$ 
in $\GL_n$. We review some $\lambda$-independence results on 
the family of subgroups $\{\bG_\lambda\subset\GL_n\}_\lambda$.

\begin{thm}(Serre) \label{Serre1}\cite{Se81}
Let $\{\rho_\lambda\}_\lambda$ be a SCS or WCS.
\begin{enumerate}[(i)]
\item The component group $\bG_\lambda/\bG_\lambda^\circ$ is independent of $\lambda$.
\item The formal character of $\bG_\lambda\subset\GL_n$ is independent of $\lambda$.
\end{enumerate}
\end{thm}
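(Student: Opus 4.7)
The plan is to derive both assertions from two common ingredients: (1) the compatibility axiom, by which the polynomial $P_{v,\rho_\lambda}(T) \in E[T]$ at each unramified $v$ is independent of $\lambda$; and (2) Chebotarev density, which combined with the definition of $\bG_\lambda$ as the Zariski closure of $\rho_\lambda(\Gal_K)$ implies that $\{\rho_\lambda(Frob_v)\}_v$ is Zariski dense in $\bG_\lambda$. After replacing $\rho_\lambda$ by its semisimplification (which preserves Frobenius characteristic polynomials, the formal character, and the component group while making $\bG_\lambda$ reductive), any invariant of $\bG_\lambda$ cut out by Zariski-closed conditions on Frobenius characteristic polynomials is then automatically $\lambda$-independent.

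For assertion (ii), the formal character of $\bG_\lambda \subset \GL_n$ is the $\GL_n$-conjugacy class of a maximal torus $\bT_\lambda \subset \bG_\lambda^\circ$, equivalently the closed subvariety $\chi(\bT_\lambda) \subset \A^n$ under the characteristic polynomial morphism $\chi : \GL_n \to \A^n$. Reductivity of $\bG_\lambda$ together with the connectedness of $\bG_\lambda^\circ$ gives $\overline{\chi(\bG_\lambda^\circ)} = \chi(\bT_\lambda)$; and $\chi(\bG_\lambda) = \bigcup_i \chi(g_i \bG_\lambda^\circ)$ is a finite union of irreducible subvarieties indexed by coset representatives $g_i$ of $\bG_\lambda^\circ$ in $\bG_\lambda$, of which $\chi(\bT_\lambda)$ is the one through $\chi(1_n) = (T-1)^n$. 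By the density observation, the Zariski closure of $\{P_{v,\rho_\lambda}(T)\}_v$ in $\A^n$ equals $\overline{\chi(\bG_\lambda)}$; selecting its irreducible component through $(T-1)^n$ extracts $\chi(\bT_\lambda)$ from $\lambda$-independent data.

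For assertion (i), let $K^{conn}_\lambda$ be the fixed field of the open subgroup $\rho_\lambda^{-1}(\bG_\lambda^\circ) \subset \Gal_K$, so that $\Gal(K^{conn}_\lambda/K) \cong \bG_\lambda/\bG_\lambda^\circ$; it suffices to show $K^{conn}_\lambda \subset \overline K$ is $\lambda$-independent. By Chebotarev this reduces to detecting, for each unramified $v$, whether $\rho_\lambda(Frob_v) \in \bG_\lambda^\circ$. The main obstacle, where the real work lies, is that the characteristic polynomial alone does not distinguish connected components of $\bG_\lambda$: elements of distinct cosets of $\bG_\lambda^\circ$ can share the same $P_{v,\rho_\lambda}(T)$. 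The plan to circumvent this is to apply the argument of (ii) to the auxiliary compatible systems $\{\rho_\lambda^{\otimes k}\}_\lambda$ and their semisimple summands, which remain SCS or WCS by operation (B) of $\mathsection\ref{op}$; combined with Chevalley's theorem that $\bG_\lambda^\circ$ arises as the stabilizer of a line in some tensor construction from the standard representation, this produces a $\lambda$-independent Tannakian characterization of $\bG_\lambda^\circ$ inside $\bG_\lambda$, hence of $K^{conn}_\lambda$ and the component group.
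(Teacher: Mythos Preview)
The paper does not itself prove this theorem: it cites \cite{Se81}, gives a one-line description (``studying the algebraic variety of characteristic polynomials of $\bG_\lambda$''), and refers to \cite{LP92}. Your sketch of (ii) is correct and matches that description.

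Your plan for (i), however, has a genuine gap. You rightly flag that $\chi$ on $V$ need not separate components of $\bG_\lambda$, but passing to tensor powers cannot repair this: if $g \in \bG_\lambda \setminus \bG_\lambda^\circ$ is $\GL_n$-conjugate to some element of $\bG_\lambda^\circ$ --- as with the coordinate swap $\sigma$ in $\G_m^2 \rtimes \langle (12)(34)\rangle \subset \GL_4$ via weights $\pm e_1,\pm e_2$, where $\sigma$ is $\GL_4$-conjugate to $\diag(1,-1,1,-1) \in \G_m^2$ --- then $g$ has the same characteristic polynomial as that element on \emph{every} tensor construction of $V$. And Chevalley's theorem, applied to each $\lambda$ separately, yields a tensor space and a line that both depend on $\lambda$; you offer no mechanism for matching them across primes, and the formal characters of the $\rho_\lambda^{\otimes k}$ do not single out such a line. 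The argument in the cited references proceeds instead via a density comparison: one shows that for every non-identity component $C$ of a reductive $\bG_\lambda \subset \GL_n$ the closure $\overline{\chi(C)}$ is a \emph{proper} subvariety of $\chi(\bT_\lambda)$ (a twisted-conjugacy dimension count when $C$ acts nontrivially on $\bT_\lambda$ by conjugation; a torus-coset computation when it acts trivially). Hence if $\bG_{\lambda_0}$ were connected while $\bG_\lambda$ were not, the positive-density set of Frobenii landing in $C$ would be forced, via the $\lambda$-independent polynomials $P_v$, into a proper Zariski-closed --- hence Haar-measure-zero --- subset of the connected $\bG_{\lambda_0}$, contradicting Chebotarev equidistribution in $\Gamma_{\lambda_0}$.
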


\begin{remark}\label{remSerre}
It follows from Theorem \ref{Serre1}(i) and $\mathsection$\ref{op}(D) that 
if $L$ is a finite extension of $K$
such that the Zariski closure of $\rho_\lambda(\Gal_L)$ in $\GL_n$
is connected (i.e., equal to $\bG_\lambda^\circ$) for one $\lambda$ then this is also true 
for any other $\lambda$. Denote by $K^{\conn}/K$ the minimal of such extensions.
We obtain $\Gal(K^{\conn}/K)\cong \bG_\lambda/\bG_\lambda^\circ$ for all $\lambda$.
We say that $\{\rho_\lambda\}_\lambda$ is connected
if $\bG_\lambda$ is connected for one $\lambda$ (hence for all $\lambda$).
\end{remark}

This theorem of Serre is proved by studying the algebraic variety of characteristic 
polynomials of $\bG_\lambda$ for every $\lambda$, see also \cite[$\mathsection4$]{LP92}.
When $\{\rho_\lambda\}_\lambda$ is pure of weight $w$, Serre also used
the \emph{method of Frobenius torus} to prove the theorem and construct an $E$-torus $\bT$
such that the base change $\bT_{E_\lambda}$ is a maximal torus of $\bG_\lambda$ for all $\lambda$ not above some rational prime, 
see \cite[Theorem 1.1]{LP97} 
and see \cite[Theorem 2.6]{Hu18} for a more general case. 

When $\{\rho_\lambda\}_\lambda$ is a semisimple SCS and $\rho_\lambda$ is 
abelian for one $\lambda$, Serre developed the theory of 
abelian $\ell$-adic representations based on class field theory \cite{Se98}
to prove that $\bG_\lambda\subset\GL_{n, E_\lambda}$ is 
independent of $\lambda$ in the sense that there exists 
an abelian subgroup $\bG\subset\GL_{n,E}$ such that 
\begin{equation}\label{indep1}
(\bG\subset\GL_{n,E})_{E_\lambda}\cong (\bG_\lambda\subset\GL_{n,E_\lambda})\hspace{.2in}\forall\lambda.
\end{equation}
The Mumford-Tate conjecture (see \cite{Se94}) implies that \eqref{indep1} holds (for some reductive subgroup $G\subset\GL_{n,E}$) if 
$\{\rho_\lambda\}_\lambda$ is connected and arising from the $\ell$-adic cohomology groups
of smooth projective varieties defined over $K$ (see $\mathsection$\ref{ael}). 
For general rational semisimple SCS $\{\rho_\lambda\}_\lambda$, 
Larsen-Pink \cite{LP92} obtained various $\ell$-independence results on $\bG_\lambda$ 
by utilizing the compatibility condition. In particular, one has the following result.

\begin{prop}\label{LP1}\cite[Proposition 8.7]{LP92}
Let $\{\rho_\ell\}_\ell$ be a rational semisimple SCS.
For all sufficiently large $\ell$, the identity component $\bG_\ell^\circ$
splits after an unramified extension of $\Q_\ell$.
\end{prop}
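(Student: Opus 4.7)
The plan is to show that for $\ell \gg 0$ the connected reductive group $\bG_\ell^\circ$ contains a maximal $\Q_\ell$-torus that splits over an unramified extension of $\Q_\ell$; this suffices, since a reductive group splits over any field extension over which at least one of its maximal tori splits. Observe that $\bG_\ell^\circ$ is reductive: since $\rho_\ell$ is semisimple, any normal unipotent subgroup of $\bG_\ell$ has a nonzero fixed-vector subspace which must be the whole space (being $\bG_\ell$-stable) and is therefore trivial, so $\bG_\ell^\circ$ has trivial unipotent radical.

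The required torus will be produced by Serre's method of Frobenius tori. By Theorem \ref{Serre1}(ii), the rank $r = \rank\bG_\ell$ is $\ell$-independent. I would like to find a closed point $v_0 \in \Sigma_K \setminus S$ and an integer $s_0 \geq 1$ such that, for almost all $\ell$, the Zariski closure inside $\bG_\ell^\circ$ of the cyclic subgroup generated by $\rho_\ell(Frob_{v_0})^{s_0}$ is a maximal torus $\bT_\ell$. Membership of $\rho_\ell(Frob_v)$ in the Zariski-open subset of $\bG_\ell$ whose cyclic Zariski closure is a maximal torus depends only on the characteristic polynomial $P_v(T) \in \Q[T]$, which by compatibility is $\ell$-independent. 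A Chebotarev density argument applied at a single auxiliary prime then produces a $v_0$ that works for all large $\ell$ simultaneously, with $s_0$ absorbing the torsion in the multiplicative group generated by the eigenvalues of $\rho_\ell(Frob_{v_0})$.

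Once $(v_0, s_0)$ is fixed, let $P(T) \in \Q[T]$ be the common characteristic polynomial of $\rho_\ell(Frob_{v_0})$ and let $L/\Q$ be its splitting field, a fixed finite Galois extension. For each $\ell$ outside the finite set of primes ramified in $L$, every completion $L_w$ with $w \mid \ell$ is an unramified extension of $\Q_\ell$. Choosing an embedding $L \hookrightarrow \overline{\Q_\ell}$ selects such a $w$, and the element $\rho_\ell(Frob_{v_0})^{s_0}$ -- whose eigenvalues lie in $L$ -- is diagonalizable over $L_w$; consequently the torus $\bT_\ell$ splits over $L_w$. Since $\bT_\ell$ is a maximal $\Q_\ell$-torus of $\bG_\ell^\circ$, the reductive group $\bG_\ell^\circ$ itself splits over the unramified extension $L_w/\Q_\ell$.

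The main obstacle is justifying the uniform choice of $(v_0, s_0)$. Serre's original treatment of Frobenius tori in the pure case exploits archimedean bounds from the Weil conjectures to force multiplicative independence of eigenvalues, whereas our setting is only rational. This is where I would invoke the refinement of Larsen--Pink: the $\Q$-variety of characteristic polynomials of $\bG_\ell$ is essentially $\ell$-independent thanks to Theorem \ref{Serre1}, so the property of having a Frobenius torus of maximal dimension at $v$ is genuinely an $\ell$-independent condition on $v$, and Chebotarev density then yields the uniform $v_0$. The rest of the argument, by contrast, is formal: it rests only on the rationality of the characteristic polynomials, the fact that a maximal torus defined by a semisimple element splits over the splitting field of that element's characteristic polynomial, and the standard descent statement that a reductive group splits over any extension over which one of its maximal tori splits.
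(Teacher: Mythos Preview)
The paper does not prove this proposition; it is simply cited from \cite[Proposition 8.7]{LP92}. Your outline follows the same route as Larsen--Pink's original argument: locate a place $v_0$ whose Frobenius torus $\bT_\ell$ is maximal in $\bG_\ell^\circ$ for all large $\ell$, then observe that $\bT_\ell$ splits over any completion of the (fixed, $\ell$-independent) splitting field of $P_{v_0}(T)$, which is unramified over $\Q_\ell$ once $\ell$ is unramified in that number field. You correctly isolate the one genuinely hard step --- the uniform existence of such a $v_0$ in the absence of purity --- and point to the Larsen--Pink machinery for it; in \cite{LP92} this is their theory of ``good places'' (\S7), which rests on the $\ell$-independence of the variety of characteristic polynomials.

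One technical correction: the locus of $g \in \bG_\ell$ for which the Zariski closure of $\langle g \rangle$ is a maximal torus is \emph{not} Zariski-open. Already in a split torus $\bT \cong \G_m^r$, the elements generating a Zariski-dense subgroup are the complement of a countable union of proper subtori, not of a closed set. What is true --- and what you actually use --- is that the dimension of the Frobenius torus at $v$ is determined by the rank of the multiplicative group generated by the roots of $P_v$, hence depends only on $P_v \in \Q[T]$; combined with Theorem~\ref{Serre1}(ii) this makes ``$\bT_{v,\ell}$ is maximal'' an $\ell$-independent condition on $v$. The existence of such $v$ with positive density is then the substance of \cite[\S7]{LP92}, not a formal consequence of Chebotarev alone.
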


By exploiting some ideas in \cite{Se98} 
and the compatibility condition to compare $\bG_\lambda$ and $\bG_{\lambda'}$
for any pair $\lambda,\lambda'\in\Sigma_E$,
Theorem \ref{Serre1}(ii) is generalized to the following.

\begin{thm}\cite[Theorem 3.19, Remark 3.22]{Hu13}\label{Hui1}
Let $\{\rho_\lambda\}_\lambda$ be a SCS or WCS.
The formal bi-character of $\bG_\lambda\subset\GL_{n}$ is independent of $\lambda$.
\end{thm}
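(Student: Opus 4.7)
The plan is to refine Serre's proof of Theorem \ref{Serre1}(ii) by extracting the subtorus $\bT^{\ss}_\lambda\subset\bT_\lambda$ from the data of one-dimensional subquotients in tensor constructions of $\rho_\lambda$, and then to invoke Serre's theory of abelian $\ell$-adic representations to obtain $\lambda$-independence.

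\textbf{Reduction.} First I would reduce to a semisimple connected situation. For a regular WCS, operation $\mathsection\ref{op}$(G) gives descent to a semisimple SCS after a finite extension of $E$; for a general SCS, semisimplifying preserves Frobenius characteristic polynomials and hence compatibility, and, as noted in $\mathsection\ref{algp}$, does not change the formal bi-character. Passing to $K^{\conn}$ (Remark \ref{remSerre}) forces $\bG_\lambda=\bG_\lambda^\circ$, which is then reductive. By Theorem \ref{Serre1}(ii), fix a $\Z$-torus $\bT_\Z\subset\mathbb{G}_{m,\Z}^n$ whose base change is $\GL_n$-conjugate to $\bT_\lambda$ for every $\lambda$, giving canonical identifications of character lattices $X^*(\bT_\lambda)\cong X^*(\bT_\Z)=:\Lambda$. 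Since $\bG_\lambda^\circ$ is connected reductive, restriction gives $X^*(\bG_\lambda^\circ)\cong X^*(\bT_\lambda/\bT_\lambda^{\ss})=:\Lambda_\lambda\subset\Lambda$, so the task becomes showing that $\Lambda_\lambda\subset\Lambda$ is $\lambda$-independent.

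\textbf{Characters via tensor constructions.} A weight $\chi\in\Lambda$ lies in $\Lambda_\lambda$ precisely when it extends to a character $\tilde\chi$ of $\bG_\lambda^\circ$, equivalently when $\tilde\chi$ occurs as a one-dimensional $\bG_\lambda^\circ$-subrepresentation of some tensor construction $T^{a,b}(\rho_\lambda):=\rho_\lambda^{\otimes a}\otimes(\rho_\lambda^\vee)^{\otimes b}$. By $\mathsection\ref{op}$(B)(C), the family $\{T^{a,b}(\rho_\lambda)\}_\lambda$ is again a semisimple SCS, so each such one-dimensional subrepresentation is an $E$-rational abelian $\ell$-adic character $\psi_{\chi,\lambda}:\Gal_{K^{\conn}}\to\overline E_\lambda^*$. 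I would next verify that, as $\lambda$ varies, the $\psi_{\chi,\lambda}$ form a Serre-compatible family: the Frobenius values $\psi_{\chi,\lambda}(\mathrm{Frob}_v)$ are the evaluation of $\chi$ on the eigenvalues of $\rho_\lambda(\mathrm{Frob}_v)\in\bT_\Z(\overline E)$, which are roots of the $\lambda$-independent characteristic polynomial $P_{v,\rho_\lambda}(T)\in E[T]$. Invoking Serre's theory of abelian $\ell$-adic representations \cite{Se98}, such a compatible family is classified by $\lambda$-independent Hecke-character data, so the subset of $\Lambda$ arising as one-dimensional subrepresentations in $T^{a,b}(\rho_\lambda)$ does not depend on $\lambda$. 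Letting $(a,b)$ range over nonnegative integers then yields the $\lambda$-independence of $\Lambda_\lambda\subset\Lambda$.

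\textbf{Main obstacle.} The delicate point is to match the identification $\bT_\lambda\cong\bT_{\Z,\overline E_\lambda}$ consistently with the eigenvalue tuple of $\rho_\lambda(\mathrm{Frob}_v)$ so that $\chi$ can be evaluated unambiguously. Characters of $\bG_\lambda^\circ$ are only invariant under the subgroup $W(\bG_\lambda^\circ,\bT_\lambda)\subset S_n$, so the unordered $S_n$-multiset of eigenvalues is not by itself enough to compute $\chi(\rho_\lambda(\mathrm{Frob}_v))$. I expect this to be resolved by the method of Frobenius tori: choose a Frobenius $\mathrm{Frob}_{v_0}$ whose Zariski closure in $\GL_n$ is a maximal torus of $\bG_\lambda$ for every $\lambda$ (existence via \cite[Theorem~1.1]{LP97}, \cite[Theorem~2.6]{Hu18}), and use it to rigidify the identification. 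Once this is in place, $\chi(\rho_\lambda(\mathrm{Frob}_v))$ becomes a well-defined $\lambda$-independent element of $\overline E$, supplying the compatibility needed in the previous step and completing the argument.
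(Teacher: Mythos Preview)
The paper does not prove Theorem~\ref{Hui1}; it merely cites \cite[Theorem 3.19, Remark 3.22]{Hu13} with the one-line hint that the argument ``exploits some ideas in \cite{Se98} and the compatibility condition to compare $\bG_\lambda$ and $\bG_{\lambda'}$ for any pair $\lambda,\lambda'$.'' Your outline is therefore being measured against that external reference, and at the level of ingredients (Serre's abelian theory, tensor constructions, Frobenius tori) you are in the right neighborhood.

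That said, your argument has a genuine circularity at its core. You want to show that the sublattice $\Lambda_\lambda\subset\Lambda$ is independent of $\lambda$ by showing that for each fixed $\chi$ the characters $\psi_{\chi,\lambda}$ form a compatible family. But $\psi_{\chi,\lambda}$ only exists when $\chi\in\Lambda_\lambda$, so before you can even speak of the family $\{\psi_{\chi,\lambda}\}_\lambda$ you must already know that $\chi\in\Lambda_\lambda$ for every $\lambda$---which is precisely the statement in question. Equivalently, the ``abelian part'' $\bigoplus_{\dim=1}$ of $T^{a,b}(\rho_\lambda)$ is the $\bG_\lambda^{\der}$-invariants, and its dimension is governed by the formal character of $\bG_\lambda^{\der}$, not of $\bG_\lambda$; so Theorem~\ref{Serre1}(ii) alone does not tell you these abelian parts have matching Frobenius polynomials (or even matching dimensions) across $\lambda$.

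Your proposed fix via a single Frobenius torus $\bT_{v_0}$ does not close this gap. Rigidifying the identification $\bT_\lambda\cong\bT_{v_0,\overline E_\lambda}$ pins down $\Lambda$ canonically, but it does nothing to force a weight $\chi$ that extends to $\bG_\lambda^\circ$ to also extend to $\bG_{\lambda'}^\circ$. And even granting both extensions $\tilde\chi_\lambda,\tilde\chi_{\lambda'}$, for a general $v\neq v_0$ the element $\rho_\lambda(\mathrm{Frob}_v)$ does not lie in $\bT_{v_0,\lambda}$; the value $\tilde\chi_\lambda(\rho_\lambda(\mathrm{Frob}_v))$ is a specific monomial in the Frobenius eigenvalues whose shape depends on how $\rho_\lambda(\mathrm{Frob}_v)$ conjugates into $\bT_{v_0,\lambda}$ inside $\bG_\lambda^\circ$, i.e., on the Weyl group of $\bG_\lambda^\circ$---data you have not shown to be $\lambda$-independent. (A minor additional point: your reduction via $\mathsection\ref{op}$(G) only treats \emph{regular} WCS, whereas the theorem is stated for arbitrary WCS.) The actual proof in \cite{Hu13} has to do real work to break this circularity; your sketch identifies the obstacle correctly but does not overcome it.
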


\begin{remark}
Theorem \ref{Hui1} has then been used to study the conjectural \eqref{indep1} under various assumptions
on the algebraic monodromy groups $\bG_\lambda$, for more details see \cite[Theorem 3.21]{Hu13},
\cite[Theorem 1.2]{Hu18}, \cite[Theorem 1.6]{Hu22}, and the related \cite[Theorems 1.5, 1.6]{Hu20}.
\end{remark}

\subsection{Nori groups}\label{N}
Let $G\subset\GL_n(\F_\ell)$ be a subgroup with $\ell\geq n$. 
Denote by $G[\ell]$ the set of order $\ell$ (or equivalently, non-trivial unipotent) elements of $G$
and by $G^+\subset\GL_n(\F_\ell)$ the (normal) subgroup generated by $G[\ell]$. 
The order of the quotient $G/G^+$ is prime to $\ell$\footnote{The subgroup $G^+$ contains all $\ell$-Sylow subgroups $P$ 
of $G$ because $\ell\geq n$ forcing that every non-trivial element of $P\subset\GL_n(\F_\ell)$ 
has order $\ell$.}.
Nori's theory \cite{No87}
produces a connected semisimple by unipotent subgroup $\uS\subset\GL_{n,\F_\ell}$, 
called the \emph{Nori group} of $G\subset\GL_n(\F_\ell)$, that 
approximates $G^+$ whenever $\ell$ is large enough in terms of $n$.
We briefly review the construction of $\uS$.
Define $\mathrm{exp}$ and $\mathrm{log}$ by
$$\mathrm{exp}(x):=\sum_{i=0}^{\ell-1}\frac{x^i}{i!}\hspace{.2in}\mathrm{and}\hspace{.2in}
\mathrm{log}(x):=-\sum_{i=1}^{\ell-1}\frac{(1-x)^i}{i}.$$
The Nori group $\uS$ of $G$ is the subgroup of $\mathrm{GL}_{n,\F_\ell}$
 generated by the one-parameter subgroups
\begin{equation}\label{one}
t\mapsto x^t:=\mathrm{exp}(t\cdot\mathrm{log}(x))
\end{equation}
for all $x\in \finiteG[\ell]$. 
A subgroup of $\GL_{n,\F_\ell}$ is said to be \textit{exponentially generated}
if it is generated by the one-parameter subgroups $x^t$ for some set of unipotent elements $x\in\GL_n(\F_\ell)$.
We state the properties of $\uS$ we need.

\begin{thm}\label{Nori}\cite{No87}
Let $\uS\subset\GL_{n,\F_\ell}$ be the Nori group of $G\subset\GL_n(\F_\ell)$. There is a constant $C(n)$
depending only on $n$ such that the following holds whenever $\ell\geq C(n)$:
\begin{enumerate}[(i)]
\item $G^+=\uS(\F_\ell)^+$;
\item $\uS(\F_\ell)/\uS(\F_\ell)^+$ is abelian of order $\leq 2^{n-1}$;
\item if $G$ is semisimple on $\F_\ell^n$, then $\uS$ is a semisimple group with semisimple action on $\F_\ell^n$.
\end{enumerate}
\end{thm}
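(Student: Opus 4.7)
I would follow Nori's strategy of constructing $\uS$ from one-parameter subgroups and then matching its $\F_\ell$-points with $G^+$. Since $\ell\geq n$ forces $(1-x)^n=0$ for every $x\in G[\ell]$, the truncated series $N_x:=\log(x)$ is nilpotent and $\exp(tN_x)$ is a polynomial of degree at most $n-1$ in $t$, so $t\mapsto x^t$ defines a morphism $U_x:\G_{a,\F_\ell}\to\GL_{n,\F_\ell}$ whose image is a one-parameter algebraic subgroup. I would then take $\uS$ to be the smallest closed subgroup of $\GL_{n,\F_\ell}$ containing all the $U_x$; it is automatically connected because it is generated by connected subgroups.

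The inclusion $G^+\subseteq\uS(\F_\ell)^+$ in (i) is essentially immediate: each $x\in G[\ell]$ equals $x^1\in U_x(\F_\ell)\subseteq\uS(\F_\ell)$ and has order $\ell$, so it lies in $\uS(\F_\ell)^+$. For the reverse inclusion, which is the technical heart of the proof, I would first verify by comparing the truncated exponential and binomial series that for $t\in\{0,1,\dots,\ell-1\}$ the one-parameter element $x^t$ agrees with the usual integer power of $x$; hence $U_x(\F_\ell)=\langle x\rangle\subseteq G$ for every generator. The hard part is then to show that every order-$\ell$ element of $\uS(\F_\ell)$ already lies in the subgroup generated by $\bigcup_x U_x(\F_\ell)\subseteq G$, rather than requiring semisimple elements of $\uS(\F_\ell)$ that fall outside $G$; this I would handle via Chevalley commutator identities among the $U_x$'s together with a uniform-in-$n$ form of Steinberg's theorem on generation by root subgroups valid in characteristic $\ell\gg n$. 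Obtaining an effective constant $C(n)$ for this step is the main obstacle of the whole argument.

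For (ii), I would exploit that $\uS$ is exponentially generated, hence has no non-trivial character; combined with connectedness this forces its reductive quotient to be semisimple, yielding a decomposition $\uS=\bU\rtimes\bS$ with $\bU$ unipotent and $\bS$ semisimple. The quotient $\uS(\F_\ell)/\uS(\F_\ell)^+$ then factors through $\bS(\F_\ell)/\bS(\F_\ell)^+$, which by Steinberg's theorem is identified with a subquotient of the fundamental group of $\bS$; the latter is bounded by $2^{n-1}$ since the character-to-root-lattice index of any semisimple subgroup of $\GL_n$ is so bounded. For (iii), if $G$ acts semisimply on $\F_\ell^n$ then so does $G^+=\uS(\F_\ell)^+$, and for $\ell$ sufficiently large $\uS(\F_\ell)^+$ is Zariski dense in $\uS$, so semisimplicity propagates to $\uS$; the unipotent radical $\bU$ then acts trivially on $\F_\ell^n$ and must itself be trivial by faithfulness of $\uS\hookrightarrow\GL_{n,\F_\ell}$, so $\uS=\bS$ is semisimple.
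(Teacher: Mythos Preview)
The paper does not supply its own proof of this theorem: it is stated with the citation \cite{No87} and no proof environment follows. So there is nothing in the paper to compare your proposal against; the result is simply imported from Nori's original work.

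That said, your sketch does track the shape of Nori's argument. The easy inclusion $G^+\subset\uS(\F_\ell)^+$ and the observation that $U_x(\F_\ell)=\langle x\rangle\subset G$ are correct, and your treatments of (ii) via the semisimple quotient and of (iii) via Zariski density of $\uS(\F_\ell)^+$ and triviality of a semisimply-acting unipotent radical are along the right lines. The genuine content, however, is exactly where you wave your hands: showing that every order-$\ell$ element of $\uS(\F_\ell)$ already lies in $G^+$ (equivalently, that the group generated over $\F_\ell$ by the $U_x$'s does not pick up extra unipotents). Invoking ``Chevalley commutator identities together with a uniform-in-$n$ form of Steinberg's theorem'' is not a proof but a placeholder for one; Nori's actual argument requires a careful analysis of exponentially generated subgroups and is the substance of his paper. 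If you intend to present this as a self-contained proof rather than a citation, that step needs to be filled in.
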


\subsection{Tame inertia weights}\label{tiw}
Let $F$ be a finite field extension of $\Q_\ell$ and $\overline F$ be an algebraic closure of $F$.
Let $\Gal_F:=\Gal(\overline F/F)$ be the absolute Galois group and $I$ be the inertia subgroup.
Denote by $I^w$ the wild inertia subgroup of $I$ and by $I^t:=I/I^w$ the tame inertia quotient.
Pick a uniformizer $\pi$ of $F$. For $d\in\N$, the tame inertia $I^t$ acts on 
the roots of $x^{\ell^d-1}-\pi=0$ inducing a group epimorphism to the subgroup of $(\ell^d-1)$th roots of unity of $\overline F^*$:
\begin{align*}
\begin{split}
\bar\theta_d: I^t&\longrightarrow \mu_{\ell^d-1}\\
g&\mapsto g(\pi^{\frac{1}{\ell^d-1}})/\pi^{\frac{1}{\ell^d-1}}
\end{split}
\end{align*}
which is independent of the choices of the uniformizer $\pi$ and 
the root $\pi^{\frac{1}{\ell^d-1}}$. Moreover,  
\begin{equation}\label{norm}
\bar\theta_d=(\bar\theta_{dd'})^{\frac{\ell^{dd'}-1}{\ell^d-1}}=(\bar\theta_{dd'})^{1+\ell^{d}+\ell^{2d}+\cdots+\ell^{(d'-1)d}}
\end{equation}
gives a canonical isomorphism $I^t\stackrel{}{\rightarrow} \varprojlim_{d}\mu_{\ell^d-1}$ \cite[Proposition 2]{Se72}. 

By identifying the residue field of $\overline F$ as $\overline\F_\ell$ 
and $\mu_{\ell^d-1}$ as its image $\F_{\ell^d}^*$ in $\overline\F_\ell$,
we obtain a canonical isomorphism $I^t\cong \varprojlim_{d} \F_{\ell^d}^*$  
where the inverse limit is given by norm maps $\F_{\ell^{dd'}}\to\F_{\ell^d}$ by \eqref{norm}
and we write $\bar\theta_d: I^t\to \F_{\ell^d}^*\subset\overline\F_\ell^*$ as an $\overline\F_\ell$-character.
The \emph{fundamental characters of level} $d$ of the tame inertia $I^t$ \cite[$\mathsection 1.7$]{Se72} are defined as
\begin{equation}\label{fundchar}
\bar\theta_d^{\ell^j},~j=0,1,...,d-1.
\end{equation}
If $e$ is the ramification index of the extension $F/\Q_\ell$, the $e$th power of the 
mod $\ell$ cyclotomic character $\bar\epsilon_\ell:I^t\to \Aut(\mu_\ell)=\F_\ell^*$ is
exactly $\bar\theta_1$, the fundamental character of level $1$ \cite[Proposition 8]{Se72}. 

Any character $\bar\chi: I^t\to \F_{\ell^d}^*$ 
can be written uniquely as a product of the fundamental characters of level $d$ in \emph{$\ell$-restricted form}:
$$\bar\chi=(\bar\theta_d)^{m_0}\cdot (\bar\theta_d^{\ell})^{m_1}\cdots  (\bar\theta_d^{\ell^{d-1}})^{m_{d-1}}$$
such that the \emph{exponents} $m_j$ belong to $[0,\ell-1]$ and are not all equal to $\ell-1$. 
Let $\bar\phi:I\to \GL_d(\F_\ell)$ be an irreducible representation of the inertia group.
Since $\bar\phi$ factors through $I^t$ and the commutant of $\bar\phi$ in $\End(\F_\ell^d)$ is a subfield isomorphic to $\F_{\ell^d}$,
the representation is abelian and we get a character $\bar\chi_\phi: I^t\to \F_{\ell^d}^*$ $(\subset \GL_d(\F_\ell))$.
The multiset of \emph{tame inertia weights} of $\bar\phi$ is defined as the multiset of the exponents of $\bar\chi_\phi$ as 
a product of the fundamental characters of level $d$ in $\ell$-restricted form.
One sees easily that $\bar\phi: I^t\to \GL_d(\overline\F_\ell)$ is isomorphic to the direct sum of the $\Gal(\overline\F_\ell/\F_\ell)$-conjugates
of $\bar\chi_\phi: I^t\to \F_{\ell^d}^*\subset \overline\F_\ell^*$.
If $\bar\phi: \Gal_F\to \GL_n(\F_\ell)$ is an $n$-dimensional representation of the local field $F$, 
the multiset of tame inertia weights of $\bar\phi$ is
defined as the multiset 
of $n$ numbers given by the (multiset) union of the tame inertia weights of the Jordan-H\"older factors of $\bar\phi|_I$.
Note that the tame inertia weights are unchanged if $\bar\phi$ is restricted to $\Gal_{F'}$ where $F'/F$ is unramified.
When $\F_q$ is a degree $d$ field extension of $\F_\ell$ and $\bar\phi: \Gal_F\to \GL_n(\F_q)$ is a representation,
the multiset of tame inertia weights of $\bar\phi$ (in this article) is defined as 
that of $\bar\phi$ when viewed as an ($dn$-dimensional) $\F_\ell$-representation.

\subsection{Big Galois images and algebraic envelopes}\label{ael}
Let $X$ be a smooth proper variety defined over a number field $K$.
Fixing $w\in\Z_{\geq 0}$, the absolute Galois group $\Gal_K$ acts on the $\ell$-adic 
cohomology group $V_\ell:=H^w(X_{\overline K},\Q_\ell)$ for each $\ell$.
Deligne \cite{De74} proved that the family  
\begin{equation}\label{sys0}
\{\rho_\ell:\Gal_K\to\GL(V_\ell)\cong\GL_n(\Q_\ell)\}_\ell
\end{equation}
of Galois representations is a rational SCS that is pure of weight $w$.

Denote by $\bar\Gamma_\ell\subset\GL_n(\F_\ell)$
the image of $\bar\rho_\ell^{\ss}:\Gal_K\to \GL_n(\F_\ell)$, the semisimplified
reduction of $\rho_\ell$. Since there is a natural surjection $\Gamma_\ell\to\bar\Gamma_\ell$,
the largeness of the compact subgroup $\Gamma_\ell$ in $\bG_\ell(\Q_\ell)$ 
implies the largeness of the finite image $\bar\Gamma_\ell$.
Conjecturally (see \cite{Se94,La95}), the representation $\rho_\ell$
is semisimple (or equivalently, $\bG_\ell$ is reductive) for all $\ell$ 
and the monodromy group $\Gamma_\ell$ is comparable to a 
hyperspecial maximal compact subgroup $\Omega_\ell$ of 
$\bG_\ell^\circ(\Q_\ell)$ uniformly for almost all $\ell$
\footnote{For $\ell\gg0$, the connected $\bG_\ell^\circ/\Q_\ell$ 
is an unramified reductive group and the indices $[\Gamma_\ell:\Gamma_\ell\cap\Omega_\ell]$ 
and $[\Omega_\ell:\Gamma_\ell\cap\Omega_\ell]$ are bounded by a constant independent of $\ell$.}. 
Hence, for almost all $\ell$ the reduction $\bar\Gamma_\ell$ should be comparable to $\uG_\ell(\F_\ell)$
uniformly, where $\uG_\ell/\F_\ell$ is some connected reductive group such that 
the absolute root data of $\uG_\ell$ and $\bG_\ell^\circ$ are isomorphic\footnote{
For $\ell\gg0$, the reductive groups $\uG_\ell$ and $\bG_\ell^\circ$ should respectively be
the base change of the Mumford-Tate group $\bG/\Q$ of the Hodge structure 
$H^w(X(\C),\Q)$ to $\F_\ell$ and $\Q_\ell$.}.
A prototypical result in this direction is the following well-known theorem of Serre.

\begin{thm}\cite{Se72}\label{Serre2}
If $X/K$ is an elliptic curve without complex multiplication over $\overline K$
 and $w=1$, then $\bG_\ell=\GL_2$
for all $\ell$.  Furthermore, for almost all $\ell$,
$\Gamma_\ell\cong\GL_2(\Z_\ell)$ and therefore $\bar\Gamma_\ell\cong\GL_2(\F_\ell)$.
\end{thm}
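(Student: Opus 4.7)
The plan is to first establish $\bG_\ell = \GL_2$ for every $\ell$, then prove residual surjectivity $\bar\Gamma_\ell = \GL_2(\F_\ell)$ for almost all $\ell$ via Dickson's classification of subgroups of $\GL_2(\F_\ell)$, and finally lift to $\Gamma_\ell = \GL_2(\Z_\ell)$ via a standard group-theoretic fact about $\SL_2(\Z_\ell)$.

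To show $\bG_\ell = \GL_2$, observe that the Weil pairing identifies $\det \circ \rho_\ell$ with the $\ell$-adic cyclotomic character, whose image in $\Z_\ell^\times$ is open; hence $\bG_\ell$ surjects via $\det$ onto $\mathbb{G}_m$. By Faltings's semisimplicity theorem, $\bG_\ell^\circ$ is reductive. The connected reductive subgroups of $\GL_{2,\overline{\Q}_\ell}$ are $\GL_2$, $\SL_2$, and subtori; the $\SL_2$ case is excluded because $\det$ is non-trivial on $\bG_\ell^\circ$. If $\bG_\ell^\circ$ were a torus, then $\Gamma_\ell$ would have an abelian open subgroup, and Faltings's theorem $\End_L(E) \otimes_\Z \Q_\ell \cong \End_{\Gal_L}(V_\ell(E))$ applied to a finite extension $L/K$ over which $\rho_\ell|_{\Gal_L}$ is abelian would force $\End_L(E) \supsetneq \Z$, contradicting the no-CM hypothesis over $\overline K$. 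Hence $\bG_\ell^\circ = \GL_2 = \bG_\ell$, and $\Gamma_\ell$ is open in $\GL_2(\Z_\ell)$.

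For the residual surjectivity, I would invoke Dickson's classification: any proper subgroup of $\GL_2(\F_\ell)$ has image in $\PGL_2(\F_\ell)$ either contained in a Borel subgroup, in the normalizer of a split or non-split torus, or isomorphic to one of the exceptional groups $A_4$, $S_4$, $A_5$. The exceptional case is ruled out by bounded-order considerations: fix an unramified place $v$ with $a_v := \mathrm{tr}(\rho_\ell(\mathrm{Frob}_v)) \in \Z$ non-zero and independent of $\ell$; for $\ell \gg 0$ we have $a_v \not\equiv 0 \pmod{\ell}$, forcing $\bar\Gamma_\ell$ to contain an element of order divisible by $\ell$, contradicting bounded order. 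The torus-normalizer cases are excluded, after passing to an index-two subgroup of Galois, by the non-abelianness of the restriction of $\bar\rho_\ell^{\ss}$ for $\ell \gg 0$. The Borel case, where $\bar\rho_\ell^{\ss} = \bar\chi_1 \oplus \bar\chi_2$ with $\bar\chi_1 \bar\chi_2 = \bar\epsilon_\ell$, is the core technical obstacle; it is handled in \cite{Se72} via a uniform-in-$\ell$ bound on the degrees of cyclic $\ell$-isogenies of a non-CM elliptic curve, using heights of elliptic curves and the theory of formal groups over $\Z_\ell$.

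Finally, for $\ell \geq 5$ any closed subgroup of $\SL_2(\Z_\ell)$ whose image mod $\ell$ equals $\SL_2(\F_\ell)$ equals $\SL_2(\Z_\ell)$ (since $\SL_2(\F_\ell)$ is perfect and the reduction kernel is pro-$\ell$); together with the surjectivity of $\det$ onto $\Z_\ell^\times$ for almost all $\ell$, this upgrades residual to integral surjectivity, yielding $\Gamma_\ell \cong \GL_2(\Z_\ell)$ and hence $\bar\Gamma_\ell \cong \GL_2(\F_\ell)$ for almost all $\ell$. The main obstacle is the Borel case in the residual step, whose resolution (the uniform bound on $\ell$-isogeny degrees) constitutes the technical heart of the argument.
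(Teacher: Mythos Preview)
The paper does not give its own proof of this statement: Theorem~\ref{Serre2} is stated as a background result attributed to \cite{Se72}, with no argument supplied. So there is nothing in the paper to compare your sketch against directly.

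That said, your outline follows the shape of Serre's original proof (Dickson classification for the residual image, then lifting), but two steps are not right as written. First, your exceptional-image argument is incorrect: from $a_v\not\equiv 0\pmod\ell$ you conclude that $\bar\Gamma_\ell$ contains an element of order divisible by $\ell$, but nonzero trace has no such implication (the identity has trace $2$). The clean way to dispose of the $A_4,S_4,A_5$ cases is either to note that $\det(\bar\Gamma_\ell)=\bar\epsilon_\ell(\Gal_K)$ has order $\ell-1$ for $\ell$ unramified in $K$, already exceeding $|A_5|$, or to observe that a projective image of bounded order forces $a_v^2/q_v$ to lie in a fixed finite set, which a single well-chosen Frobenius violates. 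Second, your treatment of the Cartan-normalizer case is too thin: ``non-abelianness of the restriction for $\ell\gg0$'' is exactly what needs to be proved. Serre's argument is that the quadratic character cutting out the Cartan is unramified outside a fixed finite set, hence assumes only finitely many values; if this case occurred for infinitely many $\ell$ one passes to a fixed quadratic $K'/K$ over which $\bar\rho_\ell|_{\Gal_{K'}}$ is abelian infinitely often, and then his theory of locally algebraic abelian representations forces CM over $K'$, a contradiction.

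A minor historical point: invoking Faltings for semisimplicity and the endomorphism comparison is mathematically fine but anachronistic for a result dated 1972; Serre's own route to $\bG_\ell=\GL_2$ goes through the Shafarevich finiteness theorem for isogeny classes (proved for elliptic curves at the time) and his analysis of abelian $\ell$-adic representations, not Faltings.
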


On the other hand, for any smooth proper variety $X/K$ and $w\in\Z_{\geq0}$ we constructed for each $\ell\gg0$ 
a connected reductive subgroup $\uG_\ell\subset\GL_{n,\F_\ell}$
with nice properties called the \emph{algebraic envelope}
of $\rho_\ell$ to study the largeness of $\Gamma_\ell$ in $\bG_\ell(\Q_\ell)$, 
see \cite{HL16} when $\bG_\ell$ is of type A
and \cite{HL20} when $X$ is an abelian or hyperk\"ahler variety.
The properties on the algebraic envelopes $\uG_\ell$ are summarized as follows.

\begin{thm}\label{Hui2}
Let $\{\rho_\ell\}_\ell$ be the rational SCS \eqref{sys0}.
There exist a finite Galois extension $L/K$ (with $K^{\conn}\subset L$) and for each $\ell\gg0$, 
a connected reductive subgroup $\uG_\ell\subset\GL_{n,\F_\ell}$ 
with properties below.
\begin{enumerate}[(i)]
\item The derived group $\uG_\ell^{\der}$ is the Nori group of $\bar\rho_\ell^{\ss}(\Gal_K)\subset\GL_n(\F_\ell)$.
\item The image $\bar\rho_\ell^{\ss}(\Gal_L)$ is a subgroup of $\uG_\ell(\F_\ell)$ 
with index bounded by a constant independent of $\ell$. 
\item The action of $\uG_\ell$ on the ambient space is semisimple.
\item The formal characters of $\uG_\ell\subset\GL_{n,\F_\ell}$ for all $\ell\gg0$ are bounded.
\item The formal bi-characters of $\bG_\ell$ and $\uG_\ell$ are the same and independent of $\ell$.
\item The commutants of $\bar\rho_\ell^{\ss}(\Gal_L)$ and $\uG_\ell$ (resp. 
$[\bar\rho_\ell^{\ss}(\Gal_L),\bar\rho_\ell^{\ss}(\Gal_L)]$ and $\uG_\ell^{\der}$) in $\End(\F_\ell^n)$ are equal.
\item If $\{\rho_\ell\}_\ell$ is connected, one can take $L=K$.
\end{enumerate}
The group $\uG_\ell$ is called the algebraic envelope of $\rho_\ell$ 
and is uniquely determined by properties (ii)--(iv) when $\ell$ 
is sufficiently large.
\end{thm}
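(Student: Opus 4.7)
The plan is to follow the constructive framework developed in \cite{HL16} (for the type A case) and \cite{HL20} (for abelian and hyperk\"ahler varieties), combining Nori's theory for the derived part with the $\lambda$-independence of the formal bi-character for the torus part. First, I would replace $K$ by $K^{\conn}$ so that the algebraic monodromy $\bG_\ell$ is connected for all $\ell$: by Theorem \ref{Serre1}(i) and Remark \ref{remSerre}, any finite Galois extension $L/K$ containing $K^{\conn}$ works uniformly in $\ell$, which already yields assertion (vii). Since $[\bar\rho_\ell^{\ss}(\Gal_K):\bar\rho_\ell^{\ss}(\Gal_L)]$ divides $[L:K]$ and hence is coprime to $\ell$ for $\ell\gg0$, the normal subgroups generated by order-$\ell$ elements of these two images coincide, so the Nori group can be built from either.

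For the semisimple layer, I would apply Theorem \ref{Nori} to $\bar\rho_\ell^{\ss}(\Gal_K)\subset\GL_n(\F_\ell)$: for $\ell\geq C(n)$, this produces a connected semisimple subgroup $\uS_\ell\subset\GL_{n,\F_\ell}$ acting semisimply on $\F_\ell^n$ and satisfying $\uS_\ell(\F_\ell)^+=\bar\rho_\ell^{\ss}(\Gal_K)^+$. This $\uS_\ell$ will play the role of $\uG_\ell^{\der}$, giving (i) directly. For the torus layer, Theorem \ref{Hui1} supplies a $\lambda$-independent formal bi-character $\bT_\Z^{\ss}\subset\bT_\Z\subset\GL_{n,\Z}$ of $\bG_\ell$, and Proposition \ref{LP1} of Larsen--Pink ensures $\bG_\ell^\circ$ splits over an unramified extension of $\Q_\ell$ for $\ell\gg0$. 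Integrally modeling a maximal torus of $\bG_\ell^\circ$ and reducing mod $\ell$ then yields a subtorus $\bT_\ell\subset\GL_{n,\F_\ell}$ conjugate to $\bT_{\Z,\F_\ell}$; after further conjugation one arranges $\bT_\ell\cap\uS_\ell$ to be a maximal torus of $\uS_\ell$ matching $\bT_{\Z,\F_\ell}^{\ss}$ (compare Proposition \ref{prepare}). I would then define $\uG_\ell$ to be the subgroup of $\GL_{n,\F_\ell}$ generated by $\uS_\ell$ and $\bT_\ell$; this is connected reductive with formal bi-character equal to that of $\bG_\ell$, yielding (iii), (iv), and (v).

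The hardest steps will be verifying (ii) and (vi), together with the uniqueness of $\uG_\ell$. For (ii), Theorem \ref{Nori}(ii) already gives $\bar\rho_\ell^{\ss}(\Gal_L)^+=\uS_\ell(\F_\ell)^+$ inside $\uS_\ell(\F_\ell)$ with index at most $2^{n-1}$, so one only needs to contain the abelian quotient $\bar\rho_\ell^{\ss}(\Gal_L)/\bar\rho_\ell^{\ss}(\Gal_L)^+$ inside $\bT_\ell(\F_\ell)$ up to bounded index; this containment is forced by the matching of formal bi-characters (Theorem \ref{Hui1}) and by the fact that the cocharacter lattice of $\bT_\ell$ is realized in characteristic zero via the Larsen--Pink splitting of $\bG_\ell^\circ$. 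Property (vi) then follows from the semisimplicity of both $\bar\rho_\ell^{\ss}(\Gal_L)$ and $\uG_\ell$ on $\F_\ell^n$: on the derived side the commutant is controlled by the Nori approximation, while the common torus component enlarges the commutant by the same scalars on both sides. Finally, uniqueness under (ii)--(iv) is the group-theoretic statement that two connected reductive subgroups of $\GL_{n,\F_\ell}$, each semisimple on $\F_\ell^n$ with bounded formal character and sharing a common bounded-index subgroup of $\F_\ell$-points, must coincide once $\ell$ is large enough to exclude small-characteristic pathologies.
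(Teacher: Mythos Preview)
Your overall shape---Nori for the derived part, a torus for the center, then glue---matches the paper's philosophy, but two of the load-bearing steps have genuine gaps.

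\textbf{The torus step does not work as written.} You propose to take a maximal torus of $\bG_\ell^\circ$, ``integrally model'' it, reduce mod $\ell$, and declare the result to be $\bT_\ell$. But there is no mechanism here relating this characteristic-zero reduction to the mod $\ell$ Galois image. Knowing that the formal bi-characters of $\bG_\ell$ match abstractly (Theorem~\ref{Hui1}) tells you nothing about which specific subtorus of $\GL_{n,\F_\ell}$ contains the abelianization of $\bar\rho_\ell^{\ss}(\Gal_L)$; the formal bi-character is only a $\GL_n$-conjugacy class of tori, and you cannot freely conjugate while preserving the inclusion of the Galois image. The step ``after further conjugation one arranges $\bT_\ell\cap\uS_\ell$ to be a maximal torus of $\uS_\ell$'' is exactly where this fails: such a conjugation exists abstractly, but nothing forces the Galois image to respect it. The paper's route (Remark~\ref{pin}(1), citing \cite[Theorem 2.0.5]{Hu15}) is different and essential: the torus part of $\uG_\ell$ is constructed from the \emph{local} mod $\ell$ data, specifically the bounded tame inertia weights at places above $\ell$ (\S\ref{tiw}) and potential semistability at the other places. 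These local bounds are what force the abelian quotient of $\bar\rho_\ell^{\ss}(\Gal_L)$ into a torus of bounded formal character, giving (ii) and (iv) simultaneously. Property (v) is then \emph{deduced} afterwards (\cite[Theorems 3.1.1, 3.2.1]{Hu15}), not built in. Your approach reverses this: it tries to build in (v) by construction, but then has no argument for (ii).

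\textbf{Property (vii) is not a triviality.} You claim that replacing $K$ by $K^{\conn}$ ``already yields assertion (vii)'', but (vii) says that \emph{in the connected case one can take $L=K$}, i.e., that $\bar\rho_\ell^{\ss}(\Gal_K)\subset\uG_\ell(\F_\ell)$ without passing to any further extension. This is the content of \cite[Theorem 4.5]{HL20}, and its proof uses the method of Frobenius tori (Hypothesis (MFT), see \S\ref{hypo} and Proposition~\ref{connect}): one needs a density-one set of places where the Frobenius torus is maximal, and this geometric input (purity of weights for the \'etale cohomology system \eqref{sys0}) is what pins down the full image rather than a bounded-index subgroup. Your argument only gives $L\supset K^{\conn}$, which is the easy direction.
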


\begin{remark}\label{pin}
\begin{enumerate}[(1)]
\item Construction of $\uG_\ell\subset\GL_{n,\F_\ell}$ satisfying properties (i)--(iv) 
was done in \cite[Theorem 2.0.5]{Hu15}, whose proof relies only on Nori group ($\mathsection\ref{N}$), 
some boundedness results on the local representations $\bar\rho_\ell^{\ss}|_{\Gal_{K_v}}$ for $v\in\Sigma_K$ 
(e.g., the tame inertia weights ($\mathsection\ref{tiw}$) when $v|\ell$), and the compatibility condition of $\{\rho_\ell\}_\ell$. 

\item Property (vi) follows from \cite[Proposition 3.1(iv)]{HL20} (resp. \cite[Proposition 3.1(iv)]{HL20} and property (ii)).
Property (vii) is \cite[Theorem 4.5]{HL20}. 
The uniqueness of algebraic envelope $\uG_\ell$ for $\ell\gg0$ is \cite[Proposition 4.4]{HL20}.

\item By the uniqueness of algebraic envelope, $\bG_\ell$ and $\uG_\ell$ are respectively the projection $p_1$ of the algebraic monodromy group $\bG_\ell'$
and algebraic envelope $\bar\bG_\ell'$ in the proof of \cite[Theorem 3.2.1]{Hu15}
for $\ell\gg0$. Following the notation in the proof of \cite[Theorem 3.2.1]{Hu15},
the formal characters of 
$\bG_\ell$ and $\uG_\ell$ are respectively
$$\bT_\ell''\subset p_1(\bT_\ell')\subset\GL_{n,\overline\Q_\ell} 
\hspace{.2in}\text{and}\hspace{.2in}\bar\bT_\ell''\subset p_1(\bar\bT_\ell')\subset\GL_{n,\overline\F_\ell}$$
and they are equal. Since the former formal-bi-character is independent of $\ell$ 
by Theorem \ref{Hui1},
property (v) follows.
\end{enumerate}
\end{remark}

\section{Big monodromy of subrepresentations of SCS}\label{s3}

\subsection{Algebraic envelopes of SCS}\label{aeSCS}
Let $\{\rho_\ell:\Gal_K\to\GL_n(\Q_\ell)\}_\ell$ be a semisimple rational SCS.
Then the semisimple mod $\ell$ system is 
$\{\bar\rho_\ell^{\ss}:\Gal_K\to\GL_n(\F_\ell)\}_\ell$.
In this section, we pin down the exact local conditions\footnote{They are exactly conditions (a) and (b) of Theorem \ref{ae}.} on 
$\bar\rho_\ell^{\ss}$
required to construct algebraic envelopes $\uG_\ell\subset\GL_{n,\F_\ell}$
with nice properties like Theorem \ref{Hui2}.

\subsubsection{Criteria and properties of algebraic envelope $\uG_\ell$} 

\begin{thm}\label{ae}
Let $\{\rho_\ell\}_\ell$ be a semisimple rational SCS of $K$.
Suppose there exist integers $N_1,N_2\geq 0$ and a finite extension $K'/K$
such that the following conditions hold.
\begin{enumerate}[(a)]
\item (Bounded tame inertia weights): for all $\ell\gg0$ and each finite place $v$ of $K$ above $\ell$, 
the tame inertia weights of the local representation $(\bar\rho_\ell^{\ss}\otimes\bar\epsilon_\ell^{N_1})|_{\Gal_{K_v}}$ belong to $[0,N_2]$.
\item (Potential semistability): for all $\ell\gg0$ and each finite place $w$ of $K'$ not above $\ell$,
the semisimplification of the local representation $\bar\rho_\ell^{\ss}|_{\Gal_{K_{w}'}}$ is unramified.
\end{enumerate}
Then there exist a finite Galois extension $L/K$ (with $K^{\conn}\subset L$) and for each $\ell\gg0$, 
a connected reductive subgroup $\uG_\ell\subset\GL_{n,\F_\ell}$ 
satisfying properties (i)--(vi) of Theorem~\ref{Hui2}.
The group $\uG_\ell$ is called the algebraic envelope of $\rho_\ell$ 
and is uniquely determined by properties (ii)--(iv) when $\ell$ 
is sufficiently large.
\end{thm}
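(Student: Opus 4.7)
The plan is to run the proof of Theorem~\ref{Hui2} essentially verbatim, observing (as already indicated in Remark~\ref{pin}(1)) that the geometric origin of Deligne's cohomological SCS is used only to supply two local boundedness inputs: bounded tame inertia weights at $v\mid\ell$, and eventual (after a finite extension) unramifiedness of the residual representation at $v\nmid\ell$. Conditions (a) and (b) are precisely the abstract formulations of these inputs, so the remaining ingredients---Nori's theorem, the $E$-rationality and compatibility of $\{\rho_\ell\}_\ell$, and Theorems~\ref{Serre1} and~\ref{Hui1}---transport unchanged to the present setting.

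For the construction of $\uG_\ell$, fix $\ell\gg 0$ and apply Theorem~\ref{Nori} to the semisimple subgroup $\bar\rho_\ell^{\ss}(\Gal_K)\subset\GL_n(\F_\ell)$ to produce a connected semisimple $\uS_\ell\subset\GL_{n,\F_\ell}$ with $\uS_\ell(\F_\ell)^+=\bar\rho_\ell^{\ss}(\Gal_K)^+$; this will be $\uG_\ell^{\der}$. To upgrade $\uS_\ell$ to a connected reductive $\uG_\ell$, I would adjoin a diagonalizable subgroup $\uT_\ell$ whose formal character matches, after reduction mod $\ell$, the $\ell$-independent formal character of $\bG_\ell$ given by Theorem~\ref{Serre1}(ii). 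Condition (a), via the tame-inertia analysis of $\mathsection\ref{tiw}$, constrains the toral contribution coming from decomposition groups at $v\mid\ell$; condition (b), together with the $E$-rationality of the polynomials $P_{w,\rho_\lambda}(T)$ from the SCS definition, bounds the semisimple parts of $\bar\rho_\ell^{\ss}(Frob_w)$ for $w\nmid\ell$ after restriction to $\Gal_{K'}$. Together these allow $\uT_\ell$ to be chosen with formal character bounded uniformly in $\ell$; set $\uG_\ell$ to be the subgroup generated by $\uS_\ell$ and $\uT_\ell$.

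Properties (i), (iii), (iv) are then immediate: (i) and (iv) by construction, and (iii) by semisimplicity of $\bar\rho_\ell^{\ss}$ together with Theorem~\ref{Nori}(iii). Property (ii) holds for a finite Galois extension $L/K$ chosen to contain $K^{\conn}$ and to absorb both the component group $\bG_\ell/\bG_\ell^\circ$ ($\ell$-independent by Theorem~\ref{Serre1}(i)) and the quotient $\uG_\ell(\F_\ell)/\uG_\ell(\F_\ell)^+$ of size $\leq 2^{n-1}$ from Theorem~\ref{Nori}(ii). Property (v) then follows by comparing Theorem~\ref{Hui1} ($\ell$-independence of the formal bi-character of $\bG_\ell$) with the construction of $\uG_\ell$, whose bi-character is built from a mod-$\ell$ reduction of a maximal torus chain of $\bG_\ell$. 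Property (vi) on commutants is the group-theoretic \cite[Proposition~3.1(iv)]{HL20} applied to the pair $(\bar\rho_\ell^{\ss}(\Gal_L),\uG_\ell)$ in conjunction with (ii); uniqueness of $\uG_\ell$ for $\ell\gg 0$ is \cite[Proposition~4.4]{HL20}.

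The main obstacle will be the construction of the torus $\uT_\ell$. In the setting of Theorem~\ref{Hui2}, purity of weight $w$ via the Weil conjectures supplied sharp uniform bounds on Frobenius eigenvalues at $v\nmid\ell$; here the analogue must be extracted purely from $E$-rationality, compatibility, and condition (b), exploiting that once $\bar\rho_\ell^{\ss}|_{\Gal_{K'_w}}$ is unramified, the characteristic polynomial of $\bar\rho_\ell^{\ss}(Frob_w)$ is the mod-$\ell$ reduction of a fixed polynomial in $E[T]$ and therefore varies in a bounded set as $\ell$ varies. Showing that this suffices, in combination with condition (a) at primes above $\ell$, to produce a single torus $\uT_\ell$ of bounded formal character uniformly in $\ell$ is the delicate point where the abstract local hypotheses do the work previously performed by the geometric input.
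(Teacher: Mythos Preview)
Your outline is essentially correct when $N_1=0$, and in that case it coincides with the paper's approach: the construction of $\uG_\ell$ with properties (i)--(iv) is precisely the proof of \cite[Theorem~2.0.5]{Hu15}, property (v) follows from \cite[Theorems~3.1.1, 3.2.1]{Hu15}, and (vi) and uniqueness come from the references in \cite{HL20} that you cite.

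There is, however, a genuine gap when $N_1>0$. Condition (a) bounds the tame inertia weights of the \emph{twisted} representation $(\bar\rho_\ell^{\ss}\otimes\bar\epsilon_\ell^{N_1})|_{\Gal_{K_v}}$, not of $\bar\rho_\ell^{\ss}$ itself. Untwisting shifts the weights by $-N_1$ modulo $\ell-1$, so the tame inertia weights of $\bar\rho_\ell^{\ss}|_{\Gal_{K_v}}$ may land in $[\ell-1-N_1,\,\ell-1]\cup[0,\,N_2-N_1]$ and hence wrap around (cf.\ Remark~\ref{rem1}(1)). The torus construction of \cite[\S2]{Hu15} that you invoke requires the weights to sit in a fixed interval $[0,N_2]$ independent of $\ell$; your sentence ``Condition (a) \ldots\ constrains the toral contribution'' glosses over exactly this point, and the last paragraph of your proposal addresses only the places $v\nmid\ell$.

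The paper handles $N_1>0$ by a reduction trick rather than by modifying the torus construction. It passes to the auxiliary $(n+1)$-dimensional SCS $\{\phi_\ell:=(\rho_\ell\otimes\epsilon_\ell^{N_1})\oplus\epsilon_\ell^{N_1}\}_\ell$, whose residual tame inertia weights lie in $[0,\max\{N_1,N_2\}]$, applies the $N_1=0$ case to obtain envelopes $\uH_\ell\subset\GL_{n+1,\F_\ell}$, checks via the commutant property that $\uH_\ell\subset\GL_{n,\F_\ell}\times\GL_{1,\F_\ell}$, and then recovers $\uG_\ell$ by composing with the automorphism $(A,a)\mapsto(a^{-1}A,a)$ of $\GL_n\times\GL_1$ and projecting to the first factor. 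Untwisting at the level of algebraic groups in this way preserves boundedness of formal characters and carries properties (i)--(vi) from $\uH_\ell$ to $\uG_\ell$.
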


\begin{proof}
If $N_1=0$, the construction of $\uG_\ell$ for $\ell\gg0$ with properties (i)--(iv) is 
exactly the proof of \cite[Theorem 2.0.5]{Hu15},
and the property (v) follows exactly from the proofs of \cite[Theorems 3.1.1, 3.2.1]{Hu15}. 
Although in \cite{Hu15} the SCS $\{\rho_\ell\}_\ell$
is the dual of the $i$th \'etale cohomology of a smooth projective variety $X/K$, what we needed in the construction 
are just the compatibility condition of $\{\rho_\ell\}_\ell$, the conditions \ref{ae}(a),(b), and some purely group theoretic results.
Note that condition \ref{ae}(a) is used throughout \cite[$\s 2,\s 3$]{Hu15} while condition 
\ref{ae}(b) is used only in the proof of \cite[Theorem 2.4.2]{Hu15}.

If $N_1>0$, consider the $(n+1)$-dimensional SCS 
$$\{\phi_\ell:=(\rho_\ell\otimes\epsilon_\ell^{N_1})\oplus \epsilon_\ell^{N_1}\}$$
with algebraic monodromy groups $\bH_\ell\subset\GL_{n+1,\Q_\ell}$.
For $\ell\gg0$, the semisimplified reductions
$\bar\phi_\ell^{\ss}=(\bar\rho_\ell^{\ss}\otimes\bar\epsilon_\ell^{N_1})\oplus \bar\epsilon_\ell^{N_1}=\F_\ell^{n+1}$ satisfy (b) 
and when restricted to $\Gal_{K_v}$ for $v$ above $\ell$, have tame inertia weights belonging to $[0, \max\{N_1,N_2\}]$.
By the $N_1=0$ case, there exist a finite Galois extension $L/K$ and for $\ell\gg0$, 
a connected reductive group $\uH_\ell\subset\GL_{n+1,\F_\ell}$ satisfying properties (i)--(v).

By properties (ii) and (iv) and \cite[Proposition 3.1(iv)]{HL20}, the commutants 
of $\uH_\ell$ and its subgroup $\bar\phi_\ell^{\ss}(\Gal_L)$ in $\End(\F_\ell^{n+1})$ 
are equal for $\ell\gg0$.
By Proposition \ref{lem1}(i), for $\ell\gg0$
the action of $\uH_\ell$ on the ambient space also respects the decomposition 
$$(\bar\rho_\ell^{\ss}\otimes\bar\epsilon_\ell^{N_1})\oplus \bar\epsilon_\ell^{N_1}=\F_\ell^n\oplus\F_\ell,$$
which implies $\uH_\ell\subset\GL_{n,\F_\ell}\times\GL_{1,\F_\ell}$.

Now, consider the $(n+1)$-dimensional SCS
$$\{\psi_\ell:=\rho_\ell\oplus \epsilon_\ell^{N_1}\}$$
and the semisimplified reduction $\bar\psi_\ell^{\ss}=\bar\rho_\ell^{\ss}\oplus \bar\epsilon_\ell^{N_1}$.
Let $\iota:\GL_n\times\GL_1\to \GL_n\times\GL_1$ be the automorphism sending $(A,a)$ to $(a^{-1}A,a)$.
Since $\uH_\ell\subset\GL_{n,\F_\ell}\times\GL_{1,\F_\ell}$ for $\ell\gg0$, 
the equalities $\psi_\ell=\iota\circ\phi_\ell$ and  $\bar\psi_\ell^{\ss}=\iota\circ\bar\phi_\ell^{\ss}$ 
of representations extend to the level of $\bH_\ell$ and $\uH_\ell$ for $\ell\gg0$. 
Since the family $\{\uH_\ell\subset\GL_{n,\F_\ell}\times\GL_{1,\F_\ell}\}_{\ell\gg0}$ 
has bounded formal characters by (iv), the family 
$\{\iota(\uH_\ell)\subset\GL_{n,\F_\ell}\times\GL_{1,\F_\ell}\}_{\ell\gg0}$ 
also has bounded formal characters.
Hence, it follows that 
$$\{\bar\psi_\ell^{\ss}(\Gal_L)\subset\iota(\uH_\ell)\subset\GL_{n,\F_\ell}\times\GL_{1,\F_\ell}\}_{\ell\gg0}$$ 
satisfies (i)--(iv).

Next, let $\pi:\GL_n\times\GL_1\to \GL_n$ be the projection to the first factor.
For $\ell\gg0$, the equalities $\rho_\ell=\pi\circ\psi_\ell$ and $\bar\rho_\ell^{\ss}=\pi\circ\bar\psi_\ell^{\ss}$
of representations extend to the level of $\iota(\bH_\ell)$ and $\iota(\uH_\ell)$.
Define $\uG_\ell:=\pi\circ\iota(\uH_\ell)$ for $\ell\gg0$. Again, 
the family $\{\bar\rho_\ell^{\ss}(\Gal_L)\subset \uG_\ell\subset\GL_{n,\F_\ell}\}_{\ell\gg0}$
satisfies (i)--(iv). These properties together with the compatibility of $\{\rho_\ell\}_\ell$
imply (v), see the proof of \cite[Theorem 3.1.1]{Hu15}. 
Finally, observe that $\bG_\ell=\pi\circ\iota(\bH_\ell)$ for all $\ell$
and the derived groups of $\bG_\ell^{\circ},\iota(\bH_\ell^\circ),\bH_\ell^\circ$ 
(resp. $\uG_\ell,\iota(\uH_\ell),\uH_\ell$)
are canonically the same.
Therefore, (v) for $\bG_\ell$ and $\uG_\ell$ for $\ell\gg0$ follows from (v) for $\bH_\ell$ and $\uH_\ell$ for $\ell\gg0$.
Again by properties (ii),(iv) (resp. (v)) and \cite[Proposition 3.1(iv)]{HL20}, we obtain (vi). 

The uniqueness of algebraic envelope $\uG_\ell$ for $\ell\gg0$ follows from properties (ii)--(iv) and \cite[Proposition 4.4]{HL20}
\end{proof}

\begin{remark}\label{rem1}
\begin{enumerate}[(1)]
\item The condition $\ref{ae}$(a) allows the flexibility for the tame inertia weights to concentrate near $0$ or $\ell-1$.
For example, the system $\{\psi_\ell\otimes\psi_\ell^\vee\}_\ell$ satisfies $\ref{ae}$(a) when the tame inertia weights 
of the local representations (at places above $\ell$) 
of $\bar\psi_\ell^{\ss}$ belong to $[0,N_2/2]$ for all $\ell\gg0$.
\item We will see later in Proposition \ref{connect} that if 
$\{\rho_\ell\}_\ell$ is connected and satisfies some weight conditions, 
then one can take $L=K$ in Theorem \ref{ae} like Theorem \ref{Hui2}(vii).
\end{enumerate}
\end{remark}

\subsubsection{Total $\ell$-rank} 
Let $\Gamma\subset\GL_n(\Q_\ell)$ be a compact $\ell$-adic Lie subgroup with $\ell\geq 5$.
It fixes some $\Z_\ell$-lattice and we may assume $\Gamma\subset\GL_n(\Z_\ell)$ for simplicity.
The \emph{total $\ell$-rank} $\rk_\ell\Gamma$ of $\Gamma$ is defined (see \cite[$\mathsection2$]{HL16}, \cite[$\mathsection2.3$]{HL20})
as the \emph{total $\ell$-rank} $\rk_\ell\bar\Gamma$ of the mod $\ell$ reduction $\bar\Gamma\subset\GL_n(\F_\ell)$.
Roughly speaking, $\rk_\ell\bar\Gamma$ is the sum of ``ranks'' of the finite simple groups of Lie type in characteristic $\ell$ 
in the multiset of Jordan-H\"older factors of the finite group $\bar\Gamma$.
For example,  the total $\ell$-ranks of $\GL_n(\Z_\ell)$ and $\SL_n(\Z_\ell)$ are both equal to $n-1$.
One sees that the definition is independent of the choice of the lattice.

\begin{cor}\label{rank}
Let $\{\rho_\ell\}_\ell$ be the semisimple rational SCS in Theorem \ref{ae}. 
For $\ell\gg0$, the total $\ell$-rank of $\Gamma_\ell\cap \bG_\ell^{\der}(\Q_\ell)$ 
is equal to the rank of $\bG_\ell^{\der}$.
\end{cor}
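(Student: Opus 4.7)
The plan is to sandwich $\rk_\ell(\Gamma_\ell\cap \bG_\ell^{\der}(\Q_\ell))$ between two values both equal to $r:=\rank\bG_\ell^{\der}$. I would first invoke Theorem~\ref{ae} to obtain, for each $\ell\gg 0$, the algebraic envelope $\uG_\ell\subset\GL_{n,\F_\ell}$ of $\rho_\ell$, whose derived subgroup $\uG_\ell^{\der}$ is the Nori group of $\bar\rho_\ell^{\ss}(\Gal_K)$ by property (i) and shares a formal bi-character with $\bG_\ell^{\der}$ by property (v), whence $\rank\uG_\ell^{\der}=r$.

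For the \emph{lower bound}, Theorem~\ref{Nori}(i) gives $\uG_\ell^{\der}(\F_\ell)^+=\bar\rho_\ell^{\ss}(\Gal_K)^+$. Since $\uG_\ell^{\der}$ is a connected semisimple $\F_\ell$-group of rank $r$, this subgroup is, for $\ell$ large, a central extension of a product of finite simple groups of Lie type in characteristic $\ell$ whose Lie ranks sum to $r$, yielding $\rk_\ell\bar\rho_\ell^{\ss}(\Gal_K)\geq r$. I would then bridge this to $\Gamma_\ell\cap\bG_\ell^{\der}(\Q_\ell)$ by two comparisons. First, $\Gamma_\ell \twoheadrightarrow \bar\rho_\ell^{\ss}(\Gal_K)$ has kernel composed of a pro-$\ell$ congruence kernel and a unipotent piece, each contributing only cyclic $\F_\ell$-Jordan--H\"older factors; second, $\Gamma_\ell\twoheadrightarrow \Gamma_\ell/(\Gamma_\ell\cap\bG_\ell^{\der}(\Q_\ell))$ has cokernel embedding into $\bG_\ell(\Q_\ell)/\bG_\ell^{\der}(\Q_\ell)$, an extension of a torus by the finite component group (of bounded order by Theorem~\ref{Serre1}(i)), hence abelian-by-finite. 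For $\ell\gg 0$ neither contributes a positive-rank simple group of Lie type in characteristic $\ell$, so $\rk_\ell(\Gamma_\ell\cap\bG_\ell^{\der}(\Q_\ell))=\rk_\ell\Gamma_\ell=\rk_\ell\bar\rho_\ell^{\ss}(\Gal_K)\geq r$.

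For the \emph{upper bound}, $\Gamma_\ell\cap\bG_\ell^{\der}(\Q_\ell)$ is a compact subgroup of the semisimple $\ell$-adic Lie group $\bG_\ell^{\der}(\Q_\ell)$ of rank $r$, so by the standard control of total $\ell$-rank by the rank of the Zariski closure (see \cite[$\mathsection 2$]{HL16}) it is at most $r$. The main obstacle is the Jordan--H\"older bookkeeping in the two comparison steps: one must verify that each discarded kernel or cokernel consists of $\ell$-power or abelian-by-finite pieces, so that no positive-rank finite simple group of Lie type in characteristic $\ell$ appears among its composition factors once $\ell$ is sufficiently large.
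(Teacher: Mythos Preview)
Your proposal is correct and uses the same essential ingredients as the paper: property~(i) of Theorem~\ref{ae} to identify $\uG_\ell^{\der}(\F_\ell)^+=\bar\rho_\ell^{\ss}(\Gal_K)^+$, the structure of $\uG_\ell^{\der}(\F_\ell)$ to compute $\rk_\ell\uG_\ell^{\der}(\F_\ell)=\rank\uG_\ell^{\der}$, property~(v) to get $\rank\uG_\ell^{\der}=\rank\bG_\ell^{\der}$, and Jordan--H\"older bookkeeping to pass between $\Gamma_\ell\cap\bG_\ell^{\der}(\Q_\ell)$ and the residual image.

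The organization differs slightly. The paper writes out a single chain of equalities, detouring through the finite extension $L\supset K^{\conn}$ from Theorem~\ref{ae} so that all quotients involved are either of order prime to $\ell$ or abelian; it never needs a separate upper bound. Your sandwich argument works too, but the upper bound is in fact unnecessary: since $\bar\rho_\ell^{\ss}(\Gal_K)/\bar\rho_\ell^{\ss}(\Gal_K)^+$ has order prime to $\ell$, it contributes no finite simple group of Lie type in characteristic $\ell$, so your lower bound $\rk_\ell\bar\rho_\ell^{\ss}(\Gal_K)\ge r$ is already an equality. This means you could drop the appeal to an upper bound result from \cite{HL16} (whose precise statement there you would otherwise need to pin down) and arrive at the same chain of equalities as the paper, only working with $\Gal_K$ and the component group $\bG_\ell/\bG_\ell^\circ$ directly rather than passing to $\Gal_L$.
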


\begin{proof}
Let $L$ be the finite extension of $K$ in Theorem \ref{ae}.
Since $L$ contains $K^{\conn}$, we have $\rho_\ell(\Gal_L)\subset\bG_\ell^\circ(\Q_\ell)$ 
for all $\ell$.
Suppose $\ell\geq \max\{5,n,[L:K]\}$. Then for $\ell\gg0$, we have
\begin{align*}
\begin{split}
\rk_\ell(\Gamma_\ell\cap \bG_\ell^{\der}(\Q_\ell))&=\rk_\ell(\rho_\ell(\Gal_L)\cap \bG_\ell^{\der}(\Q_\ell))
=\rk_\ell(\rho_\ell(\Gal_L))\\
=\rk_\ell(\bar\rho_\ell^{\ss}(\Gal_L))&=\rk_\ell(\bar\rho_\ell^{\ss}(\Gal_L)^+)=\rk_\ell(\bar\rho_\ell^{\ss}(\Gal_K)^+)\\
=\rk_\ell(\uG^{\der}_\ell(\F_\ell)^+)&=\rk_\ell(\uG^{\der}_\ell(\F_\ell))=\rank\uG_\ell^{\der}=\rank\bG_\ell^{\der},
\end{split}
\end{align*}
where the first two rows follow from the definitions of the total $\ell$-rank (and $|\bar\rho_\ell^{\ss}(\Gal_L)/\bar\rho_\ell^{\ss}(\Gal_L)^+|$ 
is prime to $\ell$)
and for the third row, the first equality is by property (i) in Theorem \ref{ae} and Theorem \ref{Nori}(i),
the second equality is by Theorem \ref{Nori}(ii), the third equality is  by \cite[Proposition 4(ii)]{HL16},
and the last one is by property (v) in Theorem \ref{ae}.
\end{proof}

\subsection{Auxiliary groups}\label{aux}
\subsubsection{Reductive group schemes $\cG_\ell$}
Let $\{\rho_\ell\}_\ell$ be the semisimple rational SCS in Theorem \ref{ae}.
The notation in $\mathsection\ref{aeSCS}$ 
remains in force.

\begin{prop}\label{smooth}
Let $F$ be a finite extension of $\Q_\ell$ with ring of integers $\cO$ and residue field $\F$. 
Let $\bG$ be a connected  reductive subgroup of $\GL_{n,F}$.
Let $\cG/\cO$ be a smooth closed subgroup scheme of $\GL_{n,\cO}$ with generic fiber $\bG$
such that the special fiber $\cG_{\F}$ has the same rank as $\bG$.
If $\ell$ is sufficiently large in terms of $n$, and
$\cG(\cO)$ contains a hyperspecial maximal compact subgroup $\Omega$ of $\bG(F)$,
then $\cG$ is a connected reductive group scheme over $\cO$.
\end{prop}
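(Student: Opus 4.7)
The plan is to identify $\cG$ with the canonical reductive $\cO$-model of $\bG$ attached to the hyperspecial subgroup $\Omega$. First I would verify that $\cG(\cO) = \Omega$: since $\cG$ is a closed subscheme of $\GL_{n,\cO}$, the subgroup $\cG(\cO) \subset \GL_n(\cO)$ is a bounded subgroup of $\bG(F) = \cG(F)$ containing $\Omega$, and by maximality of the hyperspecial $\Omega$, equality holds. By the hyperspecial property, there is a reductive $\cO$-group scheme $\cH$ with $\cH_F = \bG$ and $\cH(\cO) = \Omega$. As $\Omega = \cH(\cO) \subset \GL_n(\cO)$, the action of $\cH(\cO)$ on $F^n$ preserves the lattice $\cO^n$, and by the standard representation theory of reductive group schemes over a DVR the closed immersion $\rho: \bG \hookrightarrow \GL_{n,F}$ extends uniquely to an $\cO$-homomorphism $\tilde\rho: \cH \to \GL_{n,\cO}$. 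Since $\cO[\cH]$ is $\cO$-flat, the pullback $\tilde\rho^*$ annihilates any function in $\cO[\GL_n]$ that vanishes on $\bG$, so $\tilde\rho$ factors through the schematic closure of $\bG$ in $\GL_{n,\cO}$, which coincides with $\cG$ (as $\cG$ is itself an $\cO$-flat closed subscheme with generic fiber $\bG$).

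The heart of the argument is to show that $\tilde\rho: \cH \to \cG$ is a closed immersion. Generically this is given, so it suffices to treat the special fiber. Let $N := \ker \tilde\rho_\F \subset \cH_\F$. The weights $\chi_1, \ldots, \chi_n$ of $\rho$ on a maximal torus $\cT_\cH \subset \cH$ generate the character lattice $X^*(\cT_\cH)$ as a $\Z$-module (equivalent to faithfulness of $\rho$ on $\cT_{\cH,F}$, which follows from faithfulness on $\bG$), and by Cartier duality the scheme-theoretic intersection $N \cap \cT_{\cH,\F}$ is trivial. For $\ell$ sufficiently large in terms of $n$ so that the root datum of $\bG$ is in good characteristic, the identity component $N^\circ$ is smooth and a connected normal subgroup of the connected reductive $\cH_\F$; its maximal torus must lie in $\cT_{\cH,\F}$, hence is trivial, forcing $N^\circ$ to be unipotent and therefore trivial by reductivity of $\cH_\F$. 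Thus $N$ is finite, normal in the connected $\cH_\F$, hence central; the center sits scheme-theoretically inside $\cT_{\cH,\F}$, so $N \subseteq \cT_{\cH,\F}$, which combined with $N \cap \cT_{\cH,\F} = 1$ gives $N = 1$. Hence $\tilde\rho_\F$ has trivial kernel, and being a monomorphism of algebraic groups it is a closed immersion; by the fiber-wise criterion, so is $\tilde\rho$.

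Finally, the closed immersion $\tilde\rho$ realizes $\cH$ as an $\cO$-flat closed subscheme of $\GL_{n,\cO}$ with generic fiber $\bG$. By uniqueness of the schematic closure of $\bG$ in $\GL_{n,\cO}$ among $\cO$-flat closed subschemes, $\cH \cong \cG$ as $\cO$-group schemes, so $\cG$ is a connected reductive $\cO$-group scheme (and the rank hypothesis on $\cG_\F$ is automatic). The main obstacle is the closed-immersion step on the special fiber: eliminating potentially infinitesimal contributions to $\ker \tilde\rho_\F$, which is exactly where the hypothesis that $\ell$ is sufficiently large in terms of $n$ plays an essential role, ensuring that Lie algebra ideals of $\cH_\F$ integrate to smooth subgroups.
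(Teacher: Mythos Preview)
Your proof has a genuine gap at the extension step, and the gap is not a technicality: the claim that $\rho:\bG\hookrightarrow\GL_{n,F}$ extends to an $\cO$-morphism $\tilde\rho:\cH\to\GL_{n,\cO}$ merely because $\cH(\cO)=\Omega$ stabilizes the lattice $\cO^n$ is false. Here is an explicit counterexample. Take $\cO=\Z_\ell$, $\cH=\mathbb{G}_{m,\Z_\ell}$, and
\[
\rho'(t)=\begin{pmatrix}1&0&0\\0&t&0\\0&(t-t^\ell)/\ell&t^\ell\end{pmatrix}\in\GL_3(\Q_\ell).
\]
This is a homomorphism (conjugate of $\mathrm{diag}(1,t,t^\ell)$), and for every $t\in\Z_\ell^*$ one has $t^\ell\equiv t\pmod\ell$, so $\rho'(\Z_\ell^*)\subset\GL_3(\Z_\ell)$; yet the matrix entry $(t-t^\ell)/\ell$ does not lie in $\Z_\ell[t,t^{-1}]=\cO[\cH]$, so $\rho'$ does \emph{not} extend to $\cH\to\GL_{3,\Z_\ell}$. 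One checks that the schematic closure $\cG=\Spec\Z_\ell[t,t^{-1},u]/(\ell u-t+t^\ell)$ is smooth over $\Z_\ell$ with special fiber $\cG_{\F_\ell}\cong\mu_{\ell-1}\times\mathbb{G}_a$, so $\cG^\circ_{\F_\ell}\cong\mathbb{G}_a$ has rank $0$ while $\bG$ has rank $1$. Thus the rank hypothesis, which you dismiss parenthetically as ``automatic,'' is precisely what excludes this example, and since your argument never invokes it, the argument cannot be correct as written. The underlying phenomenon is that distinct algebraic characters of a torus can coincide as functions on the finite group $T(\F)$, so $\cH(\cO)$ is not scheme-theoretically dense in $\cH$.

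The paper's proof is structurally different and does use the rank hypothesis in an essential way. Rather than attempting to build a map $\cH\to\cG$, it exploits only that $\cG(\cO)=\Omega=\cH(\cO)$, so the finite groups $\cG(\F)$ and $\cH(\F)$ are both quotients of $\Omega$ by pro-$\ell$ kernels and hence share the same Jordan--H\"older factors away from $\Z/\ell\Z$. Comparing these factors (abelian ones bound the centers, non-abelian ones pin down the simple factors up to isogeny for $\ell\ge 5$) together with the rank equality forces $\dim\cG_{\F}^{\circ,\mathrm{red}}=\dim\cH_\F=\dim\cG_\F$, whence the unipotent radical of $\cG_\F^\circ$ is trivial, and one concludes via \cite[Proposition 3.1.12]{Co14}.
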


\begin{proof} It suffices to prove that the identity component of $\cG_\F$ is reductive by \cite[Proposition 3.1.12]{Co14}.
As $\Omega$ is hyperspecial, it is of the form $\cH(\cO)$, 
where $\cH$ is a connected reductive group scheme over $\cO$ with generic fiber $\bG$.
Since $\cG(\cO)$ is compact in $\cG(F)$ and contains the maximal compact $\Omega$, it follows that $\cG(\cO)=\Omega$.
The Jordan-H\"older factors of $\cG(\F)$ and $\cH(\F)$ are the same up to factors of $\Z/\ell\Z$ as 
the kernel of the surjection $\Omega\to \cG(\F)$ (resp. $\Omega\to \cH(\F)$) is pro-$\ell$. 
In particular, the Jordan-H\"older factors of $\cG_{\F}^\circ(\F)$ other than $\Z/\ell\Z$
are contained in those of $\cH(\F)$.
Let $\uU$ be the unipotent radical of $\cG_{\F}^\circ$ and $\cG_{\F}^{\circ,\red}$ be the reductive quotient $\cG_{\F}^\circ/\uU$.
Since $H^1(\F,\uU)$ is trivial, we obtain an exact sequence of groups
$$1\to \uU(\F)\to \cG_{\F}^\circ(\F)\to \cG_{\F}^{\circ,\red}(\F)\to 1.$$
As $\uU(\F)$ is an $\ell$-group, it follows that 
the Jordan-H\"older factors of $\cG_{\F}^{\circ,\red}(\F)$ other than $\Z/\ell\Z$
are contained in those of $\cH(\F)$.

For a connected almost simple algebraic group $\uJ$ over $\F$ of rank bounded by $n$,
the product of orders of all cyclic Jordan-H\"older factors of $\uJ(\F)$
is bounded by a constant $C'(n)$ depending only on $n$. 
Let $\uZ_1$ (resp. $\uZ_2$) be the connected center of $\cG_{\F}^{\circ,\red}$ (resp. $\cH_{\F}$).
Let $N_1$ (resp. $N_2$) be the product of orders of all cyclic Jordan-H\"older factors 
of $\cG_{\F}^{\circ,\red}(\F)$ (resp. $\cH(\F)$)
not isomorphic to $\Z/\ell\Z$. 
Since the sequence of groups
$$1\to \uZ_2(\F)\to \cH_{\F}(\F) \to (\cH_{\F}/\uZ_2)(\F)$$
is exact and $\cH_{\F}/\uZ_2$ is connected almost simple, it follows from above that 
\begin{equation*}
|\uZ_1(\F)|\leq N_1\leq N_2\leq C'(n)|\uZ_2(\F)|.
\end{equation*}
Hence,  if $\ell$ is sufficiently large in terms of $n$, we obtain
\begin{equation}\label{centerineq}
\dim\uZ_1\leq \dim \uZ_2.
\end{equation}

For $\ell \ge 5$ and $\uJ$ a connected almost simple algebraic group over $\F$, the group $\uJ(\F)$ 
has a unique non-abelian Jordan-H\"older factor which determines $\uJ$ up to isogeny.
Thus, it follows from above that the semisimple part of $\cG_{\F}^{\circ,\red}$ is isogenous to a normal factor of 
the adjoint quotient of $\cH_{\F}$.
By \eqref{centerineq} and 
$$\rank \cG_{\F}^{\circ,\red}=\rank\cG_{\F}^\circ=\rank \bG =\rank \cH_{\F},$$ 
we conclude that the connected reductive groups
$\cG_{\F}^{\circ,\red}$ and $\cH_{\F}$ have the same adjoint quotient as well as the 
dimension of center.
Since the two special fibers $\cG_{\F}$ and $\cH_{\F}$ have the same dimension, 
we conclude that $\cG_{\F}^\circ$
has trivial unipotent radical, i.e., $\cG_{\F}^\circ$ is reductive.
\end{proof}

\begin{prop}\label{scheme}
For all sufficiently large $\ell$, there exist a totally ramified extension $F_\ell/\Q_\ell$ 
with ring of integers $\mathcal{O}_\ell$,
an $\mathcal O_{\ell}$-lattice $\mathcal{L}_\ell\subset F_\ell^n$, and a closed reductive subscheme 
$\cG_\ell\subset\GL_{\mathcal{L}_\ell}\cong\GL_{n,\mathcal O_{\ell}}$
such that $\Gamma_\ell\cap \bG_\ell^\circ(\Q_\ell)\subset\cG_\ell(\mathcal O_{\ell})$
and the generic fiber $\cG_{\ell,F_\ell}=\bG_{\ell,F_\ell}^\circ$.
\end{prop}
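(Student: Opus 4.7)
The plan is to construct the integral model $\cG_\ell$ by combining Proposition \ref{LP1} (which makes $\bG_\ell^\circ$ unramified for $\ell\gg0$), Bruhat--Tits theory (to find a hyperspecial maximal compact of $\bG_\ell^\circ(F_\ell)$ containing $\Gamma_\ell':=\Gamma_\ell\cap\bG_\ell^\circ(\Q_\ell)$ after a suitable totally ramified extension), and Proposition \ref{smooth} (to certify the schematic closure is reductive).

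First I would apply Proposition \ref{LP1} to get that, for $\ell\gg0$, $\bG_\ell^\circ$ is an unramified reductive group over $\Q_\ell$. The open compact subgroup $\Gamma_\ell'$ is bounded, hence it stabilizes a point and is contained in the parahoric subgroup $P_\sigma(\Q_\ell)$ attached to some facet $\sigma$ of the Bruhat--Tits building $\mathcal{B}(\bG_\ell^\circ,\Q_\ell)$. Upon passing to a totally ramified extension $F_\ell/\Q_\ell$ whose ramification index is bounded in terms of $n$ (bounded, say, by the order of the affine Weyl group), the simplicial structure of the apartments refines so that $\sigma$ contains a hyperspecial vertex $x$ of $\mathcal{B}(\bG_\ell^\circ,F_\ell)$. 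Then the hyperspecial parahoric $\Omega_\ell:=P_x(F_\ell)$ contains $P_\sigma(\Q_\ell)$, and in particular $\Gamma_\ell'\subset \Omega_\ell$. By Bruhat--Tits, $\Omega_\ell=\cH_\ell(\mathcal{O}_\ell)$ for a connected reductive group scheme $\cH_\ell$ over $\mathcal{O}_\ell$ with generic fiber $\bG_{\ell,F_\ell}^\circ$. Choose an $\mathcal{O}_\ell$-lattice $\mathcal{L}_\ell\subset F_\ell^n$ stabilized by $\Omega_\ell$, and define $\cG_\ell\subset\GL_{\mathcal{L}_\ell}\cong\GL_{n,\mathcal{O}_\ell}$ to be the schematic closure of $\bG_{\ell,F_\ell}^\circ$. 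By construction $\cG_\ell$ is flat and closed, its generic fiber equals $\bG_{\ell,F_\ell}^\circ$, and $\Gamma_\ell'\subset\Omega_\ell\subset\cG_\ell(\mathcal{O}_\ell)$.

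It remains to upgrade $\cG_\ell$ to a reductive $\mathcal{O}_\ell$-group scheme, for which I would invoke Proposition \ref{smooth}. The hypothesis on a hyperspecial maximal compact is already arranged via $\Omega_\ell\subset\cG_\ell(\mathcal{O}_\ell)$. The equality of ranks of the generic and special fibers of $\cG_\ell$ should follow by combining Corollary \ref{rank} (which gives $\rk_\ell(\Gamma_\ell'\cap\bG_\ell^{\der}(\Q_\ell))=\rank\bG_\ell^{\der}$, forcing the semisimple rank of the special fiber to be full) with the equality of formal bi-characters of $\bG_\ell$ and $\uG_\ell$ from Theorem \ref{ae}(v), which handles the central part of the rank. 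The main obstacle is establishing smoothness of $\cG_\ell$: the schematic closure of a smooth generic fiber is in general not smooth, and if the action morphism $\cH_\ell\to\GL_{\mathcal{L}_\ell}$ fails to be a closed immersion on the special fiber (for instance, if a new kernel appears in characteristic $\ell$), then $\cG_\ell\neq\cH_\ell$ and reductivity fails. The plan to overcome this is to choose the lattice $\mathcal{L}_\ell$ together with a dilatation/smoothening procedure, using the bounded formal character from Theorem \ref{ae}(iv) and the rank control above, to show that $\cH_\ell\to\GL_{\mathcal{L}_\ell}$ is a closed immersion on fibers for $\ell$ sufficiently large, thereby identifying $\cG_\ell$ with $\cH_\ell$ and completing the construction.
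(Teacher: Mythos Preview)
Your outline matches the paper's strategy (fixed point in the building, pass to a totally ramified extension to make it hyperspecial, take schematic closure, apply Proposition~\ref{smooth}), but two of the key inputs are either misattributed or left unresolved.

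First, Proposition~\ref{LP1} does not give what you claim: it only asserts that $\bG_\ell^\circ$ \emph{splits} over an unramified extension of $\Q_\ell$, not that it is quasi-split over $\Q_\ell$. So you cannot conclude $\bG_\ell^\circ$ is unramified over $\Q_\ell$, and there is no reason a totally ramified base change alone produces a hyperspecial vertex. The paper fills this gap differently: it uses Corollary~\ref{rank} together with \cite[Theorem~2.11(iii)]{HL20} to show that $\bG_\ell^{\der}$ becomes unramified over a degree-$12$ totally ramified extension $F_\ell^t/\Q_\ell$; Proposition~\ref{LP1} is invoked only for the connected center $\bZ_\ell$ (for tori, splitting over an unramified extension \emph{is} unramifiedness). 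The two pieces are then glued via the canonical bijection $\mathcal{B}(\bG_\ell^{\der})\times\mathcal{B}(\bZ_\ell)\cong\mathcal{B}(\bG_\ell)$ of \cite{Lan00}, so that the centroid fixed by $\Gamma_\ell$ becomes hyperspecial over a further totally ramified $F_\ell/F_\ell^t$.

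Second, the smoothness and constant-rank step---which you correctly flag as the main obstacle---is not handled by dilatation or by comparing $\cG_\ell$ to the abstract Bruhat--Tits scheme $\cH_\ell$. The paper instead takes $\cG_\ell$ to be the Zariski closure of $\Omega_\ell$ in $\GL_{n,\cO_\ell}$ with its reduced structure and invokes \cite[Theorem~9.1 and \S9.2.1]{CHT17}, a result specific to schematic closures arising from compatible systems, which directly yields that $\cG_\ell$ is smooth with constant rank for $\ell\gg0$. Only then does Proposition~\ref{smooth} apply. Your proposed route via Corollary~\ref{rank} and Theorem~\ref{ae}(v) controls the total $\ell$-rank of the finite image $\bar\Gamma_\ell$, not the rank of the special fiber of the schematic closure $\cG_{\ell,\F_\ell}$, so that link is missing as stated.
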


\begin{proof}
For simplicity, assume $K=K^{\conn}$ and $\bG_\ell$ is connected reductive for all $\ell$.
Denote the connected center of $\bG_\ell$ by $\bZ_\ell$.
By Corollary \ref{rank} and \cite[Theorem 2.11(iii)]{HL20}, 
$\bG_\ell^{\der}$ is unramified over 
 a degree $12$ totally ramified extension $F_\ell^t/\Q_\ell$ for $\ell\gg0$.
Let $x_0$ be the centroid of a facet of the \emph{Bruhat-Tits building} $\mathcal B(\bG_\ell,F_\ell^t)$ (see \cite{Ti79},\cite{BT72})
stabilized by $\Gamma_\ell$. For field extension $K_\ell/F_\ell^t$, 
there is a canonical injection of buildings $\mathcal B(\bG_\ell,F_\ell^t)\to \mathcal B(\bG_\ell,K_\ell)$ 
(resp. for $\bG_\ell^{\der}\times F_\ell^t$, $\bZ_\ell\times F_\ell^t$) \cite[Theorem 2.1.1(ii)]{Lan00}
and there are canonical bijections 
\begin{equation}\label{BT}
\mathcal B(\bG_\ell^{\der},K_\ell)\times\mathcal B(\bZ_\ell,K_\ell)\cong\mathcal B(\bG_\ell^{\der}\times\bZ_\ell,K_\ell)
\cong \mathcal B(\bG_\ell,K_\ell)
\end{equation}
that are compatible with field extensions by \cite[Propositions 2.16, 2.17]{Lan00} (see also \cite[$\mathsection4.2.18$]{BT84}).

Let $(x_0',x_0'')\in \mathcal B(\bG_\ell^{\der},F_\ell^t)\times\mathcal B(\bZ_\ell,F_\ell^t)$
be the point corresponding to $x_0$ via \eqref{BT}. By construction, $x_0'$ is the centroid 
of a facet. Since $\bG_\ell^{\der}\times F_\ell^t$ is semisimple and unramified for $\ell\gg0$,
the point $x_0'$ becomes hyperspecial in $\mathcal B(\bG_\ell^{\der},F_\ell)$
for some totally ramified extension $F_\ell/F_\ell^t$ for $\ell\gg0$.
Since the torus $\bZ_\ell$ is unramified over $\Q_\ell$ for $\ell\gg0$ (Proposition \ref{LP1}),
we conclude that $x_0$ becomes hyperspecial 
in $\mathcal B(\bG_\ell,F_\ell)$ for $\ell\gg0$ (\cite[Lemma 2.4]{La95}).
Thus, the stabilizer of $x_0$ in $\bG_\ell(F_\ell)$ is a hyperspecial maximal 
compact subgroup $\Omega_\ell$
containing $\Gamma_\ell$ (by construction) for $\ell\gg0$. 
 Let $\mathcal{L}_\ell\subset F_\ell^n$ be an $\mathcal O_{\ell}$-lattice
stabilized by $\Omega_\ell$. Let $\cG_\ell$ denote the Zariski closure of $\Omega_\ell$ in $\GL_{n,\cO_\ell}$ endowed with its
reduced scheme structure.  Then $\cG_\ell$ is an affine smooth group scheme with constant rank over $\cO_\ell$
and generic fiber $(\bG_\ell)_{F_\ell}$ for $\ell\gg0$ by 
\cite[Theorem 9.1 and $\mathsection 9.2.1$]{CHT17}\footnote{Although \cite[$\mathsection9$]{CHT17} is proved in the setting 
$\cO_\ell=\Z_\ell$ for all $\ell$, the arguments there work equally well 
for the system $\{\rho_\ell:\Gal_K\to\GL_n(\cO_\ell)\}_{\ell\gg0}$, see also \cite[Proposition 1.3]{LP95}.}.  
By Proposition~\ref{smooth},
if $\ell$ is sufficiently large, $\cG_\ell$ is connected and reductive.
Finally, note that $F_\ell$ is also a totally ramified extension of $\Q_\ell$.\end{proof}

\begin{cor}\label{fb}
For all sufficiently large $\ell$, the special fiber $\cG_{\ell,\F_\ell}$ and $\bG_\ell$
have the same formal bi-character, which is independent of $\ell$.
\end{cor}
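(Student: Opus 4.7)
The plan is to exploit the fact that $\cG_\ell$, being a connected reductive group scheme over $\cO_\ell$ by Proposition~\ref{scheme}, carries a maximal torus that specializes well, so that the formal bi-character is constant in the family $\cG_\ell \to \Spec \cO_\ell$. Combined with Theorem~\ref{Hui1}, this should give both the equality with the formal bi-character of $\bG_\ell$ and its $\ell$-independence in a single stroke.

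First, I would invoke the structure theory of reductive group schemes: the derived subgroup $\cG_\ell^{\der}$ is itself a semisimple group scheme over $\cO_\ell$, and after passing to a finite étale cover $\cO_\ell'/\cO_\ell$ (over which $\cG_\ell$ becomes split) one can exhibit a closed chain of $\cO_\ell'$-subtori $\cT_\ell^{\ss} \subset \cT_\ell \subset \cG_\ell \times_{\cO_\ell} \cO_\ell'$, where $\cT_\ell$ is a maximal torus of $\cG_\ell \times_{\cO_\ell} \cO_\ell'$ and $\cT_\ell^{\ss}$ is a maximal torus of $\cG_\ell^{\der} \times_{\cO_\ell} \cO_\ell'$ (the latter being $\cT_\ell \cap \cG_\ell^{\der}$, by Proposition~\ref{prepare} applied fiberwise). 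Composing with the closed immersion $\cG_\ell \subset \GL_{n, \cO_\ell}$ gives a chain $\cT_\ell^{\ss} \subset \cT_\ell \subset \GL_{n, \cO_\ell'}$ of closed $\cO_\ell'$-subtori which restricts to a chain of maximal tori on both the generic fiber $\cG_{\ell, F_\ell'}$ and the special fiber $\cG_{\ell, \F_\ell'}$.

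Second, I would translate this into a statement about formal bi-characters. Since the character group of a torus is locally constant on the base, and since both fibers of $\cT_\ell \subset \GL_{n, \cO_\ell'}$ embed as subtori of the diagonal maximal torus after further étale extension and conjugation, the generic and special fibers yield the same diagonal $\Z$-subtori of $\mathbb{G}_{m,\Z}^n$, up to the equivalence defining formal bi-characters. Hence
\[
\text{formal bi-character of } \cG_{\ell,\F_\ell} = \text{formal bi-character of } \cG_{\ell,F_\ell}.
\]
Because the generic fiber of $\cG_\ell$ is $\bG_\ell^\circ \times_{\Q_\ell} F_\ell$ and the formal bi-character is defined after passage to algebraic closure, this latter bi-character coincides with that of $\bG_\ell^\circ$, which is by convention the formal bi-character of $\bG_\ell$.

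Finally, Theorem~\ref{Hui1} asserts that the formal bi-character of $\bG_\ell \subset \GL_n$ is independent of $\ell$, giving the last clause of the corollary. The main technical obstacle I foresee is the bookkeeping around the étale extension $\cO_\ell'/\cO_\ell$ needed to split $\cG_\ell$ and place $\cT_\ell$ inside the diagonal torus of $\GL_{n,\cO_\ell'}$; however, since formal bi-characters are defined after passing to algebraic closure, this only enters as a harmless intermediate step and does not affect the final comparison.
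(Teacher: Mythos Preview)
Your proposal is correct and follows essentially the same argument as the paper: pass to an \'etale extension $\cO_\ell'/\cO_\ell$ over which $\cG_\ell$ acquires a maximal torus $\cT_\ell'$, observe that the chain $(\cT_\ell'\cap\cG_\ell'^{\der})\subset\cT_\ell'\subset\GL_{n,\cO_\ell'}$ specializes to the formal bi-character on both fibers (the paper cites \cite[Corollary~3.2.7, Theorem~5.3.1, Proposition~5.3.4]{Co14} for these scheme-theoretic facts, where you invoke Proposition~\ref{prepare} fiberwise), and then conclude $\ell$-independence via Theorem~\ref{Hui1}. The only cosmetic difference is that the paper phrases the constancy of the chain as ``can be descended to a chain of split subtori of $\GL_{n,\Z}$'' rather than via local constancy of character lattices, but these amount to the same thing.
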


\begin{proof}
Consider the reductive group scheme $\cG_\ell$ for $\ell\gg0$ (Proposition \ref{scheme}).
The base change $\cG_\ell':=\cG_\ell\times \mathcal{O}_{\ell}'$ has a maximal torus $\cT_\ell'$ 
for some \'etale extension of $\mathcal{O}_{\ell}'$ of $\mathcal{O}_{\ell}$ \cite[Corollary 3.2.7]{Co14}.
The reductive group scheme $\cG_\ell'$ has a derived group $\cG_\ell'^{\der}$ \cite[Theorem 5.3.1]{Co14}
such that the special (resp. generic) fiber of 
\begin{equation}\label{fbO}
(\cT_\ell'\cap\cG_\ell'^{\der})\subset\cT_\ell'\subset\GL_{n,\mathcal{O}_{\ell}'}
\end{equation}
gives the formal bi-character of $\cG_{\ell,\F_\ell}$ (resp. $\bG_\ell$) \cite[Proposition 5.3.4]{Co14}. As \eqref{fbO} can be descended to a chain of split subtori of $\GL_{n,\Z}$,
the special fiber of $\cG_{\ell,\F_\ell}$ and $\bG_\ell$
have the same formal bi-character.
The $\ell$-independence assertion follows  from Theorem \ref{Hui1}.
\end{proof}

\subsubsection{Intermediate group $\uG_\ell'$}
Consider the mod $\ell$ representation 
$$\widetilde\rho_\ell:\Gal_{K^{\conn}}\to \cG_\ell(\mathcal O_{\ell})
\subset\GL(\mathcal{L}_\ell)\cong\GL_n(\mathcal O_{\ell})\to\GL_n(\F_\ell)$$
for $\ell\gg0$, where $\cG_\ell$ is the reductive group scheme constructed in Proposition \ref{scheme}.
The semisimplification of $\widetilde\rho_\ell$ is isomorphic to $\bar\rho_\ell^{\ss}|_{\Gal_{K^{\conn}}}$ and the 
image $\widetilde\rho_\ell(\Gal_L)$ (as $K^{\conn}\subset L$) lies in the group of  $\F_\ell$-points of the special fiber $\cG_{\ell,\F_\ell}$, which is identified as a 
connected reductive subgroup of $\GL_{n,\F_\ell}$ by Proposition~\ref{scheme}.

\begin{prop}\label{inter}
For all sufficiently large $\ell$, there exists a connected subgroup $\uG_\ell'\subset\cG_{\ell,\F_\ell}$ 
satisfying the following properties.
\begin{enumerate}[(i)] 
\item The image $\widetilde\rho_\ell(\Gal_L)$ is a subgroup of $\uG_\ell'(\F_\ell)$. 
\item The two groups $\uG_\ell'$ and $\uG_\ell$ have the same rank and the same semisimple rank.
\item The semisimplification of the representation $\uG_\ell'\subset\GL_{n,\F_\ell}$ is the representation of the algebraic envelope $\uG_\ell\subset\GL_{n,\F_\ell}$.
\end{enumerate}
\end{prop}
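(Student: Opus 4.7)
The plan is to construct $\uG_\ell'$ inside $\cG_{\ell,\F_\ell}$ by combining a Nori-theoretic piece arising from $\widetilde\rho_\ell(\Gal_L)$ (providing the derived part of $\uG_\ell$) with a suitable maximal torus of $\cG_{\ell,\F_\ell}$ (supplying the remaining central rank).

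First, I would let $\uN_\ell\subset\GL_{n,\F_\ell}$ denote the Nori group of $\widetilde\rho_\ell(\Gal_L)\subset\GL_n(\F_\ell)$, generated by the one-parameter subgroups $x^t=\mathrm{exp}(t\cdot\mathrm{log}(x))$ as $x$ ranges over the order-$\ell$ elements of $\widetilde\rho_\ell(\Gal_L)$. Since each such $x$ lies in the closed subgroup scheme $\cG_{\ell,\F_\ell}\subset\GL_{n,\F_\ell}$ (smooth and reductive by Proposition \ref{scheme}), the one-parameter subgroup $x^t$ factors through $\cG_{\ell,\F_\ell}$, giving $\uN_\ell\subset\cG_{\ell,\F_\ell}$. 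By Theorem~\ref{Nori}, $\uN_\ell$ is connected with $\uN_\ell(\F_\ell)^+=\widetilde\rho_\ell(\Gal_L)^+$.

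Next, I would identify the semisimplification of $\uN_\ell$. Taking a Jordan--H\"older filtration $\mathcal F$ of $\F_\ell^n$ under $\widetilde\rho_\ell(\Gal_L)$, let $P\subset\GL_{n,\F_\ell}$ be the parabolic stabilizing $\mathcal F$ and $\pi\colon P\twoheadrightarrow L\cong\prod_i\GL(V_i/V_{i-1})$ its Levi quotient. Since $\uN_\ell\subset P$ and $\pi$ commutes with truncated $\exp$ and $\log$, the image $\pi(\uN_\ell)$ is the Nori group of $\bar\rho_\ell^{\ss}(\Gal_L)=\pi(\widetilde\rho_\ell(\Gal_L))$ in $L$. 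For $\ell>[L:K]$, every order-$\ell$ element of $\bar\rho_\ell^{\ss}(\Gal_K)$ lies in $\bar\rho_\ell^{\ss}(\Gal_L)$, so $\bar\rho_\ell^{\ss}(\Gal_L)^+=\bar\rho_\ell^{\ss}(\Gal_K)^+$ and their Nori groups coincide. By Theorem~\ref{ae}(i) this common Nori group is $\uG_\ell^{\der}$, giving $\pi(\uN_\ell)=\uG_\ell^{\der}$.

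To obtain the full rank, I would choose a maximal torus $T\subset\cG_{\ell,\F_\ell}$ that normalizes $\uN_\ell$ (which exists since $\uN_\ell$ is exponentially generated, so its normalizer in the connected reductive $\cG_{\ell,\F_\ell}$ contains a maximal torus) and, after a suitable $\cG_{\ell,\F_\ell}(\F_\ell)$-conjugation, so that $T(\F_\ell)$ contains the semisimple parts (via Jordan decomposition in $\cG_{\ell,\F_\ell}(\overline\F_\ell)$) of representatives of the prime-to-$\ell$ quotient $\widetilde\rho_\ell(\Gal_L)/\widetilde\rho_\ell(\Gal_L)^+$. Defining $\uG_\ell':=T\cdot\uN_\ell\subset\cG_{\ell,\F_\ell}$, which is connected, property (i) follows from $\widetilde\rho_\ell(\Gal_L)^+\subset\uN_\ell(\F_\ell)$ and Jordan-decomposing the lifts of the prime-to-$\ell$ cosets; for (ii), $\rank T=\rank\cG_{\ell,\F_\ell}=\rank\uG_\ell$ by Corollary \ref{fb} and Theorem \ref{Hui1}, and the semisimple rank of $\uG_\ell'$ equals that of $\uN_\ell$, matching $\uG_\ell^{\der}$; for (iii), $\pi(\uG_\ell')=\pi(T)\cdot\uG_\ell^{\der}$, and comparison of formal bi-characters (Corollary \ref{fb}, Theorem \ref{Hui1}) forces $\pi(T)$ to be a maximal torus of $\uG_\ell$ in $L$, so $\pi(\uG_\ell')=\uG_\ell$.

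The hardest part will be the choice of the torus $T$: it must simultaneously normalize $\uN_\ell$, absorb (via Jordan decomposition) the prime-to-$\ell$ coset representatives of $\widetilde\rho_\ell(\Gal_L)$, and project under $\pi$ onto a maximal torus of $\uG_\ell$. This compatibility will rely on the subsystem-subgroup structure of $\uN_\ell$ inside the reductive $\cG_{\ell,\F_\ell}$, Jordan decomposition in $\cG_{\ell,\F_\ell}(\overline\F_\ell)$, and a conjugation alignment across the Levi quotient $\pi$.
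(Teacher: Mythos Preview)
Your overall plan—pair the Nori group $\uN_\ell$ of $\widetilde\rho_\ell(\Gal_L)$ with a toral piece inside $\cG_{\ell,\F_\ell}$—is the paper's plan too, and your identification $\pi(\uN_\ell)=\uG_\ell^{\der}$ is correct. The real gap is the torus $T$.

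You require $T$ to simultaneously (a) normalize $\uN_\ell$, (b) contain the semisimple parts of a full set of coset representatives of $\widetilde\rho_\ell(\Gal_L)/\widetilde\rho_\ell(\Gal_L)^+$, and (c) project under $\pi$ to a maximal torus of $\uG_\ell$. Condition (b) is generally impossible: the semisimple parts of distinct representatives need not commute, so no single torus contains them all. Even for a single $g=g_sg_u$ with $g_s\in T$, you would still need $g_u\in\uN_\ell$, and being unipotent in $\cG_{\ell,\F_\ell}$ does not force that. Condition (c) is also unjustified: your map $\pi$ is only defined on the parabolic $P$ stabilizing the chosen filtration, and a maximal torus of $\cG_{\ell,\F_\ell}$ need not lie in $P$; even if it did, equality of formal bi-characters only says $\pi(T)$ is $\GL_n$-conjugate to a maximal torus of $\uG_\ell$, not that $\pi(T)\subset\uG_\ell$. (A smaller issue: your claim that $x^t$ automatically lands in $\cG_{\ell,\F_\ell}$ because $x$ does is not immediate in positive characteristic; the paper instead uses \cite[Proposition~3.1]{HL20} to identify the Nori group of $\cG_\ell(\F_\ell)$ with $\cG_{\ell,\F_\ell}^{\der}$, which then contains $\uN_\ell$.)

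The paper avoids all of this by taking the toral piece to be the \emph{connected center} $\uZ_\ell$ of $\cG_{\ell,\F_\ell}$ rather than a maximal torus, setting $\uG_\ell':=\uN_\ell\cdot\uZ_\ell$. Centrality makes normalization automatic and yields (ii) directly, since the central ranks of $\cG_{\ell,\F_\ell}$ and $\uG_\ell$ agree by Theorem~\ref{ae}(v) and Proposition~\ref{scheme}. Crucially, (i) is \emph{not} built into the construction but proved afterwards: one forms $\uP_\ell:=\widetilde\rho_\ell(\Gal_L)\cdot\uG_\ell'$, bounds $|\uP_\ell/\uP_\ell^\circ|$ by injecting into the Weyl group of $\cG_{\ell,\F_\ell}$, passes to the semisimplification $\uP_\ell^{\red}$, and applies the index/formal-character criterion \cite[Proposition~2.23]{HL20} together with Theorem~\ref{ae}(ii),(iv) to force $\uG_\ell\subset\uP_\ell^{\red,\circ}$. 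This gives connectedness of $\uP_\ell$, hence (i), and the dimension comparison $\dim\uG_\ell=\dim\uP_\ell^{\red}$ then gives (iii). Your direct Jordan-decomposition route cannot replace this argument.
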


\begin{proof}

For $\ell\gg0$, let $\uS_\ell'$ be the Nori group of $\widetilde\rho_\ell(\Gal_L)\subset\GL_n(\F_\ell)$ 
and let  $\uS_\ell''$ be the Nori group of $\cG_\ell(\F_\ell)\subset\GL_n(\F_\ell)$.
By the $\ell$-independence of 
the formal bi-character of $\cG_{\ell,\F_\ell}$ for $\ell\gg0$ (Corollary \ref{fb})
and \cite[Proposition 3.1]{HL20}, 
the Nori group $\uS_\ell''$ is the derived group of $\cG_{\ell,\F_\ell}$ for $\ell\gg0$
and thus we conclude $\uS_\ell'\subset\cG_{\ell,\F_\ell}$.
Define $\uG_\ell'$ as the subgroup of $\cG_{\ell,\F_\ell}$ generated by $\uS_\ell'$ and the connected center $\uZ_\ell$ of $\cG_{\ell,\F_\ell}$
for $\ell\gg0$. The subgroup $\uG_\ell'$ is equal to the product $\uS_\ell'\uZ_\ell$ 
and has unipotent radical equal to that of $\uS_\ell'$.
Since the semisimplification of $\widetilde\rho_\ell|_{\Gal_L}$ is $\bar\rho_\ell^{\ss}|_{\Gal_L}$,
the semisimplification of the representation $\uS_\ell'\subset\GL_{n,\F_\ell}$ is 
$\uG_\ell^{\der}\subset\GL_{n,\F_\ell}$ for $\ell\gg0$ by the construction of Nori groups ($\mathsection$\ref{N}) and Theorem \ref{ae}(i).
Hence, we obtain (ii) by the construction of $\uG_\ell'$, Theorem \ref{ae}(v), 
and the fact that $\cG_\ell$ is reductive with generic fiber $\bG^\circ_{\ell, F_\ell}$ (Proposition \ref{scheme}). 

Since $\widetilde\rho_\ell(\Gal_L)$ normalizes $\uS_\ell'$ (see $\mathsection$\ref{N}) and of course $\uZ_\ell$,
the product  $\uP_\ell:=\widetilde\rho_\ell(\Gal_L)\uG_\ell'$ is a subgroup of $\cG_{\ell,\F_\ell}$ 
with identity component $\uP_\ell^\circ=\uG_\ell'$ for $\ell\gg0$. Let $\uT_\ell$ be a maximal torus of $\uG_\ell'$.
For each component of $\uP_\ell$, choose an $\overline\F_\ell$-point $g$ that normalizes $\uT_\ell$.
Since $\uT_\ell$ is also a maximal torus of  $\cG_{\ell,\F_\ell}$ by (ii), 
we obtain a function $f$ from the component group $\uP_\ell/\uP_\ell^\circ$ to the Weyl group of $\cG_{\ell,\F_\ell}$ which 
has size bounded by some constant $C'(n)$ depending only on $n$. The function $f$ is injective since 
the centralizer of $\uT_\ell$ in $\cG_{\ell,\F_\ell}$ is $\uT_\ell$ which belongs to $\uG_\ell'$.
Hence, we conclude that 
\begin{equation}\label{lowbdd}
|\uP_\ell/\uP_\ell^\circ|\leq C'(n)\hspace{.2in} \text{for}\hspace{.1in} \ell\gg0.
\end{equation}

Let $\uP_\ell^{\red}\subset\GL_{n,\F_\ell}$ be the semisimplification of  
the representation $\uP_\ell\subset\GL_{n,\F_\ell}$ with 
\begin{equation}\label{redp}
r:\uP_\ell\to \uP_\ell^{\red}
\end{equation}
is the morphism. For $\ell> C'(n)$,
the kernel of \eqref{redp}
 is the unipotent radical of $\uS_\ell'$, which is connected.
Thus, $\uP_\ell$ and $\uP_\ell^{\red}$ have the same number of connected components.
We would like to show that this number is $1$ for $\ell\gg0$.
The image $G$ of $\Gal_L$ in $\uP_\ell^{\red}(\F_\ell)$ acts semisimply on the ambient space because
the Nori group of $G$ is by construction the semisimplification 
of $\uS_\ell'\subset\GL_{n,\F_\ell}$  and
$G^+$ is a normal subgroup of $G$ of prime to $\ell$ index.
Hence, for $\ell\gg0$ the representation $\Gal_L\to \uP_\ell^{\red}(\F_\ell)\subset\GL_n(\F_\ell)$
is just $\bar\rho_\ell^{\ss}|_{\Gal_L}$ and the algebraic envelope $\uG_\ell$ enters the picture:
\begin{equation}\label{pic}
\bar\rho_\ell^{\ss}(\Gal_L)\subset \uP_\ell^{\red} \cap \uG_\ell \subset\GL_{n,\F_\ell}.
\end{equation}
By  Theorem \ref{ae}(iv) and Corollary \ref{fb}, 
$\{\uP_\ell^{\red,\circ}\}_{\ell\gg0}$ and
$\{\uG_\ell\}_{\ell\gg0}$ have bounded formal characters.
By \eqref{lowbdd}, \eqref{pic},  and Theorem \ref{ae}(ii), the ratio
$$\frac{|\uG_\ell(\F_\ell)\cap \uP_\ell^{\red,\circ}(\F_\ell)|}{|\uG_\ell(\F_\ell)|}$$
 has a positive lower bound as $\ell\to \infty$.
By \cite[Proposition 2.23]{HL20}, it follows that $\uG_\ell\subset \uP_\ell^{\red,\circ}$ for $\ell\gg0$. 
Since $\uP_\ell:=\widetilde\rho_\ell(\Gal_L)\uG_\ell'$, 
it follows that 
$$\uP_\ell^{\red}=r(\widetilde\rho_\ell(\Gal_L))\cdot r(\uG_\ell')=\bar\rho_\ell^{\ss}(\Gal_L)\cdot r(\uG_\ell')\subset \uG_\ell \cdot r(\uG_\ell')\subset \uP_\ell^{\red,\circ}$$
because $\uG_\ell'=\uS_\ell'\uZ_\ell$ is connected, here $r$ is the surjection \eqref{redp}.
Thus, $\uP_\ell^{\red}$ is connected, and the same is true for $\uP_\ell$, which implies (i).
As $\uP_\ell^{\red}\supset \uG_\ell$ is just the semisimplification of $\uP_\ell = \uG_\ell'$,
and the (connected) groups 
$\uG_\ell$ and $\uP_\ell^{\red}$ have the same dimension (see Theorem \ref{ae} and Proposition \ref{scheme}),
they coincide, giving (iii).
\end{proof}

\subsubsection{Irreducible decomposition}\label{irred}
Suppose we have an irreducible decomposition of the semisimple representation 
$\bG_\ell^\circ\to \GL_{n,\overline\Q_\ell}$ of the identity component
 $\bG_\ell^\circ$ over $\overline\Q_\ell$ for $\ell\gg0$:
\begin{equation}\label{decom1}
\overline\Q_\ell^n=\bigoplus_{j\in J} U_{\ell,j}.
\end{equation}
Since $\bG_\ell^\circ$ splits over an unramified extension of $\Q_\ell$ by Proposition \ref{LP1}, 
the decomposition \eqref{decom1} is also defined over a finite unramified extension $F_\ell'$ of $F_\ell$ (defined in Proposition~\ref{scheme}):
\begin{equation}\label{decom2}
(F_\ell')^n=\bigoplus_{j\in J} U_{\ell,j}'
\end{equation}
such that each $U_{\ell,j}'$ is an absolutely irreducible representation of $\bG_\ell^\circ$ over $F_\ell'$.
Let $\mathcal{O}_{\ell}'$ be the ring of integers of $F_\ell'$.
Define the following hyperspecial maximal bounded subgroup of $\bG_\ell^{\circ}( F_\ell')$:
\begin{equation}\label{biggroup}
\Omega_{\ell}':=\cG_\ell(\mathcal{O}_{\ell}'),
\end{equation} 
where $\cG_\ell/\cO_\ell$ is the reductive group scheme in Proposition \ref{scheme}.
For each $j$, choose an $\mathcal{O}_{\ell}'$-lattice $\mathcal{L}_{\ell,j}'$ of 
$U_{\ell,j}'$ stable under $\Omega_{\ell}'$. The direct sum 
$$\mathcal{L}_{\ell}':=\bigoplus_{j\in J} \mathcal{L}_{\ell,j}'$$
is a lattice of $(F_\ell')^n$ and we define
$\cH_{\ell}'$ to be the Zariski closure of $\Omega_{\ell}'$ in $\GL_{\mathcal{L}_{\ell}'}$ with reduced structure. 

\begin{prop}\label{prep1}
For all sufficiently large $\ell$, the $\mathcal{O}_{\ell}'$-subscheme
\begin{equation}\label{chain1}
\cH_{\ell}'\subset\prod_{j\in J}\GL_{\mathcal{L}_{\ell,j}'}\subset\GL_{\mathcal{L}_{\ell}'}
\end{equation}
is split reductive and there is an $\overline\F_\ell$-isomorphism $\alpha:\cH_{\ell,\overline\F_\ell}'\to \cG_{\ell,\overline\F_\ell}$ of reductive groups 
such that the two semisimple representations below 
\begin{align*}
\begin{split}
\cH_{\ell}'(\overline\F_\ell)&\stackrel{\alpha}{\longrightarrow} \cG_\ell(\overline\F_\ell)
\to\GL(\mathcal{L}_\ell\otimes_{\cO_\ell}\overline\F_\ell)\cong\GL_n(\overline\F_\ell)\\
\cH_{\ell}'(\overline\F_\ell)&\to \prod_{j\in J}\GL(\mathcal{L}_{\ell,j}'\otimes_{\cO_\ell'}\overline\F_\ell)
\subset\GL(\mathcal{L}_{\ell}'\otimes_{\cO_\ell'}\overline\F_\ell)\cong\GL_n(\overline\F_\ell)
\end{split}
\end{align*}
are equivalent, where $\cG_\ell\to\GL_{\mathcal{L}_\ell}$ is the reductive scheme in Proposition \ref{scheme}.
\end{prop}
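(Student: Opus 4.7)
My plan is to mimic the proof of Proposition \ref{scheme}, identifying $\cH_\ell'$ with (a reincarnation of) the Bruhat--Tits group scheme attached to the same hyperspecial point $x_0' \in \mathcal{B}(\bG_\ell, F_\ell')$ used to construct $\cG_\ell$. The inclusion $\cH_\ell' \subset \prod_j \GL_{\mathcal{L}_{\ell,j}'}$ is immediate: the decomposition $(F_\ell')^n = \bigoplus_j U_{\ell,j}'$ is a decomposition of $\bG_{\ell,F_\ell'}^\circ$-modules, so every $g \in \Omega_\ell'$ preserves each $U_{\ell,j}'$ and hence each chosen $\Omega_\ell'$-stable lattice $\mathcal{L}_{\ell,j}'$; the Zariski closure of $\Omega_\ell'$ then lies in the block-diagonal subgroup.

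Next, I would verify that $\cH_\ell'$ is a smooth connected reductive $\cO_\ell'$-group scheme, paralleling the construction of $\cG_\ell$. By smoothness of $\cG_\ell$, the integral points $\Omega_\ell' = \cG_\ell(\cO_\ell')$ are Zariski dense in $\cG_{\ell,F_\ell'} = \bG_{\ell,F_\ell'}^\circ$, so the generic fiber of $\cH_\ell'$ is $\cG_{\ell,F_\ell'}$ acting via the faithful representation $\cG_{\ell,F_\ell'} \hookrightarrow \prod_j \GL(U_{\ell,j}')$. Since $\Omega_\ell'$ is a hyperspecial maximal compact of $\bG_{\ell,F_\ell'}^\circ(F_\ell')$ stabilizing $\mathcal{L}_\ell'$, \cite[Theorem 9.1]{CHT17} (applied as in Proposition \ref{scheme}) produces $\cH_\ell'$ as a smooth affine $\cO_\ell'$-group scheme of constant rank. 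Invoking Proposition \ref{smooth} via the hyperspecial inclusion $\Omega_\ell' \subseteq \cH_\ell'(\cO_\ell')$ then yields connected reductivity for $\ell \gg 0$. Splitness is inherited from splitness of $\bG_{\ell,F_\ell'}^\circ$ over $F_\ell'$, guaranteed by the choice of $F_\ell'$ in \eqref{decom2}.

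To construct $\alpha$ I would exploit the uniqueness of Bruhat--Tits integral models at a hyperspecial point. Both $\cG_{\ell,\cO_\ell'}$ and $\cH_\ell'$ are smooth connected reductive $\cO_\ell'$-group schemes sharing the same generic fiber $\cG_{\ell,F_\ell'}$ and the same $\cO_\ell'$-points $\Omega_\ell'$ (attached to $x_0'$). Hence the generic fiber identity extends to an $\cO_\ell'$-group scheme isomorphism $\beta: \cH_\ell' \xrightarrow{\sim} \cG_{\ell,\cO_\ell'}$, and I define $\alpha := \beta_{\overline\F_\ell}$. For the equivalence of the two $n$-dimensional representations of $\cH_{\ell,\overline\F_\ell}'$, note that both arise by reducing mod $\ell$ the action on two $\Omega_\ell'$-stable lattices, $\mathcal{L}_\ell \otimes_{\cO_\ell} \cO_\ell'$ and $\mathcal{L}_\ell'$, in the common $F_\ell'$-representation of $\cG_{\ell,F_\ell'}$ on $(F_\ell')^n$. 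Their Brauer characters therefore coincide, being the mod $\ell$ reduction of the common generic fiber character. By Corollary \ref{fb}, the formal bi-character of the generic fiber representation is bounded independently of $\ell$, so the highest weights appearing in the corresponding $\cH_{\ell,\overline\F_\ell}'$-representations are uniformly bounded; for $\ell \gg 0$ all such $n$-dimensional representations of the reductive group $\cH_{\ell,\overline\F_\ell}'$ are semisimple, and Brauer--Nesbitt yields the equivalence.

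The main obstacle I anticipate is the Bruhat--Tits uniqueness step: ensuring that the ``same $\cO_\ell'$-points'' condition on the two integral models produces a canonical $\cO_\ell'$-group scheme isomorphism extending the generic fiber identity, rather than merely an abstract group isomorphism. A secondary subtlety is verifying semisimplicity of the reductions mod $\ell$, where the $\ell$-uniform bound on weights provided by Corollary \ref{fb} is essential.
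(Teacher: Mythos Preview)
Your approach is sound and would work, but it diverges from the paper precisely at the step you flagged as the main obstacle. The paper does \emph{not} invoke Bruhat--Tits uniqueness of integral models. Instead, it first observes (as you do) that the two reductions $\Omega_\ell'\twoheadrightarrow\cG_\ell(\F_q)\hookrightarrow\GL_n(\F_q)$ and $\Omega_\ell'\twoheadrightarrow\cH_\ell'(\F_q)\hookrightarrow\GL_n(\F_q)$ are equivalent as semisimple $\Omega_\ell'$-representations by Brauer--Nesbitt, so after conjugating by some $g\in\GL_n(\F_q)$ the images $\cG_\ell(\F_q)$ and $\cH_\ell'(\F_q)$ coincide as subgroups of $\GL_n(\F_q)$. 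The paper then notes that for $\ell\gg0$ both special fibers have the same formal character (the characteristic torus $T$ of a good place, via \cite[Proposition~1.3]{LP95}), and invokes \cite[Proposition~2.23]{HL20}: two connected reductive subgroups of $\GL_{n,\F_q}$ with the same formal character and the same $\F_q$-points must be equal once $\ell$ is large in terms of $T$ and $n$. Thus $\alpha$ is simply conjugation by $g$, and the equivalence of the two representations is built into the construction.

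The trade-off: your Bruhat--Tits route is more conceptual and gives an $\cO_\ell'$-isomorphism (not just over $\overline\F_\ell$), but requires carefully citing the characterization of reductive integral models by their $\cO_\ell'$-points; the paper's route stays within the toolkit already assembled in \cite{HL20} and sidesteps that citation entirely. One small point: when you invoke \cite[Theorem~9.1]{CHT17}, note that its hypotheses concern the Zariski closure of a \emph{Galois image} in a compatible system, not merely of a hyperspecial subgroup; the paper makes this explicit by remarking that $\rho_\ell(\Gal_{K^{\conn}})$ is already Zariski dense in $\bG_\ell^\circ$, so the Zariski closure of $\Omega_\ell'$ in $\GL_{\mathcal L_\ell'}$ coincides with that of the Galois image, and the SCS hypothesis of \cite{CHT17} is in force.
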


\begin{proof}
Since $\cH_{\ell}'$ is the Zariski closure (with reduced structure) in $\GL_{\mathcal{L}_{\ell}'}$ 
of the image of 
$$\rho_\ell:\Gal_{K^{\conn}}\to\cG_\ell(\mathcal{O}_{\ell})\subset\Omega_{\ell}'
=\cH_{\ell}'(\mathcal{O}_{\ell}')\subset\GL(\mathcal{L}_{\ell}')\cong\GL_n(\mathcal{O}_{\ell}')$$
and $\{\rho_\ell\}_\ell$ is a SCS, the $\mathcal{O}_{\ell}'$-group scheme $\cH_{\ell}'$ for $\ell\gg0$
is smooth with constant rank by \cite[Theorem 9.1 and $\mathsection 9.2.1$]{CHT17}
and hence reductive by Proposition \ref{smooth}. 
Since $\bG_\ell^\circ$ splits over $F_\ell'$, the group scheme $\cH'_\ell$ splits over $\cO_\ell'$.
Let $\F_q$ be the residue field of $F_\ell'$.
Suppose $\ell\geq n$ is large enough so that 
the special fibers $\cG_{\ell,\F_q}$ and $\cH'_{\ell,\F_q}$ are reductive
and their actions on $\mathcal{L}_\ell\otimes_{\cO_\ell}\overline\F_q$
and $\mathcal{L}'_\ell\otimes_{\cO'_\ell}\overline\F_q$ are semisimple \cite{Ja97}.
Then the $\F_q$-rational points $\cG_{\ell}(\F_q)$ and $\cH'_{\ell}(\F_q)$
are also semisimple on the ambient spaces if $\ell$ is large in terms of $n$.
It follows from the Brauer-Nesbitt Theorem that the two semisimple representations
\begin{align*}
\begin{split}
\Omega_\ell'&\twoheadrightarrow \cG_\ell(\F_q)\to\GL(\mathcal{L}_\ell\otimes_{\cO_\ell}\F_q)\cong\GL_n(\F_q)\\
\Omega_\ell'&\twoheadrightarrow \cH_{\ell}'(\F_q)\to \GL(\mathcal{L}_{\ell}'\otimes_{\cO_\ell'}\F_q)\cong\GL_n(\F_q)
\end{split}
\end{align*}
are equivalent. For $\ell\gg0$, 
the formal character of the fibers of $\cG_\ell$ and $\cH_\ell'$
are all equal to some torus $T\subset\GL_n$, where $T$ is the characteristic torus of
some \emph{good place}\footnote{The set of good places is of positive Dirichlet density \cite[Proposition 7.2]{LP92}.} of $K^{\conn}$ by the first paragraph of the proof of \cite[Proposition 1.3]{LP95}. To finish the proof, it suffices to show that 
if $\uG_1$ and $\uG_2$ are connected reductive subgroups of $\GL_{n,\F_q}$ with formal character $T$
such that $\uG_1(\F_q)=\uG_2(\F_q)$, then $\uG_1=\uG_2$ when $\ell$ is sufficiently large 
in terms of $T$ and $n$. This follows from \cite[Proposition 2.23]{HL20} with 
$\F_\ell$ replaced by $\F_q$\footnote{The argument there also works for finite field $\F_q$.}.
\end{proof}

\subsection{Algebraic envelopes of subrepresentations of SCS}\label{main}
\subsubsection{Setting}\label{setting}
Let $\{\rho_\ell:\Gal_K\to \GL_n(\Q_\ell)\}_\ell$ be the semisimple rational SCS in Theorem \ref{ae}.
Given a (not necessarily irreducible) decomposition of the action of $\Gal_K$ or $\bG_\ell$ over $\overline\Q_\ell$:
\begin{equation}\label{decom3}
\rho_\ell\otimes\overline\Q_\ell=\overline\Q_\ell^n=\bigoplus_{i\in I} W_{\ell,i}.
\end{equation}
Again by reduction (modulo some $\overline\Z_\ell$-lattice of $W_{\ell,i}$ for each $i$) 
and semisimplification, we obtain a direct sum decomposition 
\begin{equation}\label{decom4}
\bar\rho_\ell^{\ss}\otimes\overline\F_\ell=\overline\F_\ell^n=\bigoplus_{i\in I} \overline{W}_{\ell,i}^{\ss}.
\end{equation}
of $\bar\rho_\ell^{\ss}(\Gal_K)$ and $\bar\rho_\ell^{\ss}(\Gal_L)$ over $\overline\F_\ell$,
where $L/K$ is the Galois extension in Theorem \ref{ae}. 
The decomposition \eqref{decom4} is also a decomposition of $\uG_\ell$ on $\overline\F_\ell^n$ by Theorem \ref{ae}(vi)
and Proposition \ref{lem1}(ii) for $\ell\gg0$.
Define the following notation.

\begin{itemize}
\item $n_i$: the $\overline\Q_\ell$-dimension of $W_{\ell,i}$.
\item $\bG_{W_{\ell,i}}$: the image of $\bG_{\ell,\overline\Q_\ell}\to \GL_{W_{\ell,i}}\cong\GL_{n_i,\overline\Q_\ell}$, which is also
the algebraic monodromy group of the Galois representation $W_{\ell,i}$.
\item $\uG_{W_{\ell,i}}$: the image of $\uG_{\ell,\overline\F_\ell}\to \GL_{\overline{W}_{\ell,i}^{\ss}}\cong\GL_{n_i,\overline\F_\ell}$,
is called the \emph{algebraic envelope} of the Galois representation $W_{\ell,i}$.
\end{itemize}

\subsubsection{Formal bi-characters of subrepresentations of SCS}
Recall that 
$F_\ell$ is the totally ramified extension of $\Q_\ell$  in Proposition \ref{scheme} with ring of integers $\cO_\ell$ and
$F_\ell'$ is the finite unramified extension of $F_\ell$ in $\mathsection\ref{irred}$ with ring of integers $\cO_\ell'$
and residue field $\F_q$.

\begin{prop}\label{mt}
Let $\{\rho_\ell\}_\ell$ be the semisimple rational SCS in Theorem \ref{ae}.
For all sufficiently large $\ell$, there exists 
a chain of split reductive group schemes defined over $\cO_\ell'$:
\begin{equation}\label{chain2}
\cH_{\ell}'\subset \prod_{j\in J}\GL_{m_j,\cO_\ell'}\subset\GL_{n,\cO_\ell'}
\end{equation}
such that for any component $W_{\ell,i}$ in any decomposition \eqref{decom3},
there exist a subset $J(i)$ of $J$ with $n_i=\sum_{j\in J(i)} m_j$ and a chain of 
connected subgroups\footnote{The notation $J$
depends on $\ell$ and $J(i)$, $m_j$, and $n_i$ all depend on $W_{\ell,i}$.} 
\begin{equation}\label{chain3}
\uG_{W_{\ell,i}}'\subset\uH_{W_{\ell,i}}'\subset\prod_{j\in J(i)}\GL_{m_j,\overline\F_\ell}\subset\GL_{n_i,\overline\F_\ell}
\end{equation}
satisfying the following.
\begin{enumerate}[(i)]
\item The representation given by the image of the generic fiber of $\cH_\ell'$ in \eqref{chain2}
$$\cH_{\ell,\overline\Q_\ell}'\hto\prod_{j\in J}\GL_{m_j,\overline\Q_\ell}\twoheadrightarrow 
\prod_{j\in J(i)}\GL_{m_j,\overline\Q_\ell}\subset\GL_{n_i,\overline\Q_\ell}$$
is isomorphic to $\bG_{W_{\ell,i}}^\circ\hto\GL_{n_i,\overline\Q_\ell}\subset\GL_{n_i,\overline\Q_\ell}$.
\item The group $\uH_{W_{\ell,i}}'$ is the image of the special fiber of $\cH_{\ell}'$ in \eqref{chain2}:
$$\cH_{\ell,\overline\F_\ell}'\hto\prod_{j\in J}\GL_{m_j,\overline\F_\ell}\twoheadrightarrow 
\prod_{j\in J(i)}\GL_{m_j,\overline\F_\ell}\subset\GL_{n_i,\overline\F_\ell}.$$
\item The groups $\uG_{W_{\ell,i}}'$ and $\uH_{W_{\ell,i}}'$ have the same rank and semisimple rank
and $\uG_{W_{\ell,i}}$ is isomorphic to the reductive quotient of $\uG_{W_{\ell,i}}'$. 
\item The semisimplification of $\uG_{W_{\ell,i}}'\hto\GL_{n_i,\overline\F_\ell}$ 
is isomorphic to the representation $\uG_{W_{\ell,i}}\hto\GL_{n_i,\overline\F_\ell}$.
\item The reductive subgroups $\bG_{W_{\ell,i}}$, $\uH_{W_{\ell,i}}'$, and $\uG_{W_{\ell,i}}$ 
have the same formal bi-character. Moreover, this formal bi-character has finitely many 
possibilities among all decompositions \eqref{decom3} and all $\ell$.
\end{enumerate} 
\end{prop}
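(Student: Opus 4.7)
The plan is to take $\cH_\ell'\subset\prod_{j\in J}\GL_{\mathcal{L}_{\ell,j}'}\subset\GL_{\mathcal{L}_\ell'}$ to be exactly the split reductive $\cO_\ell'$-group scheme constructed in Proposition~\ref{prep1}, where $J$ indexes the absolutely irreducible $\bG_\ell^\circ$-decomposition \eqref{decom2} and $m_j:=\dim_{F_\ell'} U_{\ell,j}'$. Given any decomposition \eqref{decom3} and any component $W_{\ell,i}$, I would first use the splitting of $\bG_\ell^\circ$ over $F_\ell'$ together with Krull--Schmidt to write $W_{\ell,i}\otimes F_\ell'$ as a direct sum of absolutely irreducible $\bG_\ell^\circ$-modules, each isomorphic to some $U_{\ell,j}'$; this selects a subset $J(i)\subset J$ (picking representatives among isomorphic constituents) with $n_i=\sum_{j\in J(i)} m_j$.

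Next, I would set $\uH_{W_{\ell,i}}'$ to be the image of $\cH_{\ell,\overline\F_\ell}'$ under the block projection onto $\prod_{j\in J(i)}\GL_{m_j,\overline\F_\ell}\subset\GL_{n_i,\overline\F_\ell}$, which is connected and gives (ii). Assertion (i) is then immediate from the identification $\cH_{\ell,\overline\Q_\ell}'\cong\bG_{\ell,\overline\Q_\ell}^\circ$ of Proposition~\ref{prep1} composed with this projection. To construct $\uG_{W_{\ell,i}}'$, I would transport the connected subgroup $\uG_\ell'\subset\cG_{\ell,\F_\ell}$ of Proposition~\ref{inter} into $\cH_{\ell,\overline\F_\ell}'$ via $\alpha^{-1}$ (after base change) and apply the same block projection, producing the chain \eqref{chain3}. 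Property (iv) then follows from Proposition~\ref{inter}(iii), since semisimplification commutes with the $J(i)$-projection and recovers $\uG_{W_{\ell,i}}$ by the very definition of the latter on the $W_{\ell,i}$-component.

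For (iii) I would invoke Proposition~\ref{inter}(ii): $\uG_\ell'$ has the same rank and semisimple rank as $\uG_\ell$, and hence as $\cG_{\ell,\overline\F_\ell}\cong\cH_{\ell,\overline\F_\ell}'$ (by Theorem~\ref{ae}(v) and Corollary~\ref{fb}), so the block projection sends a maximal torus of $\uG_\ell'$ to a maximal torus of the image $\uH_{W_{\ell,i}}'$ by a dimension count, preserving both rank and semisimple rank. For (v), the equality of formal bi-characters of $\bG_{W_{\ell,i}}^\circ$ and $\uH_{W_{\ell,i}}'$ is automatic from the split reductivity of $\cH_\ell'$ over $\cO_\ell'$ (generic and special fibers share a formal bi-character, and this is preserved by the block projection), while equality with that of $\uG_{W_{\ell,i}}$ follows by combining Theorem~\ref{ae}(v) with \cite[Proposition 3.1]{HL20} restricted to the $W_{\ell,i}$-component. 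The finiteness of possibilities is then immediate from Theorem~\ref{Hui1}: the formal bi-character of $\bG_\ell\subset\GL_n$ is $\ell$-independent, so only finitely many sub-bi-characters indexed by the various $W_{\ell,i}$ can arise, uniformly in $\ell$.

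The main obstacle will be the first step: realizing an arbitrary $\bG_\ell$-subrepresentation $W_{\ell,i}$ of the ambient $\overline\Q_\ell^n$ as a literal block $\prod_{j\in J(i)}\GL_{m_j}$ inside the chain \eqref{chain2} rather than merely up to $\bG_\ell^\circ$-equivariant isomorphism. Multiplicities among the $U_{\ell,j}'$ force a careful choice of representatives, and one must check that this choice is compatible with the transport of $\uG_\ell'$ through $\alpha^{-1}$; once that compatibility is in place, the remaining assertions reduce to functorial bookkeeping for the block projection.
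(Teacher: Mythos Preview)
Your proposal is correct and follows essentially the same approach as the paper: both take $\cH_\ell'$ from Proposition~\ref{prep1}, select $J(i)$ by descending $W_{\ell,i}$ to $F_\ell'$, define $\uH_{W_{\ell,i}}'$ and $\uG_{W_{\ell,i}}'$ as the images of $\cH_{\ell,\overline\F_\ell}'$ and (the $\alpha^{-1}$-transport of) $\uG_\ell'$ under the $J(i)$-block projection, and deduce (i)--(iv) from Propositions~\ref{prep1} and~\ref{inter}. The only notable difference is in (v): the paper obtains the equality of formal bi-characters for $\uG_{W_{\ell,i}}$ directly from the already-established (iii) and (iv) (same rank/semisimple rank forces a common maximal torus in $\GL_{n_i}$, and semisimplification preserves formal bi-character), which is cleaner than your appeal to Theorem~\ref{ae}(v) together with \cite[Proposition 3.1]{HL20} restricted to the component---the latter reference does not obviously give what you need, whereas (iii)+(iv) do.
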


\begin{proof}
Suppose $\ell$ is sufficiently large.
The chain \eqref{chain2} is defined as \eqref{chain1} in Proposition \ref{prep1}.
Since the representation $\bG_\ell^\circ\to\GL_{W_{\ell,i}}$ can be descended
to $F_\ell'$ by Proposition \ref{LP1},
there is a subset $J(i)\subset J$ (from \eqref{decom2}) such that 
\begin{equation}\label{onlyisom}
W_{\ell,i}\cong(\bigoplus_{j\in J(i)}U_{\ell,j}')\otimes_{F_\ell'}\overline\Q_\ell
\end{equation}
and we obtain assertion (i). Note that the two subspaces in \eqref{onlyisom} may not coincide in $\overline{\Q}_\ell^n$.

Since the special fiber of \eqref{chain2} is isomorphic to 
the representation $\cG_\ell(\overline\F_\ell)\hto\GL_n(\overline\F_\ell)$
by Proposition \ref{prep1}(ii),
the chain \eqref{chain3} is defined as the image of the chain
\begin{equation}\label{chainimage}
(\uG_{\ell,\overline\F_\ell}'\subset\cG_{\ell,\overline\F_\ell}\stackrel{\alpha^{-1}}{\cong}\cH_{\ell,\overline\F_\ell}'
\subset\prod_{j\in J}\GL_{m_j,\overline\F_\ell})\longrightarrow \prod_{j\in J(i)}\GL_{m_j,\overline\F_\ell}
\end{equation}
and we have assertion (ii), where $\uG_\ell'$ is constructed in Proposition \ref{inter}. 

Since $\uG_{W_{\ell,i}}'\subset \uH_{W_{\ell,i}}'$ 
is the image of $\uG_{\ell,\overline\F_\ell}'\subset\cG_{\ell,\overline\F_\ell}$
and the algebraic envelope $\uG_\ell$ and $\cG_{\ell,\overline\F_\ell}$
have the same ranks and semisimple ranks,
assertion (iii) follows from Proposition \ref{inter}(ii),(iii). 
Assertion (iv) follows from Proposition \ref{inter}(iii) and 
the definition of $\uG_{W_{\ell,i}}'$ in \eqref{chainimage}.

Let $\cT_\ell'$ be a maximal split torus of $\cH_\ell'$. 
Then $\cT_\ell'\cap \cH_\ell'^{\der}$ is a maximal split torus
of the derived group $\cH_{\ell}'^{\der}$ \cite[Proposition 5.3.4]{Co14}.
The formal bi-character of $\bG_{W_{\ell,i}}$ (resp. $\uH_{W_{\ell,i}}'$)
is the generic (resp. special) fiber of the image of 
$$(\cT_{\ell}'\cap \cH_{\ell}'^{\der}\subset\cT_{\ell}')\longrightarrow \prod_{j\in J(i)}\GL_{m_j,\cO_\ell'} $$
by \cite[Lemma 6.1.4]{Gi19}. Thus, the formal bi-character of $\bG_{W_{\ell,i}}$ and $\uH'_{W_{\ell,i}}$
are the same. Hence, the formal bi-character of  $\bG_{W_{\ell,i}}$ and $\uG_{W_{\ell,i}}$ 
are the same by (iii) and (iv). The last part of (v) follows 
from Theorem \ref{Hui1} and that the sub-collections
of weights of the formal bi-character has finitely many possibilities. 
\end{proof}

\subsubsection{Type A and irreducibility}
Since $\cH_{\ell}'$ is a connected split reductive group scheme over $\cO_\ell'$, 
 the following split reductive group schemes are well-defined \cite[$\mathsection5.3$, Theorem 6.1.17]{Co14}:
\begin{itemize}
%\item $\cZ_{\ell^{\ur}}$: the connected center of $\cH_{\ell^{\ur}}$.
\item $\cH_{\ell}'^{\sc}$: the universal cover of the derived group $\cH_{\ell}'^{\der}$ of $\cH_{\ell}'$.
\item $\pi_\ell: \cH_{\ell}'^{\sc}\to \cH_{\ell}'^{\der}$: the central isogeny over $\cO_\ell'$.
\item $\cH_{\ell}'^{\sc}=\prod_{r\in R}\cS_{\ell,r}'$ where each 
$\cS_{\ell,r}'$ is semisimple over $\cO_\ell'$ with irreducible root datum and $R$ is an index set.
\item $\cT_{\ell,r}'$: a split maximal torus of $\cS_{\ell,r}'$ for $r\in R$.
%\item $\widehat{\cH_{\ell^{\ur}}}$: the product $\cZ_{\ell^{\ur}}\times\cH_{\ell^{\ur}}^{\sc}$.
\end{itemize}

\begin{cor}\label{mc}
Suppose $\ell$ is sufficiently large and $\bG_{W_{\ell,i}}$ is of type A. 
Then the following holds.
\begin{enumerate}[(i)]
\item The groups $\uH_{W_{\ell,i}}'$, $\uG_{W_{\ell,i}}'$, $\uG_{W_{\ell,i}}$ coincide and are also of type A.
\item If $\bG_{W_{\ell,i}}^\circ\to \GL_{W_{\ell,i}}$ is irreducible, then 
$\uG_{W_{\ell,i}}$ and thus $\Gal_K$ (resp. $\Gal_{K''}$, where $K''$ is abelian over $K$) are irreducible on $\overline{W}_{\ell,i}^{\ss}$.
\end{enumerate}
\end{cor}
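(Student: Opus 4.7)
For part (i), my strategy is to first transfer the type A property from $\bG_{W_{\ell,i}}$ to $\uH_{W_{\ell,i}}'$ via the constant root datum of the reductive group scheme $\cH_\ell'$, and then force $\uG_{W_{\ell,i}}'=\uH_{W_{\ell,i}}'$ using the key type A criterion stated right after Theorem~\ref{thmA} together with the rank and semisimple-rank equalities in Proposition~\ref{mt}(iii). Since Proposition~\ref{mt}(iii) further identifies $\uG_{W_{\ell,i}}$ with the reductive quotient of $\uG_{W_{\ell,i}}'$, this equality collapses all three groups at once. For the type A transfer, Proposition~\ref{mt}(i)(ii) realises $\bG_{W_{\ell,i}}^\circ$ and $\uH_{W_{\ell,i}}'$ as the images of the generic and special fibers of the split reductive $\cO_\ell'$-group scheme $\cH_\ell'$ under a common projection $\prod_{j\in J}\GL_{m_j}\to\prod_{j\in J(i)}\GL_{m_j}$; as $\cH_\ell'$ has constant root datum over the connected base $\Spec\cO_\ell'$, the two derived groups have isomorphic root systems and type A descends.

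For the equality step, I choose a Levi subgroup $\bM$ of $\uG_{W_{\ell,i}}'$ (its existence in this large-$\ell$ regime is guaranteed by the structure of $\uG_\ell'$ as a product of a Nori group and a torus, see Proposition~\ref{inter}). Then $\bM$ is reductive with $\rk\bM=\rk\uH_{W_{\ell,i}}'$ and $\rk\bM^{\der}=\rk\uH_{W_{\ell,i}}'^{\der}$ by Proposition~\ref{mt}(iii). The same-rank inclusion $\bM\subset\uH_{W_{\ell,i}}'$ yields a common maximal torus $\bT$, while the same-rank inclusion of semisimple $\bM^{\der}$ into the type A group $\uH_{W_{\ell,i}}'^{\der}$ forces $\bM^{\der}=\uH_{W_{\ell,i}}'^{\der}$ by the key criterion. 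Hence $\bM\supset\uH_{W_{\ell,i}}'^{\der}\cdot\bT=\uH_{W_{\ell,i}}'$, so $\uG_{W_{\ell,i}}'=\bM=\uH_{W_{\ell,i}}'$ is already reductive and therefore equal to its own reductive quotient $\uG_{W_{\ell,i}}$; the common group is type A because $\uH_{W_{\ell,i}}'$ is.

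For part (ii), the irreducibility of $\bG_{W_{\ell,i}}^\circ$ on $W_{\ell,i}$ is, via Proposition~\ref{mt}(i), the irreducibility of the generic fiber of the representation $\cH_\ell'\to\prod_{j\in J(i)}\GL_{m_j}$. Since $\cH_\ell'$ is split reductive over $\cO_\ell'$ and the relevant highest weight $\lambda$ is bounded uniformly in $\ell$ and $i$ by the bounded formal bi-character in Proposition~\ref{mt}(v), for all sufficiently large $\ell$ the Weyl module $V(\lambda)$ equals its irreducible quotient $L(\lambda)$, so the special fiber $\uH_{W_{\ell,i}}'$-representation on $\overline W_{\ell,i}^{\ss}$ is irreducible; by (i), $\uG_{W_{\ell,i}}=\uH_{W_{\ell,i}}'$ acts irreducibly. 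To transfer this to Galois groups, Theorem~\ref{ae}(vi) says $\bar\rho_\ell^{\ss}(\Gal_L)$ and $\uG_\ell$ have equal commutants in $\End(\F_\ell^n)$, so Proposition~\ref{lem1}(ii) provides a bijection between their subrepresentations; restricted to the $\uG_\ell$-stable summand $\overline W_{\ell,i}^{\ss}$, irreducibility of $\uG_{W_{\ell,i}}$ transfers to $\bar\rho_\ell^{\ss}(\Gal_L)$, hence to $\Gal_K\supset\Gal_L$. For the refinement to $\Gal_{K''}$, set $K'':=L\cap K^{\ab}$, the maximal abelian subextension of $L/K$: then $\Gal_{K''}\supset\Gal_L$ still acts irreducibly, while $\Gal(K''/K)$ is abelian by construction.

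The main obstacle I anticipate is the characteristic-independence of irreducibility invoked in the opening of part (ii): one needs the Weyl module of $\cH_\ell'$ with highest weight $\lambda$ to remain irreducible after reduction modulo the maximal ideal of $\cO_\ell'$, uniformly over all the decompositions and all large $\ell$. This is classical provided $\lambda$ stays bounded independently of $\ell$, which is exactly what Proposition~\ref{mt}(v) furnishes through the boundedness of formal bi-characters.
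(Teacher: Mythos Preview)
Your argument for (i) and for the irreducibility of $\uG_{W_{\ell,i}}$ and $\Gal_K$ in (ii) is essentially the paper's, with cosmetic variations: the paper makes the type A transfer precise by passing to $\cH_\ell'^{\sc}=\prod_r\cS_{\ell,r}'$ and noting that each non-type-A simple factor maps trivially on the generic fiber, hence (by flatness of $\cS_{\ell,r}'$ over $\cO_\ell'$) trivially on the special fiber, so only type A factors contribute to $\uH_{W_{\ell,i}}'^{\der}$; your sentence ``constant root datum $\Rightarrow$ the two derived image groups have isomorphic root systems'' is the right intuition but needs exactly this argument to be justified, since images under a homomorphism do not inherit root data for free. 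Your Levi manoeuvre for $\uG_{W_{\ell,i}}'=\uH_{W_{\ell,i}}'$ is a correct unpacking of \cite[Lemma~2]{HL16}, which the paper invokes directly.

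There is, however, a genuine gap in your handling of $\Gal_{K''}$. The claim is for \emph{every} abelian extension $K''/K$ (this is what Theorem~\ref{general}(v) needs, with $K''=K^{\ab}$), not merely for the single field $L\cap K^{\ab}$. When $K''\not\subset L$ one no longer has $\Gal_{K''}\supset\Gal_L$, and your inclusion argument collapses. The paper's remedy is to work with the \emph{derived} envelope: since $f_{\ell,i}(\cH_\ell'^{\sc}(\overline\F_\ell))=\uH_{W_{\ell,i}}'^{\der}(\overline\F_\ell)=\uG_{W_{\ell,i}}^{\der}(\overline\F_\ell)$, your Weyl-module argument (applied to $\cH_\ell'^{\sc}$) already shows that $\uG_{W_{\ell,i}}^{\der}$ is irreducible on $\overline W_{\ell,i}^{\ss}$. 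Now the second clause of Theorem~\ref{ae}(vi) gives equal commutants for $[\bar\rho_\ell^{\ss}(\Gal_L),\bar\rho_\ell^{\ss}(\Gal_L)]$ and $\uG_\ell^{\der}$, so Proposition~\ref{lem1}(i) yields irreducibility of the commutator image on $\overline W_{\ell,i}^{\ss}$. For arbitrary abelian $K''/K$, the compositum $LK''$ is abelian over $L$, whence
\[
\bar\rho_\ell^{\ss}(\Gal_{K''})\supset\bar\rho_\ell^{\ss}(\Gal_{LK''})\supset[\bar\rho_\ell^{\ss}(\Gal_L),\bar\rho_\ell^{\ss}(\Gal_L)],
\]
and irreducibility follows.
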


\begin{proof}
(i) Suppose $\ell$ is large enough so that $\uG_{W_{\ell,i}}$ is semisimple on the ambient space \cite{Ja97}.
Consider the representation 
$$f_{\ell,i}:\cH_{\ell}'^{\sc}\stackrel{\pi_\ell}{\rightarrow}\cH_{\ell}'^{\der}\hto\cH_{\ell}'
\hto\prod_{j\in J}\GL_{m_j,\cO_\ell'}\twoheadrightarrow\prod_{j\in J(i)}\GL_{m_j,\cO_\ell'}$$
with $J(i)$ defined in Proposition \ref{mt}.
The image $f_{\ell,i}(\cH_{\ell}'^{\sc}(\overline\Q_\ell))$ is the derived group $\bG_{W_{\ell,i}}^{\der}(\overline\Q_\ell)$.
If $\cS_{\ell,r}'(\overline\Q_\ell)$ is not of type A, then $f_{\ell,i}(\cS_{\ell,r}'(\overline{\mathcal{O}}_{\ell}))$ 
is trivial by the assumption where $\overline{\mathcal{O}}_{\ell}$ is the ring of integers of $\overline\Q_\ell$. 
Hence, $f_{\ell,i}(\cS_{\ell,r}'(\overline\F_\ell))$ is trivial 
by $\cS_{\ell,r}'(\overline{\mathcal{O}}_{\ell})\twoheadrightarrow\cS_{\ell,r}'(\overline\F_\ell)$,
which implies that $\uH_{W_{\ell,i}}'$ is of type $A$. By Proposition \ref{mt}(iii)
and \cite[Lemma 2]{HL16}, we have $\uG_{W_{\ell,i}}'=\uH'_{W_{\ell,i}}$ is reductive.
By Proposition \ref{mt}(iv), we obtain $\uG_{W_{\ell,i}}'=\uG_{W_{\ell,i}}$. 

(ii) The split torus $\prod_{r\in R}\cT_{\ell,r}'$ is a split maximal torus of $\cH_{\ell}'^{\sc}=\prod_{r\in R}\cS_{\ell,r}'$.
The generic (resp. special) fiber of $f_{\ell,i}$
is a semisimple representation determined by the weights of 
$f_{\ell,i}|_{\prod_{r\in R}\cT_{\ell,r}'}$.
By Proposition \ref{mt}(v) and the fact that
 $\pi_\ell$ can be descended to a morphism of Chevalley schemes over $\Z$ 
\cite[Theorems 6.1.16, 6.1.17]{Co14} 
with only finitely many possibilities (for fixed $n$),
the weights of 
$f_{\ell,i}|_{\prod_{r\in R}\cT_{\ell,r}'}$
have finitely many possibilities.
Hence for $\ell\gg0$, the highest weight theory and Steinberg tensor product theorem \cite[Theorem 13.1]{St68}
imply that $\cH_{\ell}'^{\sc}(\overline\Q_\ell)$ is irreducible on $W_{\ell,i}$
iff $\cH_{\ell}'^{\sc}(\overline\F_\ell)$ is irreducible on $\overline W_{\ell,i}^{\ss}$.
Since $f_{\ell,i}(\cH_{\ell}'^{\sc}(\overline\Q_\ell))=\bG_{W_{\ell,i}}^{\der}(\overline\Q_\ell)$
and $f_{\ell,i}(\cH_{\ell}'^{\sc}(\overline\F_\ell))=\uH_{W_{\ell,i}}'^{\der}(\overline\F_\ell)$,
the irreducibility of $\bG_\ell^\circ$ on $W_{\ell,i}$ (assumption) 
implies that $\uH_{W_{\ell,i}}'$ is irreducible on $\overline{W}_{\ell,i}^{\ss}$.
Therefore, $\uG_{W_{\ell,i}}=\uH_{W_{\ell,i}}'$ (by Corollary \ref{mc}(i)) is irreducible on $\overline{W}_{\ell,i}^{\ss}$ 
and the restriction to
$\bar\rho_\ell^{\ss}(\Gal_L)$ (resp. $\bar\rho_\ell^{\ss}(\Gal_{L'})$, where $L'$ is abelian over $L$) is also irreducible by Theorem \ref{ae}(vi) and Proposition \ref{lem1}(i). Thus, $\Gal_K$ and $\Gal_{K''}$ are also irreducible on $\overline{W}_{\ell,i}^{\ss}$.
\end{proof}

\subsection{Big monodromy results for $E$-rational SCS}\label{ESCS}

Theorem \ref{thmA} and Corollary \ref{corA} will be proven in this section.
Before that, we consider the setting of $E$-rational SCS. Let 
\begin{equation}\label{condSCS}
\{\rho_\lambda:\Gal_K\to\GL_n(E_\lambda)\}_{\lambda}
\end{equation}
be a semisimple Serre compatible system together with integers $N_1,N_2\geq 0$ and a finite extension $K'/K$
such that the following conditions hold.
\begin{enumerate}[(a)]
\item (Bounded tame inertia weights): for almost all $\lambda$ 
and each finite place $v$ of $K$ above $\ell$, 
the tame inertia weights of the local representation 
$(\bar\rho_{\lambda}^{\ss}\otimes\bar\epsilon_\ell^{N_1})|_{\Gal_{K_v}}$ belong to $[0,N_2]$.
\item (Potential semistability): for almost all $\lambda$ and each finite place $w$ of $K'$ not above $\ell$,
the semisimplification of the local representation $\bar\rho_{\lambda}^{\ss}|_{\Gal_{K_{w}'}}$ is unramified.
\end{enumerate}

\begin{remark}\label{cond}
Suppose $\{\rho^1_\lambda\}_\lambda$ and $\{\rho_\lambda^2\}_\lambda$ are two semisimple SCS of $K$ defined over $E$
satisfying the conditions (a) and (b) above (for some integers $N_1,N_2\geq 0$ and finite extension $K'/K$). 
Then the new SCS constructed under the operations in $\mathsection$\ref{op}
also satisfies the conditions above accordingly\footnote{Possibly with different integers and finite extension.}.
\end{remark}

Let $d$ be the degree $[E:\Q]$.
By restriction of scalars, 
we obtain an $dn$-dimensional rational semisimple SCS
\begin{equation}\label{res11}
\{\rho_{\ell}:=\bigoplus_{\lambda|\ell}\rho_{\lambda}:\Gal_K\to(\mathrm{Res}_{E/\Q}\GL_n)(\Q_\ell)\subset\GL_{dn}(\Q_\ell)\}_\ell.
\end{equation}
From the construction of \eqref{res11} and the definition of tame inertia weights ($\mathsection$\ref{tiw}),
it follows that $\{\rho_\ell\}_\ell$ satisfies Theorem \ref{ae}(a),(b)  
(with $N_1,N_2\geq0$, and $K'/K$)
 if and only if $\{\rho_\lambda\}_\lambda$
satisfies $\mathsection$\ref{ESCS}(a),(b) above  
(with $N_1,N_2\geq0$, and $K'/K$).
Whenever $\lambda$ divides $\ell$, it follows that $\rho_{\lambda}\otimes\overline\Q_\ell$
is a subrepresentation of $\rho_{\ell}\otimes\overline\Q_\ell$ \eqref{res11}.
Then Remark \ref{cond}, Theorem \ref{ae}, Proposition \ref{mt}, and Corollary \ref{mc} (applied to \eqref{res11}) imply that  
for almost all $\lambda$ and subrepresentation $W_\lambda$ of $\rho_\lambda\otimes\overline\Q_\ell$, 
there exist a finite Galois extension $L/K$ (in Theorem \ref{ae})
and four linear algebraic groups 
\begin{itemize}
\item $\uH_{W_{\lambda}}'$, 
\item $\uG_{W_{\lambda}}'$, 
\item $\uG_{W_{\lambda}}$ the algebraic envelope of $W_\lambda$, 
\item $\bG_{W_\lambda}$ the algebraic monodromy group of $W_\lambda$,
\end{itemize}
satisfying the assertions in Theorem \ref{ae}, Proposition \ref{mt}, and Corollary \ref{mc}, where
the first three are defined over $\overline\F_\ell$ and the last one is over $\overline\Q_\ell$. Therefore, we obtain the following.

\begin{thm}\label{general}
Let $\{\rho_\lambda\}_{\lambda}$ be a semisimple SCS of $K$ satisfying conditions (a) and (b) in $\mathsection$\ref{ESCS}
for some integers $N_1,N_2\geq 0$ and finite extension $K'/K$.
Then there exists a finite Galois extension $L/K$ such that for almost all $\lambda$
and for each subrepresentation $\sigma_\lambda:\Gal_K\to \GL(W_\lambda)$ of $\rho_\lambda\otimes\overline\Q_\ell$,
the following assertions hold.
\begin{enumerate}[(i)]
\item The inclusion $\bar\sigma_\lambda^{\ss}(\Gal_L)\subset \uG_{W_{\lambda}}$ holds.
\item The commutants of $\bar\sigma_\lambda^{\ss}(\Gal_L)$ and $\uG_{W_{\lambda}}$ 
(resp. $[\bar\sigma_\lambda^{\ss}(\Gal_L),\bar\sigma_\lambda^{\ss}(\Gal_L)]$ 
and $\uG_{W_{\lambda}}^{\der}$) in $\End(\overline W_\lambda^{\ss})$ are equal.
In particular, $\bar\sigma_\lambda^{\ss}(\Gal_L)$ (resp. $[\bar\sigma_\lambda^{\ss}(\Gal_L),\bar\sigma_\lambda^{\ss}(\Gal_L)]$) is irreducible on $\overline W_{\lambda}^{\ss}$ if and only if 
$\uG_{W_{\lambda}}$ (resp. $\uG_{W_{\lambda}}^{\der}$) is irreducible on $\overline W_{\lambda}^{\ss}$.
\item The reductive groups $\uG_{W_{\lambda}}$ and $\bG_{W_\lambda}$ have the same formal bi-character. Moreover, this formal bi-character has
finitely many possibilities depending on $\{\rho_\lambda\}_{\lambda}$.
\item If $\bG_{W_\lambda}$ is of type A, then $\uH_{W_{\lambda}}'$, $\uG_{W_{\lambda}}'$,  $\uG_{W_{\lambda}}$ coincide and are also of type A.
\item If $\bG_{W_\lambda}$ is of type A and $\bG_{W_\lambda}^\circ\to\GL_{W_\lambda}$ is irreducible, then 
$\uG_{W_{\lambda}}$ and thus $\Gal_K$ (resp. $\Gal_{K^{ab}}$, where $K^{ab}/K$ is the maximal abelian extension) 
are irreducible on $\overline{W}_{\lambda}^{\ss}$.
\end{enumerate}
\end{thm}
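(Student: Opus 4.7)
The strategy is to reduce Theorem \ref{general} to the rational SCS results of $\mathsection\ref{aeSCS}$--$\mathsection\ref{main}$ via restriction of scalars and then harvest each assertion. Much of this reduction is already carried out in the paragraphs preceding the theorem in $\mathsection\ref{ESCS}$, so the task is mainly to assemble what is there. First I would form the $dn$-dimensional rational semisimple SCS $\{\rho_\ell := \bigoplus_{\lambda\mid\ell}\rho_\lambda\}_\ell$ of \eqref{res11}. As noted in $\mathsection\ref{ESCS}$, conditions (a), (b) of Theorem \ref{general} for $\{\rho_\lambda\}_\lambda$ are equivalent, via the definition of tame inertia weights in $\mathsection\ref{tiw}$, to conditions (a), (b) of Theorem \ref{ae} for $\{\rho_\ell\}_\ell$. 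Theorem \ref{ae} then supplies a finite Galois extension $L/K$ with $K^{\conn}\subset L$ and the algebraic envelope $\uG_\ell\subset\GL_{dn,\F_\ell}$ for $\ell\gg 0$; since $\rho_\lambda\otimes\overline\Q_\ell$ embeds inside $\rho_\ell\otimes\overline\Q_\ell$ as the $\lambda$-component, any subrepresentation $W_\lambda$ appears as a summand of a decomposition \eqref{decom3}, activating the machinery of $\mathsection\ref{setting}$--$\mathsection\ref{irred}$ and the associated groups $\bG_{W_\lambda},\uH'_{W_\lambda},\uG'_{W_\lambda},\uG_{W_\lambda}$.

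With the setup in place, assertions (i), (iii), (iv) follow by inspection: (i) from Theorem \ref{ae}(ii) together with the projection $\uG_\ell\to\uG_{W_\lambda}$ of $\mathsection\ref{setting}$; (iii) from Proposition \ref{mt}(v); and (iv) from Corollary \ref{mc}(i). For (ii), I would argue by block decomposition. Because $L\supset K^{\conn}$, the isotypic decomposition \eqref{decom4} is stable under $\bar\rho_\ell^{\ss}(\Gal_L)$, and by Theorem \ref{ae}(vi) combined with Proposition \ref{lem1}(ii) it is also stable under $\uG_\ell$ after base change to $\overline\F_\ell$. Base-changing the global commutant equality of Theorem \ref{ae}(vi) to $\overline\F_\ell$ and applying this joint stability splits it into $(i,j)$-blocks
\begin{equation*}
\Hom_{\bar\rho_\ell^{\ss}(\Gal_L)}(\overline W_{\ell,i}^{\ss},\overline W_{\ell,j}^{\ss}) = \Hom_{\uG_\ell}(\overline W_{\ell,i}^{\ss},\overline W_{\ell,j}^{\ss}),
\end{equation*}
and the diagonal $(\lambda,\lambda)$-block yields $\End_{\bar\sigma_\lambda^{\ss}(\Gal_L)}(\overline W_\lambda^{\ss}) = \End_{\uG_{W_\lambda}}(\overline W_\lambda^{\ss})$; the derived-commutator half of Theorem \ref{ae}(vi) decomposes in the same way. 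Proposition \ref{lem1}(i) converts both commutant equalities into the stated irreducibility equivalences.

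For (v), assume $\bG_{W_\lambda}$ is of type A and $\bG_{W_\lambda}^\circ$ acts irreducibly on $W_\lambda$; Corollary \ref{mc}(ii) applied to $\rho_\ell$ gives that $\uG_{W_\lambda}$ acts irreducibly on $\overline W_\lambda^{\ss}$, and (ii) promotes this to irreducibility of $\bar\sigma_\lambda^{\ss}(\Gal_L)$, hence of $\bar\sigma_\lambda^{\ss}(\Gal_K)$. To reach $\Gal_{K^{ab}}$ I use a Schur-type device: the connected reductive $\uG_{W_\lambda}$ has connected center acting by scalars on the irreducible module $\overline W_\lambda^{\ss}$, so $\uG_{W_\lambda}^{\der}$ also acts irreducibly, whence the derived-commutator half of (ii) makes $[\bar\sigma_\lambda^{\ss}(\Gal_L),\bar\sigma_\lambda^{\ss}(\Gal_L)] = \bar\sigma_\lambda^{\ss}([\Gal_L,\Gal_L])$ act irreducibly on $\overline W_\lambda^{\ss}$; since $[\Gal_L,\Gal_L]\subset[\Gal_K,\Gal_K]=\Gal_{K^{ab}}$, this irreducible subgroup sits inside $\bar\sigma_\lambda^{\ss}(\Gal_{K^{ab}})$, forcing the latter to be irreducible as well. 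The main obstacle is the block-decomposition step underlying (ii): one must verify carefully that \eqref{decom4} is jointly stable under $\bar\rho_\ell^{\ss}(\Gal_L)$ and $\uG_{\ell,\overline\F_\ell}$ so that the global Schur commutant descends to each isotypic summand. Once that is in hand, every remaining claim is a short consequence of Theorem \ref{ae}, Proposition \ref{mt}, Corollary \ref{mc}, and Proposition \ref{lem1}.
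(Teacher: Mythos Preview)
Your proposal is correct and follows essentially the same route as the paper: reduce to the rational SCS $\{\rho_\ell\}_\ell$ by restriction of scalars, invoke Theorem~\ref{ae}, Proposition~\ref{mt}, and Corollary~\ref{mc} (exactly as the paragraph preceding Theorem~\ref{general} instructs), and use Proposition~\ref{lem1}(i) for the ``in particular'' clause of (ii). Your block-decomposition justification of the first half of (ii) and your derived-group argument for (v) make explicit what the paper leaves implicit (the paper's actual proof is one sentence, since the setup in $\mathsection\ref{setting}$ and the proof of Corollary~\ref{mc}(ii) already contain these steps), but there is no substantive difference in method.
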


\begin{proof}
Only the ``in particular'' part in (ii) needs justification: it follows from 
the equality on commutants and Proposition \ref{lem1}(i).
\end{proof}

\subsubsection{\textbf{Proof of Theorem \ref{thmA}}}\label{pthmA}
Consider the semisimple $E$-rational compatible system  
\begin{equation}\label{use}
\{\rho_\lambda:\pi_1(X)\to\GL_n(E_\lambda)\}_\lambda
\end{equation}
 in Theorem \ref{thmA}.
By restriction of scalars, 
we obtain an $dn$-dimensional semisimple rational compatible system
\begin{equation*}\label{res12}
\{\rho_{\ell}:=\bigoplus_{\lambda|\ell}\rho_{\lambda}:\pi_1(X)\to(\mathrm{Res}_{E/\Q}\GL_n)(\Q_\ell)\subset\GL_{dn}(\Q_\ell)\}_\ell
\end{equation*}
such that $\rho_{\lambda}$ is a subrepresentation of $\rho_{\ell}\otimes\overline\Q_\ell$ 
whenever $\lambda$ divides $\ell$, where $d$ is the degree $[E:\Q]$.
For a pair of distinct primes $\ell_1\neq \ell_2$, the image of $\rho_{\ell_1}\oplus\rho_{\ell_2}$
contains an open subgroup isomorphic to the product of a pro-$\ell_1$ group and a pro-$\ell_2$ group
by Goursat's lemma. Hence, condition (ii) of \cite[$\mathsection1$]{Se81} is satisfied which implies
the existence of a closed point $x\in X$ such that 
$$(\rho_{\ell_1}\oplus\rho_{\ell_2})(\pi_1(X))=(\rho_{\ell_1,x}\oplus\rho_{\ell_2,x})(\Gal_{K(x)}),$$
where $\rho_{\ell,x}:=\rho_\ell\circ sp_x$ and $sp_x$ is the specialization map in \eqref{diag}.
This implies that for any two finite places $\lambda_1$ and $\lambda_2$ of $E$,
we can find some closed point $x\in X$ such that 
$$\rho_{\lambda_1}(\pi_1(X))=\rho_{\lambda_1,x}(\Gal_{K(x)})\hspace{.2in}\text{and}\hspace{.2in}
\rho_{\lambda_2}(\pi_1(X))=\rho_{\lambda_2,x}(\Gal_{K(x)}),$$
where $\rho_{\lambda_i,x}:=\rho_{\lambda_i}\circ sp_x$ for $i=1,2$.
Therefore, we obtain the following $\lambda$-independence 
assertions for the algebraic monodromy group 
$\bG_\lambda$ of $\rho_\lambda$ since they hold for SCS $\{\rho_{\lambda,x}\}_\lambda$.
\begin{enumerate}[(i)]
\item The component group $\bG_\lambda/\bG_\lambda^\circ$ is independent of $\lambda$.
\item There exists a minimal Galois cover $X^{\conn}$ of $X$ such that $\rho_\lambda(\pi_1(X^{\conn}))\subset\bG_\lambda^\circ$ for all $\lambda$.
\item The formal bi-character of $\bG_\lambda\subset\GL_{n,E_\lambda}$ is independent of $\lambda$.
\end{enumerate}
Pick a closed point $x_0\in X$ such that $\rho_{\ell_1}(\pi_1(X))=\rho_{\ell_1,x_0}(\Gal_{K(x_0)})$ for some prime $\ell_1$ 
and consider the  $E$-rational SCS
\begin{equation}\label{sp0}
\{\rho_{\lambda,x_0}:\Gal_{K(x_0)}\to\GL_n(E_\lambda)\}_\lambda
\end{equation}
with algebraic monodromy groups $\bG_{\lambda,x_0}$ (not necessarily reductive). It follows that 
\begin{enumerate}[(iv)]
\item $\bG_{\lambda,x_0}\subset \bG_\lambda$ have the same formal bi-character.
\end{enumerate}
Let $W_\lambda$ be a type A irreducible subrepresentation of $\rho_\lambda\otimes\overline\Q_\ell$
with algebraic monodromy group $\bG_{W_\lambda}$. Denote the image of $\bG_{\lambda,x_0}$ in $\GL_{W_\lambda}$ by $\bG_{W_\lambda,x_0}$.\\ 

The degree $k:=deg(X^{\conn}/X)=|\bG_\lambda/\bG_\lambda^\circ|$
is independent of $\lambda$ by assertions (i) and (ii) above. 
In this proof, we call $k$ the \emph{degree} of \eqref{use}.
Consider $\ell>\max\{k,n\}$ from now on and 
we prove Theorem \ref{thmA} by induction on the degree $k$.
When $k=1$, assertion (iv) and \cite[Lemma 2]{HL16} imply that the inclusion
$$\bG_{W_\lambda,x_0}\subset\bG_{W_\lambda}\hspace{.1in}\text{ (connected and of type A)}$$ 
is an equality. Thus, $\bG_{W_\lambda,x_0}$ is connected and irreducible on $W_\lambda$.
By Theorem \ref{general}(v) on the semisimplification of \eqref{sp0},  it follows that
$W_\lambda$ is a residually irreducible representation of $\Gal_{K(x)}$ and $\pi_1(X)$
for almost all $\lambda$. Hence, Theorem \ref{thmA} is true when $k=1$.
Now suppose $k>1$. By Remark \ref{cond} and the induction hypothesis,
we assume that Theorem \ref{thmA} is true for the $E$-rational compatible system $\{\mathrm{Res}^{\pi_1(X)}_{\pi_1(X')}\rho_\lambda\}_\lambda$
and $\{\mathrm{Res}^{\pi_1(X)}_{\pi_1(X')} (\rho_\lambda\otimes\rho_\lambda^{\vee})\}_\lambda$ having degrees
bounded by $deg(X^{\conn}/X')<k$, 
here $X'$ runs through subcovers (of $X^{\conn}/X$) such that
$$X^{\conn}\to X'\stackrel{\ncong}{\rightarrow} X.$$
Such $X'$ has finitely many possibilities.

The quotient $\bG_{W_\lambda}/\bG_{W_\lambda}^\circ$ corresponds to  a subcover $X_{W_\lambda}$ (of $X^{\conn}/X$)
\begin{equation}\label{finitep}
X^{\conn}\to X_{W_\lambda}\to  X
\end{equation}
such that $\Gal(X_{W_\lambda}/X)\cong\bG_{W_\lambda}/\bG_{W_\lambda}^\circ$.
The argument above for $k=1$ enables us to just 
consider the non-Lie-irreducible case.
Restricting the irreducible $\pi_1(X)$-representation
$W_\lambda$ to the normal subgroup $\pi_1(X_{W_\lambda})$ yields the following irreducible decomposition
\begin{equation}\label{nordecom}
\bigoplus_{i=1}^e(\overbrace{U_i\oplus\cdots\oplus U_i}^{f\hspace{.05in}\text{terms}})
\end{equation}
for some positive integers $e,f$ such that $U_i\ncong U_j$ if $i\neq j$ and $\dim U_i$ is independent of $i$
by Clifford \cite[Theorem 1]{Cl37} (or \cite[Proposition 24]{Se77a}). There are two cases to analyze (a): $e>1$ and (b): $e=1$ and $f>1$.

\textit{Case (a).} The stabilizer of the subspace $W_1:=\oplus^f U_1$ (in \eqref{nordecom}) 
in $\pi_1(X)$ corresponds to a subcover $X_1$ (of $X_{W_\lambda}/X$) 
$$X_{W_\lambda}\to X_1\to X$$
with $deg(X_1/X)=e>1$ and $W_1$ is a  $\pi_1(X_1)$-representation.
Frobenius reciprocity asserts that
$$\Hom_{\pi_1(X)}(W_\lambda, \mathrm{Ind}^{\pi_1(X)}_{\pi_1(X_1)} W_1)=\Hom_{\pi_1(X_1)}(\mathrm{Res}^{\pi_1(X)}_{\pi_1(X_1)}W_\lambda, W_1)\neq 0.$$
Together with the facts that $W_\lambda$ is irreducible and $\dim W_\lambda =\dim \mathrm{Ind}^{\pi_1(X)}_{\pi_1(X_1)} W_1$,
we obtain
\begin{equation}\label{irredind}
W_\lambda\cong \mathrm{Ind}^{\pi_1(X)}_{\pi_1(X_1)} W_1.
\end{equation}
It suffices to show that for almost all $\lambda$ and $W_\lambda$ in case (a), 
the semisimple $\mathrm{Ind}^{\pi_1(X)}_{\pi_1(X_1)} \overline{W}_1^{\ss}$ (as $k<\ell$)
is irreducible. This is because Frobenius reciprocity will then imply that
$\overline{W}_\lambda^{\ss}\cong\mathrm{Ind}^{\pi_1(X)}_{\pi_1(X_1)} \overline{W}_1^{\ss}$ is also irreducible.
By Mackey's irreducibility criterion (see \cite[$\mathsection7.4$]{Se77a}), that is equivalent to the two conditions.
\begin{enumerate}[(I)]
\item The $\pi_1(X_1)$-representation $\overline{W}_1^{\ss}$ is irreducible.
\item For $g\in \pi_1(X)\backslash \pi_1(X_1)$, the two semisimple\footnote{Note that $\overline{W}_1^{\ss}$ 
restricting to the normal subgroup $\pi_1(X_{W_\lambda})$ is semisimple and $[g \pi_1(X_1) g^{-1}:\pi_1(X_{W_\lambda})]<\ell$.}
 representations below have no common factor:
\begin{align*}
\begin{split}
&(g \pi_1(X_1) g^{-1})\cap \pi_1(X_1)\longrightarrow\GL(\overline{W}_1^{\ss}), \\
&(g \pi_1(X_1) g^{-1})\cap \pi_1(X_1)\longrightarrow  \GL(g\overline{W}_1^{\ss}),
\end{split}
\end{align*}
\end{enumerate}
which is equivalent to 
\begin{enumerate}
\item[(II')] the semisimple representation below has trivial invariants:
\begin{align}\label{Mackey}
\begin{split}
(g \pi_1(X_1) g^{-1})\cap \pi_1(X_1)\longrightarrow\GL(g\overline{W}_1^{\ss}\otimes\overline{W}_1^{{\ss},\vee}).
\end{split}
\end{align}
\end{enumerate}
The group $(g \pi_1(X_1) g^{-1})\cap \pi_1(X_1)$ corresponds to a subcover $X_1$ (of $X_{W_\lambda}/X_1$)
$$X_{W_\lambda}\to X_1'\to X_1.$$
Since $\overline{W}_1^{\ss}$ and $g\overline{W}_1^{\ss}\otimes\overline{W}_1^{{\ss},\vee}$ 
are respectively the semisimplified reduction of the semisimple representations 
$W_1$ and $gW_1\otimes W_1^{\vee}$ of $\pi_1(X_1')$, applying Mackey's irreducibility criterion to the irreducible \eqref{irredind} 
implies that (I) and (II') will follow from the claims below.
\begin{enumerate}
\item[(C1)] For almost all $\lambda$, any type A irreducible subrepresentation of the (semisimple) $\pi_1(X_1')$-representation $\rho_\lambda\otimes\overline\Q_\ell$ or $(\rho_\lambda\otimes\rho_\lambda^{\vee})\otimes\overline\Q_\ell$ is residually irreducible.
\item[(C2)] For almost all $\lambda$, any type A subrepresentation $R_\lambda$ of the (semisimple) 
$\pi_1(X_1')$-representation $(\rho_\lambda\otimes\rho_\lambda^{\vee})\otimes\overline\Q_\ell$
satisfies the following on invariant dimensions:
$$\dim R_\lambda^{\pi_1(X_1')}=\dim\overline R_\lambda^{\ss,\pi_1(X_1')}.$$
\end{enumerate}
Claim (C1) follows directly from the induction hypothesis 
taking $X'=X_1'$.
Claim (C2) follows from (C1) and the lemma below taking $X=X_1'$.
\begin{lemma}\label{easy}
For almost all $\lambda$, if $R_\lambda$ is a non-trivial 
one-dimensional $\pi_1(X)$-subrepresentation of $(\rho_\lambda\otimes\rho_\lambda^{\vee})\otimes\overline\Q_\ell$
then its semisimplified reduction is also non-trivial.
\end{lemma}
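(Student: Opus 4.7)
The plan is to reduce the non-triviality of $\bar\chi_\lambda$ to the fact that $\chi_\lambda$ takes values in a group of roots of unity whose order is prime to $\ell$. To this end, I would show that the character $\chi_\lambda\colon\pi_1(X)\to\overline\Q_\ell^*$ associated to $R_\lambda$ factors through a finite quotient of bounded order independent of $\lambda$.

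Since $\rho_\lambda$ is semisimple, the algebraic monodromy group $\bG_\lambda$ is reductive, so $\bG_\lambda^\circ$ is connected reductive. By Zariski density of $\rho_\lambda(\pi_1(X))$ in $\bG_\lambda$, the one-dimensional subspace $R_\lambda\subset V_\lambda\otimes V_\lambda^\vee$ is stabilized by $\bG_{\lambda,\overline\Q_\ell}$, giving an algebraic character $\chi\colon\bG_{\lambda,\overline\Q_\ell}\to\mathbb{G}_{m,\overline\Q_\ell}$ whose restriction to $\pi_1(X)$ on $\overline\Q_\ell$-points is $\chi_\lambda$. The action of $\bG_\lambda$ on $V_\lambda\otimes V_\lambda^\vee=\End(V_\lambda)$ is by conjugation, so the center $Z(\bG_\lambda^\circ)$ acts trivially on $R_\lambda$; hence $\chi|_{\bG_\lambda^\circ}$ factors through the adjoint quotient $\bG_\lambda^\circ/Z(\bG_\lambda^\circ)$. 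This adjoint group is connected semisimple and therefore equal to its own derived subgroup, so it admits no non-trivial characters, and $\chi|_{\bG_\lambda^\circ}$ is trivial.

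Consequently $\chi_\lambda$ factors through the finite component group $\bG_\lambda/\bG_\lambda^\circ$, whose order $k$ is independent of $\lambda$ (by Theorem~\ref{Serre1}(i), or equivalently the $\lambda$-independence statement recorded for $\pi_1(X)$ in $\mathsection$\ref{pthmA}). Thus $\chi_\lambda(\pi_1(X))\subseteq\mu_k\subset\overline\Q_\ell^*$. For every prime $\ell>k$, which excludes only finitely many $\lambda$, the reduction map $\mu_k\hookrightarrow\overline\F_\ell^*$ is injective, so $\chi_\lambda$ and $\bar\chi_\lambda$ have the same kernel, and in particular the same (non-)triviality. No serious obstacle is expected; the only delicate step is the vanishing of $\chi|_{\bG_\lambda^\circ}$, which rests on the facts that conjugation is trivial on the center and that adjoint semisimple groups admit no non-trivial characters.
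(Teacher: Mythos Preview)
Your argument has a genuine gap at the step ``the center $Z(\bG_\lambda^\circ)$ acts trivially on $R_\lambda$''. You correctly observe that $\bG_\lambda$ acts on $V_\lambda\otimes V_\lambda^\vee\cong\End(V_\lambda)$ by conjugation, but conjugation by $z\in Z(\bG_\lambda^\circ)$ is trivial on $\End(V_\lambda)$ only when $z$ is a \emph{scalar} in $\GL(V_\lambda)$. The center of $\bG_\lambda^\circ$ commutes with $\bG_\lambda^\circ$, not with all of $\GL(V_\lambda)$, and since $\rho_\lambda$ is not assumed irreducible the center need not act by scalars. Concretely, take $\rho_\lambda=\epsilon_\ell^a\oplus\epsilon_\ell^b$ with $a\neq b$: then $\bG_\lambda^\circ$ is a torus (its own center), and the off-diagonal one-dimensional summand $R_\lambda\subset\End(V_\lambda)$ carries the character $\epsilon_\ell^{a-b}$, which has infinite image. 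So $\chi_\lambda$ does \emph{not} in general factor through $\bG_\lambda/\bG_\lambda^\circ$, and your bound by $\mu_k$ fails.

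This is exactly why the paper's proof splits into two cases. When the image of $\chi_\lambda$ is finite, your reduction-mod-$\ell$ argument works (and the paper gives essentially the same one-line proof, bounding the finite image by $k$). But the paper must separately treat the case where the image is infinite: there it passes to the specialization at a closed point $x_0$, uses that $\bG_{R_\lambda}=\GL_1$ forces the algebraic envelope of $\Gal_{K(x_0)}$ on $R_\lambda$ to be $\GL_1$ as well (Theorem~\ref{general}(iii)), and then invokes the commutant comparison (Theorem~\ref{general}(i),(ii)) together with Proposition~\ref{lem1}(i) to distinguish $\overline R_\lambda^{\ss}$ from the trivial character inside the envelope. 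Your approach would need an entirely different mechanism to handle this infinite-image case.
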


\begin{proof}
If the monodromy in $\GL(R_\lambda)$ is finite and non-trivial, then its size is bounded by $k<\ell$.
Since the kernel of the reduction map is pro-$\ell$, the monodromy in $\GL(\overline R_\lambda^{\ss})$
is non-trivial. 

If the monodromy in $\GL(R_\lambda)$ is infinite, then the algebraic monodromy 
$\bG_{R_\lambda}=\GL_1$. Thus, the algebraic monodromy of $\Gal_{K(x_0)}$ on $R_\lambda$
is also $\GL_1$ by assertion (iv). By Theorem \ref{general}(iii) on the semisimplification of \eqref{sp0}, for $\ell\gg0$
the algebraic envelope of $\Gal_{K(x_0)}$ on $R_\lambda$ is also $\GL_1$. 
For almost all $\lambda$, let $\uH_{\lambda}$ be the algebraic envelopes of the semisimple SCS
\begin{equation}\label{temp}
\{Y_{\lambda}:=\rho_{\lambda,x_0}^{\ss}\otimes\rho_{\lambda,x_0}^{\ss,\vee}\}_\lambda.
\end{equation}
By Theorem \ref{general}(i) and (ii), there exists $L'/K(x_0)$ such that the Galois image of $\Gal_{L'}$ in $\GL_{\overline{Y}_{\lambda}^{\ss}}$ 
factors through $\uH_\lambda$ and they have the same commutant in $\End(\overline{Y}_{\lambda}^{\ss})$ for $\ell\gg0$.
Since $Y_\lambda\otimes\overline\Q_\ell$ contains $R_\lambda$ and the trivial $\overline\Q_\ell$ as subrepresentations
and the image of $\uH_\lambda$ in $\GL_{\overline R_\lambda^{\ss}}$ is $\GL_1$ for $\ell\gg0$,
it follows that $\Gal_{L'}$ is non-trivial on $\overline R_\lambda^{\ss}$ by Proposition \ref{lem1}(i).
\end{proof}

\textit{Case (b).} In this case, $e=1$ and $f>1$ in the decomposition \eqref{nordecom}
and recall $\ell>\max\{k,n\}$.
By \cite[Theorem 3]{Cl37}, $W_\lambda$ is the tensor product of two irreducible 
projective representations\footnote{Given a group $G$ and a vector space $V$, a 
map $G\to \GL(V)$ is a projective representation
if the composition $G\to \GL(V)\to\PGL(V)$ is a group homomorphism.} of $\pi_1(X)$:
\begin{equation*}
 W_\lambda\cong U\otimes D
\end{equation*}
such that restricting $U$ to $\pi_1(X_{W_\lambda})$ is $U_1$ in \eqref{nordecom} and $D$ is actually a projective representation
of the quotient group $\Gal(X_{W_\lambda}/X)$ of dimension $f$.
Let $\Delta_U$ (resp. $\Delta_D$) be the image of $\pi_1(X)$ in $\GL(U)$ (resp. $\GL(D))$
and $\mu_f$ be the group of $f$th roots of unity.
Since $\Gal(X_{W_\lambda}/X)$ is finite of size bounded by $k<\ell$, we may adjust by multiplying 
suitable scalars so that  $\Delta_D$  is contained in $\SL(D)$ 
and the \emph{Schur multiplier}\footnote{Given a projective representation $\phi:G\to \GL_n(F)$,
the Schur multiplier is the function $c:G\times G\to F^*$ so that $\phi(gh)=c(g,h)\phi(h)\phi(g)$.} of $D$ (and hence $U$) has values in 
$$\mu_f=\text{Ker}(\SL(D)\to \PSL(D)).$$ 
Since restricting $U$ and $D$ 
to the normal subgroup $\pi_1(X_{W_\lambda})$ are true representations,
the product sets $\Delta_U\cdot\mu_f$ and $\Delta_D\cdot\mu_f$
are compact subgroups of $\GL(U)$ and $\GL(D)$ respectively.
By finding a $\overline\Z_\ell$-lattice of $U$ (resp. $D$) stabilized by $\Delta_U\cdot\mu_f$ 
(resp. $\Delta_D\cdot\mu_f$) and reduction, the residual representation $\overline W_\lambda$
is a tensor product of two projective representations of $\pi_1(X)$:
\begin{equation}\label{tensorproj}
 \overline W_\lambda\cong \overline U\otimes \overline D.
\end{equation}
It suffices to show that $\overline U$ and $\overline D$ are irreducible.
For almost all $\lambda$, the restriction of $\overline U$ to $\pi_1(X_{W_\lambda})$ is $\overline U_1$ which 
is irreducible by the facts that
\begin{itemize}
\item $X_{W_\lambda}$ in \eqref{finitep} has finitely many possibilities and 
\item the degree $k=1$ case for $\{\mathrm{Res}^{\pi_1(X)}_{\pi_1(X_{W_\lambda})}\rho_\lambda\}_\lambda$ 
because the image of $\pi_1(X_{W_\lambda})$ in $\GL(W_\lambda)$ is contained in $\bG_{W_\lambda}^\circ$. 
\end{itemize}
Since $\Delta_D$ has size
prime to $\ell$, it acts irreducibly on $\overline D$ \cite[Proposition 43(ii)]{Se77a}.
\qed

\subsubsection{\textbf{Proof of Corollary \ref{corA}}} 
Since $\dim W_\lambda\leq 3$ is of type A (including the possibility
that $\bG_{W_\lambda}^\circ$ is a torus), 
we are done by Theorem \ref{thmA}. \qed

\subsection{Connectedness}
Let $\{\rho_\ell\}_\ell$ be the rational 
semisimple SCS in Theorem \ref{ae}. We give conditions
so that a generalization of Theorem \ref{Hui2}(vii)
holds for $\{\rho_\ell\}_\ell$. 

\subsubsection{Compatible subsystem}
We use
the notation in $\mathsection\ref{csgr}$.
Denote the ring of integers of $\overline\Q$ by $\overline\Z$.
For every $\ell$, choose an embedding $\overline\Q\hto\overline\Q_\ell$.
Let $\mathcal{P}\subset\Sigma_\Q$ be a non-empty subset.
Suppose for every $\ell\in\mathcal{P}$, we have a subrepresentation 
$W_\ell$ of $\rho_\ell\otimes\overline\Q_\ell$.
The subsystem 
\begin{equation}\label{subsys}
\{\rho_{W_\ell}:\Gal_K\to \GL(W_\ell)\}_{\ell\in\mathcal{P}}
\end{equation}
is said to be \emph{compatible} if 
for all primes $\ell, \ell'\in\mathcal{P}$ and $v\in \Sigma_K\backslash (S\cup S_\ell\cup S_{\ell'})$, 
the equality $P_{v,W_\ell}(T)=P_{v,W_\ell'}(T)\in\overline\Q[T]$ holds.

\subsubsection{Frobenius tori}\label{hypo}
The algebraic monodromy group $\bG_{W_\ell}$ of $\rho_{W_\ell}$
is reductive. We state some facts for the compatible subsystem \eqref{subsys}.

\begin{enumerate}[(i)]
\item The component group $\bG_{W_\ell}/\bG_{W_\ell}^\circ$ and the rank of the $\bG_{W_\ell}$ 
are both independent of $\ell\in\mathcal{P}$\footnote{The proof goes similarly as Theorem \ref{Serre1}}.
\item Let $v\in \Sigma_K\backslash (S\cup S_\ell)$ and $\bar v$ a finite place of $\overline K$ extending $v$.
The Frobenius image $\rho_{W_\ell}(Frob_{\bar v})$ is well-defined.
\item (Serre) The \emph{Frobenius torus} at $\bar v$, denoted by $\bT_{\bar v,W_\ell}$,
 is defined to be the identity component of the smallest algebraic group containing the semisimple part of $\rho_{W_\ell}(Frob_{\bar v})$.
It is a subtorus of $\bG_{W_\ell}$. If $\bar v'$ is another finite place of $\overline K$ extending $v$, then $\bT_{\bar v,W_\ell}$
and $\bT_{\bar v',W_\ell}$ are conjugate in $\bG_{W_\ell}$.
\item The hypothesis\footnote{``MFT'' stands for ``maximal Frobenius tori''.} below is independent of $\ell\in\mathcal{P}$.
\begin{enumerate}
\item[(MFT):] The group $\bG_{W_\ell}$ is connected and there is a Dirichlet density one subset 
$\mathscr{S}_K$ of  $\Sigma_K$ such that if $v\in \mathscr{S}_K$,
then $W_\ell$ is unramified at $v$ and the Frobenius torus $\bT_{\bar v,W_\ell}$ 
is a maximal torus of $\bG_{W_\ell}$.
\end{enumerate}
\item (Serre) Suppose $\bG_{W_\ell}$ is connected. Hypothesis (MFT) holds when the roots of $P_{v,W_\ell}(T)\in\overline\Q[T]$
for all $v$ outside $S\cup S_\ell$ satisfy \cite[Theorem 2.6(a)--(c)]{Hu18}, e.g.,
when $W_\ell$ is (the semisimplification of) a subquotient of 
$$\bigoplus_{i=1}^m H^{w_i}(X_{i,\overline K},\overline\Q_\ell(n_i)),$$
where $X_i/K$ is a smooth projective variety for all $1\leq i\leq m$.
\end{enumerate}

\subsubsection{Galois image in $\uG_{W_\ell}$}
Since $W_\ell$ is a component of a decomposition of $\rho_\ell\otimes\overline\Q_\ell$ for all $\ell\in\mathcal{P}$,
we have $\uG_{W_\ell}\subset \GL(\overline W_\ell^{\ss})$ the algebraic envelope of $\rho_{W_\ell}$ for almost all $\ell$ in $\mathcal{P}$.
The following statement is a mild generalization of \cite[Theorem 4.5]{HL20} assuming 
Hypothesis $(MFT)$ in $\mathsection$\ref{hypo}(iv);
the difference is that the algebraic monodromy $\bG_\ell$ can be non-connected.

\begin{prop}\label{connect}
Suppose Hypothesis $(MFT)$ in $\mathsection$\ref{hypo}(iv) holds. 
For almost all $\ell\in\mathcal{P}$,
the image of $\bar\rho_{W_\ell}^{\ss}:\Gal_K\to \GL(\overline W_\ell^{\ss})$
is contained in $\uG_{W_\ell}$.
\end{prop}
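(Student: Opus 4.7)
The plan is to extend the containment $\bar\rho_{W_\ell}^{\ss}(\Gal_L)\subset\uG_{W_\ell}(\overline\F_\ell)$—which holds for some finite Galois extension $L/K$ and almost all $\ell\in\mathcal{P}$ by Theorem~\ref{general}(i)—to the full group $\Gal_K$, adapting the strategy of \cite[Theorem 4.5]{HL20}. The crucial new input is that Hypothesis~$(MFT)$ forces the subrepresentation monodromy $\bG_{W_\ell}$ to be connected even though the ambient $\bG_\ell$ need not be; this allows the connected-case arguments of \cite{HL20} to be applied to the compatible subsystem $\{\rho_{W_\ell}\}_{\ell\in\mathcal{P}}$ directly, by transferring from the ambient $\bG_\ell$ to the connected $\bG_{W_\ell}$.

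Since $\uG_{W_\ell}$ is Zariski closed in $\GL(\overline W_\ell^{\ss})$, Chebotarev density reduces the task to showing $\bar\rho_{W_\ell}^{\ss}(Frob_{\bar v})\in\uG_{W_\ell}(\overline\F_\ell)$ for $v$ in some density one subset $\mathscr{S}_K'\subset\mathscr{S}_K$ that avoids ramification of $\bar\rho_{W_\ell}^{\ss}$ and places above $\ell$. By shrinking $\mathscr{S}_K'$ if necessary, I may further assume that for $v\in\mathscr{S}_K'$ the semisimple part $s_v$ of $\rho_{W_\ell}(Frob_{\bar v})$ is regular, so that $\rho_{W_\ell}(Frob_{\bar v})=s_v$ and this element generates the maximal torus $\bT_{\bar v,W_\ell}$ of the connected reductive $\bG_{W_\ell}$.

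To land in $\uG_{W_\ell}$, I would use the integral model given by Proposition~\ref{mt}: projecting $\cH_\ell'$ to the $W_\ell$-factor produces a reductive subscheme $\cH_{W_\ell}'\subset\prod_{j\in J(i)}\GL_{m_j,\cO_\ell'}$ with generic fiber $\bG_{W_\ell}$ (by Proposition~\ref{mt}(i)) and special fiber $\uH_{W_\ell}'$ whose semisimplification is $\uG_{W_\ell}$ (by Proposition~\ref{mt}(ii),(iv)). Since the Frobenius eigenvalues are algebraic integers and $\cH_\ell'$ is built from the hyperspecial maximal compact stabilizer of a lattice (Propositions~\ref{scheme} and~\ref{prep1}), after possibly conjugating and enlarging $\cO_\ell'$ by an unramified extension, $\rho_{W_\ell}(Frob_{\bar v})$ lies in $\cH_{W_\ell}'(\overline\cO_\ell')$; reduction mod the maximal ideal followed by semisimplification then yields $\bar\rho_{W_\ell}^{\ss}(Frob_{\bar v})\in\uG_{W_\ell}(\overline\F_\ell)$.

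The main obstacle is precisely this integral placement: generic fiber $\bG_{W_\ell}(\overline F_\ell')$-conjugacy of maximal tori produces a conjugating element in $\GL_{n_i}(\overline F_\ell')$, but the reduction step requires this element to preserve an $\overline\cO_\ell'$-lattice. This is the same technical core as in the proof of \cite[Theorem 4.5]{HL20}, played out inside the reductive scheme $\cH_{W_\ell}'$ associated to the connected $\bG_{W_\ell}$ rather than inside $\cG_\ell$ associated to a connected ambient $\bG_\ell$; the non-connectedness of $\bG_\ell$ itself is irrelevant once attention is restricted to the factor corresponding to $W_\ell$.
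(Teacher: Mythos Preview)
Your overall strategy—adapting the Frobenius-torus method of \cite[Theorem~4.5]{HL20} using Hypothesis~$(MFT)$—is the right one, and is what the paper does. But the paper's execution differs from yours at one essential point: before invoking the Frobenius-torus argument, the paper first shows that $\bar\rho_{W_\ell}^{\ss}(\Gal_K)$ \emph{normalizes} $\uG_{W_\ell}$. This is done upstairs in the ambient $\GL_n$: since $\bar\rho_\ell^{\ss}(\Gal_K)$ normalizes $\bar\rho_\ell^{\ss}(\Gal_L)$, which is a bounded-index subgroup of $\uG_\ell(\F_\ell)$, and the formal character of $\uG_\ell$ is independent of $\ell$ (Theorem~\ref{ae}(ii),(v)), \cite[Proposition~2.23]{HL20} forces $\bar\rho_\ell^{\ss}(\Gal_K)$ to normalize $\uG_\ell$ itself. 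Projecting to $\GL_{\overline W_\ell^{\ss}}$ then makes $\bar\rho_{W_\ell}^{\ss}(\Gal_K)\cdot\uG_{W_\ell}$ a reductive group with identity component $\uG_{W_\ell}$ and at most $[L:K]$ components, and the connectedness argument of \cite[Theorem~4.5]{HL20} applies verbatim to this product (the only change being that the Frobenius tori now live over $\overline\Q_\ell$ with an underlying $\overline\Q$-rational torus rather than over $\Q_\ell$).

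Your direct integral-model route has a gap at exactly the step you flag as the ``main obstacle''. The scheme $\cH_\ell'$ is the Zariski closure of $\Omega_\ell'=\cG_\ell(\cO_\ell')$, and $\cG_\ell$ has generic fiber $\bG_\ell^\circ$, not $\bG_\ell$ (Proposition~\ref{scheme}). Hence only $\rho_\ell(\Gal_{K^{\conn}})$ lies in $\cH_\ell'(\cO_\ell')$, and after projection only $\rho_{W_\ell}(\Gal_{K^{\conn}})\subset\cH_{W_\ell}'(\cO_\ell')$. The connectedness of $\bG_{W_\ell}$ tells you that $\rho_{W_\ell}(\Gal_K)$ sits in the \emph{generic fiber} $\bG_{W_\ell}(\overline\Q_\ell)$ of $\cH_{W_\ell}'$, but gives no control over the \emph{integral} containment $\rho_{W_\ell}(Frob_{\bar v})\in\cH_{W_\ell}'(\overline\cO_\ell')$ for $Frob_{\bar v}\notin\Gal_{K^{\conn}}$; integrality of eigenvalues is far from enough here. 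So your assertion that ``the non-connectedness of $\bG_\ell$ itself is irrelevant once attention is restricted to the factor corresponding to $W_\ell$'' is not correct: that non-connectedness is precisely why the available integral model only captures $\Gal_{K^{\conn}}$, and why the paper inserts the normalization step before running the argument of \cite{HL20}. (A smaller slip: the special fiber of your $\cH_{W_\ell}'$ is $\uH_{W_\ell}'$, and it is $\uG_{W_\ell}'\subset\uH_{W_\ell}'$—not $\uH_{W_\ell}'$ itself—whose semisimplification is $\uG_{W_\ell}$; see Proposition~\ref{mt}(iii),(iv).)
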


\begin{proof}
Since $\bar\rho_\ell^{\ss}(\Gal_K)$ normalizes $\bar\rho_\ell^{\ss}(\Gal_L)$,
which is a subgroup of $\uG_\ell(\F_\ell)$ of uniformly bounded index independent of $\ell\gg0$ 
and the formal character of $\uG_\ell$ is independent of $\ell\gg0$ (Theorem \ref{ae}(ii),(v)),
the image $\bar\rho_\ell^{\ss}(\Gal_K)$ normalizes the algebraic envelope $\uG_\ell$ 
for $\ell\gg0$ by \cite[Proposition 2.23]{HL20}.
Thus, for $\ell\gg0$ the product $\bar\rho_\ell^{\ss}(\Gal_K)\cdot\uG_\ell$ is reductive with 
the identity component $\uG_\ell$.
Projecting into $\GL_{\overline W_\ell^{\ss}}$, for $\ell\gg0$ the product 
$$\bar\rho_{W_\ell}^{\ss}(\Gal_K)\cdot\uG_{W_\ell}\subset\GL_{\overline W_\ell^{\ss}}$$ 
is a reductive subgroup with identity component $\uG_{W_\ell}$ and number of components is bounded by $[L:K]$.
It suffices to show that this product is connected when $\ell$ is large.
The proof goes exactly the same as the proof of \cite[Theorem 4.5]{HL20} using Hypothesis (MFT)
and the only difference comes from the fields of definition of Frobenius tori:
in  \cite[Theorem 4.5]{HL20} 
the Frobenius tori are defined over $\Q_\ell$ (with an underlying $\Q$-rational torus),
whereas here the Frobenius tori are defined over $\overline\Q_\ell$
(with an underlying $\overline{\Q}$-rational torus).
\end{proof}

\section{Irreducibility of automorphic Galois representations}\label{s4}
\subsection{Algebraic envelopes of automorphic SCS}\label{ic}
Let $F$ be a totally real or CM number field and $F^+$ the maximal totally real subfield of $F$.   
Attached to a regular algebraic, polarized, cuspidal 
automorphic representation $(\pi,\chi)$ of $\GL_n(\A_F)$ is a pair of strictly compatible systems ($\mathsection$\ref{wcs})
defined over a CM field $E$\footnote{By \cite[Lemmas 1.2, 1.4]{PT15} and \cite[Lemma 5.3.1(3)]{BLGGT14}, 
we obtain the Serre compatible systems in \eqref{twosys} defined over a CM field $E$ through coefficient descent ($\mathsection$\ref{op}(G)).}
\begin{align}\label{twosys}
\begin{split}
\{\rho_{\pi,\lambda}: \Gal_F\to \GL_n(E_\lambda)\}_\lambda\hspace{.1in}\mathrm{and}\hspace{.1in}
\{\rho_{\chi,\lambda}: \Gal_{F^+}\to E_\lambda^*\}_\lambda
\end{split}
\end{align}
such that the conditions (i)--(iv) below are satisfied (\cite[$\mathsection2$]{BLGGT14}).

\begin{enumerate}[(i)]
\item $\rho_{\pi,\lambda}^c\cong \rho_{\pi,\lambda}^\vee\otimes \epsilon_\ell^{1-n}\rho_{\chi,\lambda}$ as $\Gal_F$-representations, 
where $\rho_{\pi,\lambda}^c$ denotes $\rho_{\pi,\lambda}$ composed with the conjugation of a complex conjugation $c\in\Gal_{F^+}$ and $\epsilon_\ell$ is the 
$\ell$-adic cyclotomic character.
\item Let $\iota:E_\lambda\hookrightarrow \C$. 
If $v\nmid\ell$, the semisimplified Weil-Deligne representation  is independent of $\lambda$ and satisfies:
$$\iota \text{WD}(\rho_{\pi,\lambda}|_{\Gal_{F_v}})^{F-ss}\cong(\rho_v:W_{F_v}\to \GL_n(\C),N)
\cong \text{rec}_{F_v}(\pi_v\otimes|\text{det}|_v^{(1-n)/2}),$$
where $\rho_v$ is a continuous representation of the Weil group  $W_{F_v}$ (of $F_v$), $N\in \End(\C^n)$ is nilpotent,
and $\text{rec}_{F_v}$ denotes the local Langlands correspondence.
\item When $F$ is totally real, $\rho_{\pi,\lambda}$  factors through $\GSp_n$ for all $\lambda$ or $\GO_n$ for all $\lambda$.
\item The representation $\rho_{\pi,\lambda}$ has pure weight $w$ (independent of $\lambda$) and distinct $\tau$-Hodge-Tate weights
for all $\tau:F\to\overline E$.
\end{enumerate}

\begin{thm}\label{autoae} 
The  semisimple SCS $\{\rho_{\pi,\lambda}\}_\lambda$ in \eqref{twosys}
satisfies the conditions (a) and (b) of $\mathsection$\ref{ESCS} for some integers $N_1,N_2\geq 0$ and finite  extension $F'/F$.
\end{thm}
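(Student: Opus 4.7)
My plan is to verify conditions (a) and (b) of $\mathsection\ref{ESCS}$ separately, feeding the rich local structure of the strictly compatible system $\{\rho_{\pi,\lambda}\}_\lambda$ recorded in $\mathsection\ref{ic}$ into standard $p$-adic Hodge theory on one side and Grothendieck's monodromy theorem on the other. The two conditions are of quite different flavours: (a) bounds tame inertia weights at $\ell$-adic places via Hodge-Tate weights, while (b) demands that inertia at prime-to-$\ell$ places becomes trivial (after semisimplification) over a fixed global extension $F'/F$.

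For condition (a), the regular algebraic hypothesis ensures that the $\tau$-Hodge-Tate weights of $\rho_{\pi,\lambda}$ are distinct integers lying in a $\lambda$-independent interval $[a,b]$ for every embedding $\tau:F\hookrightarrow \overline E$ (condition $\mathsection\ref{ic}$(iv)). Let $S\subset\Sigma_F$ denote the finite ramification set of the system. For almost all $\lambda$ we have $S\cap S_\ell=\emptyset$, so $\rho_{\pi,\lambda}$ is crystalline at every $v\mid\ell$; moreover, for almost all $\ell$ the local extension $F_v/\Q_\ell$ is unramified at every $v\mid\ell$, since only finitely many rational primes ramify in $F$. Setting $N_1:=\max(-a,0)$ and $N_2:=b-a$, the twist $\rho_{\pi,\lambda}\otimes\epsilon_\ell^{N_1}$ has Hodge-Tate weights in $[0,N_2]$; once $\ell>N_2+1$, Fontaine-Laffaille theory identifies the tame inertia weights of the semisimplified mod $\ell$ reduction with the Hodge-Tate weights, placing them in $[0,N_2]$ as required.

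For condition (b), I would exploit the $\lambda$-independence of the Frobenius-semisimplified Weil-Deligne representation $\iota\mathrm{WD}(\rho_{\pi,\lambda}|_{\Gal_{F_v}})^{F-ss}=(\rho_v,N_v)$ at each $v\nmid\ell$ (condition $\mathsection\ref{ic}$(ii)). At $v\notin S$ the Weil part $\rho_v$ is unramified; at $v\in S$ the image $\rho_v(I_{F_v})$ is a fixed finite group, so one can pick a single finite Galois extension $F'/F$ whose local completions kill these inertia actions simultaneously for every $v\in S$. Then $\rho_{\pi,\lambda}|_{\Gal_{F'_w}}$ is semistable at every $w\nmid\ell$, with inertia acting via $g\mapsto \exp(N_w\,t_\ell(g))$ for a nilpotent $N_w$ and the $\ell$-adic tame character $t_\ell$ of inertia. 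Once $\ell>n$ this exponential reduces integrally modulo $\ell$ to a unipotent matrix, which is annihilated by semisimplification; a Jordan-H\"older comparison shows the result agrees with $(\bar\rho_{\pi,\lambda}^{\ss}|_{\Gal_{F'_w}})^{\ss}$, giving (b).

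The main hurdle is the bookkeeping in (a): the passage from Hodge-Tate weights to tame inertia weights requires enough hypotheses for Fontaine-Laffaille to apply (unramified local base, small weight range), and the cyclotomic twist $\bar\epsilon_\ell^{N_1}$ is precisely the cosmetic device that shifts potentially negative Hodge-Tate weights into the non-negative range where tame inertia weights, as $\ell$-restricted exponents, naturally live. Once this shift is in place, both halves reduce to standard results combined with the built-in compatibilities of the automorphic system $\{\rho_{\pi,\lambda}\}_\lambda$.
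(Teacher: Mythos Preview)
Your proposal is correct and follows essentially the same route as the paper: Fontaine--Laffaille theory for (a) after a cyclotomic twist normalizing the Hodge--Tate weights into $[0,N_2]$, and the $\lambda$-independence of the Weil--Deligne data at $v\in S$ for (b), globalized to a single finite $F'/F$ over which inertia acts unipotently and hence trivially after semisimplification. (One trivial arithmetic slip: your choice $N_2=b-a$ works only when $a\le 0$; in general take $N_2=b+N_1$, as the paper effectively does.)
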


\begin{proof}
The SCS $\{\rho_{\pi,\lambda}\}_\lambda$ is a strictly compatible system 
(i.e., $\mathsection$\ref{wcs}(i)--(iv) are satisfied).
The (finite) multiset $HT\subset\Z$ of union of $\tau$-Hodge Tate weights of $\rho_\lambda$ 
for all embeddings $\tau:F\to\overline E$ is independent of $\lambda$ 
($\mathsection$\ref{wcs}(iii)). Write $HT=\{b_1,b_2,...,b_m\}$ and 
suppose $b_1\leq b_2\leq\cdots\leq b_m$.
Denote by $v$ a place of $F$ above $\ell$ (the residue characteristic of $\lambda$).
Suppose $\ell$ is sufficiently large so that $F_v/\Q_\ell$ is unramified.
 Since $\rho_{\pi,\lambda}$
is crystalline at $v$ for almost all $\lambda$ 
($\mathsection$\ref{wcs}(ii)),
the $\ell$-adic representation $\rho_{\pi,\ell}:=\bigoplus_{\lambda|\ell}\rho_{\pi,\lambda}$ given by restriction of scalars (see \eqref{res11}) is crystalline at $v$ for $\ell\gg0$.
Pick integers $N_1,N_2\geq 0$ such that $0\leq b_1+ N_1\leq b_m+N_1\leq N_2$.
For $\ell\gg N_2+1$, the Fontaine-Laffaille theory  implies that 
the tame inertia weights of 
\begin{equation}
(\bar\rho_{\pi,\ell}^{\ss}\otimes \bar\epsilon_\ell^{N_1})|_{\Gal_{F_v}}
\end{equation}
and the Hodge-Tate weights (belonging to $[0,\ell-1)$) of the crystalline representation
\begin{equation}\label{localrepn}
(\rho_{\pi,\ell}\otimes \epsilon_\ell^{N_1})|_{\Gal_{F_v}}
\end{equation}
coincide (\cite[Theorem 5.3]{FL82}, see also \cite[Theorem 1.0.1]{Ba20}).
Since each Hodge-Tate weight of \eqref{localrepn} belongs to
$$HT+N_1:=\{b_1+N_1,b_2+N_1,...,b_m+N_1\}\subset [0,N_2],$$
Theorem \ref{ae}(a) (resp. $\mathsection$\ref{ESCS}(a)) holds for the SCS $\{\rho_{\pi,\ell}\}_\ell$ (resp. $\{\rho_{\pi,\lambda}\}_\lambda$) with 
integers $N_1,N_2\geq 0$.

Suppose the SCS $\{\rho_{\pi,\lambda}\}_\lambda$ is unramified outside $S$.
Consider large enough $\ell$ that is not divisible by any $v\in S$. 
For each $v\in S$, there is an open subgroup $U_v$ of the inertia group $I_{F_v}$ of $\Gal_{F_v}$ 
such that $\rho_{\pi,\lambda}(U_v)=\exp(t_\ell(U_v)N)$ is unipotent for all $\lambda$ whose characteristic $\ell$ 
is not divisible by $v$ by $\mathsection$\ref{ic}(ii), where
$$t_\ell: I_{F_v}\twoheadrightarrow I_{F_v}^t\twoheadrightarrow\Z_\ell$$
and the second surjection is the maximal pro-$\ell$ quotient of the tame inertia group. 
By shrinking $U_v$, we may assume $U_v$ is the inertia group of 
a finite Galois extension $F_v'$ of $F_v$.
Since $\overline F$ is dense in $\overline F_v$, we may 
pick $\alpha_v\in\overline F$ such that $F_v'\subset F_v(\alpha_v)$
and $F(\alpha_v)/F$ is Galois.
Thus, the representation $\rho_{\pi,\lambda}$ 
is unipotent on the inertia group $I_{F_v(\alpha_v)}$.
Define the finite Galois extension $F':=F(\alpha_v: ~v\in S)$ of $F$.
If $w$ is a finite place of $F'$ above $v$, then the completion $F'_w$
contains $F_v(\alpha_v)$ and $\rho_{\pi,\lambda}$
is unipotent on $I_{F'_w}$. Therefore,  
the semisimplification of  
$\bar\rho_{\pi,\lambda}^{\ss}|_{\Gal_{F'_w}}$ is unramified whenever $w\nmid\ell$
and $\mathsection$\ref{ESCS}(b) holds.
\end{proof}

By Theorem \ref{autoae}, the assertions in Theorem \ref{general}, Theorem \ref{thmA}, and Corollary \ref{corA}
hold for the automorphic SCS $\{\rho_{\pi,\lambda}\}_\lambda$.
Denote the ambient space and the algebraic monodromy group of $\rho_{\pi,\lambda}$ respectively
by $V_\lambda$ and $\bG_\lambda$.
For almost all $\lambda$ we write 
the algebraic envelope of $\rho_{\pi,\lambda}$ ($\mathsection\ref{ESCS}$) as 
\begin{equation}\label{aelambda}
\uG_\lambda:=\uG_{V_\lambda}\subset\GL_{\overline V_\lambda^{\ss}}.
\end{equation}
The remaining sections are devoted to the proof of Theorem \ref{thmB}.

\subsection{Potential automorphy for subrepresentations of SCS}
We present the essential results in \cite{BLGGT14}, \cite{CG13}, and \cite{PT15} that we need later on
and improve a result in \cite{CG13} on dihedral subrepresentation (Proposition \ref{esd2}).

\subsubsection{Polarized and odd $\ell$-adic representations}\label{selfdual}
We follow the terminology in \cite[$\mathsection2.1$]{BLGGT14}.
Let $F$ be a totally real or CM field and $F^+$ its maximal totally real subfield. 
For any infinite place $v$ of $F^+$, 
denote by $c_v\in\Gal_{F^+}$ the corresponding complex conjugation. 
A pair of $\ell$-adic
representations $(r,\mu)$ 
\begin{equation}\label{pair}
r:\Gal_F\to\GL_n(\overline\Q_\ell)\hspace{.1in}\text{and}\hspace{.1in}\mu:\Gal_{F^+}\to\overline\Q_\ell^*
\end{equation} 
is said to be \emph{polarized} if for some (and therefore for any) infinite place $v$ of $F^+$,
there exists $\epsilon_v\in\{\pm 1\}$ and a non-degenerate pairing $\left\langle~,~\right\rangle_v$
on $\overline \Q_\ell^n$ such that for all $x,y\in \overline \Q_\ell^n$ and $g\in\Gal_F$:
\begin{enumerate}[(i)]
\item $\left\langle x,y\right\rangle_v= \epsilon_v\left\langle y,x\right\rangle_v$;
\item $\left\langle r(g)x, r(c_v g c_v)y\right\rangle_v= \mu(g)\left\langle x,y\right\rangle_v$;
\item $\epsilon_v=-\mu(c_v)$ when $F$ is imaginary.
\end{enumerate}
When $F$ is totally real, $(r,\mu)$ is polarized if and only if
$r$ factors through either $\GSp_n(\overline\Q_\ell)$
with multiplier $\mu$ and $\mu(c_v)=-\epsilon_v$, or $\GO_n(\overline\Q_\ell)$ with 
multiplier $\mu$ and $\mu(c_v)=\epsilon_v$.
The pair $(r,\mu)$ is said to be \emph{totally odd}
or just \emph{odd} if $\epsilon_v=1$ for every infinite place $v$ of $F^+$.  
(Note that $\epsilon_v$ has nothing to do with $\epsilon_\ell$, the cyclotomic character.)

An $\ell$-adic representation $r$ of $F$ is said to be polarized (and odd)
if there exists an $\ell$-adic character $\mu$ of $F^+$ such that $(r,\mu)$ is polarized (and odd).
The automorphic $\ell$-adic representation $\rho_{\pi,\lambda}$ in \eqref{twosys} 
(or more precisely, the pair $(\rho_{\pi,\lambda},\epsilon_\ell^{1-n}\rho_{\chi,\lambda})$)  is polarized.
According to \cite[p. 208--209]{PT15}, there is a positive Dirichlet density set $\mathcal{L}$
of rational primes $\ell$ such that for every $\lambda$ dividing some prime in $\mathcal{L}$, the irreducible constituents
$W_\lambda$ of $\rho_{\pi,\lambda}\otimes\overline\Q_\ell$  are all polarized and odd. 
We state state some results when $F$ is totally real.

\begin{thm}\cite[Corollary 1.3]{BC11}\label{BC}
Let $F$ be a totally real field and $(\rho_{\pi,\lambda},\rho_{\chi,\lambda})$ the pair of (automorphic) $\ell$-adic 
Galois representations in \eqref{twosys}.
If $W_\lambda$ is an irreducible factor of $\rho_{\pi,\lambda}\otimes\overline\Q_\ell$ with 
$W_\lambda\cong W_\lambda^{\vee}\otimes \epsilon_\ell^{1-n}\rho_{\chi,\lambda}$,
then $(W_\lambda,\epsilon_\ell^{1-n}\rho_{\chi,\lambda})$ is 
polarized and odd.
\end{thm}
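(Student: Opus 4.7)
The plan is to leverage the fact that the global pair $(\rho_{\pi,\lambda},\mu)$, with $\mu:=\epsilon_\ell^{1-n}\rho_{\chi,\lambda}$, is itself polarized and odd (since $\pi$ is polarized cuspidal and $F$ is totally real), and to transfer this property to the essentially self-dual irreducible constituent $W_\lambda$. Writing $\rho_{\pi,\lambda}\otimes\overline{\Q}_\ell = W_\lambda \oplus W$ as $\Gal_F$-modules and applying Schur's lemma to the irreducible $W_\lambda$, the hypothesis $W_\lambda \cong W_\lambda^{\vee}\otimes\mu$ yields a nondegenerate $\Gal_F$-equivariant pairing $\langle\cdot,\cdot\rangle_v$ on $W_\lambda$, unique up to scalar, with a definite sign $\epsilon_v\in\{\pm 1\}$ at each infinite place $v$ of $F^+$. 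Conditions (i)--(ii) of the polarization definition in $\mathsection\ref{selfdual}$ then follow formally; the content of the theorem is to show $\epsilon_v=+1$ at every such $v$.

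A first attempt is to restrict the global pairing $\langle\cdot,\cdot\rangle_v^{\mathrm{glob}}$ (of sign $+1$) to $W_\lambda$. When $W_\lambda$ has isotypic multiplicity one in $\rho_{\pi,\lambda}\otimes\overline{\Q}_\ell$, the complement $W$ admits no $\Gal_F$-map to $W_\lambda^{\vee}\otimes\mu$, so $W_\lambda$ is necessarily paired nondegenerately with itself by the restriction and inherits the sign $\epsilon_v=+1$ directly. In the higher-multiplicity case, the restriction to the $W_\lambda$-isotypic subspace is a Hermitian form over $\End_{\Gal_F}(W_\lambda^{\oplus m})\cong M_m(\overline{\Q}_\ell)$, and one has to extract the intrinsic sign on a single $W_\lambda$ summand by a Morita-type calculation, which should again yield $\epsilon_v=+1$ provided the form does not force $W_\lambda$ to be totally isotropic.

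The main obstacle is to rule out this isotropic possibility and, more fundamentally, to prove that $\epsilon_v$ is a genuine invariant of $(W_\lambda,\mu)$ compatible with the global polarization; this is where the Bellaïche--Chenevier technique enters. Their strategy \cite{BC11} embeds $\rho_{\pi,\lambda}$ in an $\ell$-adic family of polarized Galois representations on a suitable unitary eigenvariety, where a Zariski-dense set of classical points yields \emph{irreducible} Galois representations whose sign at each $v$ is $+1$ by direct classical input, and then propagates the sign along the family by continuity across reducibility loci to the point corresponding to $\pi$ and to each of its irreducible constituents. Carrying this out rigorously---in particular verifying that the sign deforms continuously and specializes correctly onto $W_\lambda$---is the heart of the argument and is what forces $(W_\lambda,\mu)$ to be polarized and totally odd.
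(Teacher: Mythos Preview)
The paper does not give its own proof of this statement; it simply quotes \cite[Corollary 1.3]{BC11} as a black box. So there is nothing to compare your argument against on the paper's side. What remains is whether your sketch is a faithful outline of the Bella\"iche--Chenevier argument.

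Your third paragraph is essentially correct: the actual content of \cite{BC11} is an eigenvariety argument, deforming to a Zariski-dense locus of classical points where the Galois representation is irreducible and the sign is known by earlier work, and then propagating the sign along the family to the point of interest and its irreducible constituents. That is the heart of the proof.

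The problem is with your first two paragraphs. You begin by asserting that the global pair $(\rho_{\pi,\lambda},\mu)$ is already polarized \emph{and odd} ``since $\pi$ is polarized cuspidal and $F$ is totally real,'' and then try to restrict this oddness to $W_\lambda$. But the oddness of the global pair is not a hypothesis here; it is precisely the main theorem of \cite{BC11}, and it does not follow formally from $\pi$ being polarized. The automorphic polarization datum tells you the value of $\mu(c_v)$, but it does not by itself pin down the sign $\epsilon_v$ of the Galois-theoretic pairing; establishing $\epsilon_v=+1$ is the whole point. So your ``transfer'' strategy in paragraphs 1--2 is circular: you are assuming the conclusion (for the full representation) in order to deduce it for a factor. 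A second, smaller issue: your discussion of higher isotypic multiplicity is moot, since the regularity hypothesis (distinct $\tau$-Hodge--Tate weights, condition (iv) in $\mathsection$\ref{ic}) forces every irreducible constituent of $\rho_{\pi,\lambda}\otimes\overline{\Q}_\ell$ to occur with multiplicity one.

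In short: drop the first two paragraphs, and present the eigenvariety argument of paragraph 3 as the proof itself rather than as a rescue of a failed restriction attempt.
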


By condition (i) in $\mathsection\ref{ic}$, we have the following observation (\cite[Corollary 2.4]{CG13}).

\begin{cor}\label{BCcor}
Let $F$ be a totally real field and $W_\lambda$ an irreducible factor of $\rho_{\pi,\lambda}\otimes\overline\Q_\ell$. 
If $\dim W_\lambda>n/2$, then $(W_\lambda,\epsilon_\ell^{1-n}\rho_{\chi,\lambda})$ is 
polarized and odd.
\end{cor}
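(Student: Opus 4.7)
The plan is to derive the hypothesis of Theorem \ref{BC} for $W_\lambda$, namely the self-duality $W_\lambda \cong W_\lambda^{\vee}\otimes \epsilon_\ell^{1-n}\rho_{\chi,\lambda}$, and then quote Theorem \ref{BC} to conclude.

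First I would exploit that $F$ is totally real. In that case, $F^+ = F$, so any complex conjugation $c_v \in \Gal_{F^+}$ already belongs to $\Gal_F$. Hence $\rho_{\pi,\lambda}^c$, the conjugation of $\rho_{\pi,\lambda}$ by $c_v$, is merely obtained from $\rho_{\pi,\lambda}$ by an inner automorphism of $\Gal_F$, and therefore $\rho_{\pi,\lambda}^c \cong \rho_{\pi,\lambda}$ as representations of $\Gal_F$. Condition (i) of $\mathsection\ref{ic}$ then specializes to
\[
\rho_{\pi,\lambda} \cong \rho_{\pi,\lambda}^{\vee}\otimes \epsilon_\ell^{1-n}\rho_{\chi,\lambda}.
\]

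Next I would use semisimplicity to decompose $\rho_{\pi,\lambda}\otimes\overline\Q_\ell=\bigoplus_i W_{\lambda,i}$ into irreducible factors and observe that the involution $W\mapsto W^{\vee}\otimes \epsilon_\ell^{1-n}\rho_{\chi,\lambda}$ on isomorphism classes of irreducible $\Gal_F$-representations permutes the set $\{W_{\lambda,i}\}$, preserving the dimension of each factor. If $\dim W_\lambda > n/2$, then since $\sum_i \dim W_{\lambda,i} = n$, there can be only one irreducible constituent of dimension greater than $n/2$, namely $W_\lambda$ itself. Consequently $W_\lambda$ is fixed by the involution, yielding the desired self-duality $W_\lambda \cong W_\lambda^{\vee}\otimes \epsilon_\ell^{1-n}\rho_{\chi,\lambda}$.

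Finally I would apply Theorem \ref{BC} directly to conclude that $(W_\lambda, \epsilon_\ell^{1-n}\rho_{\chi,\lambda})$ is polarized and odd. No genuine obstacle is expected: the only slightly delicate point is the passage from condition (i) of $\mathsection\ref{ic}$ (stated for $\rho_{\pi,\lambda}^c$) to a plain self-duality statement for $\rho_{\pi,\lambda}$, which as noted rests solely on the fact that $c_v$ is an interior element of $\Gal_F$ when $F$ is totally real.
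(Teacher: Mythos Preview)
Your argument is correct and is exactly the reasoning the paper intends: it merely records that the corollary follows ``by condition (i) in $\mathsection\ref{ic}$'' and cites \cite[Corollary 2.4]{CG13}, leaving the details you have written out implicit. The key steps---using $F=F^+$ to turn $\mathsection\ref{ic}$(i) into a self-duality of $\rho_{\pi,\lambda}$, the dimension-count forcing $W_\lambda$ to be fixed by the involution $W\mapsto W^\vee\otimes\epsilon_\ell^{1-n}\rho_{\chi,\lambda}$, and the appeal to Theorem~\ref{BC}---are precisely those of the cited reference.
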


\begin{prop}\cite[Lemma 2.2]{CG13}\label{esd1}
If $F$ is totally real and $\mu:\Gal_F\to\overline\Q_\ell^*$ is a character, then $(\mu,\mu^2)$ is polarized and odd.
\end{prop}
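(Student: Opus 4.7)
The plan is to unwind conditions (i)--(iii) in $\mathsection\ref{selfdual}$ for the pair $(r,\tilde\mu) = (\mu,\mu^2)$, noting that the ambient space is one-dimensional (so $n=1$) and that $F^+ = F$ since $F$ is totally real. Any non-degenerate pairing on the one-dimensional space $\overline\Q_\ell$ is a non-zero scalar multiple of $\langle x, y\rangle_v := xy$, which is automatically symmetric. Taking this as the pairing at each infinite place $v$ of $F$ forces $\epsilon_v = +1$, which simultaneously settles condition (i) and the oddness requirement $\epsilon_v = +1$ for all infinite $v$. Condition (iii) is vacuous since $F$ is not imaginary.

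The only substantive point is condition (ii): one needs
$$\langle \mu(g)x,\, \mu(c_v g c_v)\, y\rangle_v \;=\; \mu^2(g)\,\langle x,y\rangle_v \qquad \text{for all } g\in\Gal_F,\ x,y\in\overline\Q_\ell,$$
which for the pairing above reduces to the single identity $\mu(c_v g c_v) = \mu(g)$. I would establish this by two elementary observations: first, $\mu$ lands in the abelian group $\overline\Q_\ell^*$, so $\mu(c_v g c_v) = \mu(c_v)\mu(g)\mu(c_v) = \mu(c_v)^2\mu(g)$; second, $c_v$ is a complex conjugation attached to a real place of the totally real field $F$ and is therefore an involution, hence $\mu(c_v)^2 = \mu(c_v^2) = \mu(1) = 1$.

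There is no genuine obstacle here — the result is essentially the observation that in dimension one the polarization conditions collapse to the commutativity of $\overline\Q_\ell^*$ together with the involutivity of $c_v$, while oddness is automatic from the symmetry of the only (up to scalar) pairing on a line.
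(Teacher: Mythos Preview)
Your proof is correct. The paper does not supply a proof of this proposition; it simply cites \cite[Lemma 2.2]{CG13}. Your verification is the standard one: in dimension one the only non-degenerate pairing (up to scalar) is symmetric, giving $\epsilon_v = 1$ and hence oddness, while condition (ii) reduces to $\mu(c_v g c_v) = \mu(g)$, which follows because $\mu$ is a homomorphism into the abelian group $\overline\Q_\ell^*$ and $c_v^2 = 1$.
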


\begin{prop}\cite[Lemma 2.1]{CG13}\label{esd3}
If $F$ is totally real and $(r,\mu)$ is polarized with $n$ odd, then $(r,\mu)$ is odd.
\end{prop}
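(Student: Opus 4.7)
The plan is a short dimension-parity argument extracted directly from the definition of polarized representation in $\mathsection$\ref{selfdual}. Since $F$ is totally real, $F^+ = F$ and condition (iii) of $\mathsection$\ref{selfdual} is vacuous, so the polarization datum at an infinite place $v$ of $F$ consists only of a sign $\epsilon_v \in \{\pm 1\}$ and a non-degenerate bilinear pairing $\langle\,,\,\rangle_v$ on $\overline\Q_\ell^n$ satisfying (i) $\langle x,y\rangle_v = \epsilon_v \langle y,x\rangle_v$ and the equivariance (ii).

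The key observation I would use is that in characteristic zero a non-degenerate antisymmetric bilinear form exists on an $n$-dimensional vector space only when $n$ is even: if $B$ is the Gram matrix of such a form then $B^{\mathsf{T}} = -B$ gives $\det B = (-1)^n \det B$, forcing $\det B = 0$ when $n$ is odd. Applying this to $\langle\,,\,\rangle_v$ over $\overline\Q_\ell$ rules out the possibility $\epsilon_v = -1$, so $\epsilon_v = 1$ for every infinite place $v$, which is precisely the definition of $(r,\mu)$ being odd.

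There is no genuine obstacle here; the statement is a formal consequence of parity, and the only care required is to notice that the condition on $\mu(c_v)$ in (iii) is not imposed in the totally real setting. An equivalent but slightly longer route would be to remark that $n$ odd forces $r$ to factor through $\GO_n$ rather than $\GSp_n$ (as no non-degenerate alternating form exists), so that by the totally real criterion recalled in $\mathsection$\ref{selfdual} one has $\epsilon_v = \mu(c_v)$, and then to deduce $\mu(c_v) = 1$ by combining $c_v^2 = 1$ (whence $r(c_v)^2 = I$ and $\mu(c_v)^2 = 1$) with $(\det r(c_v))^2 = \mu(c_v)^n = \mu(c_v)$; I would opt for the shorter parity route in the write-up.
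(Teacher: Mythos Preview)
The paper does not supply its own proof of this proposition; it is simply quoted from \cite[Lemma~2.1]{CG13} and no argument is given in the body of the paper. So there is nothing in the paper to compare your proposal against.

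That said, your parity argument is correct and is exactly the standard one-line proof: over a field of characteristic zero an alternating form on an odd-dimensional space is automatically degenerate, so the non-degenerate pairing $\langle\,,\,\rangle_v$ in the polarization datum cannot satisfy $\epsilon_v=-1$, forcing $\epsilon_v=1$ at every real place. Your observation that condition~(iii) is vacuous in the totally real case is also correct and necessary. The alternative route you sketch via $\GO_n$ and determinants is valid but, as you note, redundant once the parity of alternating forms has been invoked; the short version is the right choice.
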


We treat the $\dim W_\lambda\leq 2$ case next.
An irreducible two-dimensional representation $\phi:G\to \GL(V)$ of 
a group $G$ is called \emph{dihedral}
if it is induced from a one-dimensional representation of an index two subgroup of $G$.
In this case, the projective image of $G$ in $\PGL(V)$ is a dihedral group. 

\begin{prop}\label{esd2}
Let $F$ be a totally real field and $\{\rho_{\pi,\lambda}\}_\lambda$ the (automorphic) semisimple SCS in \eqref{twosys}.
For almost all $\lambda$, if $W_\lambda$ 
is a two-dimensional irreducible factor of $\rho_{\pi,\lambda}\otimes\overline\Q_\ell$, 
then $(W_\lambda, \det W_\lambda)$ is polarized and odd.
\end{prop}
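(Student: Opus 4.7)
The polarization half of the statement is automatic in dimension two: the determinant pairing on $W_\lambda$ realizes it as a subgroup of $\GSp_2=\GL_2$ with similitude character $\det W_\lambda$, so $(W_\lambda,\det W_\lambda)$ satisfies the polarization conditions of $\mathsection\ref{selfdual}$ for $F$ totally real. The substantive content is therefore oddness: $\det W_\lambda(c_v)=-1$ at every infinite place $v$ of $F^+$.

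My approach is to analyze $W_\lambda$ through its position in the decomposition $\rho_{\pi,\lambda}\otimes\overline\Q_\ell=\bigoplus_i W_{\lambda,i}$ under the polarization $\rho_{\pi,\lambda}^c\cong\rho_{\pi,\lambda}^\vee\otimes\mu$ with $\mu=\epsilon_\ell^{1-n}\rho_{\chi,\lambda}$. Since $F$ is totally real, this pairs the irreducible constituents: either $W_\lambda$ is self-paired ($W_\lambda\cong W_\lambda^\vee\otimes\mu$), or $W_\lambda$ is matched with a distinct constituent $W_\lambda'\cong W_\lambda^\vee\otimes\mu$. In the self-paired non-dihedral subcase, Theorem \ref{BC} gives that $(W_\lambda,\mu)$ is polarized and odd, and since the only self-pairing a non-dihedral 2-dimensional irreducible representation admits is the symplectic one coming from the determinant, one must have $\mu=\det W_\lambda$; oddness then transfers directly.

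The remaining configurations are the dihedral self-paired subcase, in which $W_\lambda\cong\Ind_{\Gal_K}^{\Gal_F}\alpha$ and the $\mu$-pairing can be orthogonal so that $\mu$ differs from $\det W_\lambda$ by the quadratic character $\epsilon_{K/F}$, and the case where $W_\lambda$ pairs with a distinct factor $W_\lambda'$. For the dihedral subcase I would use the identity $\det W_\lambda=\tilde\alpha\cdot\epsilon_{K/F}$, with $\tilde\alpha$ the transfer of $\alpha$, split on whether $K/F$ is CM or totally real at $v$ to evaluate $\epsilon_{K/F}(c_v)$, and apply Proposition \ref{esd1} to $\alpha$ viewed as a character of $\Gal_K$ together with the algebraic Hecke character structure of $\alpha$ to compute $\tilde\alpha(c_v)$. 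For the paired case I would apply the result underlying Theorem \ref{BC} to the 4-dimensional self-paired summand $W_\lambda\oplus W_\lambda'$ to obtain a polarized-odd structure with multiplier $\mu$, yielding $\det W_\lambda(c_v)\det W_\lambda'(c_v)=\mu(c_v)^2=1$, and then use the regularity of $\pi$ ($\mathsection\ref{ic}$(iv)) together with the Hodge-Tate weight structure of $\rho_{\pi,\lambda}$ to force the common sign to be $-1$.

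The main obstacle is precisely these two non-symplectic subcases: when $W_\lambda$ is not directly self-paired via $\mu=\det W_\lambda$, neither Theorem \ref{BC} nor the automatic polarization suffices, and one needs either a class-field-theoretic computation (dihedral subcase) or a Hodge-theoretic pinning of the sign (paired subcase). This dihedral-type analysis is the new content of the proposition compared with \cite{CG13}.
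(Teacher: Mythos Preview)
Your approach diverges from the paper's and has a genuine gap in the ``paired with a distinct factor'' case.

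The paper does not work extrinsically through the ambient polarization $\rho_{\pi,\lambda}\cong\rho_{\pi,\lambda}^\vee\otimes\mu$ at all. Instead it treats $W_\lambda$ intrinsically: by Clifford's theorem applied to $\bG_{W_\lambda}$, either (a) $\SL_2\subset\bG_{W_\lambda}$, (b) $\bG_{W_\lambda}^\circ$ is scalar (ruled out by regularity of Hodge--Tate weights), or (c) $W_\lambda$ is dihedral. In case (a) the paper invokes its big monodromy result Theorem~\ref{general}(v) to show that $\overline W_\lambda|_{\Gal_{F(\zeta_\ell)}}$, and hence $\mathrm{Sym}^2\overline W_\lambda|_{\Gal_{F(\zeta_\ell)}}$, is irreducible for almost all $\lambda$; this is precisely the hypothesis needed to feed $W_\lambda$ into \cite[Proposition~2.5]{CG13}, which then yields oddness via potential automorphy. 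Case (c) follows \cite[Proposition~2.7]{CG13} verbatim. So the new content here is not a dihedral computation but the use of Theorem~\ref{general}(v) to supply residual irreducibility for almost all $\lambda$ (rather than the density-one set available in \cite{CG13}).

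Your extrinsic approach fails in the paired case. When $W_\lambda$ is matched under the $\mu$-polarization with a distinct constituent $W_\lambda'\cong W_\lambda^\vee\otimes\mu$, you correctly obtain $\det W_\lambda\cdot\det W_\lambda'=\mu^2$, hence $\det W_\lambda(c_v)\det W_\lambda'(c_v)=1$. But you then propose to pin the common sign to $-1$ using regularity and the Hodge--Tate weight structure. This does not work: Hodge--Tate weights are data at places above $\ell$, while $\det W_\lambda(c_v)$ is archimedean. For an algebraic Hecke character of a totally real field, the $\ell$-adic Hodge--Tate weight records only the exponent in $|z|^k$ at infinity, not the sign character, so the same Hodge--Tate weight is compatible with either value of $\chi(c_v)$. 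Bridging the two requires global input---automorphy, or a motivic Hodge structure at infinity---which is exactly what \cite[Proposition~2.5]{CG13} supplies and what your outline avoids. Note also that this paired case can occur with $W_\lambda$ non-dihedral, so your self-paired non-dihedral use of Theorem~\ref{BC} does not cover the full non-dihedral range; the paper's intrinsic route via Theorem~\ref{general}(v) and potential automorphy handles it uniformly.
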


\begin{proof}
By Theorem \ref{autoae} and Corollary \ref{corA} 
(on $W_\lambda\subset \rho_{\pi,\lambda}\otimes\overline\Q_\ell$), the semisimplified reduction 
$\overline W_\lambda:=\overline W_\lambda^{\ss}$ is irreducible for almost all $\lambda$.
For almost all $\lambda$, 
consider the algebraic monodromy group $\bG_{W_\lambda}$ and 
 algebraic envelope $\uG_{W_\lambda}$ of $W_\lambda$.
By \cite[Theorem 1]{Cl37}, the three cases we have are 
(a) $\SL_2\subset\bG_{W_\lambda}$,
(b) $\bG_{W_\lambda}^\circ$ is scalar, or
(c) $W_\lambda$ is dihedral.
Since $W_\lambda$ has distinct Hodge-Tate weights at any $v$ above $\ell$ (with respect to any $F_v\to\overline\Q_\ell$), case (b) is ruled out.
If case (a) holds,  for $\ell\gg0$ 
 the restriction $\overline W_\lambda|_{\Gal_{F(\zeta_\ell)}}$
is irreducible by Theorem \ref{general}(v)
and thus $\mathrm{Sym}^2 \overline W_\lambda|_{\Gal_{F(\zeta_\ell)}}$
is irreducible as well. Otherwise, the image of $\Gal_{F(\zeta_\ell)}$ in $\GL(\overline W_\lambda)$
is contained in some $\GO_2$ which is abelian.
Hence, the pair $(W_\lambda, \det W_\lambda)$ is polarized and odd 
for almost all $\lambda$ by \cite[Proposition 2.5]{CG13} (see also the proof of \cite[Theorem 3.2]{CG13}).
If case (c) holds, $(W_\lambda, \det W_\lambda)$ is polarized and odd 
as in the proof of \cite[Proposition 2.7]{CG13}.
\end{proof}

\begin{remark}
Proposition \ref{esd2} is an improvement of
\cite[Proposition 2.7]{CG13} from all $\lambda$ lying above $\ell\in\mathcal{L}$
of Dirichlet density one to almost all $\lambda$.
\end{remark}

Suppose $F$ is a CM field and $\{\rho_{\pi,\lambda}\}_\lambda$ is the (automorphic) semisimple SCS in \eqref{twosys}.
We obtain the following $(2n+1)$-dimensional SCS of $F^+$:
\begin{equation}\label{2n+1}
\{\Phi_\lambda:=(\mathrm{Ind}^{F^+}_F \rho_{\pi,\lambda})\oplus  \rho_{\chi,\lambda}\}_\lambda
\end{equation}
Denote by $F_{1,\pi}/F^+$ the Galois extension corresponding to the identity component of 
the algebraic monodromy group of $\Phi_{\lambda}$. Note that $F\subset F_{1,\pi}$.
By summarizing the results of \cite[$\mathsection\mathsection$6,8,9]{Xi19}, we have the following.

\begin{prop}(Xia)\label{X0}
Let $F$ be a CM field and $\{\rho_{\pi,\lambda}\}_\lambda$ the (automorphic) semisimple SCS in \eqref{twosys}. 
Suppose $F$ is the maximal CM subextension of $F_{1,\pi}/F^+$ and $n\leq 6$.
For almost all $\lambda$, every irreducible factor $W_\lambda$ of $\rho_{\pi,\lambda}\otimes\overline\Q_\ell$
is polarized and odd.
\end{prop}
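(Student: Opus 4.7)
The plan is to follow Xia's strategy in \cite[\S\S6, 8, 9]{Xi19}, leveraging the auxiliary $(2n+1)$-dimensional compatible system $\Phi_\lambda$ to transfer the totally real polarization results (Theorem \ref{BC}, Corollary \ref{BCcor}, and Propositions \ref{esd1}--\ref{esd3}) from $F^+$ down to the original factors over $F$.

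First I would decompose $\rho_{\pi,\lambda}\otimes\overline{\Q}_\ell = \bigoplus_{i\in I} W_{\lambda,i}$ into irreducibles. By Theorem \ref{Hui1} and Theorem \ref{general}(iii), the multiset $(\dim W_{\lambda,i})_{i\in I}$ is $\lambda$-independent for almost all $\lambda$, and the constraint $n\le 6$ restricts it to a small finite list of possibilities. The polarization condition $\mathsection$\ref{ic}(i) induces an involution $\iota$ on $I$ via $W_{\lambda,\iota(i)}\cong (W_{\lambda,i}^c)^\vee\otimes \epsilon_\ell^{1-n}\rho_{\chi,\lambda}$: fixed points correspond to factors admitting a self-polarization, while two-cycles $\{i,\iota(i)\}$ assemble into irreducible constituents of the form $\mathrm{Ind}^{F^+}_F W_{\lambda,i}$ inside $\Phi_\lambda|_{\Gal_{F^+}}$.

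Next I would classify the irreducible constituents of $\Phi_\lambda$ as a representation of $\Gal_{F^+}$: each is either the character $\rho_{\chi,\lambda}$, an induction $\mathrm{Ind}^{F^+}_F W_{\lambda,i}$ arising from a two-cycle of $\iota$, or an extension of some $\iota$-fixed $W_{\lambda,i}$ up to $\Gal_{F^+}$. The hypothesis that $F$ is the maximal CM subextension of $F_{1,\pi}/F^+$ prevents the combinatorics from degenerating further: no extension in the third case is itself induced from a smaller CM subfield, and no two-cycle collapses after base change. For each such constituent $U$, either $\dim U > (2n+1)/2$ so that Corollary \ref{BCcor} (applied to $\Phi_\lambda$ in place of $\rho_{\pi,\lambda}$) yields polarization and oddness, or $\dim U$ is small and one invokes Propositions \ref{esd1}, \ref{esd2} (with residual irreducibility supplied by Corollary \ref{corA}), or \ref{esd3}.

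Finally I would descend: for $\iota(i)=i$, restrict the $\Gal_{F^+}$-polarization of the corresponding constituent to $\Gal_F$ to obtain the desired structure on $W_{\lambda,i}$; for $\iota(i)\ne i$, use the polarization of $\mathrm{Ind}^{F^+}_F W_{\lambda,i}$ together with Frobenius reciprocity to produce compatible polarizations on both $W_{\lambda,i}$ and $W_{\lambda,\iota(i)}$ with matching infinite-place signs, preserving oddness. The main obstacle will be the second step: ensuring the CM-maximality hypothesis correctly controls the algebraic envelope of $\Phi_\lambda$ so that every irreducible constituent either exceeds the dimension threshold of Corollary \ref{BCcor} or falls into the narrow list of small-dimensional cases handled by Propositions \ref{esd1}--\ref{esd3}. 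This demands a case-by-case analysis for each admissible partition of $n\le 6$, with particular care at $n=4$ and $n=6$ where several decomposition patterns coexist and the dimension-counting trick alone may be insufficient.
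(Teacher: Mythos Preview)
Your plan has two genuine gaps.

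First, you cannot apply Corollary~\ref{BCcor} (or Theorem~\ref{BC}) to $\Phi_\lambda$. Those results are stated for the SCS attached to a single regular algebraic \emph{polarized cuspidal} automorphic representation; the oddness conclusion comes from Bella\"iche--Chenevier's sign computation, which needs that automorphic input. The $(2n+1)$-dimensional $\Phi_\lambda=(\mathrm{Ind}^{F^+}_F\rho_{\pi,\lambda})\oplus\rho_{\chi,\lambda}$ is not of this form, so the dimension threshold $>(2n+1)/2$ buys you nothing. Propositions~\ref{esd1}--\ref{esd3} are likewise stated for $F$ totally real and do not immediately yield what you need for the CM factors $W_{\lambda,i}$.

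Second, and more importantly, you never establish that the involution $\iota$ is the identity. This is the heart of the matter and is exactly where both hypotheses ($F$ is the maximal CM subextension of $F_{1,\pi}/F^+$, and $n\le 6$) enter. The paper's proof simply cites \cite[Proposition~18, Lemma~5]{Xi19} to get $W_\lambda'^{c_v}\cong W_\lambda'^\vee\otimes\delta_v$ for every irreducible factor $W_\lambda'$ over $F_{1,\pi}$, and then \cite[Proposition~3]{Xi19} (which uses the CM-maximality assumption) to upgrade this to
\[
W_\lambda^{c_v}\cong W_\lambda^\vee\otimes\epsilon_\ell^{1-n}\rho_{\chi,\lambda}
\]
for every irreducible factor $W_\lambda$ of $\rho_{\pi,\lambda}\otimes\overline\Q_\ell$, with the \emph{same} twist as the ambient polarization. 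Once this is known, no case analysis on partitions of $n$ is needed: one just checks that the given pairing $\langle\,,\,\rangle_v$ on $\overline\Q_\ell^n$ restricts non-degenerately to $W_\lambda$. If it were degenerate, irreducibility forces $\langle W_\lambda,W_\lambda\rangle_v\equiv 0$, and then the pairing produces a distinct factor $U_\lambda$ with $U_\lambda^{c_v}\cong W_\lambda^\vee\otimes\epsilon_\ell^{1-n}\rho_{\chi,\lambda}\cong W_\lambda^{c_v}$, hence $U_\lambda\cong W_\lambda$, contradicting regularity. Oddness is then automatic because the restricted pairing inherits the sign $\epsilon_v=1$ from the ambient one. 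Your ``descent via Frobenius reciprocity'' for two-cycles is unnecessary once you know there are no two-cycles, and would in any case not obviously produce the correct sign.
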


\begin{proof}
By Proposition 18 and Lemma 5 of \cite{Xi19}, 
for almost all $\lambda$
and any irreducible factor $W_\lambda'$ of $\rho_{\pi,\lambda}|_{\Gal_{F_{1,\pi}}}$
 we have
\begin{equation}\label{cdual'}
W_\lambda'^{c_v}\cong W_\lambda'^\vee\otimes \delta_v
\end{equation}
for any infinite place $v$ of $F^+$, where $\delta_v$ is some character of $\Gal_{F_{1,\pi}}$.
This assertion together with \cite[Proposition 3]{Xi19}
imply that for almost all $\lambda$ and any irreducible factor $W_\lambda$ of 
$\rho_{\pi,\lambda}$, 
we have
\begin{equation}\label{cdual}
W_\lambda^{c_v}\cong W_\lambda^\vee\otimes \epsilon_\ell^{1-n}\rho_{\chi,\lambda}
\end{equation}
for any infinite place $v$ of $F^+$. We remark that \cite[Proposition 3]{Xi19}
assumes that  $F$ is the maximal CM subextension of $F_{1,\pi}/F^+$.

For any infinite place $v$ of $F^+$, let $\left\langle~,~\right\rangle_v$ be the non-degenerate pairing on $\overline\Q_\ell^n$
as $(\rho_{\pi,\lambda},\epsilon_\ell^{1-n}\rho_{\chi,\lambda})$ is polarized and odd.
To prove the proposition, it suffices to show that the restriction of each $\left\langle~,~\right\rangle_v$ 
to $W_\lambda$ is still non-degenerate such that conditions \ref{selfdual}(i),(ii),(iii) hold.
If $\left\langle~,~\right\rangle_v$ is not non-degenerate on $W_\lambda$,
then $\left\langle W_\lambda,W_\lambda\right\rangle_v\equiv 0$ by the irreducibility of $W_\lambda$ and \ref{selfdual}(ii).
The maximal subspace $M_\lambda$ such that $\left\langle W_\lambda,M_\lambda\right\rangle_v\equiv 0$
is a representation of $\Gal_F$ containing $W_\lambda$. By semisimplicity, we obtain a decomposition of representations
$$\rho_{\pi,\lambda}\otimes\overline\Q_\ell\cong M_\lambda\oplus U_\lambda$$
 such that the pairing 
$$\left\langle~,~\right\rangle_v: W_\lambda\times U_\lambda\to\overline\Q_\ell$$
is non-degenerate. Thus, we obtain 
$U_\lambda^{c_v}\cong W_\lambda^\vee\otimes \epsilon_\ell^{1-n}\rho_{\chi,\lambda}$
by \ref{selfdual}(ii) and then $U_\lambda\cong W_\lambda$ by \eqref{cdual}.
This contradicts the regularity of $\rho_{\pi,\lambda}$ 
since $W_\lambda$ and $U_\lambda$  are distinct in $\overline\Q_\ell^n$.
\end{proof}

\subsubsection{Irreducibility vs potential automorphy}\label{irredauto}
Let $\rho_{\pi,\lambda}$ be the $\ell$-adic Galois representation of a totally real (resp. CM) field $F$ attached to the 
regular algebraic, polarized, cuspidal automorphic 
representation of $\GL_n(\A_F)$ in \eqref{twosys} with irreducible decomposition
\begin{equation}\label{ird}
\rho_{\pi,\lambda}\otimes\overline\Q_\ell=W_1\oplus W_2\oplus\cdots\oplus W_k.
\end{equation}
The link between absolute irreducibility of $\rho_{\pi,\lambda}$ and potential automorphy
is as follows, see \cite[Proposition 3.1]{CG13} and \cite[Proposition 5.4.6, Theorem 5.5.2]{BLGGT14}.

\begin{prop}\label{link}
Let $F$ be a totally real (resp. CM) field and $\rho_{\pi,\lambda}$ be the $\ell$-adic representation
of $F$ in \eqref{twosys}.
Suppose there is a totally real (resp. CM) Galois extension $F'/F$ such that
$W_i|_{\Gal_{F'}}$  is irreducible and automorphic for all $1\leq i\leq k$ in \eqref{ird}. 
Then $k=1$, so $\rho_{\pi,\lambda}$
is absolutely irreducible.
\end{prop}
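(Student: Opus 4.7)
The plan is to upgrade the hypothesis that each $W_i|_{\Gal_{F'}}$ is cuspidal automorphic to the stronger assertion that each $W_i$ (viewed as a $\Gal_F$-representation) is itself automorphic, and then use strong multiplicity one to contradict the cuspidality of $\pi$ on $\GL_n(\A_F)$ unless $k=1$.

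For the descent step, I would proceed as follows. Since $W_i$ is irreducible with $W_i|_{\Gal_{F'}}$ cuspidal automorphic, the descent results \cite[Proposition~5.4.6, Theorem~5.5.2]{BLGGT14} apply provided that $W_i$ is essentially self-dual (polarized and odd) and Hodge--Tate regular. Hodge--Tate regularity is inherited from $\rho_{\pi,\lambda}$, whose Hodge--Tate weights are distinct. For the polarized-and-odd condition, in the totally real case I would combine Corollary~\ref{BCcor} (covering constituents of dimension $>n/2$), Proposition~\ref{esd2} (two-dimensional constituents), and Propositions~\ref{esd1} and \ref{esd3} (one-dimensional and odd-dimensional pieces); in the CM case, Proposition~\ref{X0} applies after possibly enlarging $F$ to the maximal CM subfield of $F_{1,\pi}/F^+$. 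This yields for each $i$ a cuspidal automorphic representation $\pi_i$ of $\GL_{n_i}(\A_F)$ whose attached $\lambda$-adic Galois representation is $W_i$.

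Armed with $\pi_1,\ldots,\pi_k$, form the isobaric sum $\Pi:=\pi_1\boxplus\cdots\boxplus\pi_k$ on $\GL_n(\A_F)$. Its attached $\lambda$-adic Galois representation is $\bigoplus_i W_i=\rho_{\pi,\lambda}\otimes\overline{\Q}_\ell$, which is also that of $\pi$, so by local--global compatibility at unramified places and strong multiplicity one (Jacquet--Shalika), $\pi\cong\Pi$. But $\pi$ is cuspidal while a nontrivial isobaric sum of cuspidal representations of smaller rank is not, forcing $k=1$.

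The main obstacle is the descent step: verifying the essentially self-dual hypothesis constituent-by-constituent is precisely what confines the range to $n\leq 6$, since for larger $n$ an irreducible factor of intermediate dimension need not inherit a polarization from $\rho_{\pi,\lambda}$ in any automatic way. A secondary subtlety is that the descent theorem requires $F'/F$ to be a CM or totally real Galois extension of the specific shape prescribed in \cite{BLGGT14}; if the given $F'$ does not already satisfy this, one enlarges $F'$ inside $\overline F$ to such an extension and applies descent there.
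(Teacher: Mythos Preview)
Your endgame is right, but the descent step has a real gap. Proposition~\ref{link} carries no hypothesis that the individual $W_i$ are polarized and odd, nor any bound on $n$; you are importing both. In the paper's architecture, polarized-and-odd is verified \emph{before} this proposition, as input to Theorem~\ref{potential} in order to produce the extension $F'$ appearing in the hypothesis---it plays no role in the proof of Proposition~\ref{link} itself. Your attempted in-line verification is also incomplete: a $3$-dimensional constituent when $n=6$ escapes Corollary~\ref{BCcor} (which needs dimension $>n/2$), Proposition~\ref{esd3} only upgrades ``polarized'' to ``odd'' and does not supply the polarization, and Proposition~\ref{X0} in the CM case assumes $n\le 6$ and that $F$ is already the maximal CM subextension of $F_{1,\pi}/F^+$, neither of which is in the statement you are proving.

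The argument in \cite[Proposition~3.1]{CG13} and \cite[Proposition~5.4.6, Theorem~5.5.2]{BLGGT14} bypasses descending the $W_i$ to $F$ entirely. Apply Brauer's theorem to $\Gal(F'/F)$ to write $1=\sum_m a_m\,\Ind_{\Gal_{F_m}}^{\Gal_F}\chi_m$ with each $F'/F_m$ solvable. Since $W_i|_{\Gal_{F'}}$ is irreducible and $W_i$ is a $\Gal_F$-representation, each $W_i|_{\Gal_{F_m}}$ is irreducible and $\Gal(F'/F_m)$-invariant, hence cuspidal automorphic over $F_m$ by iterated Arthur--Clozel cyclic descent. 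This expresses every $L^S(s,W_i\otimes W_j^\vee)$ over $F$ as an alternating product of Rankin--Selberg $L$-functions over the $F_m$, and a Frobenius-reciprocity count shows its order of pole at $s=1$ equals $\dim\Hom_{\Gal_F}(W_j,W_i)=\delta_{ij}$ (the $W_i$ being pairwise non-isomorphic by regularity of $\rho_{\pi,\lambda}$). Hence $L^S(s,\pi\times\pi^\vee)=\prod_{i,j}L^S(s,W_i\otimes W_j^\vee)$ has a pole of order $k$ at $s=1$, while cuspidality of $\pi$ forces a simple pole; so $k=1$. No polarization of the $W_i$ enters.
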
 

Theorem \ref{potential} below gives some criteria for potential automorphy, whose proof essentially follows
the one of \cite[Theorem 3.2]{CG13} using the conditions \ref{potential}(a),(b),
the properties of a WCS in $\mathsection\ref{wcs}$, \cite[Theorem 4.5.1]{BLGGT14} and \cite[Lemma 1.5]{BLGHT11}.

\begin{thm}\label{potential}
Let $F$ be a totally real (resp. CM) field and $\rho_{\pi,\lambda}$ be the $\ell$-adic representation
of $F$ in \eqref{twosys}. Suppose $\ell$ is sufficiently large depending on the system $\{\rho_{\pi,\lambda}\}_\lambda$
and  $W_{i_1},...,W_{i_h}$ are some irreducible factors in \eqref{ird} such that for each $1\leq j\leq h$:
\begin{enumerate}[(a)]
\item $\overline W_{i_j}^{\ss}|_{\Gal_{F(\zeta_\ell)}}$ is irreducible;
\item $W_{i_j}$ is polarized and odd.
\end{enumerate}
Then there is a totally real (resp. CM) Galois extension $F'/F$ such that $W_{i_j}|_{\Gal_{F'}}$ 
is irreducible and automorphic for all $1\leq j\leq h$.
\end{thm}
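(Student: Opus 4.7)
The proof plan I would follow mirrors that of \cite[Theorem 3.2]{CG13}: first verify the input hypotheses of the potential automorphy theorem \cite[Theorem 4.5.1]{BLGGT14} for each $W_{i_j}$ individually, then patch the resulting base-change fields into one common extension using \cite[Lemma 1.5]{BLGHT11}.

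First I would check that each $W_{i_j}$ inherits the necessary $p$-adic Hodge-theoretic structure from $\rho_{\pi,\lambda}$. Since $W_{i_j}$ is a direct summand of the semisimple $\rho_{\pi,\lambda}\otimes\overline\Q_\ell$ coming from a strictly compatible system, it is itself de Rham at all places above $\ell$ and crystalline outside the fixed ramification set $S$ (by $\mathsection\ref{wcs}$(ii)), with distinct Hodge-Tate weights forming a subset of those of $\rho_{\pi,\lambda}$ (by $\mathsection\ref{wcs}$(iii) and the regularity of $\pi$). Moreover, $\{W_{i_j}\}$ sits inside a compatible subsystem in the sense of $\mathsection 3.5$, with $\ell$-independent formal bi-character by Theorem \ref{Hui1}. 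For $\ell$ sufficiently large with respect to the fixed Hodge-Tate range of $\{\rho_{\pi,\lambda}\}_\lambda$, the Fontaine-Laffaille argument used in the proof of Theorem \ref{autoae} places $\overline W_{i_j}^{\ss}|_{\Gal_{F_v}}$ for $v\mid\ell$ in the Fontaine-Laffaille regime required by \cite[Theorem 4.5.1]{BLGGT14}.

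Next I would verify the residual hypotheses. Condition (b) provides the polarization and oddness. Condition (a) gives irreducibility of $\overline W_{i_j}^{\ss}$ after restriction to $\Gal_{F(\zeta_\ell)}$, and for $\ell$ large in terms of $\dim W_{i_j}\le n$ this irreducibility of a semisimple $\GL(\overline W_{i_j}^{\ss})$-representation is enough to imply adequacy of the image of $\Gal_{F(\zeta_\ell)}$ in the sense used in \cite{BLGGT14}; this uses standard bounds on finite irreducible subgroups of $\GL_n(\overline\F_\ell)$ in large characteristic, together with the Nori-type control already exploited in the construction of the algebraic envelope (Theorem \ref{Nori}). At this stage \cite[Theorem 4.5.1]{BLGGT14} applies individually to each $W_{i_j}$, producing for every $j$ a CM (resp.\ totally real) Galois extension $F_j/F$ over which $W_{i_j}$ is automorphic.

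Finally, to pass from $h$ separate extensions $F_j/F$ to a single common extension $F'/F$, I would invoke \cite[Lemma 1.5]{BLGHT11}, whose purpose is precisely to allow the auxiliary CM/totally real field produced by the potential automorphy machinery to be chosen linearly disjoint from any prescribed finite extension. Iterating this across the $h$ applications of \cite[Theorem 4.5.1]{BLGGT14} lets the $F_j$'s be realized as subfields of one totally real (resp.\ CM) Galois extension $F'/F$. Automorphy of $W_{i_j}|_{\Gal_{F'}}$ is preserved under base change from $F_j$ to $F'$, and irreducibility of $W_{i_j}|_{\Gal_{F'}}$ survives because $\overline W_{i_j}^{\ss}|_{\Gal_{F(\zeta_\ell)}}$ is already irreducible and $F(\zeta_\ell)\subset F'$. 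The main obstacle is purely bookkeeping: ensuring that condition (a) plus ``$\ell\gg 0$'' is enough to secure the adequacy/bigness hypothesis of \cite[Theorem 4.5.1]{BLGGT14} and that the various largeness conditions on $\ell$ (with respect to $n$, the Hodge-Tate weights, the ramification set, and the field degrees $[F_j:F]$) can all be met simultaneously so that the linear-disjointness patching via \cite[Lemma 1.5]{BLGHT11} is uniform across $j=1,\dots,h$.
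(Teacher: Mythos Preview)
Your proposal is correct and follows exactly the approach the paper indicates: the paper's own proof is a one-line reference to \cite[Theorem 3.2]{CG13}, \cite[Theorem 4.5.1]{BLGGT14}, and \cite[Lemma 1.5]{BLGHT11}, and you have unpacked precisely those ingredients in the right order.

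One small correction to your final sentence: you cannot have $F(\zeta_\ell)\subset F'$ when $F'$ is required to be totally real (and in the CM case it is still not the reason irreducibility persists). The point is rather that \cite[Lemma 1.5]{BLGHT11} lets you choose $F'$ linearly disjoint over $F$ from the splitting field of $\bigoplus_j \overline W_{i_j}^{\ss}$; then the image of $\Gal_{F'(\zeta_\ell)}$ in $\GL(\overline W_{i_j}^{\ss})$ equals that of $\Gal_{F(\zeta_\ell)}$, so residual (hence $\ell$-adic) irreducibility over $F'$ follows from condition (a).
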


Theorem \ref{infirred} below asserts the existence of infinitely many $\lambda$ 
such that $\rho_{\pi,\lambda}\otimes\overline\Q_\ell$ is irreducible.
We call the finite places $\lambda$ lying above some prime in $\mathcal{L}$ (in the theorem) the \emph{Patrikis-Taylor primes}.

\begin{thm}\cite[Theorem D, Theorem 1.7]{PT15}\label{infirred}
Let $F$ be a totally real (resp. CM) field and $\rho_{\pi,\lambda}$ be the $\ell$-adic representation
of $F$ in \eqref{twosys}. Then there exists a positive Dirichlet density set $\mathcal{L}$ of rational primes $\ell$
such that $\rho_{\pi,\lambda}\otimes\overline\Q_\ell$ is irreducible whenever $\lambda$ is above some $\ell\in\mathcal{L}$.
\end{thm}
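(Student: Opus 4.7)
The plan is to exhibit a positive Dirichlet density set $\mathcal{L}$ of rational primes $\ell$ such that for every $\lambda$ above $\ell\in\mathcal{L}$ the representation $\rho_{\pi,\lambda}\otimes\overline\Q_\ell$ is irreducible, by combining the big-monodromy machinery for subrepresentations developed in Section~\ref{s3} with the potential automorphy theorem recalled as Theorem~\ref{potential}.

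First, by Theorem~\ref{autoae} the semisimple SCS $\{\rho_{\pi,\lambda}\}_\lambda$ satisfies the hypotheses of $\mathsection\ref{ESCS}$, so Theorem~\ref{general} (together with its corollaries) is applicable, and by Theorem~\ref{Hui1} the formal bi-character $(T^{\ss}\subset T\subset\GL_n)$ of the algebraic monodromy group $\bG_\lambda$ is $\lambda$-independent; in particular the multiset of dimensions of the $\bG_\lambda^\circ$-irreducible constituents of $\rho_{\pi,\lambda}\otimes\overline\Q_\ell$ is independent of $\lambda$. I would argue by contradiction: suppose that for a set of $\lambda$ of density one there is a nontrivial decomposition $\rho_{\pi,\lambda}\otimes\overline\Q_\ell=W_{\lambda,1}\oplus\cdots\oplus W_{\lambda,k}$ with $k\geq 2$.

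Next, I would select a positive Dirichlet density set $\mathcal{L}$ of rational primes $\ell$ such that at every $\lambda$ above $\ell\in\mathcal{L}$ each factor $W_{\lambda,i}$ verifies both hypotheses of Theorem~\ref{potential}. For condition (b), polarization and oddness, the self-duality $\rho_{\pi,\lambda}^c\cong\rho_{\pi,\lambda}^\vee\otimes\epsilon_\ell^{1-n}\rho_{\chi,\lambda}$ together with Corollary~\ref{BCcor} and Propositions~\ref{esd1}, \ref{esd2}, \ref{esd3} covers the totally real case, while Proposition~\ref{X0} handles the CM case (after replacing $F^+$ if necessary by the maximal CM subextension of the field cut out by the identity component of the algebraic monodromy group of the system~\eqref{2n+1}). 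For condition (a), residual irreducibility of $W_{\lambda,i}|_{\Gal_{F(\zeta_\ell)}}$, Corollary~\ref{corA} handles factors of dimension at most $3$ and Theorem~\ref{general}(v) handles all factors on which $\bG_\lambda^\circ$ acts through a type~A quotient. For the remaining non-type-A factors I would exploit the $\lambda$-independence of the formal bi-character of $\uG_{W_{\lambda,i}}$ together with Chebotarev applied to the reductive group scheme $\cG_\ell$ of Proposition~\ref{scheme}, in order to produce, for $\ell$ in a positive density subset, a Frobenius whose image in the algebraic envelope is regular semisimple and whose characteristic polynomial already forces $\bar\rho_{\pi,\lambda}^{\ss}(\Gal_{F(\zeta_\ell)})$ to act irreducibly on $\overline{W}_{\lambda,i}^{\ss}$.

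Once both hypotheses of Theorem~\ref{potential} are in force simultaneously for all $W_{\lambda,i}$ at some $\lambda$ above $\ell\in\mathcal{L}$, I apply the theorem to each $W_{\lambda,i}$, intersect the resulting finitely many totally real (resp.\ CM) Galois extensions of $F$ to obtain a single $F'/F$, and conclude that every $W_{\lambda,i}|_{\Gal_{F'}}$ is irreducible and automorphic; Proposition~\ref{link} then forces $k=1$, contradicting the standing reducibility assumption, so $\rho_{\pi,\lambda}\otimes\overline\Q_\ell$ is irreducible for every $\lambda$ above $\ell\in\mathcal{L}$. The most serious obstacle will be the Chebotarev step for non-type-A factors: controlling $\bar\rho_{\pi,\lambda}^{\ss}(\Gal_{F(\zeta_\ell)})$ inside $\uG_\lambda$ uniformly enough across a positive density set of $\ell$ to guarantee both the residual irreducibility of every constituent and avoidance of the small residual images that would obstruct automorphy lifting in Theorem~\ref{potential}.
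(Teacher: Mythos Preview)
This theorem is not proved in the paper: it is quoted as an external input from \cite[Theorem D, Theorem 1.7]{PT15}, and indeed the paper's main result (Theorem~\ref{thmB}) \emph{uses} it via Proposition~\ref{X3} and the notion of Patrikis--Taylor primes. So there is no ``paper's own proof'' to compare against; the question is whether your argument stands on its own.

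It does not, for two reasons. First, your handling of condition (b) (polarization and oddness of every irreducible factor) does not cover general $n$. Proposition~\ref{X0} is explicitly restricted to $n\leq 6$, and in the totally real case Corollary~\ref{BCcor} only treats factors of dimension $>n/2$ while Propositions~\ref{esd1}--\ref{esd3} leave even-dimensional factors of dimension between $4$ and $n/2$ untreated. The actual argument in \cite{PT15} obtains polarization and oddness of every factor only on a positive-density set of $\ell$ by a more delicate argument specific to that paper; you cannot recover it from the tools assembled here.

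Second, and more seriously, your ``Chebotarev step'' for condition (a) on non-type-A factors is not an argument. Producing a Frobenius whose image in the algebraic envelope is regular semisimple does not force $\uG_{W_{\lambda,i}}$ (or the Galois image) to be irreducible on $\overline{W}_{\lambda,i}^{\ss}$: for instance $\Sp_4$ and $\SL_2\times\SL_2$ sit in $\GL_4$ with the same formal character (this is exactly the ambiguity between cases $(4B_2)$ and $(2A_1\otimes 2A_1)$ in Table~A), yet the first is irreducible and the second is not, and both contain plenty of regular semisimple elements. Theorem~\ref{general}(iii) guarantees only that $\uG_{W_{\lambda,i}}$ and $\bG_{W_{\lambda,i}}$ share a formal bi-character, which is precisely the information that fails to distinguish these two groups. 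Without a genuine mechanism to rule out such degenerations of the envelope for non-type-A factors, the residual irreducibility hypothesis of Theorem~\ref{potential} cannot be verified, and the argument collapses.
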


\subsubsection{A reduction step of Xia}\label{Xia}
We first state a reduction result in \cite{Xi19}.

\begin{prop}\cite[Proposition 2]{Xi19}\label{X1}\footnote{The result in \cite{Xi19} is stated for all $\lambda$ lying above 
a Dirichlet density $1$ set of rational primes $\ell$ but the proof also works here. Moreover, the condition ``polarized'' 
should be added to \cite[Proposition 2]{Xi19} since the proof applies \cite[Theorem 2.1]{PT15} which requires the representations
to be polarized.}
Let $n_0\geq 1$ be an integer. Suppose for all CM fields $F$, all integers $n\leq n_0$,
and all regular algebraic, polarized, cuspidal automorphic representations $\pi$
of $\GL_n(\A_F)$ such that $F$ is the maximal CM subextension of $F_{1,\pi}/F^+$\footnote{The field $F_{1,\pi}$
is the minimal extension of $F^+$ for \eqref{2n+1} to be connected.}, 
we have $\rho_{\pi,\lambda}\otimes\overline\Q_\ell$ is irreducible for almost all $\lambda$.
Then for all CM fields $F$, all integers $n\leq n_0$, and for all 
 regular algebraic, polarized, cuspidal automorphic representation $\pi$
of $\GL_n(\A_F)$, $\rho_{\pi,\lambda}\otimes\overline\Q_\ell$ is irreducible for almost all $\lambda$.
\end{prop}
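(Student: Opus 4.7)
The plan is to prove Proposition~\ref{X1} by base change from $F$ to the maximal CM subextension of $F_{1,\pi}/F^+$. Write $F^\ast$ for this maximal CM subextension and $F^{\ast,+}$ for its maximal totally real subfield. Since $F$ is itself a CM field with $F^+ \subset F \subset F_{1,\pi}$, we have $F \subseteq F^\ast$. If $F = F^\ast$, the hypothesis applies directly, so I would assume $F \subsetneq F^\ast$ and produce a regular algebraic, polarized, cuspidal automorphic representation $\pi^\ast$ of $\GL_n(\A_{F^\ast})$ which is a base change of $\pi$, meaning $\rho_{\pi^\ast,\lambda} \cong \rho_{\pi,\lambda}|_{\Gal_{F^\ast}}$ for all $\lambda$. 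The algebraic monodromy of $\{\rho_{\pi^\ast,\lambda}\}_\lambda$ has the same identity component as that of $\{\rho_{\pi,\lambda}\}_\lambda$, since restriction to an open subgroup does not shrink the connected part, so $F_{1,\pi^\ast} = F_{1,\pi}$ and $F^\ast$ is still the maximal CM subextension of $F_{1,\pi^\ast}/F^{\ast,+}$. The hypothesis then yields that $\rho_{\pi^\ast,\lambda} \otimes \overline\Q_\ell$ is irreducible for almost all $\lambda$, and since $\Gal_{F^\ast}$ has finite index in $\Gal_F$, irreducibility of the restriction forces irreducibility of $\rho_{\pi,\lambda} \otimes \overline\Q_\ell$ for those same $\lambda$.

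To construct $\pi^\ast$, I would apply Arthur--Clozel solvable cyclic base change along a tower of cyclic subextensions inside $F_{1,\pi}/F^+$. Although $F^\ast/F$ need not itself be solvable, it sits inside the finite Galois extension $F_{1,\pi}/F^+$; I would work inside a solvable Galois extension of $F$ containing $F^\ast$ (constructed by an auxiliary totally real extension, for instance) and then descend to $F^\ast$. I would check that $\pi^\ast$ inherits the required properties: regularity, since the $\tau$-Hodge--Tate weights of $\rho_{\pi^\ast,\lambda}$ at each $\tau : F^\ast \hto \overline\Q_\ell$ coincide with those of $\rho_{\pi,\lambda}$ at the restricted embedding and are therefore distinct; and polarization relative to $F^{\ast,+}$, since any complex conjugation for $F^\ast/F^{\ast,+}$ restricts from a complex conjugation in $\Gal_{F^+}$ under compatible embeddings, and the pairing defining the polarization restricts non-degenerately.

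The main obstacle is cuspidality of $\pi^\ast$, which by strong multiplicity one for $\GL_n$ is equivalent to irreducibility of $\rho_{\pi^\ast,\lambda}$, i.e., exactly what we wish to prove. I would circumvent this by a contrapositive argument: assuming that $\rho_{\pi,\lambda}|_{\Gal_{F^\ast}}$ is reducible for infinitely many $\lambda$, the $\lambda$-independence of the formal bi-character (Theorem~\ref{Hui1}) together with the polarization constraint (via \cite[Theorem 2.1]{PT15}) force a Galois-stable decomposition pattern that is $\lambda$-uniform; this pattern must descend to a CM subextension of $F_{1,\pi}/F^+$ strictly larger than $F^\ast$, contradicting the maximality of $F^\ast$ and completing the reduction. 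This is the crucial place where the maximal-CM-subextension hypothesis enters, and I expect it to be the most delicate step of the argument.
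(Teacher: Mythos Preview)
Your approach diverges from the paper's and has a genuine gap at the cuspidality step that your contrapositive sketch does not close.

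The paper (following \cite{Xi19}) does not base change $\pi$ to $\GL_n$ over $F^\ast$. Instead, Proposition~\ref{X2} produces an \emph{induced} structure: there is a subextension $F_4$ of $F_2/F$ and a family $\{r_{1,\lambda}\}_\lambda$ of $m$-dimensional representations of $\Gal_{F_4}$ with $\rho_{\pi,\lambda}\cong\mathrm{Ind}^F_{F_4} r_{1,\lambda}$, and a regular algebraic polarized cuspidal $\pi_1$ on $\GL_m(\A_{F_3})$ (for a CM extension $F_3/F_2$) such that $\rho_{\pi_1,\lambda}\cong r_{1,\lambda}|_{\Gal_{F_3}}$ and $F_3$ is the maximal CM subextension of $F_{1,\pi_1}/F_3^+$. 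Since $m\leq n\leq n_0$, the hypothesis applies to $\pi_1$, so $r_{1,\lambda}|_{\Gal_{F_3}}$ and hence $r_{1,\lambda}$ are irreducible for almost all $\lambda$. Then Mackey's criterion together with the regularity of $\rho_{\pi,\lambda}$ (distinct Hodge--Tate weights force the Galois conjugates of $r_{1,\lambda}$ to be pairwise non-isomorphic) gives irreducibility of the induction $\rho_{\pi,\lambda}$.

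Your plan instead looks for a cuspidal $\pi^\ast$ on $\GL_n(\A_{F^\ast})$ with $\rho_{\pi^\ast,\lambda}=\rho_{\pi,\lambda}|_{\Gal_{F^\ast}}$. But exactly in the situation the proposition must handle---when $\rho_{\pi,\lambda}$ is induced from a proper intermediate field---the restriction $\rho_{\pi,\lambda}|_{\Gal_{F^\ast}}$ is reducible for \emph{every} $\lambda$, so no such cuspidal $\pi^\ast$ exists on $\GL_n$. Your proposed contrapositive says that a $\lambda$-uniform decomposition pattern would force descent to a CM subextension strictly larger than $F^\ast$, contradicting maximality; but the induced case is precisely a $\lambda$-uniform decomposition that does \emph{not} come from a larger CM field---it comes from the induction over $F_4/F$---so no contradiction arises. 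The missing idea is to drop from $\GL_n$ to $\GL_m$ and apply the hypothesis to the (lower-dimensional) automorphic representation attached to the inducing data, not to a base change of $\pi$ itself. There are also secondary issues (solvable base change to an arbitrary $F^\ast$, and the identification $F_{1,\pi^\ast}=F_{1,\pi}$ when the defining system \eqref{2n+1} changes base field), but the decisive one is the cuspidality/induction point.
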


Proposition \ref{X1} is due to the following assertion (using the notation in \cite{Xi19}), Mackey's irreducibility criterion, and 
the regularity of $\rho_{\pi,\lambda}$. 

\begin{prop}\cite[Proof of Proposition 2]{Xi19}\label{X2}
Let $F$ be a CM field, $\{\rho_{\pi,\lambda}\}_\lambda$ the SCS of $F$ defined over $E$ in \eqref{twosys},
and $F_2$ the maximal CM subextension of $F_{1,\pi}/F^+$. After enlarging the CM field $E$ if necessary,
there exist a family of Galois representations $\{r_{1,\lambda}\}_\lambda$ of a subextension $F_4$ of $F_2/F$ 
and a regular algebraic polarized cuspidal automorphic representation $\pi_1$ of $\GL_m(\A_{F_3})$
where $F_3$ is a finite CM extension of $F_2$ such that 
$$\{\mathrm{Ind}^F_{F_4} r_{1,\lambda}\}_\lambda\cong\{\rho_{\pi,\lambda}\}_\lambda\hspace{.1in}\text{and}\hspace{.1in}\{\mathrm{Res}^{F_4}_{F_3}r_{1,\lambda}\}_\lambda\cong\{\rho_{\pi_1,\lambda}\}_\lambda,$$
and $F_3$ is the maximal CM subextension of $F_{1,\pi_1}/F^+$.
\end{prop}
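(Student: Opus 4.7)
The plan is to exploit Clifford theory for the restriction of $\rho_{\pi,\lambda}$ to $\Gal_{F_2}$, combined with a potential automorphy argument to produce $\pi_1$ on an appropriate CM extension $F_3$. First I would invoke Theorem \ref{Serre1}(i) to confirm that $F_{1,\pi}$ and its maximal CM subextension $F_2$ are intrinsic to the compatible system $\{\Phi_\lambda\}_\lambda$, not to any single $\lambda$. By the definition of $F_{1,\pi}$, the restriction $\Phi_\lambda|_{\Gal_{F_{1,\pi}}}$ lies in the identity component of its algebraic monodromy group, so the structure of the irreducible decomposition of $\rho_{\pi,\lambda}|_{\Gal_{F_2}}$ is stable in $\lambda$. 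Clifford's theorem applied to the finite group $\Gal(F_2/F)$ decomposes $\rho_{\pi,\lambda}|_{\Gal_{F_2}}$ into isotypic blocks that are permuted by $\Gal(F_2/F)$; choosing one block, letting $F_4$ (with $F \subset F_4 \subset F_2$) be the fixed field of its stabilizer, and extending a constituent of the block to a true representation $r_{1,\lambda}$ of $\Gal_{F_4}$ (possibly after enlarging $E$ to trivialize the projective-cocycle obstruction, in analogy with Case (b) of the proof of Theorem \ref{thmA}), one obtains $\rho_{\pi,\lambda} \cong \mathrm{Ind}^F_{F_4} r_{1,\lambda}$ with $\{r_{1,\lambda}\}_\lambda$ fitting into a compatible system.

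Next I would apply the potential automorphy theorem \cite[Theorem 4.5.1]{BLGGT14} to $r_{1,\lambda}$ to produce a CM Galois extension $F_3/F_4$ (enlarged so as to contain $F_2$) together with a regular algebraic polarized cuspidal automorphic representation $\pi_1$ of $\GL_m(\A_{F_3})$ satisfying $\rho_{\pi_1,\lambda} \cong r_{1,\lambda}|_{\Gal_{F_3}}$. The polarized-and-odd input required by potential automorphy is extracted from the polarization $\rho_{\pi,\lambda}^c \cong \rho_{\pi,\lambda}^\vee \otimes \epsilon_\ell^{1-n}\rho_{\chi,\lambda}$ of $\mathsection$\ref{ic}(i) via Frobenius reciprocity, combined with Theorem \ref{BC} and the regularity of $\rho_{\pi,\lambda}$ (which forces distinct Hodge-Tate weights on any irreducible block, so that $\rho_{\pi_1,\lambda}$ inherits regularity as well).

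The hardest step is verifying the final maximality assertion, namely that $F_3$ is the maximal CM subextension of $F_{1,\pi_1}/F^+$. This demands that the compatible system associated to $\pi_1$ admits no further nontrivial induction from a proper CM subfield, equivalently that the component group of the algebraic monodromy group of $\mathrm{Ind}^{F^+}_{F_3}\rho_{\pi_1,\lambda} \oplus \rho_{\chi,\lambda}$ has no CM intermediate extension strictly larger than $F_3$. The key input is the maximality of $F_2$ inside $F_{1,\pi}$: were there a strictly larger CM field $F_3' \supsetneq F_3$ hidden inside $F_{1,\pi_1}/F^+$, re-inducing would contradict that maximality after reassembling $\rho_{\pi,\lambda}$ from its Clifford blocks. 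The cleanest implementation is to enlarge $F_3$ iteratively within the CM category until the maximality condition holds, using the $\lambda$-independence of connectedness (Remark \ref{remSerre}) to see that this process terminates in finitely many steps.
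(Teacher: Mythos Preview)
The paper does not supply its own proof of this proposition; it simply records the construction from \cite[Proof of Proposition~2]{Xi19}.  The footnote to Proposition~\ref{X1} indicates that Xia's argument relies on \cite[Theorem~2.1]{PT15} (rather than \cite[Theorem~4.5.1]{BLGGT14}) for the automorphy step.

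Your outline has roughly the right shape---a Clifford-type decomposition followed by potential automorphy---but there are genuine gaps.

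First, Clifford's theorem requires $\rho_{\pi,\lambda}$ to be \emph{irreducible}, which is precisely the property being reduced.  You need to work first at a single Patrikis--Taylor prime $\lambda_0$ (Theorem~\ref{infirred}), carry out the decomposition there, and then argue that the field $F_4$ and the system $\{r_{1,\lambda}\}_\lambda$ are defined compatibly for all $\lambda$ (via the $\lambda$-independence of the component group and of Frobenius characteristic polynomials).  As written, your argument presupposes irreducibility at every $\lambda$.

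Second, the ``projective-cocycle obstruction'' is a red herring here.  In Clifford theory the full isotypic block, viewed as a representation of its stabilizer $\Gal_{F_4}$, is already a genuine irreducible representation $r_{1,\lambda}$ with $\mathrm{Ind}^F_{F_4}r_{1,\lambda}\cong\rho_{\pi,\lambda}$; no cocycle needs to be killed.  The projective issue in Case~(b) of the proof of Theorem~\ref{thmA} arises only when one further writes the block as a tensor product, which is not what the proposition asserts.

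Third, your justification of the polarized-and-odd hypothesis appeals to Theorem~\ref{BC}, but that result is stated only for totally real $F$; it does not apply to the CM fields $F_4$ or $F_3$.  Transporting the polarization and the sign $\epsilon_v=1$ to $r_{1,\lambda}$ over a CM base requires the CM-specific arguments in \cite{Xi19} (cf.\ Proposition~\ref{X0} and \cite[Proposition~3]{Xi19}), not Frobenius reciprocity alone.  Relatedly, the correct potential-automorphy input is \cite[Theorem~2.1]{PT15}, whose hypotheses you have not verified.

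Finally, the ``iterative enlargement'' of $F_3$ to achieve the maximal-CM condition is too vague: you must explain why the process terminates and why cuspidality, regularity, and polarizability of $\pi_1$ survive each step.
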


In fact, a totally real version of Proposition \ref{X2} can be established 
by following the proof of \cite[Proposition 2]{Xi19}.

\begin{prop}\label{X2real}
Let $F^+$ be a totally real field, $\{\rho_{\pi,\lambda}\}_\lambda$ the SCS of $F^+$ defined over 
$E$ in \eqref{twosys}, $F$ a CM field containing $F^+$ as maximal totally real subfield,
and $F_2$ the maximal CM subextension of $F_{1,\pi}/F^+$\footnote{The field $F_{1,\pi}$ here denotes the minimal extension of $F^+$
such that $(\mathrm{Ind}^{F^+}_F\mathrm{Res}^{F^+}_F \rho_{\pi,\lambda})\oplus \rho_{\chi,\lambda}$ is connected, analogous to \eqref{2n+1}.}. After enlarging the CM field $E$ if necessary,
there exist a family of Galois representations $\{r_{1,\lambda}\}_\lambda$ of a subextension $F_4$ of $F_2/F^+$ 
and a regular algebraic polarized cuspidal automorphic representation $\pi_1$ of $\GL_m(\A_{F_3})$
where $F_3$ is a finite CM extension of $F_2$ such that 
$$\{\mathrm{Ind}^{F^+}_{F_4} r_{1,\lambda}\}_\lambda\cong\{\rho_{\pi,\lambda}\}_\lambda\hspace{.1in}\text{and}\hspace{.1in}\{\mathrm{Res}^{F_4}_{F_3}r_{1,\lambda}\}_\lambda\cong\{\rho_{\pi_1,\lambda}\}_\lambda,$$
and $F_3$ is the maximal CM subextension of $F_{1,\pi_1}/F^+$.
\end{prop}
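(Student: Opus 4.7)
The plan is to imitate the proof of \cite[Proposition 2]{Xi19} (Proposition \ref{X2} above) almost verbatim, substituting the totally real base field $F^+$ for Xia's CM base $F$ while retaining $F$ as an auxiliary CM field containing $F^+$. Xia's argument rests on three ingredients which transfer with only bookkeeping changes: Clifford theory to extract an induced structure, compatibility of the induced and restricted systems via $L$-factor identities (operations (D), (E), (G) in $\mathsection\ref{op}$), and potential automorphy via Theorem \ref{potential}.

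First I would apply Clifford's theorem to the normal subgroup $H := \Gal_{F_{1,\pi}} \trianglelefteq \Gal_{F^+}$. Fix an irreducible $H$-constituent $V$ of $\rho_{\pi,\lambda}|_H$, let $F_4$ be the fixed field (inside $F_{1,\pi}$) of the stabilizer in $\Gal_{F^+}$ of the isomorphism class of $V$, and let $r_{1,\lambda}$ be the $V$-isotypic component of $\rho_{\pi,\lambda}|_{\Gal_{F_4}}$. As in the proof of Theorem \ref{thmA} (case (a), following \eqref{irredind}), Frobenius reciprocity yields $\rho_{\pi,\lambda} \cong \mathrm{Ind}^{F^+}_{F_4} r_{1,\lambda}$. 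Compatibility of $\{r_{1,\lambda}\}_\lambda$ follows from that of $\{\rho_{\pi,\lambda}\}_\lambda$ and the induction formula for $L$-factors; after enlarging $E$ via \cite[Lemma 5.3.1(3)]{BLGGT14} we may assume $\{r_{1,\lambda}\}_\lambda$ is a Serre compatible system over $E$.

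Next, I would choose a finite CM extension $F_3$ of the compositum $F_2 \cdot F_4$ inside $F_{1,\pi}$ (still CM over $F^+$ since $F_2$ is). By construction $r_{1,\lambda}|_{\Gal_{F_3}}$ is Lie-irreducible; it is polarized and odd by the Proposition \ref{X0}-style analysis applied to the CM base $F_3$ together with the polarization inherited from $\rho_{\pi,\lambda}$; and for $\ell \gg 0$ its mod-$\ell$ semisimplification is irreducible on $\Gal_{F_3(\zeta_\ell)}$ by Theorem \ref{general}(v). Theorem \ref{potential} then produces a CM Galois extension $F_3'/F_3$ and a regular algebraic polarized cuspidal automorphic representation $\pi_1$ of $\GL_m(\A_{F_3'})$ with $r_{1,\lambda}|_{\Gal_{F_3'}} \cong \rho_{\pi_1,\lambda}$. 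Renaming $F_3'$ to $F_3$ and iterating as needed, one arranges $F_3$ to coincide with the maximal CM subextension of $F_{1,\pi_1}/F^+$, which is the desired final condition.

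The main obstacle I expect is tracking the polarization through the induction when the base is totally real: Xia's CM-base argument places the complex conjugation inside $\Gal_F$, whereas here it sits in $\Gal_{F^+} \setminus \Gal_F$. Concretely, one must verify that the self-dual pairing on $\rho_{\pi,\lambda}$ (valued in $\epsilon_\ell^{1-n}\rho_{\chi,\lambda}$) restricts nondegenerately to the $V$-component on $\Gal_{F_4}$, so that $r_{1,\lambda}$ inherits a polarization over an appropriate totally real subfield of $F_4$. I would handle this by the decomposition-by-pairing argument appearing in the proof of Proposition \ref{X0}: if the pairing vanished identically on $V$, then its orthogonal complement would force two distinct $\Gal_{F_4}$-constituents of $\rho_{\pi,\lambda}|_{\Gal_{F_4}}$ to be isomorphic, contradicting the regularity of $\rho_{\pi,\lambda}$ (i.e.\ the distinctness of Hodge-Tate weights).
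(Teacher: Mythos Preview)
Your overall plan---imitate Xia's proof of \cite[Proposition 2]{Xi19} with $F^+$ in place of $F$---is exactly what the paper does; the paper gives no further details beyond the sentence ``a totally real version of Proposition~\ref{X2} can be established by following the proof of \cite[Proposition 2]{Xi19}.'' So at the level of strategy you are aligned with the paper.

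That said, one step of your implementation does not match Xia's argument and would not work as written. To obtain the cuspidal $\pi_1$ on $\GL_m(\A_{F_3})$ you invoke Theorem~\ref{potential}, and to verify its hypothesis~(a) (residual irreducibility of $\overline{r_{1,\lambda}}^{\ss}|_{\Gal_{F_3(\zeta_\ell)}}$) you appeal to Theorem~\ref{general}(v). But Theorem~\ref{general}(v) requires the algebraic monodromy group of the subrepresentation to be of type~A, and there is no reason for $r_{1,\lambda}$ to be of type~A at this stage of the argument (indeed Proposition~\ref{X2real} is used precisely to reduce to a situation where one can then do the type-by-type analysis of $\mathsection$\ref{table}). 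Xia does not use a residual-irreducibility criterion here; the potential automorphy input in \cite[Proposition 2]{Xi19} is \cite[Theorem 2.1]{PT15} (see the footnote to Proposition~\ref{X1}), which works on a positive-density set of primes without any type hypothesis. You should use the same input.

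A smaller remark: the ``main obstacle'' you flag---tracking the polarization through the induction---is actually milder in the totally real case than in Xia's CM case. When the base is $F^+$, the polarization $\rho_{\pi,\lambda}^c\cong\rho_{\pi,\lambda}^\vee\otimes\epsilon_\ell^{1-n}\rho_{\chi,\lambda}$ is an honest self-duality (since $c\in\Gal_{F^+}$ acts by an inner automorphism on $\rho_{\pi,\lambda}$), so the nondegeneracy-on-isotypic-components argument you sketch goes through more directly than in the conjugate-self-dual setting.
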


A Galois representation $V$ of $F$ is said to be Lie-irreducible if 
$\mathrm{Res}^F_{F'} V$ is  irreducible for all finite extensions $F'/F$.
The advantage of taking the reduction step is as follows.

\begin{prop}\cite[Corollary 1]{Xi19}\label{X3}
Let $F$ be a CM field and $\{\rho_{\pi,\lambda}\}_\lambda$ the SCS of $F$ in \eqref{twosys}.
If $F$ is the maximal CM subextension of $F_{1,\pi}/F^+$, then there exist infinitely many
Patrikis-Taylor primes $\lambda$ such that 
$\rho_{\pi,\lambda}\otimes\overline\Q_\ell$
is Lie-irreducible.
\end{prop}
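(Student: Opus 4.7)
The plan is to combine the supply of Patrikis--Taylor primes from Theorem \ref{infirred} with an obstruction coming from Clifford theory and the polarization structure. Specifically, I aim to show that every Patrikis--Taylor prime $\lambda$ which fails to be Lie-irreducible would produce a CM subextension of $F_{1,\pi}/F^+$ strictly containing $F$, contradicting the maximality hypothesis.

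First I would invoke Theorem \ref{infirred} to fix infinitely many $\lambda$ for which $\rho_{\pi,\lambda}\otimes\overline{\Q}_\ell$ is irreducible, and argue by contradiction that such a $\lambda$ must in fact be Lie-irreducible. If not, the reductive $\bG_\lambda$ forces $\rho_{\pi,\lambda}|_{\Gal_{F^{\conn}}}$ to decompose into irreducibles permuted transitively by $\Gal(F^{\conn}/F)$, so Clifford theory (\cite[Theorem 1]{Cl37}, cf.\ \cite[Proposition 24]{Se77a}) yields an intermediate field $F\subsetneq M\subseteq F^{\conn}$ and a Lie-irreducible representation $W_\lambda$ of $\Gal_M$ such that $\rho_{\pi,\lambda}\otimes\overline{\Q}_\ell\cong \mathrm{Ind}^F_M W_\lambda$.

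Next I would exploit the polarization $\rho_{\pi,\lambda}^c\cong\rho_{\pi,\lambda}^{\vee}\otimes\epsilon_\ell^{1-n}\rho_{\chi,\lambda}$ from \S\ref{ic}(i) to detect a CM structure on $M$. Writing the two sides as $\mathrm{Ind}^F_{c(M)}W_\lambda^c$ and $\mathrm{Ind}^F_M(W_\lambda^{\vee}\otimes(\epsilon_\ell^{1-n}\rho_{\chi,\lambda})|_{\Gal_M})$ respectively via the projection formula, the uniqueness of inducing data for irreducible induced representations forces $\Gal_M$ and $c\Gal_M c^{-1}$ to be conjugate in $\Gal_F$. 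Hence some lift $c'\in\Gal_{F^+}$ of the non-trivial class of $\Gal(F/F^+)$ normalises $\Gal_M$, i.e.\ preserves $M$ setwise; after adjusting $c'$ by a suitable element of $\Gal_M$ I obtain an involution of $M$ restricting to complex conjugation on $F$, whose fixed field $M^+$ is totally real (each archimedean completion of $M$ is $\C$ because $M\supset F$ is totally imaginary, so the completions of $M^+$ at the places below are $\R$). Thus $M$ is a CM field with $F\subsetneq M$.

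Finally, since $F\subset F_{1,\pi}$ and $\Phi_\lambda|_{\Gal_{F_{1,\pi}}}$ has connected algebraic monodromy by definition of $F_{1,\pi}$, its direct summand $\rho_{\pi,\lambda}|_{\Gal_{F_{1,\pi}}}$ also has connected monodromy, so $F^{\conn}\subseteq F_{1,\pi}$ and therefore $M\subseteq F_{1,\pi}$. This produces a CM subextension $F\subsetneq M\subseteq F_{1,\pi}$ of $F^+$, contradicting the maximality hypothesis and proving Lie-irreducibility. The main obstacle will be the polarization step: one must verify that the abstract uniqueness of irreducible inducing data upgrades to an honest involution of $M$ (rather than an automorphism of higher order in $\mathrm{Aut}(M/F^+)$) with totally real fixed field of index two. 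Once this CM-ness is secured, the maximality hypothesis on $F$ yields the contradiction and the rest is formal.
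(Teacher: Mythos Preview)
The paper does not supply its own proof here; Proposition~\ref{X3} is simply imported from \cite[Corollary 1]{Xi19}. So there is no argument in the paper to compare against, and I will evaluate your plan on its own merits.

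Your overall architecture --- Patrikis--Taylor primes plus Clifford theory plus the maximality hypothesis --- is the natural one. But the step you yourself flag as the obstacle is a genuine gap, and as written it does not close.

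First a minor point: in the Clifford decomposition you implicitly assume the isotypic components over $\Gal_{F^{\conn}}$ are distinct (the case $e>1$ in the notation of \S\ref{pthmA}), which is what produces $F\subsetneq M$. The isotypic case $e=1,\ f>1$ would give $M=F$. You should dispose of it explicitly: regularity of $\rho_{\pi,\lambda}$ forces $f=1$, since a repeated irreducible constituent would repeat Hodge--Tate weights.

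The real problem is the CM argument. Two of the steps are false as stated.
\begin{itemize}
\item ``After adjusting $c'$ by a suitable element of $\Gal_M$ I obtain an involution of $M$'': elements of $\Gal_M$ act trivially on $M$, so this adjustment cannot change $c'|_M$ at all. If $c'|_M$ is not already an involution, nothing of this kind will make it one.
\item Even granting an involution, your argument that its fixed field $M^+$ is totally real is incorrect. From $M_v=\C$ one cannot conclude $M^+_{v^+}=\R$; it may equally well be $\C$ with a single (unramified) place above. A concrete obstruction: take $F^+=\Q$, $F=\Q(i)$, $M=\Q(i,\sqrt[3]{2})$. Then $M$ is totally imaginary, complex conjugation $c:i\mapsto -i,\ \sqrt[3]{2}\mapsto\sqrt[3]{2}$ is an involution of $M$ restricting to complex conjugation on $F$, yet its fixed field $\Q(\sqrt[3]{2})$ is not totally real and $M$ is not CM.
\end{itemize}
Thus ``$c(M)$ is $\Gal_F$-conjugate to $M$'' together with the existence of an extension of complex conjugation stabilising $M$ is strictly weaker than ``$M$ is CM''.

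To repair this one needs further input. In the setting of \cite{Xi19} the natural thing is to work with the $\Gal_{F^+}$-representation $\Phi_\lambda=(\mathrm{Ind}^{F^+}_F\rho_{\pi,\lambda})\oplus\rho_{\chi,\lambda}$ of \eqref{2n+1} and its connectedness field $F_{1,\pi}$ directly, rather than extracting an abstract $M$ from Clifford theory for $\Gal_F$ alone; the induction up to $F^+$ and the specific description of Patrikis--Taylor primes in \cite{PT15} carry the extra arithmetic information (about how complex conjugation interacts with all the isotypic pieces simultaneously) that your bare polarization identity does not. As it stands, your plan identifies the right obstruction but does not overcome it.
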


\subsection{Preparations for Theorem \ref{thmB}(i)}\label{table}
Let $\{\rho_{\pi,\lambda}:\Gal_F\to\GL(V_\lambda)\}_\lambda$ 
be the $n$-dimensional semisimple SCS in \eqref{twosys} of a totally real or CM field $F$
with $n\leq 6$ and algebraic monodromy groups $\bG_\lambda$.
By Proposition \ref{X2} (resp. \ref{X2real}), Mackey's irreducibility criterion, 
and regularity of $\{\rho_{\pi,\lambda}\}_\lambda$,
the absolutely irreducibility of $\rho_{\pi,\lambda}$ and $r_{1,\lambda}$
are equivalent, and moreover, follows from the 
absolute irreducibility of $\rho_{\pi_1,\lambda}$.
Hence, we may replace $\pi$ with $\pi_1$ and assume that $F=F_3$ is a CM field.
By the line of Proposition  \ref{X2} (resp. \ref{X2real}) on $F_3$,
and Propositions \ref{X0} and \ref{X3}, we further assume that
\begin{enumerate}[(i)]
\item  for almost all $\lambda$, every irreducible factor $W_\lambda$ of $\rho_{\pi,\lambda}\otimes\overline\Q_\ell$
is polarized and odd; 
\item the identity component $\bG_{\lambda_0}^\circ$ and also the connected semisimple group $\bG_{\lambda_0}^{\der}$  
are irreducible on $V_{\lambda_0}\otimes\overline\Q_{\ell_0}$ for some Patrikis-Taylor prime $\lambda_0$.
\end{enumerate}
When $2\leq n\leq 6$,
the isomorphism class of $(\bG_{\lambda_0}^{\der}, V_{\lambda_0}\otimes\overline\Q_{\ell_0})$ 
belongs to Table A below.
We use the notation $S^i$ for the symmetric $i$th power and
$\mathrm{std}$  for the  standard representation of an orthogonal or symplectic group including $\SL_2$.
Given a representation $\rho:\bG\to \GL(V)$, we denote the faithful representation 
$(\rho(\bG),V)$ by $(\bG,V)$ for simplicity. Thus, $(\SL_n,\mathrm{std})$ and 
$(\SL_n,\mathrm{std}^\vee)$ are identified as the same representation.

\begin{center}
\begin{tabular}{|l|l|c|c|} \hline
Cases  & $(\bG,V)$ & Self-duality & Weights in formal character\\ \hline
$(2A_1)$ & $(\SL_2,\mathrm{std})$& Yes 
& \begin{tikzpicture}
%% vertices
\draw[fill=black] (0.5,0) circle (2pt);
\draw[fill=black] (1,0) circle (2pt);
\end{tikzpicture} \\ \hline
$(3A_1)$ & $(\SL_2,S^2(\mathrm{std}))$& Yes& \begin{tikzpicture}
%% vertices
\draw[fill=black] (0.5,0) circle (2pt);
\draw[fill=black] (1,0) circle (2pt);
\draw[fill=black] (1.5,0) circle (2pt);
\end{tikzpicture} \\ \hline
$(3A_2)$  &  $(\SL_3,\mathrm{std})$ & No 
& \begin{tikzpicture}
%% vertices
\draw[fill=black] (-0.3,0) circle (2pt);
\draw[fill=black] (0.15,-0.26) circle (2pt);
\draw[fill=black] (0.15,0.26) circle (2pt);
\end{tikzpicture}  \\ \hline
$(4A_1)$ & $(\SL_2,S^3(\mathrm{std}))$&  Yes 
& \begin{tikzpicture}
%% vertices
\draw[fill=black] (0,0) circle (2pt);
\draw[fill=black] (0.5,0) circle (2pt);
\draw[fill=black] (1,0) circle (2pt);
\draw[fill=black] (1.5,0) circle (2pt);
\end{tikzpicture} \\ \hline
$(2A_1\otimes 2A_1)$ & $(\SO_4,\mathrm{std})$&  Yes
& \begin{tikzpicture}
%% vertices
\draw[fill=black] (0,0) circle (2pt);
\draw[fill=black] (0.25,0.25) circle (2pt);
\draw[fill=black] (0.5,0) circle (2pt);
\draw[fill=black] (0.25,-0.25) circle (2pt);
\end{tikzpicture} \\ \hline
$(4B_2)$ & $(\Sp_4,\mathrm{std})$& Yes  
& \begin{tikzpicture}
%% vertices
\draw[fill=black] (0,0) circle (2pt);
\draw[fill=black] (0.25,0.25) circle (2pt);
\draw[fill=black] (0.5,0) circle (2pt);
\draw[fill=black] (0.25,-0.25) circle (2pt);
\end{tikzpicture} \\ \hline
$(4A_3)$  &  $(\SL_4,\mathrm{std})$ & No & Vertices of a regular tetrahedron\\ \hline
$(5A_1)$ & $(\SL_2,S^4(\mathrm{std}))$&  Yes & \begin{tikzpicture}
%% vertices
\draw[fill=black] (0,0) circle (2pt);
\draw[fill=black] (0.5,0) circle (2pt);
\draw[fill=black] (1,0) circle (2pt);
\draw[fill=black] (1.5,0) circle (2pt);
\draw[fill=black] (2,0) circle (2pt);
\end{tikzpicture} \\ \hline
$(5B_2)$ & $(\SO_5,\mathrm{std})$&  Yes & \begin{tikzpicture}
%% vertices
\draw[fill=black] (0,0) circle (2pt);
\draw[fill=black] (0.25,0.25) circle (2pt);
\draw[fill=black] (0.25,0) circle (2pt);
\draw[fill=black] (0.5,0) circle (2pt);
\draw[fill=black] (0.25,-0.25) circle (2pt);
\end{tikzpicture}  \\ \hline
$(5A_4)$  &  $(\SL_5,\mathrm{std})$& No & Omitted \\ \hline
$(6A_1)$   & $(\SL_2,S^5(\mathrm{std}))$& Yes 
& \begin{tikzpicture}
%% vertices
\draw[fill=black] (0,0) circle (2pt);
\draw[fill=black] (0.5,0) circle (2pt);
\draw[fill=black] (1,0) circle (2pt);
\draw[fill=black] (1.5,0) circle (2pt);
\draw[fill=black] (2,0) circle (2pt);
\draw[fill=black] (2.5,0) circle (2pt);
%%% edges
%\draw[thick] (0,0) -- (2.5,0);
\end{tikzpicture}  \\ \hline
$(2A_1\otimes 3A_1)$  & $(\SL_2\times\SL_2,\mathrm{std}\otimes S^2(\mathrm{std}))$& Yes
& \begin{tikzpicture}
%% vertices
\draw[fill=black] (0,0) circle (2pt);
\draw[fill=black] (0.3,0) circle (2pt);
\draw[fill=black] (0.6,0) circle (2pt);
\draw[fill=black] (0,-0.6) circle (2pt);
\draw[fill=black] (0.3,-0.6) circle (2pt);
\draw[fill=black] (0.6,-0.6) circle (2pt);
%%% edges
%\draw[thick] (0,0) -- (2.5,0);
\end{tikzpicture}  \\ \hline
$(6A_3)$  &  $(\SO_6,\mathrm{std})$& Yes  & Vertices of a regular octahedron \\ \hline
$(6C_3)$  &  $(\Sp_6,\mathrm{std})$& Yes & Vertices of a regular octahedron \\ \hline
$(6A_2)$  &  $(\SL_3, S^2(\mathrm{std}))$&  No   
& \begin{tikzpicture}
%% vertices
\draw[fill=black] (0.3,0) circle (2pt);
\draw[fill=black] (-0.15,0.26) circle (2pt);
\draw[fill=black] (-0.15,-0.26) circle (2pt);
\draw[fill=black] (-0.6,0) circle (2pt);
\draw[fill=black] (0.3,0.52) circle (2pt);
\draw[fill=black] (0.3,-0.52) circle (2pt);
\end{tikzpicture}  \\ \hline
$(2A_1\otimes 3A_2)$  &  $(\SL_2\times\SL_3,\mathrm{std}\otimes\mathrm{std})$&  No & Vertices of a regular triangular prism \\ \hline
$(6A_5)$  &  $(\SL_6,\mathrm{std})$&  No & Omitted \\ \hline
\end{tabular}
\end{center}
\begin{center}
\footnotesize Table A. Isomorphism classes of connected semisimple subgroups $\bG\subset\GL_V$ that 
are irreducible on $V=\overline\Q_\ell^n$, where $2\leq n\leq 6$. 
\end{center}

\vspace{.1cm}

A brief explanation on the symbols representing the cases, 
take $(nA_m)$ for example, the integer $n$ is the degree of the irreducible representation  
and $A_m$ is the Lie type of the derived group, and $(2A_1\otimes 3A_1)$
is the tensor product representation of $(2A_1)$ and $(3A_1)$.  The other symbols
follow the same rules. 
Since the formal character of the semisimple group 
$\bG_\lambda^{\der}\to\GL_n$ is independent of $\lambda$ (Theorem \ref{Hui1}),
it is thus equal to the formal character of some case on the table.
In the course of establishing Theorem \ref{thmB}(i) in the coming subsections, we will obtain
the following results when $\rho_{\pi,\lambda_1}\otimes\overline\Q_{\ell_1}$ is Lie-irreducible for some $\lambda_1$.

\begin{prop}\label{lambdaindep}
Let $F$ be totally real, $n\leq 6$, and $\{\rho_{\pi,\lambda}\}_\lambda$ 
the semisimple SCS of $F$ in \eqref{twosys} with algebraic monodromy groups $\bG_\lambda$.
For every $\lambda$, fix an embedding $E_\lambda\hookrightarrow\C$.
If $\rho_{\pi,\lambda_1}\otimes\overline\Q_{\ell_1}$ is Lie-irreducible for some $\lambda_1$, then the following assertions hold.
\begin{enumerate}[(i)]
\item For almost all $\lambda$, $\rho_{\pi,\lambda}\otimes\overline\Q_{\ell}$ is Lie-irreducible.
\item For almost all $\lambda$, 
the faithful representations are isomorphic:
\begin{equation*}\label{isomC}
(\bG_{\lambda}^{\circ}\to\GL_{n,E_\lambda})_\C\cong (\bG_{\lambda_1}^{\circ}\to\GL_{n,E_{\lambda_1}})_\C.
\end{equation*}
\item  If $\bG_{\lambda_1}^{\der}$ is  of type A, then the isomorphism in (ii) holds for all $\lambda$.
\end{enumerate}
\end{prop}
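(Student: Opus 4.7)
The plan is to combine the $\lambda$-independence of the formal bi-character of $\bG_\lambda^{\der} \subset \bG_\lambda^\circ \subset \GL_n$ (Theorem \ref{Hui1}) with the classification in Table A of Lie-irreducible faithful representations of connected semisimple subgroups of $\GL_{n,\C}$ for $n \leq 6$. The Lie-irreducibility at $\lambda_1$ identifies $(\bG_{\lambda_1}^{\der}, V_{\lambda_1} \otimes \overline\Q_{\ell_1})$ with a specific row $(\bG_0, V_0)$ of Table A; its formal bi-character, a chain of $\Z$-subtori of $\G_{m,\Z}^n$, then describes $\bG_\lambda^{\der} \subset \bG_\lambda^\circ$ for every $\lambda$.

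For (i), I would first invoke Theorem \ref{thmB}(i), proved in tandem in the same section, to conclude that $\rho_{\pi,\lambda} \otimes \overline\Q_\ell$ is irreducible for almost all $\lambda$; then $\bG_\lambda^\circ$ acts irreducibly on the ambient space, forcing $(\bG_\lambda^{\der}, V_\lambda \otimes \overline\Q_\ell)$ to be an entry of Table A. A direct inspection of the weight sets shows that distinct rows of Table A correspond to inequivalent $\Z$-subtori of $\G_{m,\Z}^n$, with the sole exception of the pair $(6A_3) = (\SO_6,\mathrm{std})$ and $(6C_3) = (\Sp_6,\mathrm{std})$ whose common weight set is $\{\pm e_1, \pm e_2, \pm e_3\}$. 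This ambiguity is broken by condition (iii) of $\mathsection\ref{ic}$: since $F$ is totally real, $\rho_{\pi,\lambda}$ factors either through $\GO_n$ for every $\lambda$ or through $\GSp_n$ for every $\lambda$, a uniform choice pinned down by the data at $\lambda_1$. Hence $(\bG_\lambda^{\der}, V_\lambda \otimes \overline\Q_\ell) \cong (\bG_0, V_0)$ for almost all $\lambda$, proving Lie-irreducibility. For (ii), Schur's lemma forces the connected center of $\bG_\lambda^\circ$ to act by scalars, so $\bG_\lambda^\circ$ is the almost direct product of the now-identified $\bG_\lambda^{\der}$ with the scalar $\G_m \subset \GL_n$; this determines the faithful representation over $\C$ up to isomorphism.

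For (iii), under the type A hypothesis $(\bG_0, V_0)$ is a type A row of Table A with multiplicity-free weight multiset. Highest weight theory then forces any connected semisimple subgroup of $\GL_{n,\C}$ sharing the same formal bi-character to act irreducibly via the Table A entry — a proper partition of the multiplicity-free weight multiset simply cannot arise as the union of weight multisets of non-trivial representations of the type A factors involved. Crucially, this step does not invoke Theorem \ref{thmB}(i), so the conclusion applies to every $\lambda$ rather than almost every. The one residual worry, namely that $(6A_3)$ at $\lambda_1$ could be paired with the non-type-A $(6C_3)$ at some other $\lambda$, is again ruled out by the uniformity in condition (iii) of $\mathsection\ref{ic}$.

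The main obstacle is the combinatorial verification that distinct rows of Table A yield inequivalent $\Z$-subtori of $\G_{m,\Z}^n$; the subtlest comparison is $(4B_2) = (\Sp_4,\mathrm{std})$ with weights $\{\pm e_1, \pm e_2\}$ versus $(2A_1 \otimes 2A_1) = (\SO_4,\mathrm{std})$ with weights $\{(\pm 1, \pm 1)\}$, whose two ``squares'' are related by a basis change of determinant $\pm 2$, hence not in $\GL_4(\Z)$. An analogous check is needed across all seventeen entries, but each individual verification is routine.
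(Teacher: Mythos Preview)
Your determinant-$\pm 2$ argument is wrong, and with it your central combinatorial claim. The formal character of $\bG\subset\GL_n$ is its maximal torus \emph{as a subtorus of $\G_m^n$}, taken up to the $S_n$-action permuting coordinates---not a weight multiset in some $\Z^r$ up to $\GL_r(\Z)$. For $(\Sp_4,\mathrm{std})$ the maximal torus is $\{(t_1,t_2,t_1^{-1},t_2^{-1})\}\subset\G_m^4$; for $(\SO_4,\mathrm{std})$ (with split form $x_1x_3+x_2x_4$) it is the \emph{same} subtorus. This is precisely why Table~A draws the identical square for both rows, and why \S\ref{437} explicitly says that from the CM side one ``cannot tell if $\bG_\lambda$ and $\bG_{\lambda_0}$ are of the same Lie type.'' So the exception list is $\{\SO_4/\Sp_4,\ \SO_6/\Sp_6\}$, not just the latter; at a minimum your appeal to \S\ref{ic}(iii) must be extended to $n=4$ as well.

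Part~(iii) has a deeper gap: the assertion that a connected semisimple subgroup sharing the given formal bi-character is forced to act irreducibly is false. For $(5A_1)$ the formal character of $(\SL_2,S^4(\mathrm{std}))$ in $\GL_5$ is (up to $S_5$) the rank-one torus $\{(s^2,s,1,s^{-1},s^{-2})\}$, which is equally the formal character of the reducible $(\SL_2,\mathrm{std}\oplus S^2(\mathrm{std}))$; the analogous coincidence holds for $(3A_1)$, and for $(2A_1\otimes 3A_1)$ the formal character is shared by the reducible $(\SL_2\times\SL_2,(\mathrm{std}\otimes\mathrm{std})\oplus\mathrm{std})$. The paper eliminates each of these alternatives individually in \S\ref{433}, \S\ref{435}, and \S\ref{439}, using genuine arithmetic input---automorphic base change from $F$ to $F^+$ via \cite{AC89} and \cite{HLTT16}, and the $\lambda$-uniform orthogonal/symplectic dichotomy of \S\ref{ic}(iii)---rather than weight combinatorics. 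Your proposal offers no substitute for these arguments, so as written it does not prove (iii), and (since you also need to pass from ``irreducible'' to ``Lie-irreducible'' after invoking Theorem~\ref{thmB}(i)) it does not fully prove (i) either.
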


\begin{prop}\label{lambdaindepCM}
Let $F$ be CM, $n\leq 6$, and $\{\rho_{\pi,\lambda}\}_\lambda$ 
the semisimple SCS of $F$ in \eqref{twosys} with algebraic monodromy groups $\bG_\lambda$.
For every $\lambda$, fix an embedding $E_\lambda\hookrightarrow\C$.
If $\rho_{\pi,\lambda_1}\otimes\overline\Q_{\ell_1}$ is Lie-irreducible for some $\lambda_1$, then the following assertions hold.
\begin{enumerate}[(i)]
\item For almost all $\lambda$, $\rho_{\pi,\lambda}\otimes\overline\Q_{\ell}$ is Lie-irreducible.
\item If $\bG_{\lambda_1}^{\der}\neq \SO_4,\Sp_4,\SO_6,\Sp_6$ on Table A, then for almost all $\lambda$
the faithful representations are isomorphic:
\begin{equation*}
(\bG_{\lambda}^{\circ}\to\GL_{n,E_\lambda})_\C\cong (\bG_{\lambda_1}^{\circ}\to\GL_{n,E_{\lambda_1}})_\C.
\end{equation*}
\item If $\bG_{\lambda_1}^{\der}$ is 
 one of the cases in $\mathsection$\ref{431}---$\mathsection$\ref{434},
then the isomorphism in (ii) holds for all $\lambda$.
\end{enumerate}
\end{prop}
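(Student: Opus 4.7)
The plan is to establish all three assertions as byproducts of a case-by-case analysis driven by the formal bi-character of $\bG_\lambda^{\der}$. By Theorem~\ref{Hui1} this formal bi-character is independent of $\lambda$, and the Lie-irreducibility at $\lambda_1$ identifies it with exactly one of the rows of Table~A. My goal is to show that, for any other $\lambda$, the only action of $\bG_\lambda^\circ$ on $V_\lambda$ compatible with this fixed formal character is again (up to the ambiguities noted in (ii)) the irreducible representation of that same row.

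For assertion (i), I would argue by contradiction: if $\rho_{\pi,\lambda}\otimes\overline\Q_\ell$ were not Lie-irreducible for some $\lambda$, then after passing to a suitable finite Galois cover so that $\bG_\lambda$ becomes connected we obtain a non-trivial $\bG_\lambda^\circ$-decomposition $\rho_{\pi,\lambda}\otimes\overline\Q_\ell = W_\lambda^{(1)}\oplus\cdots\oplus W_\lambda^{(k)}$. The matching of formal bi-characters with the Table~A row forces this splitting to correspond to one of only finitely many combinatorial partitions of the weights. For each such partition I would run the strategy sketched after Theorem~\ref{thmB}: by Theorem~\ref{general}(v) each residual factor $\overline W_\lambda^{(j),\ss}|_{\Gal_{F(\zeta_\ell)}}$ is irreducible for $\ell\gg0$; by Proposition~\ref{X0} each $W_\lambda^{(j)}$ is polarized and odd; Theorem~\ref{potential} then supplies a CM extension $F'/F$ over which every $W_\lambda^{(j)}$ is automorphic; and Proposition~\ref{link} contradicts the Lie-irreducibility of $\rho_{\pi,\lambda_1}\otimes\overline\Q_{\ell_1}$.

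For (ii), once (i) is in hand, $\bG_\lambda^{\der}$ acts irreducibly on $V_\lambda$ for almost all $\lambda$. Excluding the two pairs $\{\SO_4,\Sp_4\}$ and $\{\SO_6,\Sp_6\}$, which share their formal bi-characters on Table~A, the pair (formal bi-character, irreducibility) uniquely identifies the isomorphism class of $(\bG_\lambda^{\der},V_\lambda)$. The connected center of $\bG_\lambda^\circ$ is then determined by the $\lambda$-independent central torus appearing in the formal bi-character, producing the claimed $\C$-isomorphism of faithful representations.

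For (iii), I would leverage the case-specific information provided in Sections~\ref{431}--\ref{434}: Patrikis--Taylor primes propagate Lie-irreducibility, the polarization character $\rho_{\chi,\lambda}$ pins down the self-duality type, and the explicit companion constructions from those subsections allow the $\C$-isomorphism to be promoted from \emph{almost all} $\lambda$ to \emph{all} $\lambda$. I expect the main obstacle to be the six-dimensional octahedral formal character, where $\SO_6$ and $\Sp_6$ are a priori indistinguishable and where the reducible splittings $3+3$, $2+4$, $1+5$, and $3+2+1$ must each be ruled out by running the potential-automorphy argument in combination with careful bookkeeping of polarizations and Hodge--Tate weights.
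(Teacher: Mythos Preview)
Your plan follows the paper's in using Theorem~\ref{Hui1} to pin down the formal bi-character and then arguing case by case from Table~A, but the uniform strategy you propose for (i) has a genuine gap. Your blanket appeal to Theorem~\ref{general}(v) to obtain residual irreducibility of each factor $\overline W_\lambda^{(j),\ss}|_{\Gal_{F(\zeta_\ell)}}$ requires the algebraic monodromy $\bG_{W_\lambda^{(j)}}$ to be of type~A, and this can fail. In Case~$(5B_2)$ a $(4,1)$ splitting may have a four-dimensional factor with derived group $\Sp_4$; in Cases~$(6A_3)/(6C_3)$ the four-dimensional complement of the two-dimensional factor may likewise carry $\Sp_4$. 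For such non-type-A factors you cannot verify condition~(a) of Theorem~\ref{potential}, so the contradiction via Proposition~\ref{link} does not close. The paper circumvents this by two distinct manoeuvres: in \S\ref{436} it first excludes the $\Sp_4$ sub-case via a base-change-to-$F^+$ argument (producing an auxiliary polarized cuspidal $\Pi$ on $\GL_5(\A_{F^+})$ and using $\Sp_4\times\{1\}\not\subset\GO_5$), leaving only type-A factors; in \S\ref{438} it applies potential automorphy only to the guaranteed two-dimensional $\SL_2$ factor, extends it to a compatible system $\{\psi_\lambda\}$, and derives a contradiction by comparing the \emph{rank} of the derived monodromy of $\{\rho_{\pi,\lambda}\oplus\psi_\lambda\}$ at the Lie-irreducible $\lambda_0$ versus the reducible $\lambda$ via Theorem~\ref{Hui1} --- bypassing Proposition~\ref{link} entirely.

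Your account of (iii) is also misdirected: the \S\ref{431}--\S\ref{434} cases are not ``promoted'' from almost-all to all. Rather, the pure formal-character arguments in \S\ref{431}, \S\ref{432}, \S\ref{434} and the base-change-to-$F^+$ argument in \S\ref{433} hold directly for \emph{every} $\lambda$, which is exactly why those cases give the stronger conclusion. And the octahedral ``main obstacle'' you flag is moot for (ii) and (iii), since $(6A_3)$ and $(6C_3)$ are explicitly excluded from both assertions.
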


\subsection{Proofs of Theorem \ref{thmB}(i) and Propositions \ref{lambdaindep}, \ref{lambdaindepCM}}
By $\mathsection$\ref{table}, it suffices to prove Theorem \ref{thmB}(i) under the situation
that $F$ is CM and the conditions $\mathsection$\ref{table}(i),(ii) hold.
Since the case $n=1$ is obviously true, we perform a case-by-case analysis 
 based on Table A in $\mathsection$\ref{431}---$\mathsection$\ref{438}.
Propositions \ref{lambdaindep}, \ref{lambdaindepCM} will be proved in $\mathsection$\ref{439}.

\subsubsection{Cases $(2A_1)$, $(3A_2)$, $(4A_3)$, $(5A_4)$, and $(6A_5)$}\label{431}
Theorem \ref{Hui1} implies that the rank of the semisimple subgroup $\bG_\lambda^{\der}\subset\GL_n$
is $n-1$ for all $\lambda$. It follows that $\bG_\lambda^{\der}=\SL_n$ and $V_\lambda\otimes\overline\Q_\ell$ 
is irreducible for all $\lambda$. \qed

\subsubsection{Cases $(4A_1)$ and $(6A_1)$}\label{432}
Since the formal character of $\bG_\lambda^{\der}$ is independent of $\lambda$ (Theorem \ref{Hui1})
and it does not contain zero weight (Table A),
the representation $(\bG_\lambda^{\der},V_\lambda\otimes\overline\Q_\ell)$
can only be $(\SL_2, S^{n-1}(\mathrm{std}))$.
It follows that $V_\lambda\otimes\overline\Q_\ell$ 
is irreducible for all $\lambda$ \qed 

\subsubsection{Cases $(3A_1)$ and $(5A_1)$}\label{433}
If $V_\lambda\otimes\overline\Q_\ell$ is reducible, 
we obtain an irreducible decomposition 
\begin{equation}\label{35}
V_\lambda\otimes\overline\Q_\ell=W_\lambda\oplus W'_\lambda
\end{equation}
such that $(\dim W_\lambda,\dim W'_\lambda)=(2,3)$ for case $(5A_1)$
(resp. $(2,1)$ for case $(3A_1)$)
by the fact that the formal character of $\bG_\lambda^{\der}$ (independent of $\lambda$)
has only one zero weight (Table A). The possibility that $(\dim W_\lambda,\dim W'_\lambda)=(4,1)$ 
is ruled out because it and $(5A_1)$ have different formal characters.
In the $(2,3)$-case,  
$(\bG_\lambda^{\der},V_\lambda\otimes\overline\Q_\ell)=(\SL_2,\mathrm{std}\oplus S^2(\mathrm{std}))$  and we will show that this is impossible.

We treat the case $(5A_1)$ only since $(3A_1)$ is similar (and easier).
Since $\bG_{\lambda_0}^{\der}=\PGL_2$ belongs to $\SO_5$ (e.g., via the principal homomorphism) up to conjugation in $\GL_5$,
we obtain an isomorphism $V_{\lambda_0}^\vee\cong V_{\lambda_0}\otimes\alpha_{\lambda_0}$
for some character $\alpha_0$. This isomorphism can be extended to an isomorphism of SCS (enlarging $E$ if necessary) 
\begin{equation}\label{SCSisom0}
\{V_{\lambda}^\vee\}_\lambda\cong \{V_{\lambda}\otimes\alpha_{\lambda}\}_\lambda
\end{equation}
since $\alpha_{\lambda_0}$ is of Hodge-Tate type.
Then we find a one-dimensional SCS $\{\beta_\lambda\}_\lambda$ such that
\begin{equation}\label{SCSisom20}
\{V_{\lambda}^c\}_\lambda\cong \{V_{\lambda}\otimes\beta_{\lambda}\}_\lambda
\end{equation}
by \eqref{SCSisom0} and $\mathsection$\ref{ic}(i). 
The algebraic monodromy $\bG_{W'_\lambda}$ in $\GL_{W'_\lambda}$ is contained in $\GO_3\cong \SO_3\times\GL_1$.
Let $\mu_\lambda$ be the Galois character corresponding to the $\GL_1$-component in $\GO_3$. 
Define $\widetilde{V}_\lambda:=V_\lambda\otimes\mu_\lambda^{-1}$
and $\widetilde{W}'_\lambda:=W_\lambda'\otimes\mu_\lambda^{-1}$. 
It follows by \eqref{SCSisom20} that
\begin{equation}\label{SCSisom30}
\widetilde{V}_{\lambda}^c\cong \widetilde{V}_{\lambda}\otimes \kappa_\lambda
\end{equation}
for some character $\kappa_\lambda$ and by dimension consideration that
\begin{equation}\label{SCSisom40}
\widetilde{W}_{\lambda}'^c\cong \widetilde{W}'_{\lambda}\otimes \kappa_\lambda.
\end{equation}
Since $\bG_{\widetilde{W}'_{\lambda}}=\SO_3\subset\SL_3$, the complex conjugation on Frobenii and Chebotarev's density theorem
imply that
\begin{enumerate}[(i)]
\item $\bG_{\widetilde{W}_{\lambda}'^c}\subset\SL_3$ and 
\item there is a density one subset $\mathscr{S}_F\subset\Sigma_F$ such that  
$\widetilde{W}_{\lambda}'^c|_{Frob_{\bar v}}$ has $1$ as an eigenvalue whenever $\bar v$ is above 
some element in $\mathscr{S}_F$.
\end{enumerate}
By (i) and \eqref{SCSisom40}, we obtain $\kappa_\lambda^3$ is trivial. If $\kappa_\lambda$ is non-trivial, then (ii) above
implies the existence of a density one-third subset $\mathscr{S}_F'\subset \mathscr{S}_F$
such that the characteristic polynomial of $Frob_{\bar v}$ on $\widetilde{W}'_{\lambda}$ is $1-T^3$
whenever $\bar v$ is above 
some element in $\mathscr{S}_F'$. This is absurd by Chebotarev's density theorem 
because the trace-zero set in the connected $\SO_3$ is 
a proper closed subset. Thus, $\kappa_\lambda$ is trivial.

Since $\mu_\lambda$ is of Hodge-Tate type, it comes from a character $\mu$ of $\GL_1(\A_F)$.
By \eqref{SCSisom30}, $\kappa_\lambda=1$, and multiplicity one \cite{JS81}, we get $(\pi\otimes\mu^{-1})^c\cong \pi\otimes\mu^{-1}$.
The cuspidal $\pi\otimes\mu^{-1}$ is $\text{BC}_{F/F^+}(\Pi)$,
the base change of a cuspidal automorphic representation $\Pi$ of $\GL_5(\A_{F^+})$ by \cite[Theorem 4.2(d)]{AC89}.
Since $\pi\otimes\mu^{-1}$ is regular algebraic, so is $\Pi$ (see \cite[Definition 1.6]{Cl16}). 
Then \cite{HLTT16} attaches an SCS $\{U_\lambda\}_\lambda$ of totally real $F^+$ (enlarging $E$ if necessary) to $\Pi$
such that 
\begin{equation}\label{SCSisom50}
\{\mathrm{Res}^{F^+}_F U_{\lambda}\}_\lambda\cong \{\widetilde{V}_{\lambda}:=V_\lambda\otimes\mu_\lambda^{-1}\}_\lambda
\end{equation}
by Chebotarev's density theorem,
in particular we get $\bG_{U_{\lambda_0}}^{\der}\cong\bG_{\widetilde{V}_{\lambda_0}}^{\der}=\PGL_2\subset \SO_5\subset\GL_5$.
Since the normalizer in $\GL_5$ of $\bG_{U_{\lambda_0}}^{\der}$ is contained in $\GO_5$, 
the image of the complex conjugation $c$ in $\GL(U_{\lambda_0})$
belongs to $\GO_5$ and thus the algebraic monodromy $\bG_{U_{\lambda_0}}$ satisfies
$$\bG_{U_{\lambda_0}}\subset\GO_5,$$ 
which implies $U_{\lambda_0}\cong U_{\lambda_0}^\vee\otimes \psi_{\lambda_0}$
for some Hodge-Tate type character $\psi_{\lambda_0}$ (coming from a character of $\GL_1(\A_{F^+}))$.
It follows by multiplicity one \cite{JS81} and $5$ is odd that the regular algebraic cuspidal $\Pi$ is polarized and odd,
and then by $\mathsection$\ref{ic}(iii) that $\bG_{U_{\lambda}}\subset\GO_5$ for all $\lambda$.
By the fact that $(\SL_2,\mathrm{std}\oplus S^2(\mathrm{std}))$ 
is not contained in $\GO_5$ and \eqref{SCSisom50},
we conclude that $(\bG_\lambda^{\der},V_\lambda\otimes\overline\Q_\ell)=(\SL_2,S^4(\mathrm{std}))$ (\eqref{35} is impossible) and is irreducible for all $\lambda$.\qed

\subsubsection{Cases $(6A_2)$ and $(2A_1\otimes 3A_2)$}\label{434}
The formal character of 
$(\bG_\lambda^{\der},V_\lambda\otimes\overline\Q_\ell)$ (independent of $\lambda$) has no zero weight and satisfies the fact that if $w$ is a weight
then $-w$ is not a weight (Table A). If $V_\lambda\otimes\overline\Q_\ell$ is reducible, we obtain an irreducible decomposition 
\begin{equation}\label{62*3}
V_\lambda\otimes\overline\Q_\ell=W_\lambda\oplus W'_\lambda
\end{equation}
such that $(\dim W_\lambda,\dim W'_\lambda)=(3,3)$. Then the images of $\bG_\lambda^{\der}$ in $\GL_{W_\lambda}$ and $\GL_{W'_\lambda}$
have to be $\SL_3$, which is impossible by the weights of $(6A_2)$ and $(2A_1\otimes 3A_2)$ on Table A.
It follows that $V_\lambda\otimes\overline\Q_\ell$ is irreducible for all $\lambda$.
Again from their weights on Table A, we see that
the irreducible $V_\lambda\otimes\overline\Q_\ell$ is not induced from a proper subgroup.
Hence, we obtain $(\bG_\lambda^{\der},V_\lambda\otimes\overline\Q_\ell)=(\SL_3, S^2(\mathrm{std}))$
 (resp. $=(\SL_2\times\SL_3,\mathrm{std}\otimes\mathrm{std})$) for case $(6A_2)$ (resp. $(2A_1\otimes 3A_2)$)
for all $\lambda$.\qed

\subsubsection{Case $(2A_1\otimes 3A_1)$}\label{435}
If $V_\lambda\otimes\overline\Q_\ell$ is reducible, we obtain a decomposition 
\begin{equation}\label{2*3}
V_\lambda\otimes\overline\Q_\ell=W_\lambda\oplus W'_\lambda
\end{equation}
such that $(\dim W_\lambda,\dim W'_\lambda)=(3,3)$ or $(4,2)$
because the formal character of $\bG_\lambda^{\der}$ (independent of $\lambda$)
has no zero weight (Table A). In the $(3,3)$-case, the only possibility is 
$(\bG_\lambda^{\der},V_\lambda\otimes\overline\Q_\ell)=(\SL_3, \mathrm{std}\oplus\mathrm{std}^\vee)$
since the formal characters of $(\SL_3, \mathrm{std}\oplus\mathrm{std})$ and $(2A_1\otimes 3A_1)$ are different.
In the formal character (Table A), there are three weights $w_1,w_2,w_3$ (top row) such that $w_1+w_2=2w_3$
but this is not true for the formal character of $(\SL_3, \mathrm{std}\oplus\mathrm{std}^\vee)$.
Thus, there remains the $(4,2)$-case. Since $\bG_\lambda^{\der}$ is of rank two, the only possibility\footnote{Note that $\Sp_4$ does not have any irreducible representation of dimension two.} is 
$(\bG_\lambda^{\der},V_\lambda\otimes\overline\Q_\ell)=(\SL_2\times\SL_2, (\mathrm{std}\otimes\mathrm{std})\oplus\mathrm{std})$.

Since $\SL_2\times\SL_2$ is of type A, 
 $\Gal_{F(\zeta_\ell)}$ is residually irreducible (Theorem \ref{general}(v)) on 
the polarized and odd ($\mathsection$\ref{table}(i)) representations $W_\lambda$ and $W'_\lambda$ if $\ell\gg0$.
Hence, Theorem \ref{potential} and Proposition \ref{link}
imply that the decomposition \eqref{2*3} is impossible if $\ell\gg0$. 
Again from the weights of $(2A_1\otimes 3A_1)$ on Table A, we see that
 the irreducible $V_\lambda\otimes\overline\Q_\ell$ is not induced from a proper subgroup.
For almost all $\lambda$, we conclude that the self-dual $(\bG_\lambda^{\der},V_\lambda\otimes\overline\Q_\ell)$ is irreducible
and isomorphic to $(\SL_2\times\SL_2, \mathrm{std}\otimes S^2(\mathrm{std}))$.\qed

\subsubsection{Case $(5B_2)$}\label{436}
If $V_\lambda\otimes\overline\Q_\ell$ is reducible, then we obtain an irreducible decomposition  
\begin{equation}\label{5}
V_\lambda\otimes\overline\Q_\ell=W_\lambda\oplus W'_\lambda\oplus W''_\lambda
\end{equation}
such that $(\dim W_\lambda,\dim W'_\lambda,\dim W''_\lambda)=(4,1,0)$ or $(3,2,0)$ or $(2,2,1)$ by the weights in Table A.
For the case $(4,1,0)$, $\bG_{W_\lambda}^{\der}$ is either $\SO_4$ (of type A) or $\Sp_4$.
We first show below that $\bG_{W_\lambda}^{\der}$ cannot be $\Sp_4$ (the argument is similar to the one in $(5A_1)$).

Since $\bG_{\lambda_0}^{\der}=\SO_5$, we obtain an isomorphism $V_{\lambda_0}^\vee\cong V_{\lambda_0}\otimes\alpha_{\lambda_0}$
for some character $\alpha_0$. This isomorphism can be extended to an isomorphism of SCS (enlarging $E$ if necessary) 
since $\alpha_{\lambda_0}$ is of Hodge-Tate type:
\begin{equation}\label{SCSisom}
\{V_{\lambda}^\vee\}_\lambda\cong \{V_{\lambda}\otimes\alpha_{\lambda}\}_\lambda.
\end{equation}
By \eqref{SCSisom} and $\mathsection$\ref{ic}(i), we find an one-dimensional SCS $\{\beta_\lambda\}_\lambda$ such that
\begin{equation}\label{SCSisom2}
\{V_{\lambda}^c\}_\lambda\cong \{V_{\lambda}\otimes\beta_{\lambda}\}_\lambda.
\end{equation}
Let $\mu_\lambda$ be the character on the one-dimensional representation $W'_\lambda$ 
in the $(4,1,0)$-case
and define $\widetilde{V}_\lambda:=V_\lambda\otimes\mu_\lambda^{-1}$. It follows that $W'_\lambda\otimes \mu_\lambda^{-1}$ 
and $(W'_\lambda\otimes \mu_\lambda^{-1})^c$ are trivial and 
by \eqref{SCSisom2} that
\begin{equation}\label{SCSisom3}
\widetilde{V}_{\lambda}^c\cong \widetilde{V}_{\lambda}.
\end{equation}
Since $\mu_\lambda$ is of Hodge-Tate type, it comes from a character $\mu$ of $\GL_1(\A_F)$.
By \eqref{SCSisom3} and multiplicity one \cite{JS81}, we get $(\pi\otimes\mu^{-1})^c\cong \pi\otimes\mu^{-1}$.
The cuspidal $\pi\otimes\mu^{-1}$ is $\text{BC}_{F/F^+}(\Pi)$,
the base change of a cuspidal automorphic representation $\Pi$ of $\GL_5(\A_{F^+})$ by \cite[Theorem 4.2(d)]{AC89}.
Since $\pi\otimes\mu^{-1}$ is regular algebraic, so is $\Pi$. 
Then \cite{HLTT16} attaches an SCS $\{U_\lambda\}_\lambda$ of totally real $F^+$ (extending $E$ if necessary) to $\Pi$
so that 
\begin{equation}\label{SCSisom4}
\{\mathrm{Res}^{F^+}_F U_{\lambda}\}_\lambda\cong \{\widetilde{V}_{\lambda}:=V_\lambda\otimes\mu_\lambda^{-1}\}_\lambda
\end{equation}
by Chebotarev's density theorem,
in particular we get $\bG_{U_{\lambda_0}}^{\der}\cong\bG_{\widetilde{V}_{\lambda_0}}^{\der}=\SO_5$.
Since the normalizer in $\GL_5$ of $\SO_5$ is $\GO_5$, the image of the complex conjugation $c$ in $\GL(U_{\lambda_0})$
belongs to $\GO_5$ and thus the algebraic monodromy $\bG_{U_{\lambda_0}}$ satisfies
$$\SO_5\subset\bG_{U_{\lambda_0}}\subset\GO_5,$$ 
which implies $U_{\lambda_0}\cong U_{\lambda_0}^\vee\otimes \psi_{\lambda_0}$
for some Hodge-Tate type character $\psi_{\lambda_0}$ (coming from a character of $\GL_1(\A_{F^+}))$.
It follows by multiplicity one \cite{JS81} and $5$ is odd that the regular algebraic cuspidal $\Pi$ is polarized and odd,
and then by $\mathsection$\ref{ic}(iii) that $\bG_{U_{\lambda}}\subset\GO_5$ for all $\lambda$.
By the fact that $\Sp_4\times\{1\}$ is not contained in $\GO_5$ and \eqref{SCSisom4},
we conclude that $\bG_{W_\lambda}^{\der}=\SO_4$ in the $(4,1,0)$ case.

Therefore, the irreducible factors in \eqref{5} are Lie-irreducible and of type A for all three cases.
Since $\Gal_{F(\zeta_\ell)}$ is residually irreducible (Theorem \ref{general}(v)) on 
the polarized and odd ($\mathsection$\ref{table}(i)) representations $W_\lambda$, $W'_\lambda$, $W''_\lambda$ if $\ell\gg0$,
Theorem \ref{potential} and Proposition \ref{link}
imply that the decomposition \eqref{5} is impossible if $\ell\gg0$. 
Since the semisimple group $\bG_\lambda^{\der}$ is of rank two and $5$ is prime, 
the irreducible $V_\lambda\otimes\overline\Q_\ell$ cannot be induced from a proper subgroup.
We conclude from Table A that $(\bG_\lambda^{\der},V_\lambda\otimes\overline\Q_\ell)=(\SO_5,\mathrm{std})$ for almost all $\lambda$.

\subsubsection{Cases  $(2A_1\otimes 2A_1)$ and $(4B_2)$}\label{437}
 From the formal character of $\bG_\lambda^{\der}$ in Table A,
if $\bG_\lambda$ is not irreducible on $V_\lambda\otimes\overline\Q_\ell$ (for infinitely many $\lambda$)
then for large $\ell$ the representation admits a decomposition 
\begin{equation}\label{44}
V_\lambda\otimes\overline\Q_\ell=W_\lambda\oplus W'_\lambda
\end{equation}
such that $W_\lambda$ and $W'_\lambda$ are irreducible of dimension $2$ 
and the semisimple groups satisfy $\bG_{W_\lambda}^{\der}\cong\SL_2\cong \bG_{W'_\lambda}^{\der}$.
By Theorem \ref{general}(v), $\Gal_{F(\zeta_\ell)}$ is residually irreducible on  
the polarized and odd ($\mathsection$\ref{table}(i)) representations $W_\lambda$ and $W'_\lambda$ if $\ell\gg0$ . 
Hence, Theorem \ref{potential} and Proposition \ref{link}
imply that the decomposition \eqref{44} is impossible if $\ell\gg0$. 
We obtain that $V_\lambda\otimes\overline\Q_\ell$ is irreducible for almost all $\lambda$.

By assumption $\mathsection4.3$(ii) and twisting $\{V_\lambda\}_\lambda$
with the SCS of some power of cyclotomic characters, we assume $\bG_{\lambda_0}=\GO_4$ or $\GSp_4$
and thus $\bG_\lambda$ is connected for all $\lambda$. Therefore, $V_\lambda\otimes\overline\Q_\ell$ is not induced
and $V_\lambda\otimes\overline\Q_\ell$ is Lie-irreducible for almost all $\lambda$.
By considering the formal character, we 
conclude that $\bG_{\lambda}$ is either $\GO_4$ or $\GSp_4$ for almost all $\lambda$ 
(although we cannot tell if $\bG_{\lambda}$ and $\bG_{\lambda_0}$ are of the same Lie type).
\qed

\subsubsection{Cases $(6A_3)$ and $(6C_3)$}\label{438}
From the formal character of $\bG_\lambda^{\der}$ in Table A,
if $\bG_\lambda$ is not irreducible on $V_\lambda\otimes\overline\Q_\ell$ (for infinitely many $\lambda$)
then for large $\ell$ the representation admits a decomposition 
\begin{equation}\label{66}
V_\lambda\otimes\overline\Q_\ell=W_\lambda\oplus W'_\lambda
\end{equation}
such that $W_\lambda$ is irreducible of dimension $2$ 
and the semisimple group $\bG_{W_\lambda}^{\der}\cong\SL_2$.
By Theorem \ref{general}(v) and $\mathsection$\ref{table}(i), 
$\Gal_{F(\zeta_\ell)}$ is residually irreducible on the polarized and odd $W_\lambda$ if $\ell\gg0$.
Thus, Theorem \ref{potential} implies that for some $\lambda_1$ above $\ell\gg0$
the restriction $W_{\lambda_1}|_{\Gal_{F'}}$ is automorphic
for some totally real field $F'$, which implies that
$W_{\lambda_1}|_{\Gal_{F'}}$ belongs a two-dimensional semisimple WCS 
$$\{\psi_\lambda:\Gal_{F'}\to \GL_2(\overline E_\lambda)\}_\lambda$$ 
of $F'$ defined over $E$ (by enlarging $E$ if necessary) with 
algebraic monodromy groups all containing $\SL_2$ (Theorem \ref{Hui1}). 
Consider the WCS $\{\rho_{\pi,\lambda}\oplus\psi_\lambda\}_\lambda$ of $F'$
with algebraic monodromy groups $\bH_\lambda$. 
It follows that $\bH_{\lambda_1}^{\der}\cong\bG_{\lambda_1}^{\der}$ is of rank three
and (by Goursat's lemma) $\bH_{\lambda_0}^{\der}=\SO_6\times\SL_2$ or $\Sp_6\times\SL_2$ is of rank four, 
contradicting Theorem \ref{Hui1}.
We obtain that $V_\lambda\otimes\overline\Q_\ell$ is irreducible for almost all $\lambda$.

By twisting $\{V_\lambda\}_\lambda$
with the SCS of some power of cyclotomic characters, we assume $\bG_{\lambda_0}=\GO_6$ or $\GSp_6$
and thus $\bG_\lambda$ is connected for all $\lambda$. Therefore, $V_\lambda\otimes\overline\Q_\ell$ is not induced
and  $V_\lambda\otimes\overline\Q_\ell$ is Lie-irreducible for almost all $\lambda$.
By considering the formal character, we conclude that $\bG_{\lambda}$ is either $\GO_6$ or $\GSp_6$ for almost all $\lambda$
(although we cannot tell if $\bG_{\lambda}$ and $\bG_{\lambda_0}$ are of the same Lie type).
\qed

\subsubsection{Proofs of Propositions \ref{lambdaindep}, \ref{lambdaindepCM}}\label{439}
When $\rho_{\pi,\lambda_1}\otimes\overline\Q_\ell$ is Lie-irreducible for some $\lambda_1$,
the identity component $\bG_\lambda^\circ$ (for all $\lambda$) is unchanged under 
the reduction in $\mathsection$\ref{table} (see Propositions \ref{X2},\ref{X2real}). Hence,
Proposition \ref{lambdaindepCM}  follows immediately 
from the determination of $\bG_\lambda^{\der}$ in $\mathsection$4.1.1--$\mathsection$4.4.8.
It remains to treat the totally real case, in which we can say more about $\bG_\lambda^{\der}$.
As Proposition \ref{lambdaindepCM} holds and the totally real case can again be reduced to the CM 
case by $\mathsection$\ref{table},
it suffices to prove the following two assertions.
\begin{itemize}
\item Proposition \ref{lambdaindep}(ii) when $\bG_{\lambda_1}^{\der}\in\{\SO_4,\Sp_4,\SO_6,\Sp_6\}$.
\item Proposition \ref{lambdaindep}(iii) for cases $(2A_1\otimes 3A_1)$, $(2A_1\otimes 2A_1)$, and $(6A_3)$.
\end{itemize}

The first assertion is obvious by the $\lambda$-independence of the pairing ($\mathsection$\ref{ic}(iii)) when $F$ is totally real and the fact\footnote{This is seen by reducing to the CM case and applying $\mathsection$4.4.7--$\mathsection$4.4.8.} that
$\bG_{\lambda}^{\der}\in\{\SO_4,\Sp_4\}$ when $n=4$  
(resp. $\bG_{\lambda}^{\der}\in\{\SO_6,\Sp_6\}$ when $n=6$) 
 for almost all $\lambda$.

For case $(2A_1\otimes 2A_1)$ (resp. $(6A_3)$) in the second assertion,
we have an inclusion  $\bG_\lambda^{\der}\subset\SO_4$ (resp. $\bG_\lambda^{\der}\subset\SO_6$) 
of semisimple groups with the same rank for each $\lambda$.
Since $\SO_4$ (resp. $\SO_6$) is of type A, the inclusion is an equality for all $\lambda$
(\cite[Lemma 2]{HL16}). For case $(2A_1\otimes 3A_1)$, it suffices (by $\mathsection$4.4.5) to 
show that the $(4,2)$-decomposition in \eqref{2*3} is impossible, i.e., 
\begin{equation}\label{impossible}
(\bG_\lambda^{\der},V_\lambda\otimes\overline\Q_\ell)=
(\SL_2\times\SL_2,(\mathrm{std}\otimes\mathrm{std})\oplus \mathrm{std})
\end{equation}
is impossible. Since $\bG_\lambda^{\der}$ 
is a subgroup of $\SO_6$ or $\Sp_6$ by $\mathsection$\ref{ic}(iii),
the equation \eqref{impossible} cannot hold. We are done.
\qed

\subsection{Proof of Theorem \ref{thmB}(ii)}
Suppose $F=\Q$ in this section. By Proposition \ref{X2real} and $n\leq 6$, 
the system $\{\rho_{\pi,\lambda}\}_\lambda$ are of type A 
when induced, i.e., $F_4\neq \Q$. Then the result follows from Theorem \ref{thmB}(i) and Theorem \ref{thmA}.
Otherwise if $F_4=\Q$, Propositions \ref{X2real} and \ref{X3} imply that
 $\rho_{\pi,\lambda_0}\otimes\overline\Q_{\ell_0}$ is Lie-irreducible for some $\lambda_0$.
By Proposition \ref{lambdaindep}(iii) and Theorem \ref{thmA}, we just need to consider the 
non-type A (self-dual) cases on Table A, which are $(4B_2)$, $(5B_2)$, and $(6C_3)$.

\subsubsection{Cases $(4B_2)$ and $(6C_3)$}
Since the argument is essentially the same for both cases, we treat $(4B_2)$ only.
Proposition \ref{lambdaindep}(ii) implies that $\bG_\lambda^{\der}=\Sp_4$ for almost all $\lambda$.
Suppose $\Gal_\Q$ is not irreducible on $\overline V_\lambda^{\ss}\otimes\overline\F_\ell$ for some large $\ell$.
Since the formal characters of the semisimple groups $\bG_\lambda^{\der}$
and $\uG_{\lambda}^{\der}$ are equal (Theorem \ref{general}(iii)), we obtain an irreducible decomposition 
\begin{equation}\label{4c6d'}
\overline V_\lambda^{\ss}\otimes\overline\F_\ell=\overline W\oplus \overline W'
\end{equation}
of $\uG_\lambda$ with $\dim \overline W =\dim \overline W'= 2$ and $\uG_{\lambda}^{\der}=\SL_2\times\SL_2$
acting naturally on \eqref{4c6d'}\footnote{As the formal character of 
$\uG_{\lambda}^{\der}$ and $\Sp_6$ are equal in case $(6C_3)$, for almost all $\lambda$
the only possibilities of
 $(\uG_{\lambda}^{\der},\overline V_\lambda^{\ss}\otimes\overline\F_\ell)$ are
$(\Sp_6,\mathrm{std})$, $(\Sp_4\times\SL_2,\mathrm{std}\oplus \mathrm{std})$, and 
$(\SL_2\times\SL_2\times\SL_2, \mathrm{std}\oplus \mathrm{std}\oplus\mathrm{std})$.}.
Since the image of $[\Gal_L,\Gal_L]$ (in $\uG_\lambda^{\der}(\overline{\F}_\ell)$) and $\uG_\lambda^{\der}(\overline{\F}_\ell)$ have the same commutant in 
$\End(\overline V_\lambda^{\ss}\otimes\overline\F_\ell)$ for $\ell\gg0$ 
by Theorem \ref{general}(ii), 
it follows that $[\Gal_L,\Gal_L]$ is irreducible on $\overline W$ (resp. $\overline W'$).
Thus, we obtain 
 a continuous two-dimensional irreducible Galois representation
\begin{equation}\label{tworepn}
\bar\phi_\lambda:\Gal_\Q\to \GL(\overline W)\cong\GL_2(\overline\F_\ell).
\end{equation}
such that $\bar\phi_\lambda|_{[\Gal_\Q,\Gal_\Q]}$ is still irreducible.
We would like to show that $\bar\phi_\lambda$ is odd and apply Serre's modularity conjecture \cite{Se87} proved in \cite{KW09a,KW09b}.

The condition $\mathsection\ref{ic}$(i) asserts that 
$V_\lambda\cong V_\lambda^\vee\otimes \epsilon_\ell^{-3}\rho_{\chi,\lambda}$.
The character $\mu_\lambda:=\epsilon_\ell^{-3}\rho_{\chi,\lambda}$ is odd by Theorem \ref{BC} and Theorem \ref{thmB}(i).
By semisimplified reduction, we obtain
\begin{equation}\label{odd1}
\overline V_\lambda^{\ss}\cong (\overline V_\lambda^{\ss})^\vee\otimes \overline \mu_\lambda.
\end{equation}
If we can show that 
\begin{equation}\label{odd2}
\overline W\cong \overline W^\vee\otimes \overline \mu_\lambda,
\end{equation}
then $\det(\overline W)=\overline\mu_\lambda$ is odd since $\overline W$ is two-dimensional.
If on the contrary \eqref{odd2} is false, then $\overline W'\cong \overline W^\vee\otimes \overline \mu_\lambda$
which implies $\uG_\lambda^{\der}=\SL_2$. This is absurd since $\uG_\lambda^{\der}=\SL_2\times\SL_2$.
We conclude that the Galois representation \eqref{tworepn} of $\Q$ is of \emph{Serre-type} (i.e., odd and irreducible). 

If $\overline V_\lambda^{\ss}\otimes\overline\F_\ell$ is not irreducible for infinitely many $\lambda$,
then we can find a sequence $\{\bar\phi_{\lambda}\}_\ell$ of two-dimensional irreducible odd Galois representations 
 of $\Q$ \eqref{tworepn} indexed by an infinite set of distinct primes $\ell$ such that $\lambda$ divides $\ell$. 
After twisting $\bar\phi_{\lambda}$ with the $m$th power of the cyclotomic character, 
the Galois representation $\bar\phi_{\lambda}\otimes \bar\epsilon_\ell^m$ of $\Q$
is still of Serre-type and thus comes from 
a cuspidal Hecke eigenform $f_\lambda$ of level $N_\lambda$ and weight $k_\lambda$ by Serre's conjecture, where $N_\lambda$ and $k_\lambda$ always denote Serre's predicted minimal prime-to-$\ell$ level and weight.
We would like to show that for some integer $m\in\Z$, 
there exists a subsequence of $\{\bar\phi_{\lambda}\otimes \bar\epsilon_\ell^m\}_\ell$
coming from the same eigenform $f$. By the finiteness of Hecke eigenforms of a fixed level and weight,
it suffices to show that for some $m\in\Z$, the weights $k_\lambda$ and levels $N_\lambda$ 
are bounded for an infinite subset of $\{f_\lambda\}_\ell$.
Hence by twisting $\{\rho_\lambda\}_\lambda$ with $\{\epsilon_\ell^{m'}\}_\ell$ for some $m'\in\N$, 
we assume the Hodge-Tate weights of the WCS $\{\rho_\lambda\}_\lambda$
belong to $[0,C]$ for some $C>0$.

Suppose $\ell$ is odd. Let $I_\ell$ be the inertia subgroup of $\Gal_{\Q_\ell}$.
The semisimplification of $\bar\phi_{\lambda}|_{I_\ell}$ factors through 
the tame inertia $I_\ell^t$, giving us two tame inertia characters $\bar\gamma_\lambda$ and $\bar\gamma_\lambda'$
stable under the action of Frobenius $x\mapsto x^\ell$. Following \cite[$\mathsection2$]{Da95}, we can distinguish two cases:\\

 \textit{Case 1}: $\bar\gamma_\lambda^\ell=\bar\gamma_\lambda'$ and 
$(\bar\gamma_\lambda')^\ell=\bar\gamma_\lambda$.
Then we can write 
$$\bar\gamma_\lambda=\bar\theta_2^{a_\lambda+\ell b_\lambda}$$ 
where $\bar\theta_2$ is the fundamental character of level $2$ (see $\mathsection$\ref{tiw}) and
$0\leq a_\lambda,b_\lambda\leq \ell-1$.\\

 \textit{Case 2}: $\bar\gamma_\lambda^\ell=\bar\gamma_\lambda$ and 
$(\bar\gamma_\lambda')^\ell=\bar\gamma_\lambda'$.
Then we can write 
\begin{equation}\label{inertiarp}\bar\phi_{\lambda}|_{I_\ell}=\bpx
\bar\epsilon_\ell^{a_\lambda} & \ast  \\
0 & \bar\epsilon_\ell^{b_\lambda}
\epx
\end{equation}
where $\bar\epsilon_\ell$ is the mod $\ell$ cyclotomic character. We normalize $a_\lambda$ and $b_\lambda$
so that 
\begin{itemize}
\item $0\leq a_\lambda\leq \ell-2$ if \eqref{inertiarp} is semisimple,  $1\leq a_\lambda\leq \ell-1$ otherwise.
\item  $0\leq b_\lambda\leq \ell-2$.
\end{itemize}

Suppose $\ell\gg C$ (upper bound of the (positive) Hodge-Tate weights) such that $\rho_{\pi,\lambda}|_{\Gal_{\Q_\ell}}$ 
is \emph{Fontaine-Laffaille} (i.e., crystalline and the Hodge-Tate weights belong to $[0,\ell-1)$).
By $\mathsection$\ref{wcs}(iii), the regularity of $\rho_{\pi,\lambda}$, and the Fontaine-Laffaille theory \cite[Theorem 5.3]{FL82} (see also \cite[Theorem 1.0.1]{Ba20}),
the exponents $a_\lambda$ and $b_\lambda$ are distinct integers belonging
 to $[0,C]$. 
Thus, $\{a_\lambda,b_\lambda\}$ is equal to $\{a,b\}$ for infinitely many $\lambda$
and we assume  $\{a_\lambda,b_\lambda\}=\{a,b\}$ without loss of generality.
Note that if we are in Case 2 and \eqref{inertiarp} is not semisimple, 
we always have $b_\lambda< a_\lambda$ by applying \cite[Remark 8.3.7, Exercise 8.4.3]{BrC09} to 
a Fontaine-Laffaille lift of $\bar\phi|_{\Gal_{\Q_\ell}}$ 
whose existence is guaranteed by \cite[Proposition 2.3.1]{GHLS17}.
Take $m$ such that $0=\mathrm{min}\{a+m,b+m\}$.
According to Serre's recipe (see \cite[$\mathsection2$]{Da95}), 
the weight $k_\lambda$ of the eigenform $f_\lambda$ 
corresponding to $\overline{\phi_{\lambda}\otimes \epsilon_\ell^m}$ is given by
\begin{equation}\label{weight}
1+(a+m)+(b+m)+(\ell-1)\mathrm{min}\{a+m,b+m\}+(\ell-1)\delta,
\end{equation}
where $\delta=0$ or $1$. The case $\delta=1$ arises when $a+m=b+m=0$ or 
when $\bar\phi_{\lambda}\otimes \bar\epsilon_\ell^m|_{\Gal_{\Q_\ell}}$ is \emph{tr\`es ramifi\'ee}.
Since $\rho_{\pi,\lambda}\otimes\epsilon_\ell^m |_{\Gal_{\Q_\ell}}$ is also Fontaine-Laffaille,
any of its reduction 
is  peu ramifi\'ee by \cite[Proposition 2.3.1]{GHLS17}.
As $\bar\phi_{\lambda}\otimes \bar\epsilon_\ell^m|_{\Gal_{\Q_\ell}}$
is a subquotient of a reduction of $\rho_{\pi,\lambda}\otimes\epsilon_\ell^m |_{\Gal_{\Q_\ell}}$
 that is peu ramifi\'ee,
the two-dimensional representation 
$\bar\phi_{\lambda}\otimes \bar\epsilon_\ell^m|_{\Gal_{\Q_\ell}}$ is also 
 peu ramifi\'ee by \cite[Remark 2.1.6]{GHLS17},
which coincides with the usual definition of  peu ramifi\'ee (i.e., not tr\`es ramifi\'ee) 
by \cite[Example 2.1.4(1)]{GHLS17}.
Hence, $\delta=0$ in \eqref{weight} and we obtain $k_\lambda=1+a+b+2m$ for infinitely many $\lambda$.
Next, we would like to give an upper bound of the
levels  of eigenforms $f_\lambda$ for all $\lambda$. 
According to Serre's recipe (see \cite[$\mathsection2$]{Da95}), the level
$N_\lambda$ is the Artin conductor of $\bar\phi_{\lambda}\otimes \bar\epsilon_\ell^m$ with power of $\ell$ removed.
Suppose the WCS $\{\rho_{\pi,\lambda}\}_\lambda$ is unramified outside $S$.
For each $p\in S$, 
the image $\rho_{\pi,\lambda}\otimes\epsilon_\ell^m(I^w_p)$ of the wild inertia group at $p$
is isomorphic to the fixed finite group $\rho_p(I^w_p)$ for almost all $\lambda$,
where $\rho_p$ is the representation of the Weil group defined in $\mathsection$\ref{ic}(ii).
Thus, the image $\bar\phi_{\lambda}\otimes \bar\epsilon_\ell^m(I^w_p)$ is a quotient of $\rho_p(I^w_p)$
for almost all $\lambda$. 
By the formula for Artin conductor in \cite[Chap. VI Corollary 1']{Se79} and the finiteness of $S$, 
the level $N_\lambda$ is bounded uniformly independent of $\lambda$.
Since the weights $k_\lambda$ and levels $N_\lambda$ for infinitely many $\lambda$ are bounded,
we deduce that the eigenform $f_\lambda=f$ (independent of $\lambda$) for infinitely many $\lambda$.

Enlarge the number field $E$ if necessary. Consider the 
two-dimensional SCS
$$\{\psi_{f,\lambda}:\Gal_\Q\to \GL_2(E_\lambda)\}_\lambda$$
attached to the eigenform $f$ (in last paragraph). 
The algebraic monodromy group of $\psi_{f,\lambda}$ is $\GL_2$ (i.e., $\psi_{f,\lambda}$ is not induced) for all $\lambda$
since the formal bi-character of $\psi_{f,\lambda}$ is independent of $\lambda$ (Theorem \ref{Hui1})
and for all $\ell\gg0$ the restriction of $\bar\psi_{f,\lambda}=\bar\phi_\lambda\otimes\bar\epsilon_\ell^m$
to $[\Gal_\Q,\Gal_\Q]$ is still irreducible.
Consider the $6$-dimensional SCS compatible system (fulfilling the conditions of Theorem \ref{general} by Remark \ref{cond})
\begin{equation}\label{n+2}
\{U_\lambda:=(\rho_{\pi,\lambda}\otimes\epsilon_\ell^m)\oplus\psi_{f,\lambda}\}_\lambda
\end{equation}
and let $\bM_\lambda$ be the algebraic monodromy group at $\lambda$ and $\uM_\lambda$ be the 
algebraic envelope for almost all $\lambda$. Theorem \ref{general}(iii) implies that 
the semisimple groups $\bM_{\lambda}^{\der}$ and 
$\uM_{\lambda}^{\der}$
have the same formal bi-character on respectively $U_\lambda$ and
\begin{equation}\label{n+2'}
\overline U_\lambda^{\ss}:=(\bar{\rho}_{\pi,\lambda}^{\ss}\otimes\bar\epsilon_\ell^m)\oplus\bar{\psi}_{f,\lambda}^{\ss}
\end{equation}
for almost all $\lambda$.
This is impossible since for infinitely many $\lambda$
we have $\bM_{\lambda}^{\der}=\Sp_4\times\SL_2$ (by Goursat's lemma)
and $\uM_{\lambda}^{\der}\cong\uG_\lambda^{\der}=\SL_2\times\SL_2$ 
(by construction\footnote{The second factor of \eqref{n+2'} is a subrepresentation of the first factor.})
are of different ranks. \qed

\subsubsection{Case $(5B_2)$}\label{case5B}
Proposition \ref{lambdaindep}(ii) asserts that $\bG_\lambda^{\der}=\SO_5$ for almost all $\lambda$.
By twisting $\{\rho_{\pi,\lambda}\}_\lambda$ with the SCS of cyclotomic characters,
we assume $\bG_\lambda=\GO_5$ is connected for $\ell\gg0$.
Since Hypothesis (MFT) ($\mathsection\ref{hypo}$(iv)) is satisfied for $\{\rho_{\pi,\lambda}\}_\lambda$ 
(see $\mathsection\ref{hypo}(v)$, \cite[Chapter III.2]{HT01}, \cite[Theorem 3.6]{Ta04})
which is a union of $[E:\Q]$ subsystems of the rational SCS
$$\{\rho_{\pi,\ell}:=\bigoplus_{\lambda|\ell}\rho_{\pi,\lambda}\}_\ell$$
by restriction of scalars just like \eqref{res11},
we obtain $\bar\rho_{\pi,\lambda}^{\ss}(\Gal_\Q)\subset\uG_\lambda$ for $\ell\gg0$
by Proposition \ref{connect}. 
Since the Galois representation is polarized, it follows that 
$V_\lambda=V_\lambda^\vee\otimes\mu_\lambda$ for some character $\mu_\lambda$
and after some reduction we obtain
\begin{equation}\label{dualmod}
\overline V_\lambda=\overline V_\lambda^{^\vee}\otimes\bar\mu_\lambda.
\end{equation}

If the Galois representation $\overline V_\lambda^{\ss}$ is not irreducible (for infinitely many $\lambda$), then it
admits an irreducible/zero decomposition 
\begin{equation}\label{izd} 
\overline V_\lambda^{\ss}=\overline W\oplus\overline W'\oplus \overline W''
\end{equation}
so that $(\dim \overline W,\dim\overline W',\dim\overline W'')\in\{(4,1,0),(3,2,0),(2,2,1)\}$ 
by the fact that the formal characters of $\SO_5$ and $\uG_\lambda^{\der}$ coincide for almost all $\lambda$ (Theorem \ref{general}(iii)). 
For cases $(3,2,0)$ and $(2,2,1)$, the orthogonal duality \eqref{dualmod} forces 
$\uG_\lambda:=\uG_{V_\lambda}$ to be a subgroup
of $\GO_5$ and thus we can find a two-dimensional $\uG_\lambda$-subrepresentation $\overline U$
of $\overline V_\lambda^{\ss}$ by the duality.
Hence, it follows that the image of $\psi':\uG_\lambda\to\GL_{\overline U}$ is in $\GO_2$.
The map $\psi'$ factors through $\uG_\lambda\to\GL_{\overline U}$ which is isomorphic 
to the natural $\psi:\uG_\lambda\to\GL_{\overline W'}$ in \eqref{izd} (WLOG). This is a contradiction since 
the image of $\psi$ contains $\SL_2$ by the formal character of $\SO_5$.

For case $(4,1,0)$, the formal character and orthogonal duality of $\SO_5$
forces $\uG_\lambda\subset \GO_4^\circ\times\GL_1\subset\GL_{\overline W}\times\GL_1$ for $\ell\gg0$.
Since $\bar\rho_{\pi,\lambda}^{\ss}(\Gal_\Q)\subset\uG_\lambda$, we obtain a four-dimensional irreducible Galois representation
\begin{equation}\label{4d}
\bar\alpha_\lambda:\Gal_\Q\to \GO_4^\circ(\overline\F_\ell)\subset\GL_4(\overline\F_\ell).
\end{equation}
On the other hand,
one has an exact sequence 
\begin{equation}\label{ses}
1\to \GL_1\to \GL_2\times\GL_2\to \GO_4^\circ\to 1
\end{equation}
given by the tensor product of two two-dimensional representations. 
We can lift $\bar\alpha_\lambda$ to $\GL_2\times\GL_2$ by a theorem of Tate (see \cite[$\mathsection6.5$]{Se77b} or \cite{Co11})
to get $\bar\phi_\lambda:\Gal_\Q\to \GL_2(\overline\F_\ell)$ and 
$\bar\phi'_\lambda:\Gal_\Q\to \GL_2(\overline\F_\ell)$.
Since $n=5$ and $V_\lambda$ absolutely irreducible, 
the inequality 
$$|\mathrm{Tr}(c|_{V_\lambda})|\leq 1$$
holds for any complex multiplication $c\in\Gal_\Q$ by \cite[Proposition A]{Ta12},
which implies  that
the representations $\bar\phi_\lambda$ and $\bar\phi'_\lambda$ cannot both be even.
Hence, the irreducibility of $\bar\alpha_\lambda$ \eqref{4d} and $\bar\alpha_\lambda|_{[\Gal_\Q,\Gal_\Q]}$ (Theorem \ref{general}(ii)) imply 
that either $\bar\phi_\lambda$ or $\bar\phi'_\lambda$ is of Serre-type (i.e., odd and irreducible) 
and is also irreducible when restricted to $[\Gal_\Q,\Gal_\Q]$.

Suppose $\bar\phi_\lambda$ is of Serre-type for infinitely many $\lambda$ (WLOG)
with level $N_\lambda$ and weight $k_\lambda$ by Serre's recipe \cite[$\mathsection2$]{Da95}.
If we can show that $\bar\phi_\lambda$ for infinitely 
many $\lambda$ comes from a fixed cuspidal Hecke eigenform $f$, then 
we can reach a contradiction by the same argument in case (4B) 
by considering the SCS $\{\End(V_\lambda)=V_\lambda\otimes V_\lambda^\vee\}_\lambda$
with algebraic monodromy groups $\SO_5$ for almost all $\lambda$.
To do that, it suffices to find a lift $\bar\phi_\lambda$ for infinitely many $\lambda$ such that 
both the level $N_\lambda$ and the weight $k_\lambda$ are bounded 
by a constant independent of $\lambda$.\\

\noindent\ref{case5B}.1. \textit{(Level part)}. Let $S$ be the finite set of ramified places of the SCS $\{V_\lambda\}_\lambda$. 
 The Galois representation
\begin{equation}\label{proj}
\widetilde{\phi_\lambda}:\Gal_\Q\stackrel{\bar\phi_\lambda}{\rightarrow}\GL_2(\overline\F_\ell)\to\PGL_2(\overline\F_\ell)\subset\GL_3(\overline\F_\ell)
\end{equation}
is independent of the lift $\bar\phi_\lambda$ (which always exists by Tate) 
and is a subrepresentation of $\End(\overline V_\lambda^{\ss}\otimes\overline\F_\ell)$
such that 
\begin{enumerate}[(a)]
\item for $p$ outside $S\cup\{\ell\}$, it is unramified;
\item for $p\in S$ not equal to $\ell$,
the Artin conductor at $p$ is uniformly bounded independent of $\ell$.
\end{enumerate}
For uniformly boundedness of the levels $N_\lambda$, we would also like 
our lift $\bar\phi_\lambda$ to satisfy these conditions.

For a prime $p$, denote by $D_p$ and $I_p$ 
a decomposition subgroup and a inertia subgroup at $p$ of $\Gal_\Q$.
We state a theorem of Tate \cite[Theorem 5]{Se77b} which 
also holds for $\overline\F_\ell$ besides $\C$.

\begin{thm}(Tate) \label{Tatethm}
Let $\widetilde{\rho}:\Gal_\Q\to\PGL_2(\overline\F_\ell)$ be a continuous representation,
and for each prime $p$, let $\rho_p'$ be a lifting of $\widetilde{\rho}|_{D_p}$.
Suppose that $\rho_p'|_{I_p}$ is trivial for almost all $p$. Then there is 
a unique lifting $\rho:\Gal_\Q\to\GL_2(\overline\F_\ell)$ of $\widetilde{\rho}$ such that:
$$\rho|_{I_p}=\rho_p'|_{I_p}$$
for all $p$.
\end{thm}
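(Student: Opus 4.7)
The plan is to adapt Tate's classical argument over $\mathbb{C}$ to the coefficient field $\overline\F_\ell$, exploiting the divisibility of $\overline\F_\ell^*$. I will proceed in four stages: encoding the problem as a cohomological obstruction, showing the obstruction vanishes, adjusting the resulting lift to match the prescribed inertial data, and verifying uniqueness.

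\emph{Stage 1 (Obstruction class).} Pulling back the central extension
$$1 \to \overline\F_\ell^* \to \GL_2(\overline\F_\ell) \to \PGL_2(\overline\F_\ell) \to 1$$
under $\widetilde\rho$ yields a continuous central extension of $\Gal_\Q$ by $\overline\F_\ell^*$ (with trivial action) and hence a class $c \in H^2(\Gal_\Q, \overline\F_\ell^*)$ whose vanishing is equivalent to the existence of some lift $\rho_0:\Gal_\Q \to \GL_2(\overline\F_\ell)$. Since $\widetilde\rho$ is continuous with discrete target, its image is finite, so $\widetilde\rho$ factors through $\Gal(L/\Q)$ for a finite Galois extension $L/\Q$, which will allow one to work with cohomology of finite groups when convenient.

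\emph{Stage 2 (Vanishing of the obstruction).} Each given local lift $\rho_p'$ witnesses that the restriction $c_p := c|_{D_p} \in H^2(\Gal_{\Q_p}, \overline\F_\ell^*)$ is zero. Writing $\overline\F_\ell^* = \varinjlim_{(m,\ell)=1} \mu_m$ and using Hasse's principle for the prime-to-$\ell$ part of the Brauer group—applied via the Frobenius-equivariant identification of $\mu_m \subset \overline\F_\ell^*$ with roots of unity in $\overline\Q$ under the trivial action as in Tate's $\mathbb{C}^*$-argument—I will argue that the localization map
$$H^2(\Gal_\Q, \overline\F_\ell^*) \hookrightarrow \bigoplus_v H^2(\Gal_{\Q_v}, \overline\F_\ell^*)$$
is injective on the relevant subquotient. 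Since all local components $c_p$ are zero, $c=0$, and a global lift $\rho_0$ exists.

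\emph{Stage 3 (Patching inertia).} For every prime $p$, the two lifts $\rho_0|_{D_p}$ and $\rho_p'$ of $\widetilde\rho|_{D_p}$ differ by a character $\chi_p := \rho_0|_{D_p}\cdot(\rho_p')^{-1}:D_p \to \overline\F_\ell^*$. By hypothesis $\chi_p|_{I_p}$ is trivial for almost all $p$, so the family $\{\chi_p|_{I_p}\}_p$ corresponds via local class field theory to a character on the compact part $\widehat{\Z}^* \cong \prod_p \Z_p^*$ of the id\`ele class group; since only inertial data are being prescribed there is no reciprocity obstruction, and this character extends to a global character $\chi:\Gal_\Q \to \overline\F_\ell^*$ with $\chi|_{I_p}=\chi_p|_{I_p}$ for every $p$. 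Then $\rho := \rho_0 \cdot \chi^{-1}$ lifts $\widetilde\rho$ and satisfies $\rho|_{I_p}=\rho_p'|_{I_p}$ for all $p$. Uniqueness of $\chi$ matters only modulo everywhere-unramified characters, which is addressed next.

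\emph{Stage 4 (Uniqueness).} Two lifts $\rho,\rho'$ satisfying the inertial conditions differ by a character $\eta:\Gal_\Q\to \overline\F_\ell^*$ with $\eta|_{I_p}=1$ for every prime $p$, so $\eta$ is everywhere unramified; since $\Q$ has class number one and admits no nontrivial everywhere-unramified abelian extension, $\eta$ is trivial.

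The main obstacle will be Stage 2: over $\mathbb{C}$ the vanishing of the obstruction follows cleanly from $\mathbb{C}^*$ being uniquely divisible, but $\overline\F_\ell^*$ has torsion (the prime-to-$\ell$ roots of unity), so one must instead argue through the Hasse principle for the prime-to-$\ell$ part of $\Br(\Q)$ and verify that the discrete trivial action on $\mu_m \subset \overline\F_\ell^*$ does not spoil the local-global injectivity used in Tate's original argument. Once Stage 2 is established, Stages 3 and 4 are standard consequences of global class field theory for $\Q$.
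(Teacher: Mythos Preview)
The paper does not prove this theorem; it simply cites it as \cite[Theorem~5]{Se77b} (Tate's result as presented by Serre), remarking only that the same statement holds with $\overline\F_\ell$ in place of $\C$. So there is no in-paper argument to compare against---the theorem is treated as a black box.

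Your outline is precisely Tate's classical strategy, and Stages~1, 3, and 4 are correct. Stage~3 in particular is exactly right for $K=\Q$: the id\`ele class group of $\Q$ splits as $\R_{>0}\times\widehat\Z^*$, so any continuous character of $\prod_p \Z_p^*$ (trivial on almost all factors, as guaranteed by your hypothesis) extends to a global character, and Stage~4 is immediate from the triviality of the narrow class group of $\Q$.

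The one soft spot, which you yourself flag, is Stage~2. Your appeal to ``Hasse's principle for the prime-to-$\ell$ part of the Brauer group'' is not quite the right tool: the Brauer group is $H^2$ with the \emph{cyclotomic} action on roots of unity, whereas here the Galois action on $\overline\F_\ell^*$ is trivial. What you actually need is injectivity of
\[
H^2(\Gal_\Q,\Z/n\Z)\longrightarrow\prod_v H^2(\Gal_{\Q_v},\Z/n\Z)
\]
for trivial coefficients. This does hold, but via Poitou--Tate duality rather than the Brauer-group sequence: the kernel $\Sha^2(\Q,\Z/n\Z)$ is dual to $\Sha^1(\Q,\mu_n)$, and an element of $\Q^*/(\Q^*)^n$ that is an $n$-th power in every completion is a global $n$-th power (the Grunwald--Wang special case does not arise over $\Q$ itself). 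The archimedean contribution is harmless since $H^2(\Gal_\R,\overline\F_\ell^*)=\overline\F_\ell^*/(\overline\F_\ell^*)^2=0$. Once you replace the Brauer-group heuristic by this duality argument (or by the direct argument in Serre's exposition), your proof is complete and coincides with the standard one the paper is citing.
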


By Theorem \ref{Tatethm}, it suffices to lift \eqref{proj} to $\GL_2$ locally at every prime $p$.
For $p$ outside $S\cup\{\ell\}$, an unramified local lift always exists.
For $p=\ell$, we just take any local lift, e.g., $\bar\phi_\lambda|_{D_p}$. 
For $p\in S$ not equal to $\ell$, the image $\bar\phi_\lambda(D_p)$ is solvable and
 contained in a Borel if the order $|\bar\phi_\lambda(D_p)|$ is divisible by $\ell$.
In this Borel case, let $\bar\theta_\lambda: D_p\to\overline\F_\ell^*$ be one of the two characters. 
By replacing $\bar\phi_\lambda|_{D_p}$ with  $\bar\phi_\lambda\otimes\bar\theta_\lambda^{-1}|_{D_p}$,
we get another local lift such that 
$$\ker(\bar\phi_\lambda\otimes\bar\theta_\lambda^{-1}|_{D_p})=\ker(\widetilde{\phi_\lambda}|_{D_p})$$
(for controlling the Artin conductor at $p$).

On the other hand, suppose $\ell$ does not divide $|\bar\phi_\lambda(D_p)|$.
Fix an isomorphism $\overline\Q_\ell\cong\C$.
We can lift the representation $\widetilde\phi_\lambda:D_p\to\PGL_2(\overline\F_\ell)$ to $\PGL_2(\overline\Q_\ell)\cong\PGL_2(\C)$ (such that $\widetilde\phi_\lambda(D_p)$ and the image in $\PGL_2(\overline\Q_\ell)$ are isomorphic),
then (lift) to $\GL_2(\C)\cong\GL_2(\overline\Q_\ell)$ \cite[$\mathsection6.5$]{Se77b},
and finally after mod $\ell$ reduction we get a representation 
$$\bar\rho_\lambda:D_p\to\GL_2(\overline\F_\ell)$$
which is also a lift of $\widetilde{\phi_\lambda}|_{D_p}$ up to conjugation. 
The solvable image $G$ of $D_p$ in $\PGL_2(\C)$ 
is (up to conjugation in $\PGL_2(\C)$) either (I): cyclic, (II): dihedral, or (III): tetrahedral or octahedral. We fix $G\subset \PGL_2(\C)$.
For case (I), we can lift $G$ to $\GL_2(\C)$ so that 
$$\ker\bar\rho_\lambda=\ker(\widetilde{\phi_\lambda}|_{D_p}).$$
For case (II), we can find a \emph{good lift}\footnote{We refer to \cite[$\mathsection1.2$]{FK04} for the description of good lifts.} 
of $\widetilde\phi_\lambda|_{D_p}$ to $\GL_2(\C)$ such that 
the Artin conductor is bounded in terms of  $|\widetilde\phi_\lambda(I_p^w)|$ (the order 
of the image of the wild inertia) by \cite[Theorem 1, Proposition 1]{FK04}.
Since \eqref{proj} is a subrepresentation of 
$\End(\overline V_\lambda^{\ss}\otimes\overline\F_\ell)$,
the order $|\widetilde\phi_\lambda(I_p^w)|$ is bounded by a constant independent of $\lambda$ by $\mathsection\ref{ic}$(ii).
For case (III), Krasner's lemma implies that there are finitely many tetrahedral or octahedral
extension $F$ of $\Q_p$ and for each such $F$ there are finitely many isomorphisms
$$i:\Gal(F/\Q_p)\stackrel{\cong}{\rightarrow} G\subset\PGL_2(\C)$$
where $G$ is our fixed tetrahedral or octahedral image.
Now for a pair $(F,i)$, pick a lift of
$$D_p\twoheadrightarrow \Gal(F/\Q_p)\stackrel{i}{\rightarrow} G\subset\PGL_2(\C)$$
to $\GL_2(\C)$. After the identification $\GL_2(\C)\cong\GL_2(\overline\Q_\ell)$
and mod $\ell$ reduction, we find finitely many maps $\bar\rho_\lambda: D_p\to \GL_2(\overline\F_\ell)$
that lift all possible $\widetilde{\phi_\lambda}|_{D_p}$ in case (III).

It follows from the analysis on the four cases (Borel, (I), (II), (III)) 
that $\widetilde\phi_\lambda|_{D_p}$ can be lifted to $\GL_2(\overline\F_\ell)$
such that the Artin conductor is bounded independent of $\lambda$.
Therefore, we can find lifts $\bar\phi_\lambda$ to achieve the conditions (a) and (b) (after \eqref{proj})  by Theorem \ref{Tatethm}.\\

\noindent\ref{case5B}.2. \textit{(Weight part)}.
Let $\bar\phi_\lambda$ be the lifts of \eqref{proj} for infinitely many $\lambda$ satisfying 
the conditions (a) and (b) in the level part.
By twisting $\bar\phi_\lambda$ with suitable powers of the cyclotomic character 
$\bar\epsilon_\ell$, 
we may assume the tame inertia weights of $\bar\phi_\lambda|_{\Gal_{\Q_\ell}}$ 
is $\{a_\lambda,0\}$ with $a_\lambda\geq0$.
We would like to show that $a_\lambda>0$ and is uniformly bounded for $\ell\gg0$.
Since $\End(\overline V_\lambda^{\ss}\otimes\overline\F_\ell)|_{\Gal_{\Q_\ell}}$ 
is Fontaine-Laffaille for $\ell\gg0$, 
its tame inertia weights and the Hodge-Tate weights (independent of $\lambda$)
coincide for $\ell\gg0$ by \cite{FL82}.
Since $\widetilde\phi_\lambda$ \eqref{proj} is a subrepresentation of $\End(\overline V_\lambda^{\ss}\otimes\overline\F_\ell)$
and $a_\lambda$ is a tame inertia weight of $\widetilde\phi_\lambda|_{\Gal_{\Q_\ell}}$,
the integer $a_\lambda$ is uniformly bounded.
From \eqref{ses}, we find two lifts $\bar\phi_\lambda$ and $\bar\phi_\lambda'$ 
such that $\bar\phi_\lambda\otimes\bar\phi_\lambda'=\bar\alpha_\lambda$ in \eqref{4d}
with four distinct tame inertia weights for $\ell\gg0$ (by regularity and Fontaine-Laffaille theory \cite{FL82}).
It follows that $a_\lambda>0$ for $\ell\gg0$.
If $\bar\phi_\lambda|_{I_\ell}$ is not semisimple, we may assume that 
it is an extension of the trivial character by $\bar\epsilon_\ell^{a_\lambda}$ 
(\eqref{inertiarp} with $b_\lambda=0$)\footnote{Otherwise, some twist of $\bar\phi_\lambda'|_{I_\ell}$ 
is either in this form or semisimple. This is seen by 
applying \cite[Remark 8.3.7, Exercise 8.4.3]{BrC09} to a two dimensional subrepresentation
of some Fontaine-Laffaille lift of $\bar\phi_\lambda\otimes\bar\phi_\lambda'|_{\Gal_{\Q_\ell}}$ 
\cite[Proposition 2.3.1]{GHLS17}. Therefore, we can use $\bar\phi_\lambda'$ instead of $\bar\phi_\lambda$.}.
According to Serre's recipe, 
the weight $k_\lambda$ of the eigenform $f_\lambda$ attached to $\bar\phi_\lambda$ (for $a_\lambda>0$) is given by
$1+a_\lambda+(\ell-1)\delta$, where $\delta=0$ or $1$. 
And $\delta=1$ arises only when 
$\bar\phi_{\lambda}|_{\Gal_{\Q_\ell}}$ is not peu ramifi\'ee.
If this is the case, then $\bar\phi_{\lambda}|_{\Gal_{\Q_\ell}}$ is reducible and 
we obtain $\Gal_{\Q_\ell}$-subrepresentations
\begin{equation}\label{subrepns}
\bar\phi_{\lambda}\otimes \bar\chi_\lambda\leq \End(\bar\phi_{\lambda})\leq \End(\overline V_\lambda^{\ss}\otimes\overline\F_\ell),
\end{equation}
where $\bar\chi_\lambda$ is some character of $\Gal_{\Q_\ell}$.
Since $\End(\overline V_\lambda\otimes\overline\F_\ell)|_{\Gal_{\Q_\ell}}$
admits a Fontaine-Laffaille lift for $\ell\gg0$, it is peu ramifi\'ee 
with respect to every saturated filtration \cite[Proposition 2.3.1]{GHLS17}.
Hence, $\bar\phi_{\lambda}\otimes \bar\chi_\lambda|_{\Gal_{\Q_\ell}}$ and $\bar\phi_{\lambda}|_{\Gal_{\Q_\ell}}$ are peu ramifi\'ee by \eqref{subrepns} and \cite[Remark 2.1.6]{GHLS17}
which is a contradiction. We conclude that the weight $k_\lambda=1+a_\lambda$ is uniformly bounded.\qed

\section*{Acknowledgments}
The author is indebted to Michael Larsen for 
pointing out an error during the preparation of this work, suggesting to use the structure theory of 
finite groups in $\PGL_2(\C)$ in $\mathsection$\ref{case5B}.1, and giving many helpful comments.
He would like to thank Frank Calegari for answering some questions on automorphy theorems and \cite{CG13}, Luis Dieulefait for answering a question on weight of modular form and referring to \cite{GHLS17}, 
Hui Gao for answering some questions on \cite{GHLS17} and $p$-adic Hodge theory, and Gebhard B\"ockle for his interests in this work.
The author would like to thank the referee for a very careful reading and a lot of helpful comments and suggestions. 
The work described in this paper was partially supported
by Hong Kong RGC (no. 17302321) and NSFC (no. 12222120).

\vspace{.1in}
\end{document}